\newcommand{\be}{\begin{equation} }
\newcommand{\ee}{\end{equation}}
\newcommand{\bee}{\begin{equation*} }
\newcommand{\eee}{\end{equation*}}
\newcommand{\bse}{\begin{subequations}}
\newcommand{\ese}{\end{subequations}}
\newcommand{\R}{\mathbb{R}}
\newcommand{\rmd}{\mbox{\rm d}}
\newcommand{\Ham}{\mathcal{H}}
\theoremstyle{plain}
\newtheorem{theorem}{Theorem}[section]
\newtheorem{corollary}[theorem]{Corollary}
\newtheorem{lemma}{Lemma}[section]
\newtheorem{proposition}{Proposition}[section]
\theoremstyle{remark}
\newtheorem{remark}{Remark}[section]
\theoremstyle{definition}
\newtheorem{definition}{Definition}
\numberwithin{equation}{section}
\title[Asymptotic stability of smooth CH multi-solitons ]
{Asymptotic stability of smooth solitons and multi-solitons for the
Camassa--Holm equation}
\author[R. M. Chen]{Robin Ming Chen}
\address[R. M. Chen]{Department of Mathematics, University of Pittsburgh, PA 15260}\email{mingchen@pitt.edu}
\author[Y. Lan]{Yang Lan}
\address[Y. Lan]{Yau Mathematical Sciences Center, Tsinghua University, 100084 Beijing, P. R. China}
\email{lanyang@mail.tsinghua.edu.cn}
\author[Y. Liu]{Yue Liu}
\address[Y. Liu]{Department of Mathematics, University of Texas at Arlington, TX 76019}\email{yliu@uta.edu}
\author[Z. Wang]{Zhong Wang}
\address[Z. Wang]{School of Mathematics and Big Data, Foshan University, 528000, China}\email{wangzh79@fosu.edu.cn}
\date{}                                           
\begin{document}
\thispagestyle{empty}

\begin{abstract}

%
%
%

We establish the asymptotic stability of smooth solitons and multi-solitons for the Camassa–Holm (CH) equation in the energy space $H^1(\R)$. We show that solutions initially close to a soliton converge, up to translation, weakly in $H^1(\R)$ as time tends to infinity to a (possibly different) soliton. The analysis is based on a Liouville-type rigidity theorem characterizing solutions that remain localized near a soliton trajectory. 

A central feature of the proof is a complete spectral resolution of the linearized CH operator around a soliton. This linear theory is obtained via the bi-Hamiltonian and integrable structure of the CH equation, through the recursion operator and the completeness of the associated squared eigenfunctions. It provides a substitute for the classical spectral framework used in KdV and gKdV equations, which is unavailable in the nonlocal and variable-coefficient setting of CH.

The spectral resolution yields sharp decay estimates for the linearized flow in exponentially weighted spaces, which in turn lead to the nonlinear rigidity result and the asymptotic stability of a single soliton. Combined with known orbital stability results, this approach extends to well-ordered trains of solitons and to the explicit multi-soliton solutions generated by the inverse scattering method. As an additional application, we revisit the linearized problems associated with other integrable dispersive equations, including the KdV and mKdV equations, from the perspective of squared-eigenfunction expansions.

\end{abstract}

\maketitle

\noindent {\sl Keywords\/}:   Camassa--Holm equation, asymptotic stability, multi-solitons, recursion operator,\\  completeness relation

\vskip 0.2cm

\noindent {\sl AMS Subject Classification} (2010): 35Q35, 35Q51, 37K05, 37K10 \\
\maketitle

 \setcounter{tocdepth}{1}

\tableofcontents

\setcounter{tocdepth}{1}
\section{Introduction}\label{intro}
We consider the Camassa--Holm (CH ) equation \cite{CH93, FF81}
\begin{equation}\label{eq1}
 u_{t}-u_{xxt}+2\omega u_{x}+3uu_{x}-2u_{x}u_{xx}-uu_{xxx}=0, \quad (t,x)\in \mathbb{R}\times \mathbb{R},
\end{equation}
where $\omega\geq0 $ and $u(t,x)$ denotes the fluid velocity in dimensionless space-time variables. This equation models the unidirectional propagation of shallow-water waves over a flat bottom \cite{CH93,J02}, and a rigorous hydrodynamic justification within the framework of shallow-water asymptotics can be found in \cite{cola}.

The CH equation (\ref{eq1}) is a completely integrable system: it admits a Lax pair and an infinite number of conserved quantities \cite{BBS98,CH93,C,C01,CM99}. It is capable of describing both permanent and breaking waves \cite{C00,CE98,M}. In the natural energy space $H^1(\mathbb{R})$, both the smooth solitary waves (corresponding to $\omega > 0$) and the peaked solitary waves ($\omega = 0$) are orbitally stable \cite{CS02,CS00}. Moreover, the peaked solitary waves are proved to be asymptotically stable \cite{M18}. In addition, the CH flow arises as the geodesic equation for the right-invariant $H^1$ metric on the Bott--Virasoro group for $\omega>0$ \cite{CK02,CK03,V05}, and on the diffeomorphism group for $\omega = 0$ \cite{CK02,CK03}.

The CH equation (\ref{eq1}) also possesses a bi-Hamiltonian  structure\cite{CH93,FF81}. Writing the momentum density as
\[
m: = u-u_{xx},
\]
one has
\begin{equation}\label{eq:recursion}
\begin{split}
 &\frac{\partial m}{\partial t}=\mathcal{J}_2\frac{\delta \Ham_{2}[m]}{\delta m}=\mathcal{J}_1\frac{\delta \Ham_{1}[m]}{\delta m}, \\
 &\mathcal{J}_1:=-(2\omega \partial_x +m\partial_x+\partial_x m),\quad \mathcal{J}_2:=-(\partial_x-\partial_x^{3}),
\end{split}
\end{equation}
with the two Hamiltonians
\begin{eqnarray}\label{eq2a}
&& \Ham_{1}[m] = \Ham_{1}(u)=E(u)=\frac{1}{2}\int_{\mathbb{R}} m u \rmd x, \\ \label{eq2b}
&& \Ham_{2}[m] = \Ham_{2}(u)=F(u)=\frac{1}{2}\int_{\mathbb{R}}(u^{3}+uu_{x}^{2}+2\omega u^{2}) \, \rmd x.
 \end{eqnarray}
Using the identity $m = (1 - \partial^2_x)u$, equation \eqref{eq1} may also be written as an infinite-dimensional Hamiltonian PDE of the form
\begin{equation}\label{Hamilton form}
 u_{t}=\mathcal{J}\frac{\delta \Ham_{2}(u)}{\delta u},\qquad \mathcal{J}:=(1-\partial_x^2)^{-1}\mathcal{J}_2(1-\partial_x^2)^{-1}=-\partial_x(1-\partial_x^2)^{-1},
\end{equation}
where $\mathcal{J}$ is a skew symmetric, bounded operator on $L^2(\R)$.

More generally, there exist  infinitely  many conservation laws
(multi-Hamiltonian structures) $\Ham_n[m]$, $n=0,\pm1, \pm2,\ldots$, linked through the Lenard recursion scheme \cite{L05}:
\begin{equation}\label{eq2aa}
 \mathcal{J}_2\frac{\delta \Ham_{n}[m]}{\delta m}=\mathcal{J}_1\frac{\delta \Ham_{n-1}[m]}{\delta m}.
\end{equation}
Algorithmic constructions of these conserved quantities may be found in \cite{CL05,FS99,I05,L05}.

\subsection{CH solitons}\label{subsec intro solitons}
By means of inverse scattering theory, the evolution of rapidly decaying initial data for the CH equation can be described through purely algebraic methods. Solutions decompose into a well-structured family of functions determined by the scattering data. The {\em{soliton resolution conjecture}} states that, for a large class of dispersive equations, any global solution decomposes as $t\rightarrow +\infty$ into a finite sum of (rescaled and translated) solitons plus a dispersive radiation term governed by the corresponding linearized equation. For the CH equation, the solitonic component of this decomposition consists of multi-soliton solutions, which we describe below.

It is known that the CH equation \eqref{eq1} possesses smooth solitary-wave solutions called {\em{solitons}} if $ \omega > 0$ \cite{CH94} and peaked solitary waves termed \emph{peakons} if $ \omega = 0$ \cite{CH93}. These solitary waves arise as constrained minimizers of the Hamiltonian in the $H^1$ topology. When $ \omega > 0$,  the CH equation possesses a smooth soliton profile $\varphi_c$, depending on a translation parameter $ x_0 \in \mathbb{R}$, of the form
 \begin{equation}\label{eq:so}
u(t,x) =\varphi_c(x-ct-x_0),\ c>2\omega,  \, t\in\R, \, x_0 \in \mathbb{R},
\end{equation}
which can be written parametrically as follows \cite{J03,LZ04}:
\begin{eqnarray*}
&&u(t,x)=\frac{c-2\omega}{1+(2\omega/c)\sinh^2 \theta},\\ &&\theta=\frac{1}{2\sqrt{\omega}}\sqrt{1-\frac{2\omega}{c}}\big(y-c\sqrt{\omega}t\big),\\
&&x=\frac{y}{\sqrt{\omega}}+\ln\frac{\cosh(\theta-\theta_0)}{\cosh(\theta+\theta_0)},\quad \theta_0:=\tanh^{-1}\sqrt{1-\frac{2\omega}{c}}.
\end{eqnarray*}
Substitution of \eqref{eq:so} into \eqref{eq1} shows that $\varphi_c > 0$ satisfies
\begin{equation}\label{eq:stationary}
-c\varphi_c + c \partial_x^2 \varphi_{c}+\frac32\varphi_c^2+2\omega\varphi_c = \varphi_c \partial_x^2 \varphi_{c}+\frac12(\partial_x \varphi_{c})^2, \quad x \in\mathbb{R}.
\end{equation}
For $\omega>0$, such a solitary wave exists if and only if $ c>2\omega$.
Conversely, each $ c>2\omega$ determines a unique (up to translation) smooth soliton profile $\varphi_c$. As shown in \cite{CS02}, these profiles satisfy:
\begin{enumerate}[label=\arabic*)]
\item $\varphi_c$ is smooth, positive, and even, with peak height $c-2\omega$, and strictly decreasing away from the peak. \\
\item $\varphi_c$ is concave while its values lie in $ \left [c-\frac{\omega}{2}-\sqrt{c\omega+\frac{\omega^2}{4}}, \, c-2\omega \right]$, and convex elsewhere.\\
\item  Multiplying both sides of \eqref{eq:stationary} by $\partial_x \varphi_c$ and integrating over $\R$ leads to
\[
(\partial_x \varphi_{c})^2 (c-\varphi_c)=\varphi_c^2(c-2\omega-\varphi_c),
\]
from which one obtains $|\partial_x \varphi_c|\leq \varphi_c$ and the exponential decay
\[
\varphi_c(x)=O\left(\exp\left(-\sqrt{1-\frac{2\omega}{c}}|x|\right)\right) \quad \text{for}\ |x|\rightarrow\infty.
\]
Furthermore, \eqref{eq:stationary} implies
\begin{equation}\label{eq:potential}
\varphi_c - \partial_x^2 \varphi_c=\frac{\omega\varphi_c(2c-\varphi_c)}{(c-\varphi_c)^2}>0.
\end{equation}
\item If $\varphi_c$ reaches its maximum at $x=0$, then $\varphi_c$ converges
uniformly on every compact subset of $\R$, as $\omega \to 0$, to the peakon profile $ce^{-|x|}$.
\end{enumerate}

Beyond isolated solitons, the CH equation \eqref{eq1} also possesses {\em{multi-solitons}} \cite{cht, CGI06, LZ04, M05}. These solutions can be expressed in a parametric form analogous to the one-soliton case. As $t\rightarrow\infty$, an $N$-soliton $U^{(N)}(x;c_j^0,x^0_j-c^0_jt)$ decomposes into $N$ asymptotically noninteracting solitons with speeds $c^0_j>0$ and space shifts $x_j^0$, $j=1,2,\cdot\cdot\cdot,N$, namely
\begin{equation}\label{eq:asymptotic behavior}
U^{(N)}(t)\sim\sum_{n=1}^N\varphi_{c^0_j}(\cdot-c^0_jt-x_j^{\pm}),\quad\  t\rightarrow \pm \infty,
\end{equation}
for appropriate phase parameters $x_j^\pm \in \mathbb{R}$ depending on the initial data $(c^0_j, x^0_j)$ \cite{M05}.

Finally, the CH equation \eqref{eq1} enjoys the important invariance property that if $\omega>0$ and $m(0, x)+\omega > 0$, then $m(t, x)+\omega > 0$ for all $t \in \mathbb{R}$, see \cite{C, C01, CM99}.

\subsection{Nonlinear stability of smooth CH solitons}\label{subsec intro stab}

There is an extensive literature on the stability of CH peakons (the case $\omega = 0$), culminating in the asymptotic stability result of Molinet \cite{M18}. In this paper, however, we focus exclusively on the stability of smooth solitons arising when $\omega > 0$.

In the peakon setting, the soliton profiles admit explicit expressions and the associated momentum density $m = u - u_{xx}$ is a finite sum of Dirac measures. This explicit structure has been exploited in a decisive way by Molinet \cite{M18}, where a rigidity result is first established at the level of the momentum density and then transferred to the solution $u$. 

By contrast, for smooth CH solitons, neither the soliton profile nor the corresponding momentum density admits an explicit representation. As a consequence, the strategy developed for peakons does not directly apply. In particular, rigidity cannot be accessed through pointwise or measure-theoretic arguments and instead requires a fundamentally different approach.

We begin by recalling the well-known results on orbital stability. The nonlinear orbital stability of the single soliton $\varphi_c$ was established by Constantin--Strauss \cite{CS02} using the general spectral method introduced by Benjamin \cite{B72} and developed by Grillakis--Shatah--Strauss \cite{GSS87}. Their proof relies on the Hamiltonian structure of the CH equation and on a careful analysis of the constrained variational properties of the solitary waves. For multi-soliton configurations, a general framework for establishing the orbital stability of trains of $N$ solitary waves in Hamiltonian dispersive equations was provided by Martel--Merle--Tsai \cite{MMT02} in the context of gKdV. Adapting this approach, El~Dika and Molinet \cite{EM07} proved the orbital stability of $N$-soliton trains for the CH equation in the energy space $H^1(\mathbb{R})$. A key step in their argument is the derivation of an almost-monotonicity property for localized versions of the conserved quantities $\Ham_1$ and $\Ham_2$. More recently, the Lyapunov stability of smooth multi-solitons was obtained in \cite{WL20}.

We emphasize that all the above results concern orbital or Lyapunov stability. To the best of our knowledge, no prior work has established the asymptotic stability of smooth CH solitons, either for a single soliton or for multi-soliton configurations. The present paper provides the first such results. Our method combines the integrable structure of the CH hierarchy, a detailed spectral analysis of the linearized operator, weighted semigroup estimates, and a nonlinear rigidity argument, as described in the following subsections.

\subsection{Main results}

The main objective of this paper is to establish the asymptotic stability of smooth solitons and  multi-solitons of the CH equation \eqref{eq1}. Before stating our results, we introduce the natural function space setting for our initial data. Following \cite{CM01}, define
\begin{equation}\label{eq:Yspace}
Y := \left\{ u\in H^1(\R); \quad m=u-u_{xx}\in \mathcal M(\R) \right\},
\end{equation}
where $\mathcal M(\R)$ denotes the set of Radon measures with bounded total variation. We denote by $Y_+ \subset Y$ the closed subspace
\[
Y_+ = \{u \in Y : u-u_{xx} \in \mathcal M_{+}(\R)\},
\]
where $\mathcal M_{+}(\R)$ is the set of nonnegative finite Radon measures on $\R$.

We also recall the notion of right-moving $H^{1}$-localized solutions, introduced in \cite{EM07} and used by El~Dika in the study of the BBM equation \cite{E05}.
\begin{definition}[\cite{EM07}]\label{de3.1}
We say that an $H^{1}$ solution of the CH equation \eqref{eq1} is \emph{$H^{1}$-localized} if there exists $c_{1}>0$ {and} a $\mathcal {C}^{1}$ function $x(\cdot)$ with $\inf\limits_{t \in \mathbb{R}} \dot{x}(t)>c_{1}$, such that for any $\varepsilon>0$, there exists $R_{\varepsilon}>0$ such that for all $t\in \R$,
\begin{equation*}
\int_{|x|>R_{\varepsilon}} u^{2}(t, x+x(t))+u_{x}^{2}(t, x+x(t)) \rmd x<\varepsilon.
\end{equation*}
\end{definition}

Our first main result is a rigidity statement: any solution of the CH flow that remains uniformly close to a soliton and is right-moving $H^1$-localized must itself be exactly a soliton.

\begin{theorem}[Rigidity property for the CH flow] \label{th1.1}
Let $c>2\omega$, $u_0\in Y_+$ and $u\in \mathcal{C}(\R;H^1(\R))$ be the solution of  \eqref{eq1} emanating from $u_0$. There exists $\alpha_0>0$ such that if $\|u_0-\varphi_c\|_{H^1}<\alpha_0$ and $u(t)$ is $H^1$-localized, then there exist $y_0\in\R$ and $c^\star>2\omega$ such that
 \[u(t,x)=\varphi_{c^\star}(x-c^\star t-y_0),\ \text{for}\ (t,x)\in\R^2.\]
\end{theorem}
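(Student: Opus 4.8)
The plan is to follow the by-now classical Martel–Merle virial/rigidity strategy, but to replace the self-adjoint spectral analysis of the (g)KdV linearized operator—unavailable here because of the nonlocal term $(1-\partial_x^2)^{-1}$ and the variable coefficients in \eqref{eq:stationary}—by the squared-eigenfunction spectral resolution of the linearized CH operator announced in the abstract. First I would set up the modulation: since $u_0$ is $\alpha_0$-close to $\varphi_c$ and the soliton family $\{\varphi_{c}\}_{c>2\omega}$ is orbitally stable by Constantin–Strauss \cite{CS02}, for small enough $\alpha_0$ there are $C^1$ functions $t\mapsto c(t)>2\omega$ and $t\mapsto x(t)$ such that, writing $u(t,x)=\varphi_{c(t)}(x-x(t))+\eta(t,\cdot-x(t))$, the perturbation $\eta(t)$ stays small in $H^1$ for all $t\in\R$ and satisfies two orthogonality conditions (to $\partial_x\varphi_{c(t)}$ and to $\partial_c\varphi_{c(t)}$, in a duality pairing adapted to $\mathcal{J}$) that fix the decomposition; these give $|\dot c|+|\dot x-c|\lesssim\|\eta\|_{H^1}$ and, crucially, $\inf_t\dot x(t)>c_1>0$ because $u$ is right-moving $H^1$-localized. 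I would also record that $u_0\in Y_+$ and the sign-preservation property ($m+\omega>0$) propagate, so that $\varphi_{c(t)}-\partial_x^2\varphi_{c(t)}$ stays in the regime \eqref{eq:potential} throughout.

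The heart of the argument is an almost-monotonicity/virial computation. I would introduce a weight function—either an exponentially weighted translate $\psi(x-x(t))$ moving with the soliton, or the linear-growth weight used by El~Dika–Molinet \cite{EM07}—and consider the localized functional $\mathcal{I}(t)=\int \psi(\cdot-x(t))\big(\eta^2+\eta_x^2\big)\,\rmd x$ together with a localized version of the conserved quantities $\Ham_1,\Ham_2$ restricted to the region to the right of the soliton. Using the equation for $\eta$, the stationary relation \eqref{eq:stationary}, the skew-symmetry of $\mathcal{J}$, and the lower bound $\dot x\geq c_1$, the transport terms produce a favorable sign while the nonlinear and commutator errors are $O(\|\eta\|_{H^1}^2)$ times small constants. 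The outcome I am aiming for is a differential inequality forcing a coercive quantity built from $\eta$ to be integrable in time, hence to vanish along a sequence $t_n\to+\infty$ (and similarly $t_n\to-\infty$). Here is where the spectral resolution enters: the quadratic form $\mathcal{Q}[\eta]$ obtained from the second variation of $\Ham_2-c\Ham_1$ is, after modding out the two-dimensional kernel killed by the orthogonality conditions, coercive, and the squared-eigenfunction completeness relation lets me diagonalize the linearized flow in exponentially weighted space and obtain the sharp decay that upgrades "along a sequence" to genuine decay of $\eta(t_n)\to 0$.

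The final step is a backward-uniqueness / rigidity closure. Having shown that for a sequence $t_n\to+\infty$ the renormalized perturbation $\eta(t_n)\to 0$ in $H^1_{loc}$ (in fact in a weighted space), and that $c(t_n)\to c^\star$ for some $c^\star>2\omega$ by the modulation bounds and conservation of $\Ham_1,\Ham_2$, I would pass to the limit to obtain a limiting solution $\tilde u$ of \eqref{eq1} that is exactly $\varphi_{c^\star}$. Then I run the linearized decay estimate the other way—or invoke the injectivity of the flow map on $Y_+$ together with the weighted-semigroup smoothing—to propagate $\eta\equiv 0$ back from $t=+\infty$ to all of $\R$, concluding $u(t,x)=\varphi_{c^\star}(x-c^\star t-y_0)$.

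I expect the main obstacle to be the virial/monotonicity estimate in combination with the nonlocal structure: unlike gKdV, every term here carries the operator $(1-\partial_x^2)^{-1}$ and the soliton coefficient $\varphi_{c(t)}$ appears inside the linearized operator, so controlling the commutators $[\psi,(1-\partial_x^2)^{-1}]$ and $[\psi,\varphi_{c(t)}\partial_x]$ and showing that the bad terms are genuinely lower order (absorbed by the coercivity of $\mathcal{Q}$ and the transport gain $c_1$) is delicate. Establishing the coercivity of $\mathcal{Q}$ modulo the kernel without a usable Sturm–Liouville theory—i.e., purely from the squared-eigenfunction expansion and the explicit spectral data of the recursion operator—is the other place where the integrable-structure input of this paper is essential and where the argument departs most sharply from the KdV template.
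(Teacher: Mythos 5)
Your outline hinges on a virial/almost-monotonicity computation performed directly on the nonlinear perturbation $\eta$, with the hope that the transport gain $\dot x\geq c_1$ plus coercivity of the second variation yields a differential inequality making a coercive quantity time-integrable. This is precisely the route the paper rules out: as stated in Remark \ref{rk spec of L}, the negative eigenvalue and eigenfunction of $L_1$ (equivalently of the transformed Sturm--Liouville operator $\mathcal{L}$) are not explicitly known, and it is exactly this explicit spectral data --- not the coercivity of $\langle L_1\cdot,\cdot\rangle$ modulo the kernel, which is already available from \cite{CS02} --- that the Martel--Merle virial machinery requires to close the sign of the localized quadratic form. So the ``heart'' of your argument is the step that cannot be carried out here, and the monotonicity formulas of Section \ref{sec_2.4} are used in the paper only to get exponential decay of $H^1$-localized solutions and for the stability arguments, not to prove rigidity.

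The paper's actual mechanism, which is absent from your proposal, is a contradiction-plus-renormalization reduction to a \emph{linear} Liouville theorem: assuming the rigidity fails, one takes a sequence of solutions with $a_n=\sup_t\|v_n\|_{H^1}\to 0$, rescales by $a_n$, and uses the uniform exponential decay and the modulation bounds to extract (Proposition \ref{pro.linear problem}) a nontrivial, $H^1$-localized solution $v$ of the linearized equation $v_t+\mathcal{J}L_1v=\beta(t)\partial_x\varphi_c$ satisfying the orthogonality conditions. The rigidity then follows from Theorem \ref{thm1.4}: along this linear flow $\langle L_1v,v\rangle$ is \emph{conserved} and, by coercivity under the orthogonality conditions, bounded below by a positive constant if $v\not\equiv 0$, while the Pego--Weinstein weighted semigroup estimate for $\mathcal{L}_a=e^{ax}\mathcal{J}L_1e^{-ax}$ (Proposition \ref{pro5.2}, built on the squared-eigenfunction spectral resolution and Pr\"uss's theorem) forces $\|v(t)\|_{H^1_{loc}}\to 0$; this contradiction gives $v\equiv 0$, hence the theorem, with no virial identity and no backward-uniqueness step. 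Your final ``propagate $\eta\equiv 0$ back from $t=+\infty$'' is also unjustified as stated: decay of $\eta(t_n)$ along a sequence only shows asymptotic attraction to the soliton family, and neither injectivity of the flow on $Y_+$ nor the weighted semigroup (which controls the linearized, not the nonlinear, evolution) converts this into $\eta\equiv 0$ for all $t\in\R$.
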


The rigidity theorem allows us to follow the strategy of Martel--Merle \cite{MM01,MM05,MM08} to obtain asymptotic stability of a single smooth CH soliton in $H^1(\mathbb{R})$.

\begin{theorem}[Asymptotic stability of smooth solitons] \label{th1.2}
Let $c>2\omega$ be fixed, $u_0\in Y_+$ and $u\in \mathcal{C}(\R;H^1(\R))$ be the solution of  \eqref{eq1} emanating from $u_0$. There exists $\alpha_0>0$ such that if $\|u_0-\varphi_c\|_{H^1}<\alpha_0$, then there exist  $c^\star>2\omega$ and a $\mathcal {C}^{1}$ function $x(t):\R\rightarrow\R$ with $\lim\limits_{t\rightarrow+\infty}\dot{x}(t)=c^\star$ such that
\begin{equation}\label{weak asympt lim}
u(t,x+x(t))\rightharpoonup\varphi_{c^\star} \ \text{in} \ H^1(\R), \ \text{as}\ t\rightarrow +\infty.
\end{equation}
Moreover,
\begin{equation}\label{asympt lim}
\lim_{t\rightarrow+\infty}\|u(t)-\varphi_{c^\star}(\cdot-x(t))\|_{H^1(x>2\omega t)}=0.
\end{equation}
\end{theorem}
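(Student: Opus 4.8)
The plan is to follow the Martel--Merle programme for upgrading a Liouville-type rigidity theorem to an asymptotic stability statement, using Theorem~\ref{th1.1} as the rigidity input and the orbital stability of Constantin--Strauss \cite{CS02} together with the almost-monotonicity estimates of El~Dika--Molinet \cite{EM07} for the soft part. First I would use orbital stability in $H^1$ \cite{CS02}, combined with global existence and the persistence of $m+\omega>0$ for data in $Y_+$ \cite{C,C01,CM99}, to keep $u(t)$ in a small $H^1$-tube around the soliton manifold for all $t\ge0$ once $\alpha_0$ is small; a standard modulation argument then gives $\mathcal{C}^1$ parameters $c(t),x(t)$ and a decomposition $u(t,x)=\varphi_{c(t)}(x-x(t))+\eta(t,x)$ with $\|\eta(t)\|_{H^1}+|c(t)-c|\le\delta(\alpha_0)$, two orthogonality conditions on $\eta(t)$, $|\dot c(t)|+|\dot x(t)-c(t)|\lesssim\|\eta(t)\|_{H^1}$, and hence $\inf_t\dot x(t)>2\omega$. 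I would also record, following \cite{EM07}, the almost-monotonicity in the moving frame $x\approx x(t)$ of localized versions of $\Ham_1$ and $\Ham_2$: the localized $H^1$-mass and the localized $\Ham_2$-functional to the right of a point travelling at any speed in a window containing $(2\omega,c)$ are nonincreasing in $t$ up to exponentially small corrections, so the $H^1$-mass lying behind the soliton is uniformly small and converges as $t\to+\infty$.

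Next I would construct the asymptotic object. Fix $t_n\to+\infty$ and set $u_n(t,x):=u(t+t_n,x+x(t_n))$; these are global solutions of \eqref{eq1}, uniformly bounded in $C(\R;H^1)$, with $m_n+\omega>0$ and $\|m_n(t)\|_{\mathcal{M}}$ bounded on bounded time intervals. Using weak continuity of the CH flow on $Y_+$, I would pass to a subsequence with $u_n\to\tilde u$ in $C_{\mathrm{loc}}(\R;H^1\text{-weak})\cap C_{\mathrm{loc}}(\R\times\R)$, where $\tilde u$ solves \eqref{eq1}, $\tilde u(0)\in Y_+$, and, after a further extraction so that $c(t_n)\to c^\star\in(2\omega,\infty)$ and $\eta(t_n,\cdot+x(t_n))\rightharpoonup\eta_\infty$, one has $\tilde u(0)=\varphi_{c^\star}+\eta_\infty$ with $\|\eta_\infty\|_{H^1}\le\delta(\alpha_0)<\alpha_0$ and $\eta_\infty$ satisfying the limiting orthogonality relations. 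The point is that the monotonicity bounds of the first step, evaluated near the times $t_n\to+\infty$, pass to $\tilde u$ and show that $\tilde u$ is $H^1$-localized in the sense of Definition~\ref{de3.1}, relative to $\tilde x(t):=\lim_n(x(t+t_n)-x(t_n))$ with $\inf_t\dot{\tilde x}(t)>2\omega$. Theorem~\ref{th1.1} then applies to $\tilde u$ and gives $\tilde u(t,x)=\varphi_{c^{\star\star}}(x-c^{\star\star}t-y_0)$ for some $c^{\star\star}>2\omega$; comparing with $\tilde u(0)=\varphi_{c^\star}+\eta_\infty$ and using that $\eta_\infty$ is small and orthogonal forces $c^{\star\star}=c^\star$, $y_0=0$ and $\eta_\infty=0$ by non-degeneracy of the soliton family, so $\tilde u(t,x)=\varphi_{c^\star}(x-c^\star t)$ and $u(t_n,\cdot+x(t_n))\rightharpoonup\varphi_{c^\star}$ in $H^1$. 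To drop the dependence on the sequence, I would show $c(t)\to c^\star$ as $t\to+\infty$: along any sequence the weak limit of $u(t,\cdot+x(t))$ is a soliton, and a scalar monotone functional from the first step --- e.g.\ $\lim_{R\to\infty}$ of a localized $H^1$-mass centered at $x(t)$, monotone in $t$ and strictly monotone in the speed of the limiting soliton --- selects a unique limiting speed. This yields \eqref{weak asympt lim} for the full limit $t\to+\infty$, and $\dot x(t)=c(t)+O(\|\eta(t)\|_{H^1})\to c^\star$ once the last step supplies $\|\eta(t)\|_{H^1(x>2\omega t)}\to0$.

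For the strong convergence on $\{x>2\omega t\}$, the threshold $2\omega$ is the maximal group velocity of the linear part $u_t-u_{xxt}+2\omega u_x=0$: the dispersion relation is $\Omega(\xi)=2\omega\xi/(1+\xi^2)$ and $\Omega'(\xi)\le2\omega$, so dispersive radiation cannot enter $\{x>2\omega t\}$, which for large $t$ contains the soliton since $c^\star>2\omega$. Concretely, for $\beta\in(2\omega,c^\star)$ the localized functionals $\int\psi(x-\beta t)(u^2+u_x^2)(t,x)\,\rmd x$ and the localized $\Ham_2$-analogue are almost-monotone by the first step, hence converge as $t\to+\infty$; by the weak convergence just established, their limits equal the corresponding full functionals evaluated at $\varphi_{c^\star}$. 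Since $H^1$ is a Hilbert space and the $\Ham_1$-equivalent norm restricted to the moving half-line $\{x>\beta t\}$ converges while $u(t,\cdot+x(t))\rightharpoonup\varphi_{c^\star}$, weak convergence upgrades to strong convergence on $\{x>\beta t\}$; letting $\beta\downarrow2\omega$ and using the exponential decay of $\varphi_{c^\star}$ gives \eqref{asympt lim}, and in particular $\|\eta(t)\|_{H^1(x>2\omega t)}\to0$, closing the loop in the previous step.

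The genuinely new analysis --- the spectral resolution of the linearized operator, the weighted semigroup bounds, and the nonlinear rigidity itself --- is packaged in Theorem~\ref{th1.1}, which we may assume here; within the present proof the main obstacle is the second step, namely showing that the weak-$H^1$ limit $\tilde u$ is $H^1$-localized with a translation speed bounded below \emph{and} that it still belongs to $Y_+$, so that Theorem~\ref{th1.1} genuinely applies to it. This requires adapting the monotonicity formulas of \cite{EM07} to the frame of the limit object and controlling $\|m\|_{\mathcal{M}}$ both under the weak limit and under the low-regularity CH flow; a secondary technical point is establishing the weak continuity of the $Y_+$ flow used in the compactness extraction.
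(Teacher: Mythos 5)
Your proposal follows essentially the same route as the paper's Section \ref{sec_3}: modulation plus orbital stability, construction of a limiting solution through the weak continuity of the $Y_+$ flow (Proposition \ref{pr2.5}), $H^1$-localization of that limit via the El~Dika--Molinet monotonicity estimates (Proposition \ref{pr H^1 compactness}), application of the rigidity Theorem \ref{th1.1}, and then monotonicity again to obtain the convergence of $c(t)$ and the strong convergence on $\{x>2\omega t\}$. The only cosmetic differences are that the paper runs the rigidity step as a contradiction argument (assuming $v(t_n)\rightharpoonup\tilde v_0\not\equiv0$) and pins down the limiting speed through the localized $\Ham_2$ functional $F_R$ together with $\frac{\rmd}{\rmd c}F(\varphi_c)=4\kappa c^2>0$ (Corollary \ref{cor FR}) rather than a localized $H^1$-mass, so your outline matches the paper's proof.
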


While the nonlinear structure of the proof follows the general Liouville-modulation philosophy developed by Martel and Merle, the present result relies on a fundamentally different linear mechanism. In the KdV and gKdV settings, the Liouville argument is closed using detailed spectral information for a local Schr\"odinger operator, which is available explicitly and feeds directly into virial and coercivity estimates. In the CH equation, no such linear theory is available a priori. The novelty of this work lies in replacing the classical Schr\"odinger-based framework with a complete spectral resolution of the linearized CH operator obtained from the recursion operator and the associated squared eigenfunctions. This integrability-based linear theory provides the rigidity mechanism leading to Theorem \ref{th1.1} and serves as the foundation for the asymptotic stability analysis developed in this paper. More discussion regarding the methodology will be presented in Section \ref{subsec method}.

It is known in \cite{EM07} that the orbital stability of well-ordered trains of CH smooth solitons in $H^1$ was established. Combining this result with Theorem~\ref{th1.2} and following the strategy of \cite{MMT02}, we extend asymptotic stability to the configuration of well-ordered trains of solitons.

\begin{theorem}[Asymptotic stability of well-ordered soliton trains] \label{th1.3}
 Let $ 0 < 2\omega < c_1< c_2< \cdot\cdot\cdot< c_N$. There exist $L_0>0$ and $\alpha_0>0$ such that if $u\in \mathcal{C}(\R;H^1(\R))$ is the solution of  \eqref{eq1} emanating from $u_0\in Y_+$, with
 \begin{equation*}
 \left\| u_0-\sum_{j=1}^N\varphi_{c^0_j}(\cdot-x_j^0) \right\|_{H^1}<\alpha_0,\quad x_j^0-x_{j-1}^0\geq L_0,\quad j=2,3,\cdot\cdot\cdot,N,
 \end{equation*}
then there exist $c^\star_1,c^\star_2,\cdot\cdot\cdot, c^\star_N$ satisfying  $2\omega< c^\star_1< c^\star_2<\cdot\cdot\cdot< c^\star_N$ and $\mathcal {C}^{1}$ functions $x_j(t):\R\rightarrow\R$, $j=1,2,\cdot\cdot\cdot,N$, with $\lim\limits_{t\rightarrow+\infty}\dot{x}_j(t)=c^\star_j$ such that
\begin{equation*}
u(t,x+x_j(t))\rightharpoonup\varphi_{c^\star_j} \ \text{in} \ H^1(\R), \ \text{as}\ t\rightarrow +\infty.
\end{equation*}
Moreover,
\begin{equation*}
\lim_{t\rightarrow+\infty} \left\| u(t)-\sum_{j=1}^N\varphi_{c^\star_j}(\cdot-x_j(t)) \right\|_{H^1(x>2\omega t)}=0.
\end{equation*}
\end{theorem}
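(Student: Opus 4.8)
The plan is to deduce Theorem~\ref{th1.3} from the single-soliton asymptotic stability (Theorem~\ref{th1.2}), the orbital stability of well-ordered trains from \cite{EM07}, and the decomposition-by-monotonicity machinery of Martel--Merle--Tsai \cite{MMT02} (as adapted to nonlocal dispersive equations). The overall scheme is: (i) decompose the solution into $N$ pieces localized near $N$ distinct soliton trajectories; (ii) show each piece is itself close to a soliton and is asymptotically governed by the single-soliton dynamics; (iii) apply Theorem~\ref{th1.2} to each piece. Throughout, the ordering $2\omega<c_1^0<\cdots<c_N^0$ and the minimal separation $x_j^0-x_{j-1}^0\geq L_0$ will guarantee that the solitons separate linearly in time, which is what makes the localization possible.

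\medskip

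\noindent\textbf{Step 1: Orbital stability and modulated decomposition.} By \cite{EM07}, for $L_0$ large and $\alpha_0$ small the solution stays, uniformly in time, $H^1$-close to the set of $N$-soliton trains: there exist $\mathcal C^1$ functions $\tilde x_1(t)<\tilde x_2(t)<\cdots<\tilde x_N(t)$ and speeds close to $c_j^0$ such that
\[
\sup_{t\geq 0}\ \Bigl\|\,u(t)-\sum_{j=1}^N\varphi_{\tilde c_j(t)}(\cdot-\tilde x_j(t))\,\Bigr\|_{H^1}\lesssim \alpha_0+e^{-\beta L_0},
\]
with $\tilde x_j(t)-\tilde x_{j-1}(t)\geq L_0+\sigma t$ for some $\sigma>0$ and $\dot{\tilde x}_j(t)$ close to $c_j^0$. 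A standard modulation/implicit-function argument promotes this to a genuine decomposition with orthogonality conditions and good control on the modulation parameters; the invariance $m+\omega>0$ on $Y_+$ is preserved and is used to stay in the regime where the profiles $\varphi_{\tilde c_j}$ are smooth solitons.

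\medskip

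\noindent\textbf{Step 2: Monotonicity and localization.} Following \cite{EM07,MMT02}, introduce smooth cutoffs $\psi_j$ transitioning at the midpoints $m_j(t):=\tfrac12(\tilde x_{j-1}(t)+\tilde x_j(t))$, which move at speeds strictly between $c_{j-1}^0$ and $c_j^0$. Define localized versions of the conserved functionals $\Ham_1$ and $\Ham_2$ (equivalently $E$ and $F$ of \eqref{eq2a}--\eqref{eq2b}) on $x>m_j(t)$. The almost-monotonicity estimate of \cite{EM07} shows these localized quantities have limits as $t\to+\infty$, with errors exponentially small in $L_0+\sigma t$. This yields that the mass/energy that separates to the right of each $m_j(t)$ is asymptotically conserved, which is the mechanism that decouples the $N$ solitons: the "far right" piece
\[
u_N(t):=u(t)\,\chi_{\{x>m_N(t)\}}
\]
(suitably smoothed) behaves, asymptotically, like a single localized solution of CH near the fastest trajectory.

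\medskip

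\noindent\textbf{Step 3: Peeling off the solitons inductively.} Apply Theorem~\ref{th1.2} (asymptotic stability of a single soliton) to the rightmost piece $u_N$: it converges weakly in $H^1$, after translation, to some $\varphi_{c_N^\star}$ with $c_N^\star>2\omega$, and the strong convergence \eqref{asympt lim} holds on $\{x>2\omega t\}$ — in fact on $\{x>m_N(t)\}$ after adjusting constants. Subtracting this contribution, the remaining solution is asymptotically an $(N-1)$-soliton train on $\{x<m_N(t)\}$ with the same separation properties; by downward induction on $N$ one obtains $c_1^\star<\cdots<c_N^\star$, all $>2\omega$, and $\mathcal C^1$ phases $x_j(t)$ with $\dot x_j(t)\to c_j^\star$, together with the weak convergence $u(t,\cdot+x_j(t))\rightharpoonup\varphi_{c_j^\star}$ and the strong convergence on $\{x>2\omega t\}$ claimed in the statement. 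The ordering $c_1^\star<\cdots<c_N^\star$ follows from continuity of the speeds in $\alpha_0,e^{-L_0}$ together with the strict ordering of the $c_j^0$.

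\medskip

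\noindent The main obstacle I expect is \emph{Step 2}: transplanting the monotonicity estimates to exactly the form needed to feed Theorem~\ref{th1.2}. The monotonicity of localized $\Ham_1,\Ham_2$ is available from \cite{EM07}, but one must check that the cutoffs can be placed so that (a) they move strictly faster than $2\omega$ (so that Theorem~\ref{th1.2}'s conclusion on $\{x>2\omega t\}$ applies to each piece), (b) the nonlocal term $(1-\partial_x^2)^{-1}$ in the CH nonlinearity does not spoil the almost-orthogonality between regions — this requires the exponential decay of the kernel $\tfrac12 e^{-|x|}$ and the exponential decay of $\varphi_c$ from property 3) in Section~\ref{subsec intro solitons} — and (c) the error terms are genuinely integrable in time, i.e.\ $O(e^{-\beta(L_0+\sigma t)})$, so that the limits defining $c_j^\star$ exist. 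A secondary technical point is that Theorem~\ref{th1.2} is stated for data in $Y_+$ close to a \emph{single} soliton; one must verify that each localized/truncated piece, although not literally a CH solution, is close enough (in $H^1$, uniformly in $t$) to an exact CH solution lying in $Y_+$ near $\varphi_{c_j^\star}$, so that the single-soliton theory is applicable — this is where the sign condition $m+\omega>0$ and the $Y_+$ structure, preserved by the flow, are essential.
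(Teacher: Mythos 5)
Your Steps~2--3 contain a genuine gap, and it is precisely the point you dismiss as a ``secondary technical point.'' The truncated piece $u_N(t)=u(t)\chi_{\{x>m_N(t)\}}$ (smoothed or not) is not a solution of the CH equation, and it need not lie in $Y_+$: multiplying by a cutoff $\psi$ turns $m=u-u_{xx}$ into $\psi m-2\psi'u_x-\psi''u$, which is no longer a nonnegative measure, so the hypotheses of Theorem~\ref{th1.2} simply do not apply to it. Replacing $u_N$ by the exact CH solution emanating from the truncated data at some large time $T$ does not fix this either, because you would then need that this exact solution stays uniformly close, for \emph{all} $t\geq T$, to the right-hand piece of $u$; that is essentially the asymptotic decoupling statement you are trying to prove, and no argument controlling the error over an infinite time interval is given. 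The same objection hits the inductive ``peeling'' step: $u-\varphi_{c_N^\star}(\cdot-x_N(t))$ is not a CH solution, so the orbital-stability and single-soliton machinery cannot be re-applied to it as a black box. Finally, the claim that the strong convergence of Theorem~\ref{th1.2} on $\{x>2\omega t\}$ transfers to each piece ``after adjusting constants'' hides a real argument: for the slower solitons the region $x>2\omega t$ extends far to the left of the faster ones, and one needs a separate mechanism there.

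The paper closes exactly these gaps by never applying Theorem~\ref{th1.2} to truncated objects. After the orbital stability and modulation step (your Step~1, which matches the paper), it argues near each soliton by compactness and rigidity: if $v(t,\cdot+x_j(t))$ does not converge weakly to $0$, one extracts $u(t_n,\cdot+x_j(t_n))\rightharpoonup\tilde u_0\in Y_+$ and uses the weak continuity of the flow (Proposition~\ref{pr2.5}) so that the limit object is an \emph{exact} CH solution; the monotonicity formulas then show this limit is $H^1$-localized (as in Proposition~\ref{pr H^1 compactness}), and the rigidity Theorem~\ref{th1.1} — not Theorem~\ref{th1.2} — forces it to be a soliton, giving the contradiction (Proposition~\ref{prop7.1}). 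The convergence of the speeds $c_j(t)$ comes from the almost monotonicity of the localized functional $F$, and the strong convergence on $\{x>2\omega t\}$ is obtained by a backwards induction on $j$, comparing half-lines moving at speeds strictly between consecutive $c_j$'s (Proposition~\ref{prop7.2}). If you want to salvage your outline, replace ``apply Theorem~\ref{th1.2} to each truncated piece'' by ``apply Theorem~\ref{th1.1} to the weak limit extracted near each soliton trajectory,'' and replace the peeling induction by the monotonicity-based backwards induction; with those substitutions your scheme becomes the paper's proof.
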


Because the CH equation \eqref{eq1} is completely integrable, explicit $N$-soliton solutions $U^{(N)}(x;c_j^0,x^0_j-c^0_jt)$ are known \cite{M05}. As a corollary of Theorem~\ref{th1.3}, we obtain orbital and asymptotic stability for this entire explicit family.

\begin{corollary}[Asymptotic stability of smooth multi-solitons] \label{co1.3}
 Let $ 0 < 2\omega < c^0_1< c^0_2< \cdot\cdot\cdot< c^0_N$ be $N$ velocities and $x^0_1,x^0_2,\cdot\cdot\cdot,x^0_N\in\R$. Then for any $\varepsilon>0$, there exist  $\alpha_1>0$ such that if $u\in \mathcal{C}(\R;H^1(\R))$ is the solution of  \eqref{eq1} emanating from $u_0\in Y_+$, with
 \begin{equation*}
 \left\| u_0-U^{(N)}(\cdot;c^0_j,x^0_j) \right\|_{H^1}<\alpha_1,
 \end{equation*}
  then there exist $\mathcal {C}^{1}$ functions $x_j(t):\R\rightarrow\R$, $j=1,2,\cdot\cdot\cdot,N$, with $\lim\limits_{t\rightarrow+\infty}\dot{x}_j(t)=c^\star_j$, such that
\begin{equation}\label{stability}
\left\| u(t)-U^{(N)}(\cdot;c^0_j,x_j(t)) \right\|_{H^1}<\varepsilon, \ \text{for any}\ t\geq0.
\end{equation}
Moreover,
\begin{equation}\label{asympt lim3}
\lim_{t\rightarrow+\infty} \left\| u(t)-U^{(N)}(\cdot;c^\star_j,x_j(t)) \right\|_{H^1(x>2\omega t)}=0.
\end{equation}
\end{corollary}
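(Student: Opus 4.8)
\emph{Proof idea.} The orbital bound \eqref{stability} is essentially the Lyapunov stability of the explicit multi-soliton established in \cite{WL20} (it can also be recovered from the orbital stability of well-ordered soliton trains of \cite{EM07}); the genuinely new content is the asymptotic statement \eqref{asympt lim3}, which I would obtain from Theorem~\ref{th1.3} by exploiting the known long-time behaviour of $U^{(N)}$. Recall from \eqref{eq:asymptotic behavior} and \cite{M05} that $U^{(N)}(t)\to\sum_{j=1}^N\varphi_{c^0_j}(\cdot-c^0_jt-x_j^+)$ in $H^1(\R)$ as $t\to+\infty$ for suitable phases $x_j^+$, and that, since the speeds $c^0_1<\cdots<c^0_N$ are pairwise distinct, the centres $c^0_jt+x_j^+$ spread apart linearly, so the limiting profiles form a well-ordered train whose minimal separation tends to $+\infty$.

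Fix $\varepsilon>0$; since proving the corollary for small $\varepsilon$ implies it for all $\varepsilon$, we may assume $\varepsilon\le\alpha_0$, where $L_0,\alpha_0>0$ are the thresholds given by Theorem~\ref{th1.3} for the speeds $c^0_1<\cdots<c^0_N$. First I would choose $T_0=T_0(\varepsilon)>0$ so large that the separation of the profiles $\varphi_{c^0_j}(\cdot-c^0_jT_0-x_j^+)$ exceeds $L_0$ and $\|U^{(N)}(T_0)-\sum_{j}\varphi_{c^0_j}(\cdot-c^0_jT_0-x_j^+)\|_{H^1}<\alpha_0/3$. Next I would propagate the closeness of the data up to time $T_0$: by the global well-posedness of \eqref{eq1} in $H^1(\R)$ under the sign condition $m_0\in\mathcal M_+(\R)$ and the continuity of the flow map on the compact interval $[0,T_0]$, there is $\alpha_1=\alpha_1(T_0)=\alpha_1(\varepsilon)>0$ such that $\|u_0-U^{(N)}(\cdot;c^0_j,x^0_j)\|_{H^1}<\alpha_1$ forces $u(T_0)\in Y_+$ (the CH flow preserving $Y_+$ for $\omega>0$) and $\sup_{t\in[0,T_0]}\|u(t)-U^{(N)}(t)\|_{H^1}<\alpha_0/3\le\varepsilon$. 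By the triangle inequality, $\|u(T_0)-\sum_{j}\varphi_{c^0_j}(\cdot-(c^0_jT_0+x_j^+))\|_{H^1}<2\alpha_0/3<\alpha_0$, with the profiles $\varphi_{c^0_j}(\cdot-(c^0_jT_0+x_j^+))$ well-ordered with separation $\ge L_0$.

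Applying Theorem~\ref{th1.3} to the CH solution $v(t):=u(t+T_0)$ then yields speeds $c^\star_1<\cdots<c^\star_N$ and $\mathcal C^1$ functions $y_j:\R\to\R$ with $\dot y_j(t)\to c^\star_j$, $v(t,\cdot+y_j(t))\rightharpoonup\varphi_{c^\star_j}$ in $H^1(\R)$, and $\|v(t)-\sum_j\varphi_{c^\star_j}(\cdot-y_j(t))\|_{H^1(x>2\omega t)}\to0$ as $t\to+\infty$. Setting $x_j(t):=y_j(t-T_0)$ gives the modulation functions of the corollary, with $\dot x_j(t)\to c^\star_j$; since $\{x>2\omega t\}\subset\{x>2\omega(t-T_0)\}$, the one-sided limit transfers to $u$, and because $\dot x_j\to c^\star_j$ with the $c^\star_j$ distinct, the separations of the $x_j(t)$ diverge, so $\sum_j\varphi_{c^\star_j}(\cdot-x_j(t))$ coincides with $U^{(N)}(\cdot;c^\star_j,x_j(t))$ up to an exponentially small error, which yields \eqref{asympt lim3}. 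Finally, \eqref{stability} holds on $[0,T_0]$ by the bound $\|u(t)-U^{(N)}(t)\|_{H^1}<\varepsilon$ together with $U^{(N)}(t)=U^{(N)}(\cdot;c^0_j,x^0_j-c^0_jt)$, and on $[T_0,\infty)$ by combining the orbital stability of trains \cite{EM07} (applied with initial time $T_0$) — or directly \cite{WL20} — with the facts that orbital stability forces $|c^\star_j-c^0_j|$, hence $\|\varphi_{c^\star_j}-\varphi_{c^0_j}\|_{H^1}$, to be $O(\varepsilon)$ and that a well-separated train agrees with the corresponding $U^{(N)}$ up to an exponentially small error; one then matches the $\mathcal C^1$ modulation parameters continuously at $t=T_0$.

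The step I expect to be the main obstacle is the bookkeeping of this transition at $t=T_0$: producing a single globally $\mathcal C^1$ modulation $x_j(\cdot)$ that is simultaneously compatible with the description $u\approx U^{(N)}$ valid near $t=0$ and with $u\approx\sum_j\varphi_{c^\star_j}(\cdot-x_j(t))\approx U^{(N)}(\cdot;c^\star_j,x_j(t))$ valid for large $t$, while keeping the resulting bound uniformly of size $\varepsilon$. This hinges on the quantitative form of \eqref{eq:asymptotic behavior} — that the defect $\|U^{(N)}(t)-\sum_j\varphi_{c^0_j}(\cdot-c^0_jt-x_j^+)\|_{H^1}$ tends to $0$ while the minimal separation grows without bound — which is what makes $T_0(\varepsilon)$ choosable as required; the remaining ingredients (well-posedness and continuous dependence for \eqref{eq1}, the stability results of \cite{EM07,WL20}, and Theorem~\ref{th1.3}) are then combined in a routine way.
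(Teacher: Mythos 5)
Your proposal follows essentially the same route as the paper: use the decoupling of the explicit multi-soliton into a well-separated train for large times (the quantitative form of \eqref{eq:asymptotic behavior}), propagate the initial closeness over the compact interval $[0,T_0]$ by continuous dependence of the CH flow, then invoke the orbital stability of well-ordered trains (Proposition \ref{thm4.1}/\cite{EM07}) and Theorem \ref{th1.3} from time $T_0$ onward, converting between the train $\sum_j\varphi_{c_j}(\cdot-x_j(t))$ and $U^{(N)}(\cdot;c_j,x_j(t))$ via the same decoupling estimate. This matches the paper's argument, including the points it leaves implicit (the gluing of modulation parameters at $t=T_0$), so the proposal is correct and not a genuinely different proof.
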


\subsection{Challenges and methodology}\label{subsec method}

The asymptotic stability of smooth CH solitons presents structural and analytic challenges that are notably different from, and more difficult than, those encountered in many dispersive models such as the modified KdV (mKdV) or generalized KdV (gKdV). The linearized operator around a CH soliton is nonlocal, involving the inverse Helmholtz operator $(1 - \partial_x^2)^{-1}$, so its spectral analysis cannot rely on classical Sturm--Liouville or Schr\"odinger operator techniques. Even the basic spectral features, such as the precise location and multiplicity of eigenvalues, the structure of the continuous spectrum, or the completeness of radiation modes, were unknown prior to this work.

A further difficulty, absent in KdV, is that the CH soliton profile $\varphi_{c}$ does not possess a closed-form expression. While the asymptotic stability theory of Martel and Merle \cite{Ma06,MM01,MM05,MM08} for general nonlinearities does not rely on explicit formulas for the soliton itself, it nevertheless exploits in an essential way the explicit spectral information available for the linearized KdV operator. Indeed, the Schr\"odinger operator arising in the linearization around the KdV soliton has an explicitly known potential, and its eigenvalues and eigenfunctions can be computed in closed form. These explicit spectral data enter directly in the virial identity and coercivity arguments, especially in the identification of the unique negative eigenvalue and the detailed structure of the corresponding eigenfunction. For general gKdV equations this information is inherited from the KdV case through a perturbative argument. In contrast, for CH such an approach is infeasible: the soliton profile is implicit, the operator is nonlocal, and there is no obvious ODE structure from which one could extract the negative direction by classical arguments (see Remark \ref{rk spec of L}). Thus, none of the virial tools used in the gKdV literature can be imported into CH in any straightforward manner.


A different approach appears in the asymptotic stability theory for the Benjamin--Bona--Mahony (BBM) equation \cite{MW96}, where the authors rely on a refined Evans function analysis of the linearized operator, leading to asymptotic stability for all but a discrete exceptional set of soliton speeds. For CH, however, the integrable structure completely bypasses the need for Evans function techniques and allows us to obtain a full description of the spectrum for every admissible speed. As a consequence, our analysis applies uniformly across the full range $c > 2\omega$, with no exceptional values, and the rigidity and asymptotic stability arguments rest directly on the spectral resolution provided by integrability.



Another perspective on asymptotic stability is provided by the more recent work of Germain--Pusateri--Rousset on the mKdV equation \cite{GPR16}. Instead of relying primarily on coercivity and virial identities, they combine a localized modulation analysis near the soliton with a detailed study of the dispersive radiation. This latter component is handled through the space-time resonances framework introduced by Germain--Masmoudi--Shatah \cite{GMS09}, which enables one to track nonlinear interactions between oscillatory modes and to capture the modified scattering behavior of the radiation. This refined dispersive machinery is particularly well suited for equations such as mKdV, whose linear dispersion relation is local and regular, whose soliton profile is explicit, and for which precise Fourier-based estimates allow a sharp description of resonant and non-resonant interactions.

However, these structural advantages are not available for the CH equation. The linearized CH operator is fundamentally nonlocal, with a symbol involving rational functions of the Fourier variable; as a result, the underlying ``dispersion relation'' does not lend itself to a clean analysis of resonant sets, stationary phase, or multilinear oscillatory integrals --- features that are essential in the space-time resonances method. Moreover, the absence of an explicit soliton profile means that one cannot isolate the discrete spectral component by direct projection arguments, nor can one express the radiation as a simple perturbation governed by a local dispersive equation. The nonlocality of the operator further obscures the structure of nonlinear interactions, making it exceedingly difficult to identify or exploit cancellations among resonant contributions.

Our approach circumvents these obstacles by invoking tools from the integrable structure of the CH equation. The key object is the recursion operator of the CH hierarchy. In the Lax pair formulation, this operator generates the higher flows and intertwines the spectral problems associated with the spatial part of the Lax pair. A crucial observation is that this recursion operator commutes with the linearized operator at the soliton. This commutativity serves as the main analytic pathway from integrability to spectral theory. It enables us to construct the eigenfunctions of the linearized operator explicitly, in terms of squared eigenfunctions of the Lax operator, thereby providing a complete and fully explicit spectral resolution despite the absence of a closed formula for the soliton (see Section \ref{subsec spec multi}). Through this mechanism, we obtain a complete characterization of the discrete and continuous spectrum, including the fact that the translation mode generates the only discrete eigenvalue, that no unstable eigenvalues exist, and that the continuous spectrum arises through the transform of the Lax spectrum under the squared eigenfunction map.

With the explicit spectral structure in hand, we establish sharp semigroup decay estimates in exponentially weighted Sobolev spaces following the framework of Pego--Weinstein's work for KdV \cite{PW94}. The weighted spaces serve to isolate the neutral translational eigenmode and to shift the continuous spectrum strictly into the stable half-plane, yielding exponential decay for the linearized flow. To convert the linearized decay into a full nonlinear asymptotic stability statement, a rigidity argument is required:  the only solutions close to solitons which exhibit regularity and strong decay properties are the solitons itself. In our CH setting, the rigidity proof relies critically on the spectral resolution produced by the recursion operator machinery and the weighted semigroup decay. The spectral decomposition allows us to control separately the translational mode and the dispersive radiation, while the weighted decay ensures that all radiation is exponentially damped in the moving frame. A localized modulation argument is then used to track the soliton center and guarantees that the modulated solution remains uniformly aligned with a member of the soliton family. The exponential decay of the linearized semigroup prevents the accumulation of radiation near the soliton and forces the solution to relax to a single modulated profile, and hence yields a nonlinear Liouville-type theorem for the CH solitons.

The integrable-structure-based spectral analysis also extends smoothly to multi-soliton configurations. Each soliton carries its own localized spectral resolution, and because the solitons separate linearly in time, their interactions decay sufficiently rapidly to allow the weighted semigroup estimates and modulation arguments to run uniformly in regions associated with different soliton centers. It is useful here to distinguish between well-separated trains of solitons, for which the strategy of Martel--Merle--Tsai \cite{MMT02} applies once the asymptotic stability of a single soliton is known, and the exact multi-solitons family produced by the inverse scattering method. For the latter, the recent work of Wang--Liu \cite{WL20} establishes orbital stability of multi-solitons using the bi-Hamiltonian structure and the completeness of squared eigenfunctions. This provides a robust control on perturbations tangent to the multi-solitons manifold and serves as the natural entry point for the asymptotic theory. By contrast, in the KdV setting the work of Killip--Vi\c{s}an \cite{KV22} provides orbital stability at low regularity but does not address asymptotic convergence. In the CH setting, once orbital stability is known, the linear-in-time separation of the constituent solitons implies that every multi-solitons become, for large times, well approximated by a well-separated train. The absence of any persistent amplification of radiation modes and the exponential decay in weighted spaces ensure that cross-interactions between distinct solitons remain negligible for large times. As a result, each constituent soliton behaves asymptotically like an isolated soliton governed by its own spectral and dynamical parameters. This yields the asymptotic stability of multi-solitons for the CH equation, again without relying on explicit formulas for the soliton profiles or fine spectral data (exact negative eigenvalues and eigenfunctions), and without invoking any dispersive or virial identities.

\subsection{Organization of the paper}

The remaining parts of the paper  are organized as follows. In Section~\ref{sec_2}, we review the Hamiltonian formulation of the CH equation and collect several facts from the inverse scattering theory that will be used throughout the paper. These tools allow us to carry out a detailed spectral analysis of the recursion operator and of the linearized operator around smooth multi-solitons. We also establish the modulation theory for multi-solitons and derive the $H^{1}$-monotonicity formulas that play a crucial role in our arguments. With these preliminaries in place, the proof of Theorem~\ref{th1.2} proceeds in three main steps. Section~\ref{sec_3} contains the proof of asymptotic stability, which follows from the nonlinear Liouville theorem (Theorem~\ref{th1.1}). The remaining two steps concern the proof of Theorem~\ref{th1.1}. In Section~\ref{sec 4}, we reduce the nonlinear problem to the linear Liouville theorem (Theorem~\ref{thm1.4}) and establish the latter by combining the spectral information for the operator. $\mathcal{J}L_1$ with sharp semigroup decay estimates in the weighted space $H_a^1(\R)$. Section~\ref{sec 6} revisits and refines the orbital stability theory for soliton trains in $H^1$, extending the results of \cite{EM07}. Section~\ref{sec 7} contains the proof of the asymptotic stability of smooth multi-solitons. Finally, in Section~\ref{sec 8}, we show how our approach extends to yield rigidity results for other completely integrable models such as the KdV and mKdV equations.

\section{Preliminaries}\label{sec_2}
In this section we collect some preliminaries for the CH equation.
This section is divided into six parts. In Section \ref{sec_2.1}, we present the equivalent eigenvalue problem of the CH equation and the basic facts from which, through the inverse scattering transform, the conservation laws and multi-solitons of the CH equation are derived. In Section \ref{sec_2.2}, the bi-Hamiltonian formation of the CH equation is considered, and the recursion operators are introduced to compute the conservation laws at the multi-solitons. The spectrum of the recursion operators is obtained through the usage of squared eigenfunctions. This allows us to perform the spectral analysis for various linearized operators at the smooth solitons. The spectral information of the linearized operators at multi-solitons requires the understanding of the corresponding recursion operator at those multi-solitons, which is presented in Section \ref{subsec spec multi}. In Section \ref{sec_2.3} some necessary well-posedness results of the CH equations are presented. Section \ref{sec_2.4} aims at establishing $H^1$- monotonicity formulae related to the conservation laws and  decay properties for the $H^1$-localized solutions. Section \ref{sec_2.5} focuses on the modulation theory for multi-solitons.

%
%

\subsection{Eigenvalue problem, conservation laws and multi-solitons}\label{sec_2.1}

The CH equation \eqref{eq1} can be expressed as a compatibility condition of the following  Lax pair \cite{CH93}
\begin{eqnarray} \label{eq3}
 &&\Psi_{xx}=\left(\frac{1}{4}+\lambda
(m+\omega)\right)\Psi,
 \\\label{eq4}
&&\Psi_{t}=\left(\frac{1}{2\lambda}-u\right)\Psi_{x}+\frac{u_{x}}{2}\Psi+\eta\Psi,
\end{eqnarray}
 with a spectral parameter $\lambda $ and a constant $\eta$ for a proper normalization of the
eigenfunctions. Equation \eqref{eq3} is the spectral problem associated with \eqref{eq1}.  Let
$k^{2}=-\frac{1}{4}-\lambda \omega$, i.e.
\begin{equation} \label{lambda}
\lambda(k)= -\frac{1}{\omega}\Big( k^{2}+\frac{1}{4}\Big).
\end{equation}
The spectrum of the problem (\ref{eq3}) is
described in \cite{C,C01}. The continuous spectrum in terms of $\lambda$
corresponds to $k\in \R$. The discrete spectrum consists of
finitely many points $k_{n}=i\kappa _{n}$, $n=1,\ldots,N$ where
$\kappa_{n}$ is real and $0<\kappa_{n}<\frac12$.

A basis in the space of solutions of (\ref{eq3}) can be introduced
in the spirit of the Jost solutions of the CH equation,
$f^+(x,k)$ and $\bar{f}^+(x,\bar{k})$. For all real $k\neq 0$ the Jost solutions are completely determined by their asymptotics as $x\rightarrow +\infty$ \cite{C01}:
\begin{equation} \label{eq6}
\lim_{x\to +\infty }e^{-ikx}f^+(x,k)= 1.
\end{equation}
Another basis, $f^-(x,k)$
and $\bar{f}^-(x,\bar{k})$, can be introduced and determined by the asymptotics at
$x\rightarrow -\infty$ for all real $k\neq 0$:
\begin{equation} \label{eq6'}
\lim_{x\to -\infty }e^{ikx} f^-(x,k)= 1.
\end{equation}

Since $m(x)$ and $\omega$ are real one gets that if $f^+(x,k)$
and $f^-(x,k)$ are solutions of (\ref{eq3}), then
\begin{equation*}
 \bar{f}^+(x,\bar{k}) = f^+(x,-k), \qquad \mbox{and} \qquad
 \bar{f}^-(x,\bar{k}) = f^-(x,-k)
\end{equation*}
are also solutions of (\ref{eq3}). In particular, for real $k\neq 0$, one has
\begin{equation}
 \label{eq5aa} \bar{f}^{\pm}(x,k)=f^{\pm}(x, -k),
\end{equation}
and for each $k\in \mathbb{R}$ there exists $a(k), b(k) \in \mathbb{C}$ such that
\begin{equation} \label{eq8}
 f^{-}(x,k)=a(k)f^+(x,-k)+b(k)f^+(x,k).
\end{equation}
From  \eqref{eq8}, letting $x\rightarrow \pm\infty$, we obtain
\begin{eqnarray}
&&\lim_{x\rightarrow-\infty} \left( f^{+}(x,k)-a(k)e^{ikx}+b(-k)e^{-ikx} \right)=0, \nonumber\\
&&\lim_{x\rightarrow+\infty} \left( f^{-}(x,k)-a(k)e^{-ikx}-b(k)e^{ikx} \right)=0.\nonumber
\end{eqnarray}
 The Wronskian $W(f_{1},f_{2}) :=
f_{1}\partial_{x}f_{2}-f_{2}\partial_{x}f_{1}$ of any pair of
solutions of (\ref{eq3}) does not depend on $x$. Therefore
\begin{equation}
 \label{eq9} W(f^-(x,k), f^-(x,-k))= W(f^+(x,-k), f^+(x,k))=2ik.
\end{equation}
 Computing the Wronskians $W(f^-,f^+)$ and $W(\bar{f}^+,f^-)$
and using (\ref{eq8}), (\ref{eq9}) we obtain:
\begin{equation}\label{eq10a}
a(k)=(2ik)^{-1} W(f^-(x,k), f^+(x,k)), \qquad b(k)=-(2ik)^{-1} W(f^-(x,k), f^+(x,-k)).\
\end{equation}
%

From (\ref{eq8}) and (\ref{eq9}), it follows that for all $k \in \mathbb{R}$
\begin{equation}\label{eq10a1}
a(k)a(-k)-b(k)b(-k)=1.
\end{equation}
It is well known \cite{C01} that $f^+(x,k)e^{-ikx}$ and
$f^-(x,k)e^{ikx}$ have analytic extensions to the upper half of
the complex $k$-plane. Likewise
$\bar{f}^+(x,\bar{k})e^{i\bar{k}x}$ and $\bar{f}^-(x,\bar{k})
e^{-i\bar{k}x}$ allow analytic extension to the lower half of the
complex $k$-plane. An important consequence of these properties is
that $a(k)$ (and hence $b(k)$) also allows analytic extension to the upper half of
the complex $k$-plane and
\begin{equation*}
\bar{a}(\bar{k}) = a(-k), \qquad \bar{b}(\bar{k}) = b(-k),
\end{equation*}
As a result,  (\ref{eq10a1}) reduces to the form
\begin{equation} \label{eq10}
|a(k)|^{2}-|b(k)|^{2}=1, \qquad k \in \mathbb{R}.
\end{equation}

At the points $i\kappa_n$ of the discrete spectrum, $a(k)$ has
simple zeroes \cite{C01},
\begin{equation*}
    a(k) = (k-i\kappa_n)\dot{a}_n +\frac{1}{2} (k-i\kappa_n)^2\ddot{a}_n
    + \cdots,
\end{equation*}
and so that the Wronskian $W(f^-,f^+)$ in (\ref{eq10a}) vanishes. Thus
$f^-$ and $f^+$ are linearly dependent:
\begin{equation}\label{eq200}
f^-(x,i\kappa_n)=b_nf^+(x,i\kappa_n).
\end{equation}
In other words, the discrete spectrum is simple, and there is only
one (real) linearly independent eigenfunction corresponding to
each eigenvalue $i\kappa_n$
\begin{equation*}
f_n^-(x):= f^-(x,i\kappa_n).
\end{equation*}
From the above and (\ref{eq6}), (\ref{eq6'}), it follows
that $f_n^-(x)$ decays exponentially for $x\to\pm\infty$, which
allows us  to show that $f_n^-(x)\in L^2(\R)$. Moreover,
for compactly supported potentials $m(x)$, from (\ref{eq200}) and
(\ref{eq8}), we have
\begin{equation*}
b_n= b(i\kappa_n), \qquad
b(-i\kappa_n)=-\frac{1}{b_n} .
\end{equation*}
The above result can
be extended to Schwartz-class potentials by an appropriate limiting
procedure. The asymptotics of $f_n^-$, according to (\ref{eq5aa}),
(\ref{eq6}), (\ref{eq200}) are
\[
\begin{aligned}
& f_n^-(x)=e^{\kappa_n x}+o(e^{\kappa_n x}), &
\qquad & x\rightarrow -\infty, \\
& f_n^-(x)= b_n e^{-\kappa_n x}+o(e^{-\kappa_n x}), & \qquad &
x\rightarrow \infty.
\end{aligned}
\]
The sign of $b_n$ obviously depends on the number of the zeroes
of $f_n^-$. Suppose that
$0<\kappa_{1}<\ldots<\kappa_{N}<\frac12$, then from the
oscillation theorem for the Sturm-Liouville problem,
$f_n^-$ has exactly $n-1$ zeroes. Therefore
\begin{equation*}
 b_n= (-1)^{n-1}|b_n|.
\end{equation*}
The sets
\begin{equation*}
\mathcal{S^{\pm}} := \left\{
\frac{b(\pm k)}{a(k)}\quad (k>0),\quad \kappa_n,\quad
C_n^{\pm} := \frac{b_n^{\pm1}}{i\dot{a}_n},\quad n=1,\ldots N \right\}
\end{equation*}
 are called scattering data. Here the dot stands for the derivative
with respect to $k$ and $\dot{a}_n := \dot{a}(i\kappa_n)$,
$\ddot{a}_n:= \ddot{a}(i\kappa_n)$, etc.
The time evolution of the scattering data is presented in \cite{CGI07}
as follows.
\[
\begin{aligned}
& a(k,t)=a(k,0), & \qquad &  b(k,t)=b(k,0)e^{\frac{i
k}{\lambda }t}, \\
& \frac{1}{a(k,t)}=\frac{1}{a(k,0)}, & \qquad &
\frac{b(\pm k,t)}{a(k,t)}=\frac{b(\pm k,0)}{a(k,0)}e^{\pm\frac{i
k}{\lambda }t}, & \qquad C_n^{\pm} (t)= C_n^{\pm} (0)\exp \left( \pm\frac{4\omega
\kappa_n}{1-4\kappa_n^2}t \right).
\end{aligned}
\]
In other words, $a(k)$ is independent on $t$ and will serve as a
generating function of the conservation laws. In particular, the integral
\begin{equation*}
\Lambda:= \int _{-\infty}^{\infty} \left( \sqrt{\frac{m(x)+\omega}
{\omega}}-1 \right)\text{d}x,
\end{equation*}
as well as all the coefficients $\mathcal{I}_k$ in the asymptotic expansion
\begin{equation*}
 \ln a(k)= -i\Lambda k+\sum_{s=1}^{\infty}\frac{\mathcal{I}_{s}}{k^{2s+1}},
\end{equation*}
must be integrals of motion. The integral $\Lambda$ is the unique Casimir function for the CH equation.
The densities $p_{s}$ of  $\mathcal{I}_{s}=\int _{-\infty}^{\infty}p_{s}
\text{d}x$ can be expressed in terms of $m(x)$ using a set of
recurrent relations  obtained  in \cite{I05}.

Let us define the following {\em{squared eigenfunctions}}
\begin{equation*}
F^{\pm}(x,k):= (f^{\pm}(x,k))^2, \qquad F^{\pm}_n(x):=
F^{\pm}(x,i\kappa_n),\quad n=1,2,\cdot\cdot\cdot,N.
\end{equation*}
one can also derive the following
relations for the variations of the scattering data with respect to the potential $m$ as follows \cite{CI06}:
\begin{eqnarray} \label{eq37}
 &&\frac{\delta
a(k)}{\delta m(x)}=-\frac{\lambda}{2ik}f^+(x,k)f^-(x,k),\\
&&\frac{\delta b(k)}{\delta
m(x)}=\frac{\lambda}{2ik}f^+(x,-k)f^-(x,k), \label{eq38}\\
&&\frac{\delta
\ln \lambda_n}{\delta m(x)}=\frac{iF_n^-(x)}{\omega b_n
\dot{a}_n}. \label{eq39}
\end{eqnarray}
Moreover, from \cite[Proposition 5]{CGI07}, there holds the following useful completeness relation for the squared eigenfunctions:
 \begin{eqnarray} \label{completeness relations}
&& \frac{\omega \theta(x-y)}{\sqrt{m(x)+\omega} \sqrt{m(y)+\omega}} = -\frac{1}{2\pi i}\int_{\R}\frac{F^-(x,k)F^+(y,k)}{ka^2(k)}\rmd k \nonumber \\
&& \quad + \sum_{n=1}^N\frac{1}{i\kappa_n\dot{a}_n^2} \left[ F_n^-(x)\dot{F}_n^+(y)+F_n^-(x)\dot{F}_n^+(y) - \left( \frac{1}{i\kappa_n}+\frac{\ddot{a}_n}{\dot{a}_n} \right) F_n^-(x)F_n^+(y) \right],
\end{eqnarray}
where $\theta(x)$ is the step function, and again, dot denotes derivative in $k$. For any function $f(x)$ which vanishes for $x\rightarrow\pm\infty$, one can expand it over the squared eigenfunctions $F^+(x,k)$ and $F^-(x,k)$. Indeed, we multiply \eqref{completeness relations} by $\frac12m_yf(y)+(m(y)+\omega)f_y$ and integrate over $y$ to have
\begin{equation}\label{refined completeness relations}
\pm \omega f(x)=-\frac{1}{2\pi i}\int_{\R}\frac{F^{\mp}(x,k)\xi^{\pm}_{f}(k)}{ka^2(k)}\rmd k+\sum_{n=1}^N\frac{1}{i\kappa_n\dot{a}_n^2} \left[ \dot{F}_n^{\mp}(x)\xi^{\pm}_{f,n}+F_n^{\mp}(x)\dot{\xi}^{\pm}_{f,n} \right],
\end{equation}
where
\begin{align*}
\xi^{\pm}_{f}(k) & = \int_{\R}F^{\pm}(y,k) \left[ \frac12m_yf(y)+(m(y)+\omega)f_y \right]\rmd y, \\
\xi^{\pm}_{f,n} & = \int_{\R}F_n^{\pm}(y) \left[ \frac12m_yf(y)+(m(y)+\omega)f_y \right]\rmd y, \\
\dot{\xi}^{\pm}_{f,n} & = \int_{\R}\dot{F}_n^{\pm}(y) \left[ \frac12m_yf(y)+(m(y)+\omega)f_y \right]\rmd y - \left( \frac{1}{i\kappa_n}+\frac{\ddot{a}_n}{\dot{a}_n} \right)\xi^{\pm}_{f,n}.
\end{align*}
%

The inverse scattering is simplified into the important case of the so-called
reflectionless potentials, where the scattering data is confined to the case
the reflection coefficient $\frac{b(k)}{a(k)} = 0$ for all real $k$. This class of potentials corresponds to the $N$-solitons of the CH equation. In this case $b(k) = 0$ and $|a(k)| = 1$ (see \eqref{eq10}). The $N$-solitons can be calculated in a parametric form \cite{CGI06} which depends on the discrete spectrum $k_n=i\kappa_n$ of \eqref{eq3} for $n=1,2,\cdot\cdot\cdot,N$ and $0<\kappa_{1}<\kappa_{2}<\ldots<\kappa_{N}<\frac12$, which are the zeros $a(k)$ in the imaginary axis. Moreover,
\[
i\dot{a}_n=\frac{1}{2\kappa_n}e^{\Lambda\kappa_n}\prod_{j\neq n}\frac{\kappa_n-\kappa_j}{\kappa_n+\kappa_j}, \quad\text{where} \quad \Lambda=\sum_{n=1}^{N}\ln\Big(\frac{1+2\kappa_n}{1-2\kappa_n}
\Big)^2.
\]
The CH $N$-solitons can be expressed in a parametric form as follows \cite{CGI06}
\begin{equation*}
u(t,x)=\frac{\omega}{2}\int_0^\infty e^{-|x-g(t,\xi)|}\xi^{-2}g_\xi^{-1}(t,\xi)\rmd \xi-\omega,
\end{equation*}
where $g(t,\xi)$ can be expressed through the scattering data as
\[
g(t,\xi):=\ln\int_0^{\xi} \left[ 1-\sum_{n,p}\frac{C_n^{+}(t)y^{-2\kappa_n}}{\kappa_n+1/2}A_{np}(t,y) \right]^{-2}\rmd y, \quad \text{with} \quad A_{np}(t,y):=\delta_{pn}+\frac{C_n^{+}(t)y^{-2\kappa_n}}{\kappa_n+\kappa_p}.
\]
The CH $N$-solitons are also presented in \cite{M05} in a parametric form using elementary determinant theory. The author demonstrates that these CH $N$-solitons exhibit the asymptotic behavior \eqref{eq:asymptotic behavior} as time approaches infinity, along with a formula for the phase shift.

\subsection{Recursion operators and completeness relations of the squared eigenfunctions}\label{sec_2.2}
In this subsection, we analyze the spectral information of recursion operators and the linearized operators around the soliton profile $\varphi_c$, which will be crucial in our proof of linear Liouville property (see Theorem \ref{thm1.4} below).

From the bi-Hamiltonian structure \eqref{eq:recursion}, Lenard relation \eqref{eq2aa} and the relation $(1-\partial_x^2)\frac{\delta \Ham_{n}[m]}{\delta m}=\frac{\delta \Ham_{n}(u)}{\delta u}$, one has the following relations for $\Ham_n$,
\begin{equation}\label{eq:recursion1}
 \frac{\delta \Ham_{n+1}[m]}{\delta m} = (1-\partial_x^2)^{-1} \left[ 2\omega +m+\partial_x^{-1} ( m\partial_x) \right] \frac{\delta \Ham_{n}[m]}{\delta m} =: \mathcal{K}[m]\frac{\delta \Ham_{n}[m]}{\delta m},
 \end{equation}
 \begin{equation} \label{eq:recursion2}
 \frac{\delta \Ham_{n+1}(u)}{\delta u} = \left[ 2\omega +m+\partial_x^{-1}( m\partial_x) \right] (1-\partial_x^2)^{-1}\frac{\delta \Ham_{n}(u)}{\delta u} =: \mathcal{R}(u)\frac{\delta \Ham_{n}(u)}{\delta u},
\end{equation}
where $\partial_x^{-1}$ is the inverse derivative operator defined by $\partial_x^{-1}f=\int_{-\infty}^xf\rmd x$ with $f\in \mathcal S(\R)$. If we take $\mathcal S^\ast(\R):= \left\{\partial_x^{-1}f : \ f\in \mathcal S(\R) \right\}$, with a bilinear form given by
\[
\langle f,g\rangle=\int_{-\infty}^{\infty}f(x)g(x)\rmd x,
\]
then $\partial_x^{-1}:\mathcal S^\ast(\R)\rightarrow \mathcal S(\R)$ is skew-symmetric and $\partial_x\partial_x^{-1}=id$.

It is not difficult (from the bi-Hamiltonian structure \eqref{eq:recursion}) to see
 \begin{equation*}
 \mathcal{K}[m]=\mathcal{J}_2^{-1}\mathcal{J}_1.
 \end{equation*}
Moreover, the linear operators $\mathcal{K}(m)$ and $\mathcal{R}(u)$ are nonlocal, satisfying
\begin{equation}\label{eq:similar}
\mathcal{R}(u) = (1-\partial_x^2)\mathcal{K}[m](1-\partial_x^2)^{-1},
\end{equation}
namely, they are similar to each other. Moreover, one can check that it holds that
\begin{equation*}
\mathcal{K}[m] \left( 1-\sqrt{\frac{\omega}{m+\omega}} \right) = u, \qquad \mathcal{R}(u)(1-\partial_x^2) \left( 1-\sqrt{\frac{\omega}{m+\omega}} \right) = m.
\end{equation*}
These can be viewed as in a special case $n=0$ in \eqref{eq:recursion1} and \eqref{eq:recursion2}, respectively. The associated conservation laws are
 \[
 \Ham_0[m] := \int_{\R} \left( \sqrt{m(x)+\omega}-\sqrt{\omega} \right)^2\rmd x, \qquad \frac{\delta \Ham_{0}[m]}{\delta m}=1-\sqrt{\frac{\omega}{m+\omega}}.
 \]
Therefore, the CH hierarchy with the choice of the dispersion law $\Omega(z)$ can be expressed as follows:
\begin{equation*}
m_t + \sqrt{m+\omega} \left[ \sqrt{m+\omega} \ \Omega(2\mathcal{K}[m]) \left(1-\sqrt{\frac{\omega}{m+\omega}} \right) \right]_x=0.
\end{equation*}
In particular, if the dispersion law $\Omega(z)=z$, then the above becomes the CH equation.

Denoting adjoint operator of $\mathcal{K}[m]$ by $\mathcal{K}^{\ast}[m]:=\mathcal{J}_1\mathcal{J}_2^{-1}$, then the adjoint of $\mathcal{R}(u)$ is
\begin{equation}\label{eq:adjoint of R}
\mathcal{R}^{\ast}(u)=(1-\partial_x^2)^{-1}\mathcal{K}^{\ast}[m](1-\partial_x^2)=(1-\partial_x^2)^{-1}\mathcal{J}_1\mathcal{J}_2^{-1}(1-\partial_x^2),
\end{equation}
and it is not difficult to see that the operators $\mathcal{R}(u)$ and $\mathcal{R}^{\ast}(u)$ satisfy
\begin{equation}\label{eq:R Rstar}
\mathcal{R}^{\ast}(u)\mathcal{J}=\mathcal{J}\mathcal{R}(u).
\end{equation}
The understanding of the spectral information of the recursion operators $\mathcal{R}(u)$ and $\mathcal{R}^{\ast}(u)$ is essential in the (spectral) stability of multi-solitons of the CH equation \eqref{eq1}, see \cite{WL20}.

It is shown in \cite{CGI07} that the conservation laws $\Ham_j$ can be expressed as follows:
\begin{equation}\label{eq:conservation laws}
\Ham_j(u)=-\int_0^{\infty}(-2)^{1-j}\frac{k\rho(k)}{\lambda^j}dk+\frac{(-2)^{2-j}}{\omega}\sum_{n=1}^N\int\frac{\kappa_n^2}{\lambda_n^{j+2}}d\kappa_n,
\end{equation}
where $\rho(k):=\frac{2k}{\pi \omega\lambda^2}\ln|a(k)|$. The first term on the right-hand side of \eqref{eq:conservation laws} is the contribution
from radiations and the second term comes from the solitons.

Let us consider the quantities $\Ham_j(\varphi_c)$ which are related to the $1$-soliton profile $\varphi_c$. From \eqref{eq:stationary} and \eqref{eq:recursion2} it follows that the $1$-soliton $\varphi_c$ with speed $c$ satisfies the following variational principle
\begin{equation*}
\frac{\delta }{\delta u}\left(-\Ham_{j+1}(u) + c \Ham_{j}(u)\right)=0, \quad  j\in \mathbb Z_+,
\end{equation*}
that is,
\begin{equation}\label{eq:1-sol variaprinciple}
-\Ham_{j+1}'(\varphi_c) + c \Ham'_{j}(\varphi_c)=0.
\end{equation}
Now multiply  \eqref{eq:1-sol variaprinciple} with $\frac{ d\varphi_c}{d c}$. For each $j$ one has
\[
\frac{\rmd \Ham_{j+1}(\varphi_c)}{\rmd c}=c\frac{\rmd \Ham_{j}(\varphi_c)}{\rmd c} = \cdots =c^j\frac{\rmd \Ham_{1}(\varphi_c)}{\rmd c},
\]
and therefore
\begin{equation*}
\Ham_{j+1}(\varphi_c)=\int_0^cy^j\frac{\rmd \Ham_{1}(\varphi_y)}{\rmd y}\rmd y,
\end{equation*}
Calculating $\Ham_{1}(\varphi_c)$ is challenging because $\varphi_c$ is not given explicitly. However, considering that the reflection coefficient $\rho(k)$ vanishes for $u = U^{(N)}$, with associated discrete eigenvalues $i\kappa_n$ for $n=1,2,\ldots,N$ where $0 < \kappa_n < \frac12$. In particular, for the one-soliton case of $\varphi_c$ with discrete eigenvalue $i\kappa$, we can derive the following formula from \eqref{eq:conservation laws}:
%
\begin{equation}\label{eq:1-sol value}
\Ham_j(\varphi_c)=\frac{(-2)^{2-j}}{\omega}\int\frac{\kappa^2}{\lambda^{j+2}}d\kappa, \quad \ \lambda=-\frac{1}{\omega}\Big( \frac{1}{4}-\kappa^2\Big)<0.
\end{equation}
More precisely, one has the following for the conservation laws $\Ham_1$ and $\Ham_2$
\begin{equation*}
\Ham_1(\varphi_c)=\omega^2 \left[ \ln\frac{1-2\kappa}{1+2\kappa}+\frac{4\kappa(1+4\kappa^2)}{(1-4\kappa^2)^2} \right], \qquad
\Ham_2(\varphi_c)=\omega^3 \left[ \ln\frac{1-2\kappa}{1+2\kappa}+\frac{4\kappa(3+32\kappa^2-48\kappa^4)}{3(1-4\kappa^2)^3} \right].
\end{equation*}
From \eqref{eq:1-sol variaprinciple} (multiplying it with $\frac{d \varphi_c}{ d\kappa}$) and \eqref{eq:1-sol value}, one has
\[
\frac{(-2)^{1-j}\kappa^2}{\omega\lambda^{j+3}}=\frac{\rmd \Ham_{j+1}(\varphi_c)}{\rmd \kappa} = c\frac{\rmd \Ham_{j}(\varphi_c)}{\rmd \kappa}=c\frac{(-2)^{2-j}\kappa^2}{\omega\lambda^{j+2}}.
\]
On the other hand, one  can represent the wave speed $c$ with respect to $\kappa$ as follows
\begin{equation*}
c=-\frac{1}{2\lambda}=\frac{2\omega}{1-4\kappa^2}>2\omega.
\end{equation*}
Therefore, the quantities $\Ham_j(\varphi_c)$ can be computed explicitly with respect to the wave velocity $c$.
Moreover, from \eqref{eq:1-sol value} and the above wave speed formula, the derivative of  $\Ham_1(\varphi_c)$ with respect to the wave speed $c$ can be computed in the following form,
\begin{equation*}
\frac{\rmd \Ham_1(\varphi_c)}{\rmd c}=\frac{\rmd \Ham_1(\varphi_c)}{\rmd \kappa}\frac{\rmd \kappa}{\rmd c}= - \frac{\kappa(1-4\kappa^2)^2}{8\omega^2\lambda^3} =4\kappa c> 0.
\end{equation*}
Similarly, from \eqref{eq:1-sol variaprinciple} and the above, one has
\begin{equation}\label{eq:Fderivative to c}
\frac{\rmd \Ham_2(\varphi_c)}{\rmd c}=c\frac{\rmd \Ham_1(\varphi_c)}{\rmd c}=4\kappa c^2> 0.
\end{equation}
For simplicity, we denote $\varphi_c$ by $\varphi$. Then by \eqref{eq:recursion2}, we have
\begin{equation*}
 \Ham'_{n+1}(\varphi)=\mathcal{R}(\varphi) \Ham'_{n}(\varphi),
\end{equation*}
where $\mathcal{R}(\varphi)$ is given in \eqref{eq:recursion2}
with $m = m_{\varphi}:=\varphi-\varphi_{xx}$.

The recursion operator $\mathcal{R}(\varphi)$ and the adjoint recursion operator $\mathcal{R}^{\ast}(\varphi)$ (see \eqref{eq:adjoint of R}) play important roles in the spectral analysis of linearized operator
\begin{eqnarray}\label{formula of Ln}
L_n := -\Ham''_{n+1}(\varphi) + c\Ham''_{n}(\varphi),
\end{eqnarray} through the following iterative and commutative operator identities.
\begin{lemma}[\cite{WL20}]\label{le3.5}
For all $n \in \mathbb{N}$, the recursion operator $\mathcal{R}(\varphi)$, the adjoint recursion operator $\mathcal{R}^{\ast}(\varphi)$, and the linearized operator $L_n$ satisfy the following operator identities.
\begin{eqnarray}
&&-\Ham''_{n+2}(\varphi)+c\Ham''_{n+1}(\varphi)=\mathcal{R}(\varphi) \left[-\Ham''_{n+1}(\varphi)+c\Ham''_{n}(\varphi) \right],\label{eq:recursion Hamiltonianre}\\
&&L_n\mathcal{J}\mathcal{R}(\varphi)=\mathcal{R}(\varphi)L_n\mathcal{J}, \label{operator identity1}\\
&&\mathcal{J}L_n\mathcal{R}^{\ast}(\varphi)=\mathcal{R}^{\ast}(\varphi)\mathcal{J}L_n,\label{operator identity2}
\end{eqnarray}
where $\mathcal{J}$ is the skew symmetric operator defined in \eqref{Hamilton form}.
\end{lemma}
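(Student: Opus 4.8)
The plan is to establish \eqref{eq:recursion Hamiltonianre} first---this is the only identity requiring a genuine computation---and then to deduce \eqref{operator identity1} and \eqref{operator identity2} from it by purely formal manipulations, using the symmetry of $L_n$ and the intertwining relation \eqref{eq:R Rstar}.

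For \eqref{eq:recursion Hamiltonianre} I would start from the recursion \eqref{eq:recursion2}, viewed as the $u$-dependent operator identity $\Ham'_{n+1}(u)=\mathcal{R}(u)\Ham'_n(u)$ valid for $u$ near $\varphi$, and differentiate it in $u$ along a direction $v$. Since $\mathcal{R}(u)$ itself depends on $u$, the Leibniz rule gives
\[
\Ham''_{n+1}(u)\,v=\bigl(D_u\mathcal{R}(u)[v]\bigr)\Ham'_n(u)+\mathcal{R}(u)\Ham''_n(u)\,v .
\]
Writing the same identity with $n$ replaced by $n+1$, evaluating both at $u=\varphi$, and forming the combination $-\Ham''_{n+2}(\varphi)v+c\,\Ham''_{n+1}(\varphi)v$, the two derivative-of-the-operator contributions collapse into $\bigl(D_u\mathcal{R}(\varphi)[v]\bigr)\bigl(-\Ham'_{n+1}(\varphi)+c\,\Ham'_n(\varphi)\bigr)$, which vanishes identically by the soliton variational principle \eqref{eq:1-sol variaprinciple}. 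What remains is exactly $\mathcal{R}(\varphi)\bigl(-\Ham''_{n+1}(\varphi)+c\,\Ham''_n(\varphi)\bigr)v$, i.e. \eqref{eq:recursion Hamiltonianre}; in the notation \eqref{formula of Ln} this reads $L_{n+1}=\mathcal{R}(\varphi)L_n$.

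From here \eqref{operator identity1} and \eqref{operator identity2} follow quickly. Each $L_j$ is the Hessian at $\varphi$ of the real-valued functional $-\Ham_{j+1}+c\,\Ham_j$, hence $L_j^\ast=L_j$ on $L^2(\R)$; taking adjoints in $L_{n+1}=\mathcal{R}(\varphi)L_n$ and using $\mathcal{R}(\varphi)^\ast=\mathcal{R}^\ast(\varphi)$ (from \eqref{eq:adjoint of R}) gives $L_{n+1}=L_n\mathcal{R}^\ast(\varphi)$. Comparing the two expressions for $L_{n+1}$ yields the intertwining $\mathcal{R}(\varphi)L_n=L_n\mathcal{R}^\ast(\varphi)$; multiplying on the right by $\mathcal{J}$ and using \eqref{eq:R Rstar} in the form $\mathcal{R}^\ast(\varphi)\mathcal{J}=\mathcal{J}\mathcal{R}(\varphi)$ turns this into $\mathcal{R}(\varphi)L_n\mathcal{J}=L_n\mathcal{J}\mathcal{R}(\varphi)$, which is \eqref{operator identity1}. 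Finally, taking adjoints of \eqref{operator identity1} with $L_n^\ast=L_n$, $\mathcal{J}^\ast=-\mathcal{J}$, and $\mathcal{R}(\varphi)^\ast=\mathcal{R}^\ast(\varphi)$, the two sign changes cancel and one obtains $\mathcal{R}^\ast(\varphi)\mathcal{J}L_n=\mathcal{J}L_n\mathcal{R}^\ast(\varphi)$, which is \eqref{operator identity2}.

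The main obstacle will not be the algebra but its justification in the nonlocal, variable-coefficient setting. One must verify that differentiating \eqref{eq:recursion2} in $u$ is legitimate in the relevant function spaces, that $D_u\mathcal{R}(\varphi)[v]$ is a well-defined operator on the range of the Hamiltonian gradients, and---most importantly---that $\Ham'_n(\varphi)=\mathcal{R}(\varphi)^{n-1}m_\varphi$ together with every function paired against it decays fast enough (here one uses the exponential decay of $\varphi$ and its derivatives together with the reflectionless structure of $m_\varphi$) so that the operators $\partial_x^{-1}$ buried in $\mathcal{R}$ and $\mathcal{R}^\ast$ contribute no boundary terms when adjoints are taken. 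Once these decay and domain issues are settled, the three identities follow exactly as above.
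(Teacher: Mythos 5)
Your proposal is correct and follows essentially the same route as the paper: differentiate the recursion $\Ham'_{n+1}(u)=\mathcal{R}(u)\Ham'_n(u)$ at $\varphi$ and kill the $D_u\mathcal{R}$ term with the variational principle to get $L_{n+1}=\mathcal{R}(\varphi)L_n$, then use self-adjointness of $L_{n+1}$ to obtain $\mathcal{R}(\varphi)L_n=L_n\mathcal{R}^\ast(\varphi)$ and combine with $\mathcal{R}^\ast(\varphi)\mathcal{J}=\mathcal{J}\mathcal{R}(\varphi)$. The only (immaterial) difference is that you derive \eqref{operator identity1} first and get \eqref{operator identity2} by adjoints, whereas the paper proves \eqref{operator identity2} directly and obtains \eqref{operator identity1} by adjoints.
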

\begin{proof}
Identity \eqref{eq:recursion Hamiltonianre} follows from \eqref{eq:recursion2} and the definition of Gateaux derivative and variational principle of soliton profile $\Ham_n'(\varphi)=c^{n-1}\Ham_1'(\varphi)=c^{n-1}m_{\varphi}$.

Thus we only need to show \eqref{operator identity2}, since \eqref{operator identity1} can be obtained by taking adjoint on \eqref{operator identity2}.  Notice that from
\eqref{eq:recursion Hamiltonianre}, one has that the operator $\mathcal{R}(\varphi)L_n=L_{n+1}$ is self-adjoint. This in turn implies that $$(\mathcal{R}(\varphi)L_n)^\ast=\mathcal{R}(\varphi)L_n=L_n\mathcal{R}^\ast(\varphi),$$
On the other hand, in view of  \eqref{Hamilton form} and \eqref{eq:R Rstar}, one has
$$
\mathcal{J}L_n\mathcal{R}^\ast(\varphi)=\mathcal{J}\mathcal{R}(\varphi)L_n=\mathcal{R}^\ast(\varphi)\mathcal{J}L_n,
$$
which is the desired \eqref{operator identity2}.
\end{proof}

\begin{remark}\label{re3.6}
Identities \eqref{operator identity1} and \eqref{operator identity2} hold also for the multi-solitons of the CH equation.
In particular, let $U^{(N)}$ be the CH $N$-soliton profile. The corresponding Lyapunov functional $I_N$ is given by \cite{WL20}
\begin{equation*}
I_N(u)=(-1)^{N} \left[\Ham_{N+1}(u)+\sum_{n=1}^N\mu_n \Ham_{n}(u) \right],
\end{equation*}
where $\mu_n$ are Lagrange multipliers determined by
\begin{equation*}
\mu_n=(-1)^{N-n+1}\sigma_{N-n+1},
\end{equation*}
with $\sigma_s$ being elementary symmetric functions of $c_1, c_2, ..., c_N$
\[
\sigma_1=\sum_{j=1}^Nc_j, \quad \sigma_2=\sum_{j<k}c_jc_k, \quad \ldots, \quad
 \sigma_N=\prod_{j=1}^Nc_j.
\]
The self-adjoint second variation operator is defined as follows
\begin{eqnarray}\label{eq:linearized n-soliton operator}
\mathcal L_N:=I_N''(U^{(N)}).
\end{eqnarray}
 Then it is easy to see that similar to  Lemma \ref{le3.5}, the following operator identities hold
\begin{eqnarray}
&&\mathcal{L}_N\mathcal{J}\mathcal{R}(U^{(N)})=\mathcal{R}(U^{(N)})\mathcal{L}_N\mathcal{J},\label{operator identity3}\\
&&\mathcal{J}\mathcal{L}_N\mathcal{R}^{\ast}(U^{(N)})=\mathcal{R}^{\ast}(U^{(N)})\mathcal{J}\mathcal{L}_N.\label{operator identity4}
\end{eqnarray}
\end{remark}

\medskip

An immediate consequence of \eqref{operator identity1} is that $\mathcal{R}(\varphi)$ commutes with $L_n\mathcal{J}$, and thus they share the same eigenfunctions. Similarly,  \eqref{operator identity2} implies that the same property holds for $\mathcal{R}^\ast(\varphi)$ and $\mathcal{J}L_n$.

We will conclude this subsection with the following results revealing the spectral information of the recursion operator $\mathcal{R}(\varphi)$, its adjoint $\mathcal{R}^\ast(\varphi)$, the linear operators $\mathcal{J}L_n$ and $L_n\mathcal{J}$. Note that the recursion operators are nonlocal which are not easy to study directly. However, by employing the operator identity \eqref{eq:similar} and the properties of the squared eigenfunctions $F^{\pm}(x,k)$, one could have the following result.

\begin{lemma}[Spectrum of $\mathcal{R}(\varphi)$] \label{lem3.6}
The recursion operator $\mathcal{R}(\varphi)$, defined in $L^2(\R)$ with domain $H^1(\R)$,  has only one  eigenvalue $c$  associated with the eigenfunction $m_{\varphi}$; the essential spectrum is the interval $(0,2\omega]$; and the corresponding eigenfunctions do not have spatial decay and are not in $L^2(\R)$. Moreover, the kernel of $\mathcal{R}(\varphi)$ is trivial, and  the inverse of $\mathcal{R}(\varphi)$ reads
\[
\mathcal{R}^{-1}(\varphi) = \frac12(1-\partial_x^2)\frac{1}{\sqrt{m_\varphi+\omega}}\partial_x^{-1}\frac{1}{\sqrt{m_\varphi+\omega}}\partial_x.
\]
\end{lemma}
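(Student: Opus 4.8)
The plan is to reduce the spectral analysis of the nonlocal operator $\mathcal{R}(\varphi)$ to that of the operator $\mathcal{K}[m_\varphi]$ via the similarity \eqref{eq:similar}, and then to analyze $\mathcal{K}[m_\varphi]=\mathcal{J}_2^{-1}\mathcal{J}_1$ through the squared eigenfunctions $F^\pm(x,k)$, $F_n^\pm(x)$ of the Lax problem \eqref{eq3}. Since $\mathcal{R}(\varphi)=(1-\partial_x^2)\mathcal{K}[m_\varphi](1-\partial_x^2)^{-1}$, the two operators have the same spectrum, and an eigenfunction $\psi$ of $\mathcal{K}[m_\varphi]$ corresponds to the eigenfunction $(1-\partial_x^2)\psi$ of $\mathcal{R}(\varphi)$; I will track how the mapping $\psi\mapsto(1-\partial_x^2)\psi$ affects $L^2$-membership and spatial decay. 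The first step is therefore to record the precise action of $\mathcal{K}[m_\varphi]$ on the squared eigenfunctions. Using the variational identities \eqref{eq37}--\eqref{eq39} together with the fact that $F^\pm$ and $F_n^\pm$ (suitably weighted) diagonalize the recursion operator of the CH hierarchy — this is exactly the content of the completeness relation \eqref{completeness relations} and its refinement \eqref{refined completeness relations} — one finds that in the reflectionless case ($b(k)\equiv0$, so $u=\varphi_c$ is the one-soliton) the continuous family $F^\pm(x,k)$ are generalized eigenfunctions of $\mathcal{K}[m_\varphi]$ with eigenvalue $-\frac{1}{2\lambda(k)}=\frac{2\omega}{1+4k^2}\in(0,2\omega]$, while the discrete squared eigenfunction $F_1^-$ (equivalently $m_\varphi$, after transporting by $1-\partial_x^2$) has eigenvalue $-\frac{1}{2\lambda_1}=\frac{2\omega}{1-4\kappa^2}=c$.

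The second step is to identify these spectral data with the spectrum of $\mathcal{R}(\varphi)$ and to verify the claimed properties. For the discrete eigenvalue: from the one-soliton variational principle \eqref{eq:1-sol variaprinciple} with $j=0$, i.e. $-\Ham_1'(\varphi)+c\,\Ham_0'(\varphi)=0$ combined with $\Ham_{n+1}'(\varphi)=\mathcal{R}(\varphi)\Ham_n'(\varphi)$ and $\Ham_1'(\varphi)=m_\varphi$, one reads off $\mathcal{R}(\varphi)m_\varphi = \Ham_2'(\varphi) = c\,\Ham_1'(\varphi)=c\,m_\varphi$ directly (this also follows from applying $1-\partial_x^2$ to $\mathcal{K}[m_\varphi](1-\sqrt{\omega/(m_\varphi+\omega)})=\varphi$ after differentiating the $n=0$ recursion along $c$); since $m_\varphi=\varphi-\varphi_{xx}$ decays exponentially (by the decay of $\varphi_c$ and \eqref{eq:potential}), $m_\varphi\in L^2(\R)\cap H^1(\R)$, so $c$ is a genuine eigenvalue. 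For the essential spectrum: the range $k\in\R$ gives the continuous family with eigenvalues filling $(0,2\omega]$ (as $k\to\infty$ the eigenvalue $\to0^+$, so $0$ is an accumulation point but not attained), and the corresponding $(1-\partial_x^2)F^\mp(x,k)$ behave like $e^{\pm ikx}$ at infinity — they are bounded, oscillatory, and not in $L^2(\R)$ — which shows $(0,2\omega]$ is essential spectrum with non-decaying generalized eigenfunctions. That the completeness relation \eqref{refined completeness relations} expands every $L^2$-type function over precisely these modes shows there is no further spectrum.

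The third step is the triviality of the kernel and the inversion formula. Triviality of $\ker\mathcal{R}(\varphi)$ is immediate once we exhibit $\mathcal{R}^{-1}(\varphi)$ as a genuine (unbounded but densely defined) operator: one checks by direct computation that
\[
\mathcal{R}(\varphi)\;\Bigl[\tfrac12(1-\partial_x^2)\tfrac{1}{\sqrt{m_\varphi+\omega}}\,\partial_x^{-1}\tfrac{1}{\sqrt{m_\varphi+\omega}}\,\partial_x\Bigr] = \mathrm{id},
\]
and similarly on the other side. Concretely, using \eqref{eq:similar} this reduces to showing $\mathcal{K}[m_\varphi]\cdot\tfrac12\tfrac{1}{\sqrt{m+\omega}}\partial_x^{-1}\tfrac{1}{\sqrt{m+\omega}}\partial_x(1-\partial_x^2)=\mathrm{id}$, i.e. that $\tfrac12\tfrac{1}{\sqrt{m+\omega}}\partial_x^{-1}\tfrac{1}{\sqrt{m+\omega}}\partial_x = \mathcal{J}_1^{-1}\mathcal{J}_2$, which in turn follows from factoring the first Hamiltonian operator: since $\mathcal{J}_1=-(2\omega\partial_x+m\partial_x+\partial_x m) = -2\sqrt{m+\omega}\,\partial_x\sqrt{m+\omega}\,\cdot\tfrac12\cdot$ — more precisely from the algebraic identity $2\omega\partial_x+m\partial_x+\partial_x m = 2\sqrt{m+\omega}\,\partial_x\,\sqrt{m+\omega}$ valid pointwise (differentiate the product), hence $\mathcal{J}_1^{-1}=-\tfrac12\tfrac{1}{\sqrt{m+\omega}}\partial_x^{-1}\tfrac{1}{\sqrt{m+\omega}}$ and $\mathcal{J}_1^{-1}\mathcal{J}_2 = \tfrac12\tfrac{1}{\sqrt{m+\omega}}\partial_x^{-1}\tfrac{1}{\sqrt{m+\omega}}(\partial_x-\partial_x^3)=\tfrac12\tfrac{1}{\sqrt{m+\omega}}\partial_x^{-1}\tfrac{1}{\sqrt{m+\omega}}\partial_x(1-\partial_x^2)$, which is exactly $\mathcal{R}^{-1}(\varphi)$ after transport by $1-\partial_x^2$. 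Since $\mathcal{R}(\varphi)$ is injective with this explicit left inverse, $\ker\mathcal{R}(\varphi)=\{0\}$.

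\textbf{Main obstacle.} I expect the delicate point to be not the formal algebra above but the functional-analytic bookkeeping: establishing that the squared-eigenfunction expansion \eqref{refined completeness relations} is genuinely complete on the relevant $L^2$ (equivalently $H^1$) space, that no spectrum is hidden in the transform of the Lax problem (e.g. that the values $k=0$, $\lambda=-1/(4\omega)$ or the band endpoint $2\omega$ contribute nothing exceptional), and that the nonlocal factor $\partial_x^{-1}$ in $\mathcal{K}[m_\varphi]$ is well-defined on the domain in question — in particular, justifying that $1-\sqrt{\omega/(m_\varphi+\omega)}$ and the image of $\partial_x^{-1}$ decay at the correct rate so that the operator identities are not merely formal. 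The positivity and boundedness $m_\varphi+\omega>0$ (from \eqref{eq:potential}) and its exponential approach to $\omega$ at infinity are what make $\sqrt{m_\varphi+\omega}$ and its reciprocal smooth bounded multipliers and keep $\partial_x^{-1}$ under control; carefully invoking these is the technical heart of the argument.
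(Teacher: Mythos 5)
Your proposal is correct and follows essentially the same route as the paper: reduce to $\mathcal{K}[m_\varphi]$ via the similarity \eqref{eq:similar}, read the discrete eigenvalue $c$ (eigenfunction $m_\varphi$) and the essential spectrum $(0,2\omega]$ with non-decaying generalized eigenfunctions off the squared eigenfunctions of the Lax problem, and treat the kernel and inverse explicitly. The only differences are cosmetic: you derive $\mathcal{R}^{-1}(\varphi)$ from the factorization $\mathcal{J}_1=-2\sqrt{m_\varphi+\omega}\,\partial_x\,\sqrt{m_\varphi+\omega}$ and get injectivity from the left inverse, where the paper verifies the inverse directly and tests the kernel on $(1-\partial_x^2)(1/\sqrt{m_\varphi+\omega})$; note also the harmless slip in your ``i.e.'' step, where the factor $(1-\partial_x^2)$ is momentarily dropped before reappearing correctly in the final identity.
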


\begin{proof} Consider the Jost solutions $f^{\pm}(x,k)$ of the eigenvalue problem \eqref{eq3} with the potential $m(x)=m_\varphi=\varphi-\varphi_{xx}$ and the asymptotics in \eqref{eq6} and \eqref{eq6'}. In this case there is an eigenvalue $k=i\kappa_1$ ($0<\kappa_1<\frac12$) which generates the soliton profile $\varphi_c$. It is then found that the squared eigenfunctions $F^{\pm}(x,k)=(f^{\pm}(x,k))^2$ satisfy
\begin{equation*}
 \bigg(-\partial_x^2+1+2\lambda\big(m_\varphi(x)+2\omega+\partial_x^{-1}(m_\varphi(x)\partial_x)\big)\bigg)F^{\pm}(x,k)=0.
\end{equation*}
This in turn implies that
\begin{equation}\label{eq:Ksquared eigen}
(1-\partial_x^2)^{-1}\left( m_\varphi(x)+2\omega+\partial_x^{-1}(m_\varphi(x)\partial_x)\right)F^{\pm}(x,k)=\mathcal{K}[m_\varphi]F^{\pm}(x,k)=-\frac{1}{2\lambda}F^{\pm}(x,k).
\end{equation}
By \eqref{eq:Ksquared eigen} and \eqref{eq:similar}, it follows that for $k \in \R$,
\begin{eqnarray}
&&\mathcal{R}(\varphi)(1-\partial_x^2)F^{\pm}(x,k)=-\frac{1}{2\lambda}(1-\partial_x^2)F^{\pm}(x,k), \label{eq:Reigen} \\
&&\mathcal{R}(\varphi)(1-\partial_x^2)F_1^{\pm}(x)=-\frac{1}{2\lambda_1}(1-\partial_x^2)F_1^{\pm}(x)=c(1-\partial_x^2)F_1^{\pm}(x).\label{eq:Reigen1}
\end{eqnarray}
Moreover, in view of the element $\dot{F}_1^{\pm}(x)$ in the completeness relation \eqref{completeness relations}, it follows that
\begin{equation*}
\mathcal{R}(\varphi)(1-\partial_x^2)\dot{F}_1^{\pm}(x)=-\frac{1}{2\lambda_1}(1-\partial_x^2)\dot{F}_1^{\pm}(x)-\frac{i\kappa_1}{\omega\lambda_1^2}(1-\partial_x^2)F_1^{\pm}(x).
\end{equation*}
Since $\mathcal{R}(\varphi)m_{\varphi}=cm_{\varphi}$, it is  inferred  from \eqref{eq:Reigen1} that $F_1^{\pm}(x)\sim \varphi(x)$. On account of  \eqref{eq:Reigen}, the essential spectrum of $\mathcal{R}(\varphi)$ is given by the set $\left\{ \lambda : -\frac{1}{2\lambda}=\frac{\omega}{2k^2+\frac12}, k\in \R \right\}$, which is exactly $(0,2\omega]$.
The associated generalized eigenfunctions $(1-\partial_x^2)F^{\pm}(x,k)$ possess no spatial decay and thus are not in $L^2(\R)$, which can be seen from \eqref{eq6} and \eqref{eq6'}.

On the other hand, a simple direct computation shows that the kernel of $\mathcal{R}(\varphi)$ is trivial except for $\omega=0$. In particular, consider $v(x):=(1-\partial_x^2)(1/\sqrt{m_\varphi+\omega})$. One deduces that $\mathcal{R}(\varphi)v=\omega>0$. The inverse of $\mathcal{R}(\varphi)$ can also be verified directly. The proof of the lemma is complete.
\end{proof}

\begin{remark} \label{re3.7} In view of  \eqref{eq:similar} or \eqref{eq:Ksquared eigen}, one can conclude that the operator $\mathcal{K}[m_\varphi]$ shares the same spectra with the operator $\mathcal{R}(\varphi)$. For instance, we have $\mathcal{K}(m_\varphi)\varphi=c\varphi$, the kernel of which is trivial and the inverse is $$\mathcal{K}^{-1}[m_\varphi]=\frac{1}{2\sqrt{m_\varphi+\omega}}\partial_x^{-1}\frac{1}{\sqrt{m_\varphi+\omega}}(1-\partial_x^2)^{-1}\partial_x.$$
\end{remark}


Next consider the adjoint recursion operator $\mathcal{R}^\ast(\varphi)$. As discussed before, $\mathcal{R}^\ast(\varphi)$ and $\mathcal{J}L_n$ share the same eigenfuntions, and thus is more relevant to the spectral stability problems of solitons. Recall from \eqref{eq:adjoint of R} that
$$
\mathcal{R}^{\ast}(u)=(1-\partial_x^2)^{-1}\mathcal{J}_1\mathcal{J}_2^{-1}(1-\partial_x^2).
$$
The observation of \eqref{eq:Ksquared eigen} reveals that
$$
\mathcal{J}_1\mathcal{J}_2^{-1}\mathcal{J}_1F^{\pm}(x,k)=\mathcal{J}_1\mathcal{K}[m]F^{\pm}(x,k)=-\frac{1}{2\lambda}\mathcal{J}_1F^{\pm}(x,k).
$$
From above one can verify that the eigenfunctions of $\mathcal{R}^{\ast}(u)$ are
\begin{equation}\label{eq:Kstarsquared eigen}
(1-\partial_x^2)^{-1}\mathcal{J}_1F^{\pm}(x,k)=-\partial_x \mathcal{K}[m]F^{\pm}(x,k)=\frac{1}{2\lambda}(F^{\pm}(x,k))_x.
\end{equation}

\begin{lemma}[Spectrum of $\mathcal{R}^\ast(\varphi)$] \label{le3.81}
The adjoint recursion operator $\mathcal{R}^\ast(\varphi)$, defined in $L^2(\R)$ with domain $H^1(\R)$, has only one  eigenvalue $c$ associated with the eigenfunction $\varphi_x$; the essential spectrum is the interval $(0,2\omega]$; and the corresponding eigenfunctions do not have spatial decay and hence are not in $L^2(\R)$. Moreover, the kernel of $\mathcal{R}^\ast(\varphi)$ is trivial, and  the inverse of $\mathcal{R}^\ast(\varphi)$ is $$\big(\mathcal{R}^\ast(\varphi)\big)^{-1}=\frac{1}{2}\partial_x\frac{1}{\sqrt{m_\varphi+\omega}}\partial_x^{-1}\frac{1}{\sqrt{m_\varphi+\omega}}(1-\partial_x^2)^{-1}.$$
\end{lemma}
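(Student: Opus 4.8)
The plan is to transpose the proof of Lemma~\ref{lem3.6} to the adjoint setting, exploiting the similarity relation \eqref{eq:adjoint of R}, the squared-eigenfunction identity \eqref{eq:Kstarsquared eigen}, and the completeness relation \eqref{completeness relations}. The starting point is that, for the reflectionless potential $m=m_\varphi$, combining \eqref{eq:Kstarsquared eigen} with \eqref{eq:adjoint of R} yields
\[
\mathcal{R}^{\ast}(\varphi)\,\partial_x F^{\pm}(\cdot,k)=-\frac{1}{2\lambda(k)}\,\partial_x F^{\pm}(\cdot,k),\qquad k\in\R,
\]
so $\partial_x F^{\pm}(\cdot,k)$ is a generalized eigenfunction of $\mathcal{R}^{\ast}(\varphi)$ with eigenvalue $-1/(2\lambda(k))$. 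Since $\lambda(k)=-\tfrac1\omega(k^2+\tfrac14)$, the map $k\mapsto -1/(2\lambda(k))=2\omega/(4k^2+1)$ sweeps out $(0,2\omega]$ as $k$ runs over $\R$; and by the normalizations \eqref{eq6}--\eqref{eq6'} the Jost functions $f^{\pm}(\cdot,k)$ are bounded and oscillatory as $x\to\pm\infty$, so $\partial_x F^{\pm}(\cdot,k)$ does not decay and lies outside $L^2(\R)$. This produces the essential spectrum $(0,2\omega]$ together with the stated non-decaying generalized eigenfunctions.

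For the discrete part I would specialize to the unique discrete eigenvalue $k=i\kappa_1$ of \eqref{eq3} generating $\varphi_c$: then $-1/(2\lambda_1)=2\omega/(1-4\kappa_1^2)=c$, and, since $F_1^{-}=b_1^2F_1^{+}$ by \eqref{eq200}, there is a single eigenfunction $\partial_x F_1^{\pm}$ up to scalar. To identify it with $\varphi_x$ I would reuse the reasoning of Lemma~\ref{lem3.6}: $(1-\partial_x^2)F_1^{\pm}$ and $m_\varphi=(1-\partial_x^2)\varphi$ are both eigenfunctions of $\mathcal{R}(\varphi)$ at the simple eigenvalue $c$ (the discrete Lax spectrum being simple), hence proportional, so $F_1^{\pm}\propto\varphi$ and $\partial_x F_1^{\pm}\propto\varphi_x$, which is smooth and exponentially decaying and thus in $L^2(\R)$. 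The completeness relation \eqref{completeness relations} for the squared eigenfunctions --- equivalently, the similarity of $\mathcal{R}^{\ast}(\varphi)$ to $\mathcal{J}_1\mathcal{J}_2^{-1}$ --- then shows that these families exhaust the spectrum, so $c$ is the only eigenvalue; in particular $0\notin\{c\}\cup(0,2\omega]$, whence the kernel is trivial.

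It remains to verify the explicit inverse. The quickest route is to take the $L^2$-adjoint of the formula for $\mathcal{R}^{-1}(\varphi)$ established in Lemma~\ref{lem3.6}, reversing the order of the factors and using that $(1-\partial_x^2)$ and multiplication by $1/\sqrt{m_\varphi+\omega}$ are self-adjoint while $\partial_x$ and $\partial_x^{-1}$ are skew-adjoint on the relevant domains; since $\mathcal{R}^{\ast}(\varphi)=\mathcal{R}(\varphi)^{\ast}$, this gives $(\mathcal{R}^{-1}(\varphi))^{\ast}=(\mathcal{R}^{\ast}(\varphi))^{-1}$. Alternatively, one checks directly that the claimed operator composes with $\mathcal{R}^{\ast}(\varphi)=(1-\partial_x^2)^{-1}\mathcal{J}_1\mathcal{J}_2^{-1}(1-\partial_x^2)$ to the identity, using $\partial_x\partial_x^{-1}=\mathrm{id}$ together with the structure of $\mathcal{J}_1$ and $\mathcal{J}_2$ from \eqref{eq:recursion}; this also re-proves that the kernel is trivial.

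The main obstacle is the same as in Lemma~\ref{lem3.6}: making rigorous that the squared-eigenfunction families $\{\partial_x F^{\pm}(\cdot,k)\}_{k\in\R}$ and $\partial_x F_1^{\pm}$ actually exhaust the spectrum of the nonlocal, variable-coefficient operator $\mathcal{R}^{\ast}(\varphi)$ --- in particular, ruling out any additional point spectrum embedded inside $(0,2\omega]$ or at $0$. This is precisely where the integrable structure is indispensable: the completeness relation \eqref{completeness relations}, together with its refinement \eqref{refined completeness relations}, plays the role that Sturm--Liouville oscillation theory would play for a local Schr\"odinger-type operator. A secondary technical point is the careful bookkeeping of domains and boundary terms for $\partial_x^{-1}$, needed both to realize $\mathcal{R}^{\ast}(\varphi)$ as a closed operator on $L^2(\R)$ with domain $H^1(\R)$ and to justify the adjoint computation of the inverse.
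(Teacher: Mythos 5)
Your proposal follows essentially the same route as the paper's proof: both derive the relations $\mathcal{R}^\ast(\varphi)\big(F^{\pm}(\cdot,k)\big)_x=-\tfrac{1}{2\lambda}\big(F^{\pm}(\cdot,k)\big)_x$ from the squared-eigenfunction identity \eqref{eq:Kstarsquared eigen}, read off the essential spectrum $(0,2\omega]$ from $k\mapsto-\tfrac{1}{2\lambda(k)}$, identify the single eigenvalue $c$ at $k=i\kappa_1$ with eigenfunction $\varphi_x$, use the Jost asymptotics \eqref{eq6}--\eqref{eq6'} for the non-decay of the generalized eigenfunctions, and verify the inverse by a direct (or, in your case, adjoint-of-$\mathcal{R}^{-1}(\varphi)$) computation. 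Your extra appeals to the completeness relation for exhausting the spectrum and to simplicity of the discrete Lax spectrum for pinning down $\varphi_x$ are harmless elaborations of what the paper does implicitly, so this counts as the same argument.
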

\begin{proof}
Consider the Jost solutions $f^{\pm}(x,k)$ of the eigenvalue problem \eqref{eq3} with the potential $m_\varphi$ and the asymptotics in  \eqref{eq6} and \eqref{eq6'}. The soliton profile $\varphi$ is  generated by the eigenvalue $k=i\kappa_1$ ($0<\kappa_1<\frac12$). By \eqref{eq:Kstarsquared eigen},  the following relations hold
\begin{eqnarray*}
&&\mathcal{R}^\ast(\varphi)\big(F^{\pm}(x,k)\big)_x=-\frac{1}{2\lambda}\big(F^{\pm}(x,k)\big)_x,\quad \text{for} \ k\in\R, \\ 
&&\mathcal{R}^\ast(\varphi)\big(F_1^{\pm}(x)\big)_x=-\frac{1}{2\lambda_1}\big(F_1^{\pm}(x)\big)_x=c\big(F_1^{\pm}(x)\big)_x,  \\ 
&&\mathcal{R}^\ast(\varphi)\big(\dot{F}_1^{\pm}(x)\big)_x=-\frac{1}{2\lambda_1}\big(\dot{F}_1^{\pm}(x)\big)_x-\frac{i\kappa_1}{\omega\lambda_1^2}\big(F_1^{\pm}(x)\big)_x. 
\end{eqnarray*}
Since  $\mathcal{R}^\ast(\varphi)\varphi_x=c\varphi_x$, by the second equation above, $c$ is the only eigenvalue. In view of the first equation above, the essential spectrum of $\mathcal{R}^\ast(\varphi)$ is  $\left\{\lambda : -\frac{1}{2\lambda}=\frac{\omega}{2k^2+\frac12}, k \in \mathbb{R} \right\}$, which is the interval $(0,2\omega]$.
From \eqref{eq6} and \eqref{eq6'} we see that the associated generalized eigenfunctions $\big(F^{\pm}(x,k)\big)_x$ do not decay and hence are not in $L^2(\R)$.

Similarly, a direct computation shows that the kernel of $\mathcal{R}^\ast(\varphi)$ is trivial except for $\omega=0$. The inverse of $\mathcal{R}^\ast(\varphi)$ can be verified directly. This completes the proof of Lemma \ref{le3.81}.
\end{proof}

Now we are in position of deriving the spectrum of linear operators $\mathcal{J}L_n$ and $L_n\mathcal{J}$. This approach is motivated by \eqref{operator identity1} and \eqref{operator identity2}, allowing us to reduce the problem to analyzing the spectrum of the recursion operator $\mathcal{R}(\varphi)$ and the adjoint recursion operator $\mathcal{R}^\ast(\varphi)$. Next, we show that the eigenfunctions of $\mathcal{R}^\ast(\varphi)$ (or equivalently, $\mathcal{J}L_n$), together with the generalized kernel of $\mathcal{J}L_n$, form an orthogonal basis in $L^2(\R)$. This serves as a completeness relation, analogous to \eqref{completeness relations} and \eqref{refined completeness relations}. It then follows that the spectrum of $\mathcal{J}L_n$ lies on the imaginary axis which yields the spectral stability of solitons.

Start with the operator $\mathcal{J}L_n$. Since $L_n=\mathcal{R}^{n-1}(\varphi)L_1$, whose principle part is
\[
\left[ 2\omega(1-\partial_x^2)^{-1} \right]^{n-1}(-c\partial_x^2+c-2\omega),
\]
the symbol of the principle part of the operator $\mathcal{J}L_n$ is
\begin{equation}\label{eq:symbol JL}
\varrho_{n,c}(\zeta):=-2^{n-1}\omega^{n-1}\frac{i\zeta(c\zeta^2+c-2\omega)}{(1+\zeta^2)^n}, \ \zeta\in\R.
\end{equation}

\begin{proposition} \label{pr3.5-3}
For $n \ge 1$, the operators $\mathcal{J}L_n$ defined in $L^2(\R)$ with domain $H^1(\R)$  and the adjoint recursion operator $\mathcal{R}^\ast(\varphi)$ share the same eigenfunctions. Moreover, the essential spectrum of  $\mathcal{J}L_n$ is contained in $i\R$, the kernel is spanned by the function $\varphi_x$, and the generalized kernel is spanned by $\frac{\partial \varphi}{\partial c}$.
\end{proposition}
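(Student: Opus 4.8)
The plan is to exploit the intertwining identity \eqref{operator identity2}, which asserts that $\mathcal{R}^\ast(\varphi)$ commutes with $\mathcal{J}L_n$; since these are operators that are diagonalizable through the squared-eigenfunction expansion, commutativity forces them to share eigenfunctions, and one then only has to track how the eigenvalue $-1/(2\lambda)$ of $\mathcal{R}^\ast(\varphi)$ transforms into an eigenvalue of $\mathcal{J}L_n$. First I would establish the shared-eigenfunction claim directly: from $L_n = \mathcal{R}^{n-1}(\varphi)L_1$ and the relation $L_{n+1} = \mathcal{R}(\varphi)L_n$ in \eqref{eq:recursion Hamiltonianre}, one checks that on each generalized eigenspace of $\mathcal{R}^\ast(\varphi)$ — spanned by $\big(F^\pm(x,k)\big)_x$ for $k\in\R$, by $\big(F_1^\pm(x)\big)_x$, and by $\big(\dot F_1^\pm(x)\big)_x$, as in the proof of Lemma \ref{le3.81} — the operator $\mathcal{J}L_n$ acts as a scalar (or a $2\times 2$ Jordan block in the generalized directions). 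Concretely, $\mathcal{J}L_n = \mathcal{J}\,\mathcal{R}^{n-1}(\varphi)L_1 = \big(\mathcal{R}^\ast(\varphi)\big)^{n-1}\mathcal{J}L_1$ by repeated application of \eqref{eq:R Rstar}, so it suffices to understand $\mathcal{J}L_1$ on these squared-eigenfunction directions and then multiply by the appropriate power of the $\mathcal{R}^\ast(\varphi)$-eigenvalue $-1/(2\lambda)$.

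Next I would compute the spectrum. For the essential (continuous) part, the generalized eigenfunctions $\big(F^\pm(x,k)\big)_x$, $k\in\R$, give the continuous spectrum of $\mathcal{J}L_n$; since $\mathcal{J}L_n$ has the symbol $\varrho_{n,c}(\zeta)$ in \eqref{eq:symbol JL}, which is purely imaginary for $\zeta\in\R$, and the squared-eigenfunction family furnishes a complete set of generalized eigenfunctions for the continuous spectrum (via the completeness relation \eqref{completeness relations}, \eqref{refined completeness relations}), the essential spectrum lies on $i\R$. For the point spectrum, I would use the variational principle \eqref{eq:1-sol variaprinciple}: differentiating $-\Ham'_{n+1}(\varphi_c) + c\,\Ham'_n(\varphi_c)=0$ in $x$ shows $L_n\varphi_x = 0$, hence $\mathcal{J}L_n\varphi_x = 0$, so $\varphi_x$ lies in the kernel; differentiating instead in $c$ gives $L_n\frac{\partial\varphi}{\partial c} = \Ham'_n(\varphi_c) = c^{n-1}m_\varphi$ (using $\Ham'_n(\varphi)=c^{n-1}m_\varphi$ from Lemma \ref{le3.5}), and since $\mathcal{J}m_\varphi = -\partial_x(1-\partial_x^2)^{-1}m_\varphi = -\varphi_x$ up to a nonzero constant, we get $\mathcal{J}L_n\frac{\partial\varphi}{\partial c}$ proportional to $\varphi_x$, so $\frac{\partial\varphi}{\partial c}$ spans the generalized kernel. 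The fact that $c$ is the \emph{only} eigenvalue of $\mathcal{R}^\ast(\varphi)$ (Lemma \ref{le3.81}), with all other spectral content being continuous and non-$L^2$, then shows that $0$ is the only eigenvalue of $\mathcal{J}L_n$ and that there are no further discrete (in particular no unstable) eigenvalues.

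The main obstacle I anticipate is \emph{the rigorous completeness statement}: one must argue that the squared eigenfunctions $\big(F^\pm(x,k)\big)_x$ together with the finite-dimensional generalized kernel actually exhaust $L^2(\R)$, so that the essential spectrum is \emph{exactly} the image of $\{-1/(2\lambda(k)):k\in\R\}$ under multiplication by the relevant factors, with no residual or embedded spectrum off $i\R$. This requires transferring the completeness relation \eqref{completeness relations}, which is stated for expansions of scalar functions over $F^\pm$, to an expansion adapted to the operator $\mathcal{J}L_n$ and its adjoint structure — essentially verifying that the map $f\mapsto \xi^\pm_f$ in \eqref{refined completeness relations} is an isomorphism onto a suitable weighted space and intertwines $\mathcal{J}L_n$ with multiplication by $\varrho_{n,c}$. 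A secondary technical point is checking that the pairing between the eigenfunctions of $\mathcal{J}L_n$ and those of its adjoint $L_n\mathcal{J}$ is nondegenerate, so that the spectral projections are bounded; this is where the orthogonality/biorthogonality of the squared eigenfunctions enters, and it is what ultimately pins the essential spectrum to the imaginary axis and yields spectral stability. The remaining computations — identifying kernels, evaluating $\mathcal{J}$ on $m_\varphi$, and reading off the symbol — are routine once this completeness framework is in place.
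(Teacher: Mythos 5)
Your proposal follows essentially the same route as the paper's proof: use the intertwining identity \eqref{operator identity2} to conclude that $\mathcal{J}L_n$ and $\mathcal{R}^\ast(\varphi)$ share eigenfunctions, apply $\mathcal{J}L_n$ to the squared-eigenfunction directions $\big(F^{\pm}(x,k)\big)_x$, $\big(F_1^{\pm}\big)_x\sim\varphi_x$, $\big(\dot F_1^{\pm}\big)_x\sim\partial_c\varphi$ to read off the essential spectrum $\varrho_{n,c}(\pm 2k)\subset i\R$ from \eqref{eq:symbol JL}, and identify the kernel and generalized kernel from the variational principle. The only slip is the sign in the $c$-derivative of \eqref{eq:1-sol variaprinciple}, which gives $L_n\frac{\partial\varphi}{\partial c}=-c^{n-1}m_\varphi$ rather than $+c^{n-1}m_\varphi$; since $\mathcal{J}m_\varphi=-\varphi_x$, this still yields $\mathcal{J}L_n\frac{\partial\varphi}{\partial c}=c^{n-1}\varphi_x$ and does not affect the conclusion.
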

\begin{proof}
The fact that the operators $\mathcal{J}L_n$ and the adjoint recursion operator $\mathcal{R}^\ast(\varphi)$ share the same eigenfunctions is inferred by the operator identity \eqref{operator identity2}. By Lemma \ref{le3.81}, one can compute the spectrum of the operator $\mathcal{J}L_n$ directly by employing the squared eigenfunctions as follows
\begin{align*}
& \mathcal{J}L_n\big(F^{\pm}(x,k)\big)_x = \varrho_{n,c}(\pm2k)\big(F^{\pm}(x,k)\big)_x, \quad \text{for} \quad k \in \R, \\
& \mathcal{J}L_n\big(F_1^{\pm}(x)\big)_x\sim\mathcal{J}L_n \varphi_x = 0, \qquad \mathcal{J}L_n\big(\dot{F}_1^{\pm}(x)\big)_x\sim\mathcal{J}L_n\frac{\partial \varphi}{\partial c}= c^{n-1} \varphi_x.
\end{align*}
In view of \eqref{eq:symbol JL}, the essential spectrum of $\mathcal{J}L_n$ is  $\varrho_{n,c}(\pm2k)$ for $ k\in\R$ which is contained in the imaginary axis, which gives the desired result in Proposition \ref {pr3.5-3}.
\end{proof}

For the adjoint operator of $\mathcal{J}L_n$, namely the operator $-L_n\mathcal{J}$, which commutes with the recursion operator $\mathcal{R}(\varphi)$ \eqref{operator identity1}, we have the following result.

\begin{proposition} \label{le3.82}
For $n \ge 1$, the operators $L_n\mathcal{J}$ defined in $L^2(\R)$ with domain $H^1(\R)$ and the recursion operator $\mathcal{R}(\varphi)$ share the same eigenfunctions. Moreover, the essential spectrum of  $L_n\mathcal{J}$ is contained in $i\R$, the kernel is spanned by the function $m_{\varphi}$ and the generalized kernel is spanned by $\partial_x^{-1}\big(\frac{\partial m_{\varphi}}{\partial c}\big)$.
\end{proposition}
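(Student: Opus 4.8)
The plan is to follow the proof of Proposition~\ref{pr3.5-3} in a dual fashion, with the adjoint recursion operator $\mathcal{R}^\ast(\varphi)$ replaced by $\mathcal{R}(\varphi)$ and the derived squared eigenfunctions $(F^{\pm}(x,k))_x$ replaced by the squared eigenfunctions $(1-\partial_x^2)F^{\pm}(x,k)$. First I would use the operator identity \eqref{operator identity1}, that is $L_n\mathcal{J}\mathcal{R}(\varphi)=\mathcal{R}(\varphi)L_n\mathcal{J}$, so that $L_n\mathcal{J}$ commutes with $\mathcal{R}(\varphi)$ and leaves each (generalized) eigenspace of $\mathcal{R}(\varphi)$ invariant; this gives at once the claim that the two operators share eigenfunctions. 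By Lemma~\ref{lem3.6}, the eigenfunctions of $\mathcal{R}(\varphi)$ are the squared eigenfunctions $(1-\partial_x^2)F^{\pm}(x,k)$, $k\in\R$, associated with the essential spectrum $(0,2\omega]$, together with the unique discrete eigenfunction $m_\varphi\sim(1-\partial_x^2)F_1^{\pm}$ at the eigenvalue $c$ and its generalized partner $(1-\partial_x^2)\dot{F}_1^{\pm}$.

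Next I would compute the action of $L_n\mathcal{J}$ on these modes. Since $L_n\mathcal{J}$ commutes with $\mathcal{R}(\varphi)$, on the one-dimensional $\mathcal{R}(\varphi)$-eigenspace spanned by $(1-\partial_x^2)F^{\pm}(x,k)$ (for $k$ outside a finite exceptional set) it acts as multiplication by a scalar; this scalar is read off by comparing the leading exponential factor $e^{\pm 2ikx}$ of $F^{\pm}(x,k)$ as $x\to\pm\infty$ (see \eqref{eq6}, \eqref{eq6'}) with the principal symbol of $L_n\mathcal{J}$, which equals $\varrho_{n,c}$ in \eqref{eq:symbol JL} --- the same symbol as $\mathcal{J}L_n$, since $\mathcal{J}=-\partial_x(1-\partial_x^2)^{-1}$ and $L_n$ enter $\mathcal{J}L_n$ and $L_n\mathcal{J}$ symmetrically at the level of principal parts. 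This yields the eigenvalue $\varrho_{n,c}(\pm 2k)\in i\R$ on $(1-\partial_x^2)F^{\pm}(x,k)$, so that the essential spectrum of $L_n\mathcal{J}$ lies on the imaginary axis, and these generalized eigenfunctions do not decay. (Alternatively, since $-L_n\mathcal{J}=(\mathcal{J}L_n)^\ast$, one may transpose Proposition~\ref{pr3.5-3} and use that the essential spectrum of $A^\ast$ is the complex conjugate of that of $A$.)

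For the kernel I would note that $m_\varphi=(1-\partial_x^2)\varphi$ gives $\mathcal{J}m_\varphi=-\varphi_x$, while differentiating the one-soliton variational principle \eqref{eq:1-sol variaprinciple} in the translation parameter gives $L_n\varphi_x=0$; hence $L_n\mathcal{J}m_\varphi=-L_n\varphi_x=0$, so $m_\varphi\in\ker(L_n\mathcal{J})$. Differentiating \eqref{eq:1-sol variaprinciple} in $c$ and using $\Ham_n'(\varphi)=c^{n-1}m_\varphi$ gives $L_n\partial_c\varphi=-c^{n-1}m_\varphi$, while a direct computation with $\mathcal{J}=-\partial_x(1-\partial_x^2)^{-1}$ gives $\mathcal{J}\partial_x^{-1}(\partial_c m_\varphi)=-\partial_c\varphi$, so that
\[
L_n\mathcal{J}\,\partial_x^{-1}\!\big(\tfrac{\partial m_\varphi}{\partial c}\big)=c^{n-1}m_\varphi,\qquad \big(L_n\mathcal{J}\big)^{2}\partial_x^{-1}\!\big(\tfrac{\partial m_\varphi}{\partial c}\big)=0.
\]
Thus $\partial_x^{-1}(\partial_c m_\varphi)$ generates a Jordan chain of length two above $m_\varphi$ at the eigenvalue $0$. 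To conclude that $\ker(L_n\mathcal{J})$ is \emph{exactly} $\mathrm{span}\{m_\varphi\}$ and that this chain has length exactly two, I would invoke the completeness of the squared eigenfunctions established earlier in this section (the counterpart of \eqref{completeness relations} for $\mathcal{R}(\varphi)$): these modes form a basis of $L^2(\R)$, on the continuous part $\varrho_{n,c}(\pm 2k)$ vanishes only at $k=0$ --- which, since $c>2\omega$, is the edge of the essential spectrum rather than a genuine eigenvalue --- and the $\mathcal{R}(\varphi)$-generalized eigenspace at $c$ is two-dimensional.

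I expect the essential-spectrum step to be the main obstacle: because $L_n\mathcal{J}$ is nonlocal, one cannot pin down the continuous-spectrum multiplier, bound the length of the Jordan chain at $0$, or exclude stray $L^2$ kernel elements by an ODE or Wronskian count as in the Schr\"odinger setting; all of this has to come from the squared-eigenfunction completeness relation together with the commutation \eqref{operator identity1}. A secondary technicality is the careful handling of $\partial_x^{-1}$ acting on $\partial_c m_\varphi$, which need not lie in $L^2(\R)$, so that the Jordan-chain identities above must be understood in the appropriate class of bounded functions, exactly as in Proposition~\ref{pr3.5-3}.
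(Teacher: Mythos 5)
Your proposal is correct and follows essentially the same route as the paper: commutation via \eqref{operator identity1}, identification of the eigenfunctions of $\mathcal{R}(\varphi)$ from Lemma~\ref{lem3.6}, the computations $L_n\mathcal{J}m_\varphi=-L_n\varphi_x=0$ and $L_n\mathcal{J}\,\partial_x^{-1}(\partial_c m_\varphi)\sim L_n\partial_c\varphi$, and the essential spectrum read off from the symbol $\varrho_{n,c}$ in \eqref{eq:symbol JL}. The only cosmetic differences are that the paper obtains the continuous-spectrum multiplier by the one-line identity $L_n\mathcal{J}(1-\partial_x^2)F^{\pm}=-L_n\big(F^{\pm}\big)_x=\varrho_{n,c}(\pm 2k)(1-\partial_x^2)F^{\pm}$ rather than your symbol/adjoint argument, and your appeal to the squared-eigenfunction completeness to pin down the exact kernel and Jordan-chain length is a refinement the paper leaves implicit.
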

\begin{proof}
From \eqref{operator identity1} we see immediately that $L_n\mathcal{J}$ and $\mathcal{R}(\varphi)$ share the same eigenfunctions. By Lemma \ref{re3.6}, one can compute the spectrum of the operator $L_n\mathcal{J}$ directly by employing the squared eigenfunctions as follows
\begin{align*}
& L_n\mathcal{J}(1-\partial_x^2)F^{\pm}(x,k)=-L_n\big(F^{\pm}(x,k)\big)_x=\varrho_{n,c}(\pm2k)(1-\partial_x^2)F^{\pm}(x,k), \\
& L_n\mathcal{J}(1-\partial_x^2)F_1^{\pm}(x)\sim L_n\mathcal{J}m_{\varphi}=-L_n\varphi_x=0, \\
& L_n\mathcal{J}(1-\partial_x^2)\dot{F}_1^{\pm}(x)=-L_n\big(\dot{F}_1^{\pm}(x)\big)_x\sim L_n\frac{\partial \varphi}{\partial c}=-c^{n-1}m_{\varphi}.
\end{align*}
In view of \eqref{eq:symbol JL}, the essential spectrum of $L_n\mathcal{J}$ is $\varrho_{n,c}(\pm2k)$ for $ k\in\R$, which is purely imaginary. On the other hand, it is inferred from  the last equality above that $\big(\dot{F}_1^{\pm}(x)\big)_x\sim \frac{\partial \varphi}{\partial c}$.  Hence the generalized kernel is
\[
(1-\partial_x^2)\dot{F}_1^{\pm}(x)\sim \partial_x^{-1} \left( \frac{\partial m_{\varphi}}{\partial c}\right),
\] which implies the desired result in  the proposition.
\end{proof}

On account of  Proposition \ref{pr3.5-3} and Proposition \ref{le3.82},  we now have the two function sets as follows. The first set
\begin{equation}\label{eq:set JL}
\left\{\big(F^{\pm}(x,k)\big)_x \ \  \text{for }  k\in\R;\quad \varphi_x; \quad \frac{\partial \varphi}{\partial c}  \right\}
\end{equation}
consists of  linearly independent eigenfunctions and generalized kernel of the operator $\mathcal{J}L_n$. Moreover, they are essentially orthogonal under the $L^2$-inner product.
The second set
\begin{equation}\label{eq:set LJ}
\left\{(1-\partial_x^2)F^{\pm}(x,k) \ \  \text{for}  \  k\in\R;\quad m_{\varphi}; \quad\partial_x^{-1} \left( \frac{\partial m_{\varphi}}{\partial c} \right)  \right\}
\end{equation}
consists of linearly independent eigenfunctions and generalized kernel of the operator $L_n\mathcal{J}$. Notice that from the expression of soliton profile \eqref{eq:so} or \eqref{eq:stationary}, as the functions $\varphi,m_\varphi$ are even and localized, one sees that $L_n\frac{\partial \varphi(-x)}{\partial c}=-m_\varphi$ also holds, then the function $\frac{\partial \varphi}{\partial c}$ is also localized and even. The nonzero inner product of the elements of the sets \eqref{eq:set JL} and \eqref{eq:set LJ} are the following:
\begin{align*}
&\int_{\R}\big(F^{\pm}(x,k)\big)_x(1-\partial_x^2)\overline{F^{\pm}(x,l)}\rmd x=\mp2\pi ik|a(k)|^2\delta(k-l),\quad \text{for} \ k,l\in\R, \\
& \int_{\R}\varphi_x\partial_x^{-1}\big(\frac{\partial m_{\varphi}}{\partial c}\big)\rmd x=-\int_{\R}\varphi\frac{\partial m_{\varphi}}{\partial c}\rmd x=-\frac{\rmd \Ham_1(\varphi)}{\rmd c}=-4\kappa c, \\
& \int_{\R}\frac{\partial \varphi}{\partial c}m_{\varphi}\rmd x=\frac{\rmd \Ham_1(\varphi)}{\rmd c}=4\kappa c.
\end{align*}
The corresponding closure relation is
\begin{equation}\label{eq:closure relation}
\mp \int_{\R}\frac{1}{2\pi ik|a(k)|^2}\big(F^{\pm}(x,k)\big)_x(1-\partial_y^2)\overline{F^{\pm}(y,k)}\rmd k + \frac{1}{4\kappa c}\bigg(\varphi_x\partial_{y}^{-1}\big(\frac{\partial m_{\varphi}}{\partial c}\big)+\frac{\partial \varphi}{\partial c}m_{\varphi}(y)\bigg)=\delta(x-y),
\end{equation}
which indicates that  any function $z(x)$ which vanishes as $x\rightarrow\pm\infty$  can be expanded over the above two sets \eqref{eq:set JL} and \eqref{eq:set LJ}. By comparing \eqref{eq:closure relation} to \eqref{refined completeness relations}, one can take the derivative of \eqref{refined completeness relations} with respect to $x$ and insert $z(x)=f'(x)$ to have the following decomposition:
\begin{eqnarray} \label{decomposition of z}
&& z(x)=\int_{\R} \left( F^{\pm}(x,k) \right)_x P^{\pm}(k)\rmd k+\beta \varphi_x+\gamma \frac{\partial \varphi}{\partial c},
\end{eqnarray}
where the coefficients $P^{\pm}$, $\beta$ and $\gamma$ are related to the coefficients in \eqref{refined completeness relations}. 
Similarly, one can also decompose the function $z(x)$ on the second set \eqref{eq:set LJ} by applying the operator $1-\partial_x^2$ to \eqref{refined completeness relations}.

\subsection{The recursion operators at $N$-solitons}\label{subsec spec multi}
This subsection devotes to showing the spectral information of the recursion operators $\mathcal{R}(U^{(N)})$ and the adjoint $\mathcal{R}^{\ast}(U^{(N)})$ and the linearized operators $\mathcal{J}\mathcal{L}_N$, $\mathcal{L}_N\mathcal{J}$ and $\mathcal{L}_N$ (defined in $L^2(\R)$ with domain $H^1(\R)$) around the CH $N$-solitons profile $U^{(N)}$.

As stated in Remark \ref{re3.6}, the operator identities \eqref{operator identity3} and \eqref{operator identity4}  hold also for the $N$-solitons.
One has, by similar arguments of Lemma \ref{lem3.6} and \ref{le3.81}, the following:

\begin{lemma} \label{le3.62} The recursion operator $\mathcal{R}(U^{(N)})$, defined in $L^2(\R)$ with domain $H^1(\R)$, has $N$ discrete eigenvalues $c_j$  for $j=1,2,\cdot\cdot\cdot,N$, with associated  eigenfunction $G_j(U^{(N)})$ defined as follows:
\begin{eqnarray}
&&G_j(U^{(N)}) := \left( \Ham'_{N}+(\mu_{N-1}+c_j) \Ham'_{N-1} + \cdots + \frac{\mu_0}{c_j}\Ham'_1 \right)(U^{(N)}).\label{eq:Fj}
\end{eqnarray}
 The essential spectrum of $\mathcal{R}(U^{(N)})$ is the interval $(0,2\omega]$, and the corresponding eigenfunctions do not decay and thus are not in $L^2(\R)$.
\end{lemma}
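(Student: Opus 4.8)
The plan is to transcribe the squared-eigenfunction analysis of Lemma~\ref{lem3.6} (and Lemma~\ref{le3.81}) to the $N$-soliton potential, and then to carry out the one genuinely new step: matching the resulting discrete eigenfunctions with the explicit conserved-quantity combinations $G_j(U^{(N)})$ in \eqref{eq:Fj}. First I would take the potential $m=m_{U^{(N)}}:=U^{(N)}-U^{(N)}_{xx}$ in the Lax spectral problem \eqref{eq3}; by the invariance property recalled at the end of Section~\ref{subsec intro solitons} one has $m+\omega>0$, and since $U^{(N)}$ is an $N$-soliton the potential is reflectionless, so the spectrum of \eqref{eq3} is the continuous band $k\in\R$ together with exactly $N$ simple discrete eigenvalues $k=i\kappa_n$, $0<\kappa_1<\cdots<\kappa_N<\frac12$, with $c_n=\frac{2\omega}{1-4\kappa_n^2}=-\frac{1}{2\lambda_n}>2\omega$. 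Forming the squared eigenfunctions $F^{\pm}(x,k)=(f^{\pm}(x,k))^2$ and $F_n^{\pm}(x)=F^{\pm}(x,i\kappa_n)$, the identity \eqref{eq:Ksquared eigen}, valid for any such potential, gives $\mathcal{K}[m]F^{\pm}(x,k)=-\frac{1}{2\lambda}F^{\pm}(x,k)$; combined with the similarity relation \eqref{eq:similar} this shows that $(1-\partial_x^2)F^{\pm}(x,k)$ is a generalized eigenfunction of $\mathcal{R}(U^{(N)})$ with eigenvalue $-\frac{1}{2\lambda(k)}=\frac{\omega}{2k^2+\frac12}$.

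As $k$ ranges over $\R$ this eigenvalue covers the interval $(0,2\omega]$, and the Jost asymptotics \eqref{eq6}--\eqref{eq6'} show that $(1-\partial_x^2)F^{\pm}(x,k)$ has no spatial decay and hence is not in $L^2(\R)$; this gives the essential-spectrum and non-decay assertions. At $k=i\kappa_n$ the Jost solutions $f^{\pm}(x,i\kappa_n)$ decay exponentially, so $(1-\partial_x^2)F_n^{\pm}\in L^2(\R)$ is a genuine eigenfunction of $\mathcal{R}(U^{(N)})$ with eigenvalue $c_n$. That $c_1,\dots,c_N$ exhaust the discrete spectrum, each eigenvalue being simple, follows from \eqref{eq:similar}: $\mathcal{R}(U^{(N)})$ is conjugate to $\mathcal{K}[m]$, whose spectrum is the image under $k\mapsto-\frac{1}{2\lambda(k)}$ of the Lax spectrum, and the reflectionless $m$ has precisely $N$ simple discrete Lax eigenvalues. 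The triviality of the kernel and an explicit inverse, if one adds them as in Lemma~\ref{lem3.6}, follow in the same way since $0\notin(0,2\omega]\cup\{c_1,\dots,c_N\}$ for $\omega>0$.

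It remains to identify the $c_j$-eigenfunction with $G_j(U^{(N)})$. The inputs are the recursion \eqref{eq:recursion2}, which at $u=U^{(N)}$ reads $\mathcal{R}(U^{(N)})\Ham_n'(U^{(N)})=\Ham_{n+1}'(U^{(N)})$, and the Euler--Lagrange equation $I_N'(U^{(N)})=0$ from Remark~\ref{re3.6}. Writing $\Ham_n'(U^{(N)})=\mathcal{R}^{n-1}(U^{(N)})\Ham_1'(U^{(N)})$ and substituting $\mu_n=(-1)^{N-n+1}\sigma_{N-n+1}$, the Euler--Lagrange equation factorizes as
\[
\prod_{l=1}^{N}\bigl(\mathcal{R}(U^{(N)})-c_l\bigr)\,\Ham_1'(U^{(N)})=0,
\]
since $z^{N}+\mu_N z^{N-1}+\cdots+\mu_1=\prod_{l=1}^{N}(z-c_l)$. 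Setting $G_j:=\prod_{l\neq j}\bigl(\mathcal{R}(U^{(N)})-c_l\bigr)\Ham_1'(U^{(N)})$ and expanding $\prod_{l\neq j}(z-c_l)$ in powers of $z$, the relation $\Ham_n'(U^{(N)})=\mathcal{R}^{n-1}(U^{(N)})\Ham_1'(U^{(N)})$ identifies $G_j$ with exactly the linear combination of $\Ham_N',\dots,\Ham_1'$ displayed in \eqref{eq:Fj}, while $(\mathcal{R}(U^{(N)})-c_j)G_j=\prod_{l=1}^{N}(\mathcal{R}(U^{(N)})-c_l)\Ham_1'(U^{(N)})=0$, i.e.\ $\mathcal{R}(U^{(N)})G_j=c_j G_j$. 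Finally $G_j\neq0$: since $m_{U^{(N)}}=\Ham_1'(U^{(N)})$ corresponds to a reflectionless potential, its squared-eigenfunction expansion has no continuous component (by \eqref{eq39} and the explicit form \eqref{eq:conservation laws} of $\Ham_1$ only the discrete modes contribute, each with a nonzero weight), so $m_{U^{(N)}}=\sum_l v_l\,(1-\partial_x^2)F_l^{\pm}$ with all $v_l\neq0$; then $G_j=v_j\prod_{l\neq j}(c_j-c_l)\,(1-\partial_x^2)F_j^{\pm}\neq0$ and spans the one-dimensional $c_j$-eigenspace.

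\emph{Main obstacle.} The essential-spectrum and non-decay parts are a routine transcription of Lemma~\ref{lem3.6}; the real difficulty is the identification step in the last paragraph. One has two a priori unrelated descriptions of the $c_j$-eigenspace --- the analytic one via the squared Jost eigenfunctions $(1-\partial_x^2)F_j^{\pm}$ and the algebraic one via the conserved-quantity gradient $G_j$ --- and reconciling them relies on the simplicity of the $j$-th discrete Lax eigenvalue together with the nonvanishing of $G_j$, the latter being the assertion that $m_{U^{(N)}}$ has a nonzero projection onto each discrete squared-eigenfunction mode. Carefully matching the indexing of the multipliers $\mu_n$ with the coefficients in \eqref{eq:Fj} (so that the constant term of $\prod_{l\neq j}(z-c_l)$ is read correctly) is bookkeeping rather than a conceptual obstruction.
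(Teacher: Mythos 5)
Your proposal follows essentially the same route as the paper: the squared eigenfunctions $F^{\pm}(x,k)$ of the Lax problem at $m_{U^{(N)}}$ together with the similarity relation \eqref{eq:similar} give the essential spectrum $(0,2\omega]$ and the non-decaying generalized eigenfunctions, while the Euler--Lagrange equation $I_N'(U^{(N)})=0$ factored through the recursion operator yields $\mathcal{R}(U^{(N)})G_j=c_jG_j$ and the identification $F_j^{\pm}\sim(1-\partial_x^2)^{-1}G_j(U^{(N)})$. Your additional material (the explicit factorization $\prod_{l}(z-c_l)$, the exhaustion/simplicity of the discrete spectrum, and the nonvanishing of $G_j$) merely makes explicit steps the paper leaves implicit, so the argument is correct and matches the paper's proof.
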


\begin{proof} Consider the Jost solutions $f^{\pm}(x,k)$ of the eigenvalue problem \eqref{eq3} with the potential $m(x)=m_{U^{(N)}}:=U^{(N)}-U^{(N)}_{xx}$ and the asymptotic expressions in \eqref{eq6} and \eqref{eq6'}. In this case there are $N$ eigenvalues $k=i\kappa_j$ ($0<\kappa_j<\frac12, j=1,2,\cdot\cdot\cdot,N$) which generates the $N$-solitons profile $U^{(N)}$. It is then found that the squared eigenfunctions $F^{\pm}(x,k)=(f^{\pm}(x,k))^2$ satisfy
\begin{equation}\label{eq:Nsquared eigen}
 \bigg(-\partial_x^2+1+2\lambda\big(m_{U^{(N)}}(x)+2\omega+\partial_x^{-1}(m_{U^{(N)}}(x)\partial_x)\big)\bigg)F^{\pm}(x,k)=0.
\end{equation}
This in turn implies that
\begin{equation*}
(1-\partial_x^2)^{-1} \left( m_{U^{(N)}}+2\omega+\partial_x^{-1}(m_{U^{(N)}}(x)\partial_x) \right)F^{\pm}(x,k)=\mathcal{K}[m_{U^{(N)}}]F^{\pm}(x,k)=-\frac{1}{2\lambda}F^{\pm}(x,k).
\end{equation*}
By \eqref{eq:Nsquared eigen} and \eqref{eq:similar}, for $ j=1,2,\cdot\cdot\cdot,N$, it follows that
\begin{eqnarray}
&&\mathcal{R}(U^{(N)})(1-\partial_x^2)F^{\pm}(x,k)=-\frac{1}{2\lambda}(1-\partial_x^2)F^{\pm}(x,k),\quad \text{for} \ k\in\R, \label{eq:Reigen10} \\ &&\mathcal{R}(U^{(N)})(1-\partial_x^2)F_j^{\pm}(x)=-\frac{1}{2\lambda_j}(1-\partial_x^2)F_j^{\pm}(x)=c_j(1-\partial_x^2)F_j^{\pm}(x).\label{eq:R eigen11}
\end{eqnarray}
Moreover, in view of $\dot{F}_j^{\pm}(x)$ in the completeness relation \eqref{completeness relations}, it follows that
\begin{equation*}
\mathcal{R}(U^{(N)})(1-\partial_x^2)\dot{F}_j^{\pm}(x)=-\frac{1}{2\lambda_j}(1-\partial_x^2)\dot{F}_j^{\pm}(x)-\frac{i\kappa_j}{\omega\lambda_j^2}(1-\partial_x^2)F_j^{\pm}(x).
\end{equation*}
Since the Euler-Lagrange equation of profile $U^{(N)}$ satisfies
\[
0=I'_N(U^{(N)})=\big(\mathcal{R}(U^{(N)})-c_j\big)G_j(U^{(N)}),\quad j=1,2,\cdot\cdot\cdot,N.
\]
Therefore one has $\mathcal{R}(U^{(N)})G_j(U^{(N)})=c_jG_j(U^{(N)})$, and it is  inferred  from \eqref{eq:R eigen11} that $F_j^{\pm}(x)\sim (1-\partial_x^2)^{-1}F_j(U^{(N)})$.
On account of  \eqref{eq:Reigen10}, the essential spectrum of $\mathcal{R}(U^{(N)})$ is given by the set $\left\{\lambda : -\frac{1}{2\lambda}=\frac{\omega}{2k^2+\frac12} \right\}$ for $k\in\R$, which is equal to the interval $(0,2\omega]$.
The associated generalized eigenfunctions $(1-\partial_x^2)F^{\pm}(x,k)$ do not decay and are not in $L^2(\R)$, which can be seen from  the asymptotics of $f^{\pm}(x,k)$ in \eqref{eq6} and \eqref{eq6'}.
From \eqref{eq:R eigen11}, the recursion operator has $N$ discrete eigenvalues $c_j$  with associated eigenfunctions $F_j(U^{(N)})$ defined in \eqref{eq:Fj}.
The proof of the lemma is completed.
\end{proof}

Similarly, one has the following for the adjoint recursion operator $\mathcal{R}^\ast(U^{(N)})$ around the $N$-solitons.
\begin{lemma} \label{le3.83} The adjoint recursion operator $\mathcal{R}^\ast(U^{(N)})$, defined in $L^2(\R)$ with domain $H^1(\R)$, has $N$ discrete eigenvalues $c_j$  associated with the eigenfunctions $\mathcal{J}G_j(U^{(N)})$. The essential spectrum is the interval $(0,2\omega]$, and the corresponding eigenfunctions do not decay and are not in $L^2(\R)$.
\end{lemma}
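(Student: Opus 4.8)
The plan is to mirror the proofs of Lemma \ref{lem3.6}, Lemma \ref{le3.81} and Lemma \ref{le3.62}, the only structural change being that the single bound state of the one-soliton problem is replaced by the $N$ discrete eigenvalues $k=i\kappa_j$, $0<\kappa_j<\frac12$, of \eqref{eq3} associated with the reflectionless potential $m_{U^{(N)}}:=U^{(N)}-U^{(N)}_{xx}$. First I would start from \eqref{eq:Nsquared eigen}, which gives $\mathcal{K}[m_{U^{(N)}}]F^{\pm}(x,k)=-\frac{1}{2\lambda}F^{\pm}(x,k)$, and apply $\mathcal{J}_1$ to it. Using $\mathcal{K}[m]=\mathcal{J}_2^{-1}\mathcal{J}_1$ together with $\mathcal{J}_2=-\partial_x(1-\partial_x^2)$, exactly as in the derivation of \eqref{eq:Kstarsquared eigen}, this produces $\mathcal{R}^{\ast}(U^{(N)})\big(F^{\pm}(x,k)\big)_x=-\frac{1}{2\lambda}\big(F^{\pm}(x,k)\big)_x$ for $k\in\R$, and, at $k=i\kappa_j$, $\mathcal{R}^{\ast}(U^{(N)})\big(F_j^{\pm}(x)\big)_x=c_j\big(F_j^{\pm}(x)\big)_x$ with $c_j=-\frac{1}{2\lambda_j}=\frac{2\omega}{1-4\kappa_j^2}>2\omega$. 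Thus the adjoint recursion operator is again diagonalized by the derivatives of the squared eigenfunctions.

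Next I would read off the essential spectrum: as $k$ ranges over $\R$ the value $-\frac{1}{2\lambda(k)}=\frac{\omega}{2k^2+\frac12}$ sweeps out the interval $(0,2\omega]$, while by the Jost asymptotics \eqref{eq6} and \eqref{eq6'} the generalized eigenfunctions $\big(F^{\pm}(x,k)\big)_x$ behave like $e^{\pm2ikx}$ as $x\to\pm\infty$, hence are bounded but do not decay and are not in $L^2(\R)$. To identify the discrete eigenfunctions I would invoke Lemma \ref{le3.62}, which yields $\mathcal{R}(U^{(N)})G_j(U^{(N)})=c_jG_j(U^{(N)})$ with $G_j(U^{(N)})$ as in \eqref{eq:Fj} and $F_j^{\pm}\sim(1-\partial_x^2)^{-1}G_j(U^{(N)})$, and then apply the intertwining relation \eqref{eq:R Rstar}: $\mathcal{R}^{\ast}(U^{(N)})\mathcal{J}G_j(U^{(N)})=\mathcal{J}\mathcal{R}(U^{(N)})G_j(U^{(N)})=c_j\mathcal{J}G_j(U^{(N)})$. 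Since $\mathcal{J}=-\partial_x(1-\partial_x^2)^{-1}$ is injective on functions vanishing at infinity and $G_j(U^{(N)})$ decays exponentially (being built from the bound states $F_j^{\pm}$), the function $\mathcal{J}G_j(U^{(N)})$ is a genuine nonzero $L^2$ eigenfunction, and comparing with $\big(F_j^{\pm}\big)_x\sim-\mathcal{J}G_j(U^{(N)})$ shows it is the one claimed. Distinctness of the $c_j$ and linear independence of the $\mathcal{J}G_j(U^{(N)})$ follow because the $F_j^{\pm}$ project onto distinct bound states, so each $c_j$ is simple.

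The step I expect to require the most care is showing that $\{c_1,\dots,c_N\}\cup(0,2\omega]$ exhausts the spectrum, i.e.\ that no embedded or additional isolated eigenvalues have been missed. For this I would rely on the completeness relation \eqref{completeness relations} for the reflectionless potential $m_{U^{(N)}}$: differentiating it in $x$ (equivalently, transporting it through the intertwining $\mathcal{J}$) turns the complete family of squared eigenfunctions into a complete family of eigen- and generalized eigenfunctions for $\mathcal{R}^{\ast}(U^{(N)})$, which pins down the spectrum. Everything else is the routine bookkeeping of carrying the one-soliton computation through with $N$ bound states in place of one, and I do not expect it to raise any new difficulty; the genuinely nontrivial input is the integrability-based completeness relation.
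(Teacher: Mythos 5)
Your proposal is correct and follows essentially the same route as the paper: computing the action of $\mathcal{R}^\ast(U^{(N)})$ on $\big(F^{\pm}(x,k)\big)_x$ from the Lax problem with potential $m_{U^{(N)}}$ yields the essential spectrum $(0,2\omega]$ with non-decaying generalized eigenfunctions via the Jost asymptotics \eqref{eq6}, \eqref{eq6'}, while the bound states $k=i\kappa_j$ give the $N$ eigenvalues $c_j$ with eigenfunctions $\mathcal{J}G_j(U^{(N)})$. The only cosmetic difference is that you transfer the eigenvalue equation to $\mathcal{J}G_j(U^{(N)})$ through the intertwining identity \eqref{eq:R Rstar} together with Lemma \ref{le3.62}, whereas the paper reads it off directly from the identification $\big(F_j^{\pm}\big)_x\sim\mathcal{J}G_j(U^{(N)})$ in the squared-eigenfunction computation.
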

\begin{proof}
Consider the Jost solutions $f^{\pm}(x,k)$ of the spectral problem \eqref{eq3} with the potential $m(x)=m_{U^{(N)}}:=U^{(N)}-U^{(N)}_{xx}$ and the asymptotic expressions in \eqref{eq6} and \eqref{eq6'}. In this case there are $N$ eigenvalues $k=i\kappa_j$ ($0<\kappa_j<\frac12, j=1,2,\cdot\cdot\cdot,N$) which generates the $N$-solitons proflie. It is then found (similar to \eqref{eq:Kstarsquared eigen}) that
\begin{eqnarray}
&&\mathcal{R}^\ast(U^{(N)})\big(F^{\pm}(x,k)\big)_x=-\frac{1}{2\lambda}\big(F^{\pm}(x,k)\big)_x, k\in\R, \label{eq:Reigen44'} \\
&&\mathcal{R}^\ast(U^{(N)})\big(F_j^{\pm}(x)\big)_x\sim\mathcal{R}^\ast(U^{(N)})\mathcal{J}G_j(U^{(N)})=-\frac{1}{2\lambda_j}\mathcal{J}G_j(U^{(N)})\nonumber\\
&&\mathcal{R}^\ast(U^{(N)})U^{(N)}_{x_j}=c_jU^{(N)}_{x_j}.\label{eq:R eigen55'}\\
&&\mathcal{R}^\ast(U^{(N)})\big(\dot{F}_j^{\pm}(x)\big)_x=-\frac{1}{2\lambda_j}\big(\dot{F}_j^{\pm}(x)\big)_x
-\frac{i\kappa_j}{\omega\lambda_j^2}\big(F_j^{\pm}(x)\big)_x.\nonumber
\end{eqnarray}
Since from \eqref{eq:R eigen55'}, the adjoint recursion operator has $N$ discrete eigenvalues $c_j$  with associated eigenfunctions $\mathcal{J}F_j(U^{(N)})$. On account of  \eqref{eq:Reigen44'}, the essential spectrum of $\mathcal{R}^\ast(U^{(N)})$ is the interval $(0,2\omega]$.
The associated generalized eigenfunctions $\big(F^{\pm}(x,k)\big)_x$ do not decay and are not in $L^2(\R)$, which can be seen from  the asymptotics of $f^{\pm}(x,k)$ in \eqref{eq6} and \eqref{eq6'}. This completes the proof of Lemma \ref{le3.83}.
\end{proof}

Recall that $\mathcal{J}=-\partial_x(1-\partial_x^2)^{-1}$ and $\mathcal{L}_N=I''_N(U^{(N)})$ is the second variation operator in \eqref{eq:linearized n-soliton operator}. We now  consider the operator $\mathcal{J}\mathcal{L}_N$ whose principle part is $\mathcal{J}I''_N(0)$. Direct computation shows that the symbol of the principle part of the operator $\mathcal{J}\mathcal{L}_N$ evaluated at $2k$ is
\begin{eqnarray}\label{eq:symbol JLM}
\varrho_{N}(2k)&:=&\frac{-2ik}{1+4k^2}\sum_{n=1}^N(-1)^{n-1}\sigma_{j,N-n}\frac{\big( 2\omega\big)^{n-1}(4c_jk^2+c_j-2\omega)}{(1+4k^2)^{n-1}},
\end{eqnarray}
where $\sigma_{j,k}$ are the elementally symmetric functions of $c_1,c_2,\cdot\cdot\cdot,c_{j-1},c_{j+1},\cdot\cdot\cdot,c_N$ as follows,
$$\sigma_{j,0}=1, \ \sigma_{j,1}=\sum_{l=1,l\neq j}^Nc_l, \ \sigma_{j,2}=\sum_{l<k,k,l\neq j}c_lc_k, ...,
\ \sigma_{j,N}=\prod_{l=1,l\neq j}^Nc_l.$$
By employing the operator identity \eqref{operator identity4}, one has the following statement related to the spectrum of the operator $\mathcal{J}\mathcal L_N$.
\begin{proposition} \label{pr4.5-1}
The operators $\mathcal{J}\mathcal L_N$, defined in $L^2(\R)$ with domain $H^1(\R)$, and the adjoint recursion operator $\mathcal{R}^\ast(U^{(N)})$, share the same eigenfunctions. Moreover, the essential spectrum of  $\mathcal{J}\mathcal L_N$ is contained in $i\R$. The kernel is $N$-fold and spanned by $\mathcal{J}G_j(U^{(N)})$ for $j=1,2,\cdot\cdot\cdot,N$ with  $G_j(U^{(N)})$ defined in \eqref{eq:Fj}. The generalized kernel is spanned by the functions $\frac{\partial U^{(N)}}{\partial c_j}$.
\end{proposition}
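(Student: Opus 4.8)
The plan is to transfer, verbatim in structure, the single-soliton argument of Proposition~\ref{pr3.5-3} to the profile $U^{(N)}$, with the commutation identity \eqref{operator identity4} playing the role of \eqref{operator identity2} and Lemmas~\ref{le3.62}--\ref{le3.83} replacing Lemmas~\ref{lem3.6}--\ref{le3.81}. First I would observe that, by \eqref{operator identity4}, $\mathcal{J}\mathcal{L}_N$ commutes with $\mathcal{R}^\ast(U^{(N)})$, so every (generalized) eigenspace of $\mathcal{R}^\ast(U^{(N)})$ is $\mathcal{J}\mathcal{L}_N$-invariant; since by Lemma~\ref{le3.83} these spaces are (essentially) one-dimensional, the two operators share their eigenfunctions, and these are produced explicitly from the squared eigenfunctions of the Lax operator \eqref{eq3} with potential $m_{U^{(N)}}=U^{(N)}-U^{(N)}_{xx}$: the family $\{(F^{\pm}(x,k))_x:k\in\R\}$ together with $\{(F_j^{\pm}(x))_x,\ (\dot F_j^{\pm}(x))_x:j=1,\dots,N\}$, where $F^{\pm}=(f^{\pm})^2$ and $F_j^{\pm}=F^{\pm}(\cdot,i\kappa_j)$.

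For the continuous part I would use \eqref{eq:Reigen44'}: for $k\in\R$, $(F^{\pm}(x,k))_x$ is a generalized eigenfunction of $\mathcal{R}^\ast(U^{(N)})$, hence of $\mathcal{J}\mathcal{L}_N$, and the eigenvalue is read off by matching the constant-coefficient principal part of $\mathcal{J}\mathcal{L}_N$, whose Fourier symbol at wavenumber $2k$ is $\varrho_N(2k)$ from \eqref{eq:symbol JLM}, against the asymptotics $F^{\pm}(x,k)\sim e^{\pm 2ikx}$ as $x\to\pm\infty$ supplied by \eqref{eq6}--\eqref{eq6'}. This gives
\[
\mathcal{J}\mathcal{L}_N\,(F^{\pm}(x,k))_x=\varrho_N(\pm 2k)\,(F^{\pm}(x,k))_x,\qquad k\in\R,
\]
and since these modes do not decay they carry the essential spectrum; by inspection of \eqref{eq:symbol JLM} it lies on $i\R$.

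For the discrete part I would differentiate the Euler--Lagrange identity $I_N'(U^{(N)})=0$ along the $N$-soliton manifold. Moving the $j$-th norming constant while freezing the speeds (so the multipliers $\mu_n$ in $I_N$ are unchanged) gives $\mathcal{L}_N\,U^{(N)}_{x_j}=0$, hence $\mathcal{J}\mathcal{L}_N\,U^{(N)}_{x_j}=0$; and $U^{(N)}_{x_j}$ is proportional to $(F_j^{\pm}(x))_x$, which by the relation $F_j^{\pm}\sim(1-\partial_x^2)^{-1}G_j(U^{(N)})$ from the proof of Lemma~\ref{le3.62} equals $-\partial_x(1-\partial_x^2)^{-1}G_j(U^{(N)})=\mathcal{J}G_j(U^{(N)})$ up to a scalar. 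Thus $\mathcal{J}\mathcal{L}_N\,\mathcal{J}G_j(U^{(N)})=0$ for $j=1,\dots,N$, and these are independent, sitting in the distinct $\mathcal{R}^\ast$-eigenspaces for $c_1,\dots,c_N$. Differentiating $I_N'(U^{(N)})=0$ in $c_j$ instead --- which now also varies $\mu_n=\mu_n(c_1,\dots,c_N)$ --- together with $\Ham_n'(U^{(N)})=\mathcal{R}^{\,n-1}(U^{(N)})\Ham_1'(U^{(N)})$ and $\partial_{c_j}\sigma_s=\sigma_{j,s-1}$, yields $\mathcal{L}_N\,\frac{\partial U^{(N)}}{\partial c_j}=\gamma_j\,G_j(U^{(N)})$ with $\gamma_j\ne 0$, the non-degeneracy coming from $\int_\R \frac{\partial U^{(N)}}{\partial c_j}\,m_{U^{(N)}}\,\rmd x=\frac{\rmd}{\rmd c_j}\Ham_1(U^{(N)})\ne 0$ in the spirit of \eqref{eq:1-sol variaprinciple}--\eqref{eq:1-sol value}; applying $\mathcal{J}$ gives the rank-two Jordan relation $\mathcal{J}\mathcal{L}_N\,\frac{\partial U^{(N)}}{\partial c_j}=\gamma_j\,\mathcal{J}G_j(U^{(N)})$, consistent with $\frac{\partial U^{(N)}}{\partial c_j}\sim(\dot F_j^{\pm}(x))_x$.

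Finally, to upgrade these relations to the full spectral picture --- kernel exactly $N$-fold, generalized kernel the $2N$-dimensional span of $\{\mathcal{J}G_j(U^{(N)}),\ \partial U^{(N)}/\partial c_j\}$, and no further point spectrum --- I would establish the $N$-soliton analogue of the closure relation \eqref{eq:closure relation}, obtained from \eqref{completeness relations}--\eqref{refined completeness relations} by the same manipulation as in the one-soliton case; on that basis $\mathcal{J}\mathcal{L}_N$ acts diagonally with eigenvalues $\varrho_N(\pm 2k)\ne 0$ on the continuous part, $0$ on the $\mathcal{J}G_j$'s, and Jordan-shift on the $\partial U^{(N)}/\partial c_j$'s (only first $k$-derivatives, not second, enter \eqref{completeness relations}), which closes the argument. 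I expect the main obstacle to be precisely this last step: a careful justification of completeness of the squared-eigenfunction expansion at a reflectionless multi-soliton potential in $L^2(\R)$, together with the identifications $U^{(N)}_{x_j}\sim(F_j^{\pm})_x$, $\partial U^{(N)}/\partial c_j\sim(\dot F_j^{\pm})_x$ and the non-vanishing of the $\gamma_j$; the continuous-spectrum computation and the commutation argument are then routine adaptations of Proposition~\ref{pr3.5-3}.
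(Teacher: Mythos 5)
Your proposal is correct and follows essentially the same route as the paper: the commutation identity \eqref{operator identity4} gives the shared eigenfunctions, the action of $\mathcal{J}\mathcal{L}_N$ on the squared eigenfunctions $(F^{\pm}(x,k))_x$, $(F_j^{\pm})_x\sim U^{(N)}_{x_j}\sim\mathcal{J}G_j(U^{(N)})$ and $(\dot F_j^{\pm})_x\sim\partial U^{(N)}/\partial c_j$ identifies the kernel, the Jordan block and the essential spectrum via the symbol $\varrho_N(2k)$. The extra scaffolding you supply (differentiating $I_N'(U^{(N)})=0$ as in Lemma \ref{le3.62}, and the $N$-soliton completeness of the squared-eigenfunction basis) is exactly the material the paper places in Lemma \ref{le3.62} and in the discussion of the sets \eqref{eq:set JLM}--\eqref{eq:set LJM} following Proposition \ref{le3.84}, so it refines rather than deviates from the paper's argument.
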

\begin{proof}

From \eqref{operator identity4}, we know that the operators $\mathcal{J}\mathcal L_N$  and $\mathcal{R}^\ast(U^{(N)})$ share the same eigenfunctions. Similar to the argument of Lemma \ref{le3.81}, one can compute the spectrum of the operator $\mathcal{J}\mathcal L_N$ directly by employing the squared eigenfunctions. Indeed, consider the Jost solutions $f^{\pm}(x,k)$ of the spectral problem \eqref{eq3} with the potential $m(x)=m_{U^{(N)}}:=U^{(N)}-U^{(N)}_{xx}$ and the asymptotics in \eqref{eq6} and \eqref{eq6'}. In this case there are $N$ eigenvalues $k=i\kappa_j$ ($0<\kappa_j<\frac12, j=1,2,\cdot\cdot\cdot,N$) which generates the $N$-solitons profile. It is then found that the squared eigenfunctions satisfy
\begin{align*}
& \mathcal{J}\mathcal L_N\big(F^{\pm}(x,k)\big)_x=\varrho_{N}(2k)\big(F^{\pm}(x,k)\big)_x,\quad \text{for} \ k\in\R, \\
& \mathcal{J}\mathcal L_N\mathcal{J}G_j(U^{(N)})\sim \mathcal{J}\mathcal L_NU^{(N)}_{x_j}=0, \qquad \mathcal{J}\mathcal L_N\frac{\partial U^{(N)}}{\partial c_j}=-\mathcal{J}G_j(U^{(N)}).
\end{align*}
In view of \eqref{eq:symbol JLM} and the above, the essential spectrum of $\mathcal{J}\mathcal L_N$ is $\varrho_{N}(2\lambda)$ for $ \lambda\in\R$, which is contained in the imaginary axis, and this yields the desired result in Proposition \ref {pr4.5-1}.
\end{proof}

Concerning the adjoint operator of $\mathcal{J}\mathcal L_N$, namely the operator $-\mathcal L_N\mathcal{J}$, which commutes with the recursion operator $\mathcal{R}(U^{(N)})$, one has the following.

\begin{proposition} \label{le3.84}
The operators $\mathcal L_N\mathcal{J}$, defined in $L^2(\R)$ with domain $H^1(\R)$, and the recursion operator $\mathcal{R}(U^{(N)})$, share the same eigenfunctions. Moreover, the essential spectrum of  $\mathcal L_N\mathcal{J}$ is contained in $i\R$, the kernel is $N$-fold and spanned by $G_j(U^{(N)})$, and the generalized kernel is spanned by $\mathcal{J}^{-1}\frac{\partial U^{(N)}}{\partial c_j}$.
\end{proposition}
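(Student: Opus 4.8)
The plan is to follow the template of Proposition~\ref{le3.82} (single soliton) and Proposition~\ref{pr4.5-1} (the operator $\mathcal{J}\mathcal{L}_N$): since $-\mathcal{L}_N\mathcal{J}$ is the adjoint of $\mathcal{J}\mathcal{L}_N$, much of the statement is dual to Proposition~\ref{pr4.5-1}, while the eigenfunction identification will be carried out directly through the squared eigenfunctions. First I would use the operator identity \eqref{operator identity3}, $\mathcal{L}_N\mathcal{J}\,\mathcal{R}(U^{(N)})=\mathcal{R}(U^{(N)})\,\mathcal{L}_N\mathcal{J}$, to conclude that $\mathcal{L}_N\mathcal{J}$ commutes with $\mathcal{R}(U^{(N)})$ and hence preserves each of its (generalized) eigenspaces. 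By Lemma~\ref{le3.62} these are spanned by the shifted squared eigenfunctions $(1-\partial_x^2)F^{\pm}(x,k)$, $k\in\R$, which comprise the essential spectrum $(0,2\omega]$ of $\mathcal{R}(U^{(N)})$; by the genuine eigenfunctions $(1-\partial_x^2)F_j^{\pm}(x)\sim G_j(U^{(N)})$ at the eigenvalues $c_j$; and by the generalized eigenfunctions $(1-\partial_x^2)\dot F_j^{\pm}(x)$, each spanning a rank-one Jordan block above $c_j$. Thus it suffices to evaluate $\mathcal{L}_N\mathcal{J}$ on these three families.

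For the continuous part I would use $\mathcal{J}(1-\partial_x^2)=-\partial_x$ and $\mathcal{J}^{-1}=-(1-\partial_x^2)\partial_x^{-1}$: applying $\mathcal{J}^{-1}$ to the relation $\mathcal{J}\mathcal{L}_N(F^{\pm}(x,k))_x=\varrho_N(\pm2k)(F^{\pm}(x,k))_x$ from Proposition~\ref{pr4.5-1} gives $-\mathcal{L}_N(F^{\pm}(x,k))_x=\varrho_N(\pm2k)(1-\partial_x^2)F^{\pm}(x,k)$, hence $\mathcal{L}_N\mathcal{J}(1-\partial_x^2)F^{\pm}(x,k)=\varrho_N(\pm2k)(1-\partial_x^2)F^{\pm}(x,k)$; since the symbol in \eqref{eq:symbol JLM} is purely imaginary for real argument, this places the essential spectrum $\{\varrho_N(\pm2k):k\in\R\}$ of $\mathcal{L}_N\mathcal{J}$ on $i\R$. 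For the discrete part, Proposition~\ref{pr4.5-1} already furnishes $\mathcal{J}G_j(U^{(N)})\sim U^{(N)}_{x_j}$ and $\mathcal{J}\mathcal{L}_N(\partial U^{(N)}/\partial c_j)=-\mathcal{J}G_j(U^{(N)})$; since $\mathcal{J}$ is injective on $L^2(\R)$ this yields $\mathcal{L}_N U^{(N)}_{x_j}=0$ and $\mathcal{L}_N(\partial U^{(N)}/\partial c_j)=-G_j(U^{(N)})$ (equivalently, by differentiating the Euler--Lagrange equation $I_N'(U^{(N)})=0$ in $c_j$, keeping track of the dependence of the multipliers $\mu_n$ on the speeds, cf.~\cite{WL20}). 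Consequently $\mathcal{L}_N\mathcal{J}\,G_j(U^{(N)})\sim\mathcal{L}_N U^{(N)}_{x_j}=0$, so each $G_j(U^{(N)})$ lies in $\ker(\mathcal{L}_N\mathcal{J})$, and $\mathcal{L}_N\mathcal{J}(\mathcal{J}^{-1}\partial U^{(N)}/\partial c_j)=-G_j(U^{(N)})$, so each $\mathcal{J}^{-1}\partial U^{(N)}/\partial c_j$ sits in the generalized kernel, closing the length-two Jordan chains.

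Finally, to ensure that no further point or residual spectrum appears and that the kernel and generalized kernel are exactly as claimed, I would invoke the completeness of the squared eigenfunctions \eqref{completeness relations}--\eqref{refined completeness relations}, in the closure form analogous to \eqref{eq:closure relation}: the family $\{(1-\partial_x^2)F^{\pm}(x,k)\}_{k\in\R}$ together with the $2N$ vectors $G_j(U^{(N)})$ and $\mathcal{J}^{-1}\partial U^{(N)}/\partial c_j$ is a generalized basis of $L^2(\R)$, so the point spectrum of $\mathcal{L}_N\mathcal{J}$ is precisely $\{0\}$, with $N$-fold kernel spanned by the $G_j(U^{(N)})$ and generalized kernel obtained by adjoining the $\mathcal{J}^{-1}\partial U^{(N)}/\partial c_j$, the remainder being the imaginary essential spectrum.

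The step I expect to be the main obstacle is this last one. The eigenfunction bookkeeping is routine once Lemma~\ref{le3.62} and Proposition~\ref{pr4.5-1} are in hand, but the completeness claim rests genuinely on the integrable input \eqref{completeness relations} from \cite{CGI07}; and on the modulation side one must verify that the $N$ translation directions $U^{(N)}_{x_j}$ already exhaust $\ker\mathcal{L}_N$ and that the $\partial U^{(N)}/\partial c_j$ close the Jordan blocks --- where, in contrast to the one-soliton case, the dependence of the Lagrange multipliers $\mu_n$ on the speeds makes the differentiation of the Euler--Lagrange relation more delicate.
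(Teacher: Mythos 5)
Your proposal is correct and follows essentially the same route as the paper: the commutation identity \eqref{operator identity3} to transfer eigenfunctions from $\mathcal{R}(U^{(N)})$ (Lemma \ref{le3.62}), direct evaluation of $\mathcal L_N\mathcal{J}$ on the squared eigenfunctions $(1-\partial_x^2)F^{\pm}(x,k)$, $G_j(U^{(N)})$ and $\mathcal{J}^{-1}\partial U^{(N)}/\partial c_j$, and the symbol \eqref{eq:symbol JLM} to place the essential spectrum on $i\R$. Your additional appeal to the completeness relation to rule out further point spectrum, and your derivation of the discrete-mode identities by applying $\mathcal{J}^{-1}$ to Proposition \ref{pr4.5-1}, only make explicit steps the paper leaves implicit.
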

\begin{proof}
The fact that the operators $\mathcal L_N\mathcal{J}$ and the adjoint recursion operator $\mathcal{R}(U^{(N)})$ share the same eigenfunctions is due to the operator identity \eqref{operator identity3}. By Lemma \ref{le3.62}, one can compute the spectrum of the operator $\mathcal L_N\mathcal{J}$ directly by employing the squared eigenfunctions:
\begin{align*}
& \mathcal L_N(1-\partial_x^2)F^{\pm}(x,k)=\varrho_{N}(2\lambda)(1-\partial_x^2)F^{\pm}(x,k),\quad \text{for} \ k\in\R, \\
& \mathcal L_N\mathcal{J}G_j(U^{(N)})=0, \quad \mathcal L_N\mathcal{J}\mathcal{J}^{-1}\frac{\partial U^{(N)}}{\partial c_j}=-G_j(U^{(N)}).
\end{align*}
Recalling \eqref{eq:symbol JLM}, the essential spectrum of $\mathcal L_N\mathcal{J}$ is $\varrho_{N}(2\lambda)$ for $ \lambda\in\R$, which is contained in the imaginary axis. The proof of Proposition \ref{le3.84} is thus complete.
\end{proof}

On account of  Proposition \ref{pr4.5-1} and Proposition \ref{le3.84}, for $j=1,2,\cdot\cdot\cdot,N$, we now have the two function sets as follows. The first set
\begin{equation}\label{eq:set JLM}
\left\{ \big(F^{\pm}(x,k)\big)_x, \quad  k\in\R;\quad \mathcal{J}G_j(U^{(N)})\sim U^{(N)}_{x_j};\quad \frac{\partial U^{(N)}}{\partial c_j} \right\}
\end{equation}
consists of  linearly independent eigenfunctions and generalized kernel of the operator $\mathcal{J}\mathcal L_N$. They are not orthogonal under the $L^2$-inner product for $N\geq2$, however, they possess some orthogonality property with respect to the following symplectic form:
\[
\aleph\big(f,g\big) := \langle f,\mathcal{J}^{-1}g\rangle=\int_{\R} f(x)\mathcal{J}^{-1}g(x)\rmd x.
\]
Indeed, since from IST and the formula of $N$-solitons, one has
\[
\sum_{j=1}^NU^{(N)}_{x_j}=-\partial_xU^{(N)}, \qquad \mathcal{J} \Ham'_n(U^{(N)})=-\sum_{j=1}^Nc^{n-1}_jU^{(N)}_{x_j}.
\]
For $j\neq k$, we compute
\begin{align}
0 = \partial_{x_k}E(U^{(N)}) & = \langle E'(U^{(N)}),U^{(N)}_{x_k}\rangle=\langle (1-\partial_x^2)U^{(N)},U^{(N)}_{x_k}\rangle, \nonumber\\
\aleph \left( U^{(N)}_{x_j},U^{(N)}_{x_k} \right)& = \int_{\R} U^{(N)}_{x_j}(x)\mathcal{J}^{-1}U^{(N)}_{x_k}(x)\rmd x\sim\int_{\R} U^{(N)}_{x_j}(x)G_j(U^{(N)}(x))\rmd x=0,\nonumber\\
\aleph \left( U^{(N)}_{x_j},\frac{\partial U^{(N)}}{\partial c_j} \right) & = 4\kappa_jc_j, \qquad  \aleph \left( U^{(N)}_{x_j},\frac{\partial U^{(N)}}{\partial c_k} \right)=0,\nonumber \\
\aleph \left( \frac{\partial U^{(N)}}{\partial c_j},\frac{\partial U^{(N)}}{\partial c_k} \right) & = \int_{\R} \frac{\partial U^{(N)}}{\partial c_j}(x)\mathcal{J}^{-1}\frac{\partial U^{(N)}}{\partial c_k}(x)\rmd x=0.\nonumber
\end{align}

The second set
\begin{equation}\label{eq:set LJM}
\left\{ (1-\partial_x^2)F^{\pm}(x,k), \quad k\in\R;\quad G_j(U^{(N)});\quad  \mathcal{J}^{-1}\frac{\partial U^{(N)}}{\partial c_j} \right\}
\end{equation}
consists of linearly independent eigenfunctions and generalized kernel of the operator $\mathcal L_N\mathcal{J}$. Similar to the case $N=1$, for any function $z(x)$ which vanishes as $x\rightarrow\pm\infty$, it can be expanded over the above two bases \eqref{eq:set JLM} and \eqref{eq:set LJM}. For example, for any  $z(x)\in L^2(\R)$, one has the following decomposition:
\begin{eqnarray*} 
&& z(x)=\int_{\R}(F^{\pm}(x,k)\big)_xP^{\pm}(k)\rmd k+\sum_{j=1}^N\beta_j\mathcal{J}F_j(U^{(N)})+\sum_{j=1}^N\gamma_j \frac{\partial U^{(N)}}{\partial c_j}
\end{eqnarray*}
with the complex-valued coefficients $P^{\pm}(k)$, $\beta_j$ and $\gamma_j$ that can be computed explicitly. Similarly, one can also decompose the function $z(x)$ on the second set \eqref{eq:set LJM} by applying $1-\partial_x^2$ to \eqref{refined completeness relations}.

\begin{remark}
It reveals from Proposition \ref{pr4.5-1} that the spectrum of $\mathcal{J}\mathcal L_N$ lies on the imagery axis, which implies that the CH $N$-solitons are spectrally stable in $H^N(\R)$. Therefore, $k_r=k^{-}_i=k_c=0$, where $k_r$ refers to the number of real positive eigenvalues of $\mathcal{J}\mathcal L_N$, $k_c$ is the number of eigenvalues
in the open first quadrant of the complex plane and $k^-_i$ is the number of purely imaginary
eigenvalues in the upper half-plane with negative Krein signature. Therefore, one has
 \[
k_r+2k_i^-+2k_c=0.
 \]
 On the other hand, the Hamilton-Krein index theorem (see for example \cite{LZ17}) reveals that
\[
n(\mathcal L_N)-p(D)=k_r+2k_i^-+2k_c=0,
\]
which verifies the stability criterion of \cite{WL20}, and hence concludes the Lyapunov stability of CH $N$-solitons.
\end{remark}
\subsection{Well-posedness theory}\label{sec_2.3}
In this subsection, we recall some well-posedness results for the CH equation (\ref{eq1}) that will be useful for proving the main results. The Cauchy problem associated with the CH equation (\ref{eq1}) has been extensively studied. Without attempting an exhaustive review, we highlight a few key results and refer to the cited literature for additional details on the well-posedness of (\ref{eq1}).

Considering strong solutions, for an initial data $u_0\in H^s(\R)$ with $s>\frac{3}{2}$, it is known \cite{LO01} that the CH equation (\ref{eq1}) has a unique local solution in $\mathcal{C}\big([0,T); H^s(\R)\big) \cap \mathcal{C}^1 \big([0, T); H^{s-1}(\R)\big)$ for some $T>0$ with $\Ham_1$ and $\Ham_2$ conserved. Moreover,  if the initial momentum potential $m_0 + \omega > 0 $ with $ m_0 =  (1- \partial_x^2) u_0,$ then $u$ is global in time \cite{CE982}. However, if $ m_0 $ changes sign ($ \omega = 0$), singularities may appear in the solution
in finite time in the form of wave breaking (the wave profile remains bounded but its slope
becomes unbounded) \cite{C00, CE98, LO01}.

As for the weak solution theory, the well-posedness in $ H^1(\R) \cap W^{1, \infty}(\R) $ was established recently in the following result.

\begin{proposition}[\cite {lps}] \label{pr2.5-2} Let $u_0\in H^1(\R) \cap W^{1, \infty} (\R).$  Then there exists
$T > 0 $ and a unique solution to the CH equation  (\ref{eq1}) such that
\begin{eqnarray*}
&&u\in \mathcal{C}\big([0, T); H^1(\R) \cap W^{1, \infty} (\R) \big)\cap \mathcal{C}^1 \big((0,T); L^{2}(\R)\big).
\end{eqnarray*}
\end{proposition}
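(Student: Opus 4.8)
The plan is to rewrite \eqref{eq1} as a nonlocal transport equation and to solve it in Lagrangian coordinates, where it collapses to an ordinary differential equation in a Banach space. Applying $(1-\p_x^2)^{-1}$ to \eqref{eq1} gives
\[
u_t + uu_x + \p_x p*\Big(u^2 + \tfrac12 u_x^2 + 2\omega u\Big) = 0, \qquad p(x):=\tfrac12 e^{-|x|},
\]
and differentiating once, using $\p_x^2 p*g = p*g-g$,
\[
u_{xt} + uu_{xx} + \tfrac12 u_x^2 = u^2 + 2\omega u - p*\Big(u^2+\tfrac12 u_x^2+2\omega u\Big).
\]
Since $u_0\in W^{1,\infty}(\R)$, for a solution with $u_x\in L^\infty_{t,x}$ the transport velocity is Lipschitz, so the flow $q(t,x)$ determined by $\p_t q=u(t,q)$, $q(0,x)=x$, is a bi-Lipschitz homeomorphism of $\R$ with $\p_x q(t,x)=\exp\!\big(\int_0^t u_x(s,q(s,x))\,\rmd s\big)$.

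I would then introduce the Lagrangian unknowns $v:=u\circ q$, $U:=u_x\circ q$. After the substitution $y=q(t,z)$ in the two convolutions, the equations above become an ODE system for $(q-\mathrm{id},v,U)$ whose nonlinearity depends on $v,U$ and on $q$ only through the kernels $p(q(t,x)-q(t,z))$, $\p_x p(q(t,x)-q(t,z))$ and the Jacobian $\p_z q=\exp\int_0^t U$. One sets up a Picard iteration in a Banach space recording $q-\mathrm{id}$ together with $\p_x q,1/\p_x q\in L^\infty$, and $v\in L^2\cap L^\infty$, $U\in L^\infty$; the vector field is locally Lipschitz there because convolution against $p$ and $\p_x p$ (both in $L^1\cap L^\infty$) is bounded on $L^2$ and on $L^\infty$, and the maps $g\mapsto g\circ q$ and $q\mapsto p(q(\cdot)-q(\cdot))$ are Lipschitz on sets where $\p_x q,1/\p_x q$ are bounded. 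Picard--Lindel\"of then yields, for $u_0\in H^1\cap W^{1,\infty}$, a unique maximal Lagrangian solution on some $[0,T)$; setting $u(t,q(t,x)):=v(t,x)$ recovers the solution of \eqref{eq1}, and uniqueness in the original variables follows from that of the flow. The regularity class is read off afterwards: $u(t)\in W^{1,\infty}$ by construction, the quantity $\|u(t)\|_{H^1}^2=\int u\,m\,\rmd x$ is conserved (passing to the limit from smooth approximations) so $u\in L^\infty([0,T);H^1)$, continuity in time of the Lagrangian data together with the bi-Lipschitz control of $q$ upgrade this to $u\in\mathcal C([0,T);H^1\cap W^{1,\infty})$, and $u_t=-uu_x-\p_x p*(u^2+\tfrac12 u_x^2+2\omega u)\in L^2$ (as $uu_x\in L^2$ and $\p_x p*(\cdots)\in H^1\subset L^2$) gives $u\in\mathcal C^1((0,T);L^2)$.

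The step I expect to be the main obstacle is exactly the interplay of nonlocality with this endpoint regularity: at the level $H^1\cap W^{1,\infty}$ all control of $u_{xx}$ is lost, so the equation cannot be differentiated a second time and every estimate on the nonlinearity must be squeezed out of the smoothing of $p,\p_x p$ and the bi-Lipschitz bounds on $q$. Keeping $\p_x q$ and $1/\p_x q$ bounded on $[0,T)$ --- equivalently $\int_0^t\|u_x(s)\|_{L^\infty}\,\rmd s<\infty$ --- is what makes both the convolution terms and the time-continuity argument close, and this is where the Riccati structure $\p_t U=-\tfrac12 U^2+(\text{bounded})$ and the a priori bound on $\|u\|_{H^1}$ (hence on $\|u\|_{L^\infty}$) are essential. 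A slightly softer alternative would replace the fixed-point step by a mollification--compactness construction for existence and a separate transport-type $L^2$ energy estimate for uniqueness, but it meets the same low-regularity bottleneck in the term $\p_x p*\big(\tfrac12(u_x+v_x)(u_x-v_x)\big)$.
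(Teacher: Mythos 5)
The paper itself contains no proof of this proposition: it is quoted verbatim from Linares--Ponce--Sideris \cite{lps}, so the only possible comparison is with the argument in that reference, which is Eulerian in nature --- mollified initial data, uniform a priori bounds on $\|u\|_{H^1}$ and $\|u_x\|_{L^\infty}$ (the latter through the characteristics/Riccati equation), and convergence plus uniqueness via an $L^2$-difference Gronwall estimate. That is essentially the ``softer alternative'' you mention in your last sentence. Your primary route --- rewriting \eqref{eq1} as the nonlocal transport equation, passing to Lagrangian variables $(q-\mathrm{id},v,U,\partial_x q)$ and running Picard--Lindel\"of in a Banach space --- is a genuinely different and viable strategy, in the spirit of the Lagrangian constructions of Bressan--Constantin and Holden--Raynaud for CH; what it buys is that the problem becomes a bona fide ODE, so no second $x$-derivative of $u$ is ever needed and local existence/uniqueness of the Lagrangian solution is clean (the sign jump of $\partial_x p$ on the diagonal is harmless under the integral, since the set where the kernels for two flows differ in sign has measure controlled by $\|q_1-q_2\|_{L^\infty}$).

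Two steps in your write-up are asserted rather than proved and are exactly where the work lies. First, uniqueness in the Eulerian class: uniqueness of the Lagrangian ODE only transfers to solutions $u\in \mathcal{C}([0,T);H^1\cap W^{1,\infty})\cap\mathcal{C}^1((0,T);L^2)$ once you show that \emph{every} such $u$ generates a Lagrangian solution, i.e.\ that $U(t,x)=u_x(t,q(t,x))$ is absolutely continuous in $t$ and satisfies the Riccati equation; since $u_{xx}$ is only a distribution at this regularity, the chain rule is not free and requires a mollification/commutator argument for transport with Lipschitz velocity (this is precisely the point handled carefully in \cite{lps} and in Molinet's peakon setting). Second, you invoke conservation of $\|u(t)\|_{H^1}$ ``by passing to the limit from smooth approximations'' to get $\mathcal{C}([0,T);H^1)$; as stated this sits outside your fixed-point scheme (and quietly presupposes a continuity-with-respect-to-data statement), whereas it can and should be extracted from the Lagrangian system itself or from an $L^2$-difference estimate. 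Both repairs are standard, so the approach is sound, but as written these are real gaps rather than routine omissions.
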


What is more relevant to our problem is the solution theory in the natural energy space $H^1$. The existence of global weak solutions was obtained in \cite{xz,bc1,bc2}, and the uniqueness of conservative solutions was later proved in \cite{bcz}.
The following existence and uniqueness result is derived in \cite{CM01} and \cite{M18}, which will be crucial in our analysis.

\begin{proposition}[\cite{CM01,M18}]\label{pr2.5}
Let $u_0\in Y_+$ be given. Then the Cauchy problem of the CH equation (\ref{eq1}) has a unique solution
$u\in \mathcal{C}^1\big(\R;L^2(\R)\big)\cap \mathcal{C}\big(\R; H^1(\R)\big)$ such that $m\in \mathcal{C}_{ti}(\R;\mathcal M_{+})$.
Moreover, for any sequence $\{u_{0,n}\} \subset Y_{+}$, such that $u_{0,n}\rightharpoonup\ast u_0$ in $Y$, the emanating sequence of solutions $\{u_{n}\}\subset \mathcal{C}^1\big(\R;L^2(\R)\big)\cap \mathcal{C}\big(\R; H^1(\R)\big)$ satisfies, for any $T>0$, as $n \to +\infty$,
\begin{eqnarray}
&&u_n\rightharpoonup u \quad \text{ in}\quad \mathcal{C}_{w}\big([-T, T];H^1(\R)\big),\label{weak conv}\\
&& (1-\partial_x^2) u_n \rightharpoonup^\ast m \quad \text{ in}\quad \mathcal{C}_{w}\big([-T, T];\mathcal M\big). \label{weak star}
\end{eqnarray}
\end{proposition}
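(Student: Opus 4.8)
The plan is to establish Proposition~\ref{pr2.5} by the approximation scheme of \cite{CM01,M18}: construct the solution for $u_0\in Y_+$ as a limit of global smooth solutions, extract from the sign constraint $m\ge0$ the compactness needed to pass to the limit in the quadratic nonlinearities, and obtain uniqueness through the Lagrangian (characteristic) formulation. The stability assertion \eqref{weak conv}--\eqref{weak star} will then follow by re-running the same compactness scheme for a general weakly-$\ast$ convergent sequence of data and identifying the limit via uniqueness.

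\emph{Smooth solutions and uniform bounds.} First I would mollify the positive measure $m_0$ to obtain $m_{0,n}\in\bigcap_s H^s(\R)$ with $m_{0,n}\ge0$, $m_{0,n}\rightharpoonup^\ast m_0$ in $\mathcal M$, and $u_{0,n}:=(1-\partial_x^2)^{-1}m_{0,n}\to u_0$ in $H^1(\R)$. Since $\omega>0$ forces $m_{0,n}+\omega>0$, the strong-solution theory recalled above (\cite{LO01,CE982}) yields global smooth solutions $u_n$, with $\Ham_1(u_n)=\tfrac12\|u_n\|_{H^1}^2$ conserved. Writing the momentum form $m_t+(um)_x+u_xm+2\omega u_x=0$ and testing against $1$ shows $\int_\R m_n\,\rmd x$ is conserved; together with the persistence of the sign $m_n+\omega>0$ (equivalently $m_n\ge0$, cf.\ \cite{C,C01,CM99}) this gives $\|m_n(t)\|_{\mathcal M}=\int_\R m_n(t)\,\rmd x=\int_\R m_{0,n}\,\rmd x\le C$ uniformly in $n,t$. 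Finally, from $u_n=G\ast m_n$ with $G(x)=\tfrac12e^{-|x|}$ and $|G'|=G$ one gets the pointwise bound $|u_{n,x}|\le u_n$, so the $u_n$ are uniformly bounded in $L^\infty_tW^{1,\infty}_x$; in particular each $u_n(t,\cdot)$ is Lipschitz with constant $C\|u_0\|_{H^1}$.

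\emph{Compactness and passage to the limit.} From the nonlocal form $u_t+uu_x+(1-\partial_x^2)^{-1}\partial_x\big(2\omega u+u^2+\tfrac12u_x^2\big)=0$ and the above bounds, $\partial_t u_n$ is bounded in $L^\infty_tL^2_x$, so by Aubin--Lions--Simon the $u_n$ are precompact in $\mathcal C([-T,T];L^2_{loc})$, and interpolation with the $H^1$ bound upgrades this to local uniform convergence $u_n\to u$. The crucial point is strong compactness of the derivatives: since $m_n\ge0$, the functions $e^{x}(u_n-u_{n,x})$ are nondecreasing and $e^{-x}(u_n+u_{n,x})$ nonincreasing on $\R$, uniformly in $n$, so Helly's selection theorem (diagonalized over compact intervals) extracts a subsequence along which both converge pointwise; subtracting then forces $u_{n,x}\to u_x$ a.e., and the domination $|u_{n,x}|\le u_n\le C$ upgrades this to $u_{n,x}\to u_x$ in $L^2_{loc}$ by dominated convergence. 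Expressing the CH nonlinearity through the measure-free identities $um=u^2+u_x^2-\partial_x(uu_x)$ and $u_xm=\tfrac12\partial_x(u^2-u_x^2)$, the convergences $u_n^2\to u^2$, $u_{n,x}^2\to u_x^2$, $u_nu_{n,x}\to uu_x$ in $L^1_{loc}$ let me pass to the limit in the weak formulation of \eqref{eq1}, so $(u,m)$ with $m:=u-u_{xx}$ solves \eqref{eq1}; moreover $m_n=u_n-u_{n,xx}\rightharpoonup m$ in $\mathcal D'$, and being bounded in $\mathcal M$ with $m_n\ge0$ this is weak-$\ast$ convergence in $\mathcal M$. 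The regularity $u\in\mathcal C^1(\R;L^2)\cap\mathcal C(\R;H^1)$ and $m\in\mathcal C_{ti}(\R;\mathcal M_+)$ then follows from the equation and the conserved quantities as in \cite{CM01}.

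\emph{Uniqueness, stability, and the main obstacle.} For uniqueness, the constraint $m\ge0$ is again decisive: it makes $u(t,\cdot)$ uniformly Lipschitz, so the characteristic ODE $\dot q=u(t,q)$, $q(0,x)=x$, is uniquely solvable and $q(t,\cdot)$ is an increasing bi-Lipschitz homeomorphism of $\R$; the distributional momentum equation then shows that $m+\omega$ is transported by $q$, formally $(m(t)+\omega)\circ q(t,\cdot)\,q_x(t,\cdot)^2=m_0+\omega$ in the measure sense, which identifies a solution with its Lagrangian data, and comparing the Lagrangian descriptions of two solutions with the same datum via a Gronwall estimate gives $u\equiv v$ --- this is the argument of \cite{CM01} for $\omega>0$, refined in \cite{M18}. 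For \eqref{weak conv}--\eqref{weak star}, if $u_{0,n}\rightharpoonup^\ast u_0$ in $Y$ then $\sup_n\|u_{0,n}\|_{H^1}<\infty$ and $\sup_n\|m_{0,n}\|_{\mathcal M}<\infty$, so the a priori bounds and the monotonicity/Helly compactness apply verbatim to the solutions $u_n$; every subsequence then has a further subsequence converging in $\mathcal C_w([-T,T];H^1)$, with $m_n\rightharpoonup^\ast$ in $\mathcal C_w([-T,T];\mathcal M)$, to a solution with data $(u_0,m_0)$, and by uniqueness this limit is always $(u,m)$, so the whole sequence converges. I expect the main obstacle to be precisely the strong $L^2_{loc}$ compactness of $u_{n,x}$ together with the well-posedness of the characteristic flow: both fail without $m\ge0$ (or at least $m+\omega\ge0$), which is exactly why the statement is posed in $Y_+$; once the weighted monotonicity of $u_n\mp u_{n,x}$ and the resulting uniform Lipschitz bound are in hand, the remaining steps are routine.
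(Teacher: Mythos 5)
First, a remark on the comparison itself: the paper contains no proof of Proposition \ref{pr2.5} — it is quoted from \cite{CM01,M18} — so the only benchmark is the argument of those references, and your outline (mollification of $m_0$, sign-based compactness, characteristics for uniqueness, and a subsequence-plus-uniqueness argument for the weak continuity \eqref{weak conv}--\eqref{weak star}) is indeed their route.

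There is, however, a genuine gap in the step on which everything else in your scheme rests: you invoke ``the persistence of the sign $m_n+\omega>0$ (equivalently $m_n\ge 0$)''. These are not equivalent, and for $\omega>0$ only the first is invariant: the momentum equation can be written as $(m+\omega)_t+u(m+\omega)_x+2u_x(m+\omega)=0$, so the quantity transported along characteristics is $m+\omega$, with $m(t,q(t,x))+\omega=(m_0(x)+\omega)/q_x^2(t,x)$ — this is exactly the invariance recalled at the end of Section \ref{subsec intro solitons}. The sign of $m$ itself is lost instantly in general: if $m_0\ge0$ is compactly supported, then at points $x$ to the left of the support one has $m_0=m_{0,x}=0$ while $u_{0,x}(x)>0$, hence $\partial_t m(0,x)=-2\omega\,u_{0,x}(x)<0$. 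Every quantitative ingredient of your compactness argument is derived from $m_n(t)\ge0$ rather than from $m_n(t)+\omega\ge0$: the identity $\|m_n(t)\|_{\mathcal M}=\int_\R m_n(t)\,\rmd x$ (conserved), the pointwise bound $|u_{n,x}|\le u_n$, the monotonicity of $e^{x}(u_n-u_{n,x})$, and the conclusion $m(t)\in\mathcal M_+$. As written, your proof is therefore a proof only in the case $\omega=0$, which is precisely the setting of \cite{CM01,M18}; for $\omega>0$ the argument must be rebuilt around $m+\omega$ (for instance $|u_x|\le u+\omega$, since $u_x=G'*(m+\omega)$ with $G=\tfrac12e^{-|x|}$ and $\int G'=0$; monotonicity of $e^{x}(u-u_x+\omega)$ on compacta for the Helly step), and one must then supply a separate argument that $m(t)$ remains a finite measure at all, because once $m$ can change sign $\int_\R|m(t)|\,\rmd x$ is no longer controlled by the conserved quantity $\int_\R m\,\rmd x$. (The same caveat applies to the literal claim $m\in\mathcal C_{ti}(\R;\mathcal M_+)$ in the statement, which is transplanted verbatim from the $\omega=0$ references.) The remaining parts of your outline — the Aubin--Lions step for $u_n$, rewriting the nonlinearity via $um=u^2+u_x^2-\partial_x(uu_x)$ and $u_xm=\tfrac12\partial_x(u^2-u_x^2)$, uniqueness through the Lagrangian flow, and the compactness-plus-uniqueness proof of \eqref{weak conv}--\eqref{weak star} — are sound and consistent with the cited proofs.
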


\subsection{Monotonicity formula and exponential decay of $H^1$-localized solutions}\label{sec_2.4}
The aim of this subsection is to prove certain monotonicity formula related to the conservation laws $E=\Ham_1$ and $F=\Ham_2$, from which we will establish the exponential decay of $H^1$-localized solutions (see Definition \ref{de3.1}) in the spirit of \cite{EM07}.

For a non-negative even function $\phi \in C_{c}^{\infty}([-1,1])$, such that $\int_{\R} \phi =\frac12$, we set
$$
g(x):= \phi*e^{-|x|} \quad \text{and} \quad \Psi_{K}(x) :=  \frac{1}{K} \int_{-\infty}^{x} g\left(\frac{y}{K}\right)dy.
$$
It is easy to check that $\Psi_{K} \geq 0$ is increasing, $\Psi_{K} \rightarrow 1$ as $x \rightarrow +\infty$  and
$$
\left|\Psi_{K}\right|+\left|\Psi_{K}^{\prime}\right|+\left|\Psi_{K}^{\prime\prime}\right|+\left|\Psi_{K}^{\prime\prime\prime}\right| \leq C_{K}\mathrm{e}^{\frac{x}{K}} \ \ \text{for}\ \  x \leq 0, \qquad \Psi_{K}^{\prime\prime\prime} = \frac{1}{K^{2}}(\Psi_{K}^{\prime}-2\phi) \leq \frac{1}{K^{2}}\Psi_{K}^{\prime}.
$$
Choosing $ 0<\alpha<1$, such that $(1-\alpha)^2c_{1}>2\omega$, let
\begin{equation*}
E_{x_{0},t_{0}}(t) := \int_{\mathbb{R}}(u^2+u_{x}^2) \Psi_{K}\big(x-x(t)-x_{0}-\alpha(x(t_{0})-x(t))\big)\rmd x.
\end{equation*}
Note that $E_{x_{0},t_{0}}(t)$ is close to $ \|u(t)\|_{H^{1}\big(x>(1-\alpha) x(t)+X_{0}\big)}^{2}$, where $X_{0} := x_{0}+\alpha x(t_{0})$. Moreover, since $\phi$ is even, there holds the following, for $t_{0}\in \mathbb{R}$,
\begin{equation*}
E_{x_{0},t_{0}}(t_{0}) \geq \frac{1}{2} \|u(t,x+x(t_0))\|_{H^{1}(x_{0},+\infty)}^{2}.
\end{equation*}

\begin{proposition}[\cite{EM07}] \label{th3.1}
Let $u \in Y(\R)$ be an $H^{1}$-localized solution of the CH equation with $c_{1}>2 \omega$. Then there exists $C>0$ depending  on $\varepsilon \mapsto R_{\varepsilon}$, $\left\|u_{0}\right\|_{H^{1}}, \omega>0 $ and $ K>\sqrt{c_{1} /\left(c_{1}-2 \omega\right)}$, such that for all $t \in \mathbb{R}$ and $x \in \mathbb{R}$,
\begin{equation*}
|u(t, x+x(t))|+|u_x(t, x+x(t))| \leq C \mathrm{e}^{-\frac{|x|}{K}}.
\end{equation*}
\end{proposition}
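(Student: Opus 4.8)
The plan is to establish the pointwise exponential decay through an almost-monotonicity argument for the localized versions of the conserved quantities $E=\Ham_1$ and $F=\Ham_2$, exactly in the spirit of El~Dika--Molinet \cite{EM07}. The starting point is the observation that the $H^1$-localization hypothesis already provides control of the tails of $u$ in the moving frame $x+x(t)$, but only in an $L^2$-averaged sense and with a rate $R_\e$ that is not quantitative; the role of the monotonicity formula is precisely to upgrade this soft control to a uniform exponential bound. First I would derive the differential inequality for $E_{x_0,t_0}(t)$: differentiating in $t$ and substituting the CH equation \eqref{eq1} (written in the $m$-formulation or directly in terms of $u$ using $u_t = -uu_x - \p_x(1-\p_x^2)^{-1}(u^2 + \tfrac12 u_x^2 + \omega u)$ with $\omega>0$), one collects the terms that fall on the weight $\Psi_K$ and its derivatives. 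The crucial structural point, using that $\Psi_K' \geq 0$, $\Psi_K''' \leq K^{-2}\Psi_K'$, and that the wave is right-moving with $\inf\dot x(t) > c_1 > 2\omega$, is that the dominant contribution $-(\dot x(t) - \alpha(\dot x(t_0) - \dot x(t)))\int (u^2+u_x^2)\Psi_K'$ has a definite (negative) sign and controls the error terms coming from the nonlinearity and from the nonlocal operator $(1-\p_x^2)^{-1}$, provided $K$ is chosen large enough relative to $c_1/(c_1-2\omega)$ and $\alpha$ small as in the statement. This yields $\frac{d}{dt}E_{x_0,t_0}(t) \leq 0$ up to an exponentially small remainder, which integrates to an almost-monotonicity statement: $E_{x_0,t_0}(t_2) \leq E_{x_0,t_0}(t_1) + C e^{-x_0/K}$ for $t_1 \leq t_2$, with the remainder absorbed because the weight decays like $e^{x/K}$ on the left and the localization confines the mass.

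Next I would exploit this monotonicity in both time directions. Applying the estimate with $t_0$ fixed and letting the ``reference'' time run to $\pm\infty$, and using the $H^1$-localization to guarantee that the mass of $u$ in the region $\{x - x(t) > R_\e\}$ stays below $\e$ for \emph{all} $t$, one concludes that the localized energy $E_{x_0,t_0}(t_0)$ is itself bounded by $C e^{-x_0/K}$ uniformly in $t_0$ and in $x_0>0$; running the symmetric argument with the reflected weight $1-\Psi_K(-\,\cdot\,)$ controls the left tail $\{x - x(t) < -x_0\}$ as well (here one uses that the linearized/nonlinear CH dynamics does not transport mass leftward faster than speed $2\omega < c_1$, so the ``backward'' monotonicity also has the right sign in the far left region). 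Combining the two one-sided bounds gives $\int_{|x|>x_0} (u^2 + u_x^2)(t, x+x(t))\,dx \leq C e^{-x_0/K}$ for every $t\in\R$ and every $x_0 \geq 0$. The final step is the passage from this integral decay to the pointwise bound: since $u(t,\cdot+x(t)) \in H^1(\R)$, the Sobolev embedding $H^1 \hookrightarrow L^\infty$ applied on the half-lines $(x_0,\infty)$ and $(-\infty,-x_0)$ gives $|u(t,x+x(t))|^2 + |u_x(t,x+x(t))|^2 \lesssim \int_{|y|>|x|-1}(u^2+u_x^2)(t,y+x(t))\,dy \lesssim e^{-|x|/K}$ after adjusting the constant $K$ slightly (replacing $K$ by, say, $2K$ to absorb the shift by $1$), which is the claimed estimate; one also uses here that $u$ and $u_x$ are continuous so the bound holds at every point, not just almost everywhere.

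The main obstacle I anticipate is controlling the error terms produced by the \emph{nonlocal} operator $(1-\p_x^2)^{-1}$ when it is commuted against the slowly-varying weight $\Psi_K$. Unlike the local KdV-type case, the quantity $\int [(1-\p_x^2)^{-1}(u^2+\tfrac12 u_x^2+\omega u)]\,u\,\Psi_K'\,dx$ (and its higher-derivative analogues) is not pointwise localized, so one must exploit the exponential kernel $\tfrac12 e^{-|x-y|}$ of $(1-\p_x^2)^{-1}$ together with the fact that $\Psi_K'$ and $g(\cdot/K)$ themselves decay, writing the commutator $[(1-\p_x^2)^{-1},\Psi_K']$ as an operator with an exponentially localized kernel of size $O(1/K)$; this is where the smallness of $1/K$ and the choice $K > \sqrt{c_1/(c_1-2\omega)}$ enter to guarantee that this nonlocal error is dominated by the good term $-c_1 \int(u^2+u_x^2)\Psi_K'$. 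A secondary technical point is that the modulated center $x(t)$ is only $\mathcal C^1$ with $\dot x$ bounded below but not necessarily bounded above a priori, so one must either first invoke the monotonicity to \emph{deduce} a uniform bound on $\dot x$ (as is done in \cite{EM07}), or carry the argument with the weaker lower bound $\dot x \geq c_1$ throughout, which suffices since only the sign of the transport term matters. Since Proposition~\ref{th3.1} is quoted verbatim from \cite{EM07}, I would present the proof as a direct adaptation, emphasizing only the points where the presence of $\omega>0$ (the smooth-soliton regime) changes the computation relative to the peakon case $\omega=0$, namely the appearance of the extra linear term $2\omega u_x$ in \eqref{eq1} which is harmless as it is lower order and has a fixed speed $2\omega < c_1$.
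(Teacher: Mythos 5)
Your overall route---the almost-monotonicity of the localized energy $E_{x_0,t_0}$ (your step (1) is exactly Lemma \ref{3.1.}), exploited toward $t\to-\infty$ together with the uniform-in-time $H^1$-localization to bound the right tail, and the reflection $(t,x)\mapsto(-t,-x)$ to handle the left tail (as in Corollary \ref{cor 1})---is precisely the mechanism behind the paper's citation of \cite{EM07}, and that part of the plan is sound, including the role of $K^2>c_1/(c_1-2\omega)$ in absorbing the $\Psi_K'''\le K^{-2}\Psi_K'$ and nonlocal commutator errors.

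The genuine gap is in your final step, the passage from the $H^1$ tail estimate to the pointwise bound. Sobolev embedding on half-lines does give $|u(t,x+x(t))|^2\lesssim \int_{|y|>|x|}(u^2+u_x^2)(t,y+x(t))\,\rmd y$, so the bound on $|u|$ is fine; but the analogous inequality you assert for $|u_x(t,x+x(t))|^2$ is false: a local $H^1$ bound on $u$ does not control $u_x$ pointwise (for a general $H^1$ function, $u_x$ need not be bounded, let alone continuous), and the monotonicity argument provides no $H^2$-type tail information. As written, the $|u_x|$ half of the conclusion is unproved. The missing ingredient is the structure of the solution class rather than any embedding: for $u(t)\in Y_+$ (propagated by the flow) one has $m=u-u_{xx}\in\mathcal M_+(\R)$, hence $u=G\ast m$ with $G=\tfrac12 e^{-|x|}$ and $u_x=G'\ast m$ with $|G'|\le G$, so $|u_x|\le G\ast m=u$ pointwise, and the decay of $u_x$ follows at once from that of $u$. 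If one only assumes the weaker invariant condition $m+\omega\ge 0$, the same identity yields only $|u_x|\le u+\omega$, and one must in addition show that the one-sided exponential tail mass of $m$, e.g. $e^{x}\int_{y>x}e^{-y}\,\rmd m(y)$ in the moving frame, decays exponentially (which can be done by testing $m=u-u_{xx}$ against cutoffs and using the already-established decay of $u$); some argument of this type is what replaces your Sobolev step. Note also that in this class $u_x$ is only of bounded variation, so the pointwise statement must be read accordingly---another reason the continuity of $u_x$ you invoke is not available from $u\in H^1$ alone.
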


Proposition \ref{th3.1} mainly follows from the almost monotonicity of the energy $E(u)$ at the right of an $H^{1}$-localized solutions.
\begin{lemma}[\cite{EM07}]\label{3.1.}
Let $u \in Y(\R)$ be an $H^{1}$-localized solution of the CH equation with $c_{1}>2\omega$. Then there exist $C>0$ and $B>0$ depending only on $\varepsilon \mapsto R_{\varepsilon}, \|u_0\|_{H^{1}},\omega \geq 0 $ and $ 0<\alpha<1$, such that for all $ t \leq t_{0}, \ x_{0}>B $ and
\begin{equation*}
K>\sqrt{\frac{(1-\alpha)^{2} c_{1}}{(1-\alpha)^{2} c_{1}-2 \omega}},
\end{equation*}
there holds
\begin{equation*}
E_{x_{0}, t_{0}}\left(t_{0}\right)-E_{x_{0}, t_{0}}(t) \leq C \mathrm{e}^{-\frac{x_{0}}{K}} .
\end{equation*}
\end{lemma}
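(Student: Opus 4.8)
The plan is to establish the almost-monotonicity estimate of Lemma~\ref{3.1.} by differentiating the localized energy functional $E_{x_0,t_0}(t)$ along the CH flow and showing that the resulting time derivative is, up to an exponentially small error, of a favorable sign. First I would compute $\frac{d}{dt}E_{x_0,t_0}(t)$ using equation \eqref{eq1}; writing $\psi(t,x):=\Psi_K\big(x-x(t)-x_0-\alpha(x(t_0)-x(t))\big)$, the derivative splits into a transport term $-\dot x(t)(1-\alpha)\int (u^2+u_x^2)\psi'$ coming from the motion of the weight and a term coming from $u_t$, which after integrating by parts and using the CH conservation-law structure (recall $E=\Ham_1$ is conserved globally) reduces to an expression of the schematic form $\int \big(\text{cubic in }u,u_x\big)\psi' + \int(\text{quadratic})\psi'''$. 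The key structural identity here is the local conservation law for the $H^1$ density: $\partial_t(u^2+u_x^2)+\partial_x\big(2\omega u^2+\tfrac23 u^3 + 2uu_x^2 - u^2 u_{xx} - \ldots\big) = (\text{lower-order})$, which expresses the flux explicitly; combined with $|\partial_x\varphi_c|\le\varphi_c$ and the bound $\|u(t)\|_{L^\infty}\lesssim\|u(t)\|_{H^1}$ one controls the cubic flux by $C\|u\|_{H^1}\int(u^2+u_x^2)\psi'$.

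Next I would exploit the two defining properties of $\Psi_K$: the exponential bound $|\Psi_K|+|\Psi_K'|+|\Psi_K''|+|\Psi_K'''|\le C_K e^{x/K}$ for $x\le 0$, and the differential inequality $\Psi_K'''\le K^{-2}\Psi_K'$. The latter lets me absorb the $\psi'''$ contributions into the $\psi'$ contributions with a gain of $K^{-2}$. The crucial point is the choice of $K$: on the support-relevant region the argument of $\Psi_K$ is essentially $x-(1-\alpha)x(t)-X_0$, and since $\inf_t\dot x(t)>c_1$ we have $(1-\alpha)\dot x(t)>(1-\alpha)c_1$; the transport term therefore contributes $-(1-\alpha)\dot x(t)\int(u^2+u_x^2)\psi'$ with a coefficient strictly exceeding $(1-\alpha)c_1$, while the flux and $\psi'''$ terms contribute at most $\big(C\|u_0\|_{H^1}+\omega + CK^{-2}\big)\int(u^2+u_x^2)\psi'$. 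Choosing $K>\sqrt{(1-\alpha)^2c_1/((1-\alpha)^2c_1-2\omega)}$ and using the $H^1$-localization to make $\|u(t)\|_{H^1(\text{relevant region})}$ as small as we like away from the soliton core, the net coefficient of $\int(u^2+u_x^2)\psi'$ becomes negative, so $\frac{d}{dt}E_{x_0,t_0}(t)\ge -(\text{exponentially small remainder})$.

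The remainder is controlled as follows: the only region where the integrand $(u^2+u_x^2)\psi'$ is not pointwise tiny is where $\psi'$ is not exponentially small, i.e. where $x-(1-\alpha)x(t)-X_0$ is not very negative; but there $x>(1-\alpha)x(t)+X_0 - O(K)$, and by $H^1$-localization combined with $\inf_t\dot x>c_1$ and $x_0>B$ large, the mass of $u$ in that half-line is $\le\varepsilon$. Conversely, where $u$ carries non-negligible mass (the soliton core, near $x=x(t)$), the weight $\psi'$ is evaluated at argument $\approx -x_0-\alpha(x(t_0)-x(t))\le -x_0$ (using $t\le t_0$ and monotonicity of $x$), hence $\psi'\le C_K e^{-x_0/K}$. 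Integrating this differential inequality from $t$ to $t_0$ yields $E_{x_0,t_0}(t_0)-E_{x_0,t_0}(t)\le C e^{-x_0/K}$, which is precisely Lemma~\ref{3.1.}. Finally, Proposition~\ref{th3.1} follows by a now-standard argument: apply the lemma (and its mirror version for the left, using evenness of $\phi$ and the lower bound on $\dot x$) to deduce that for each fixed $t$ and each $x_0>0$, $\int_{|x|>x_0}(u^2+u_x^2)(t,\cdot+x(t))\le C e^{-x_0/K}$; a pointwise bound $|u|+|u_x|\le C e^{-|x|/K}$ then comes from the one-dimensional Sobolev embedding $\|f\|_{L^\infty(x>x_0)}^2\lesssim \|f\|_{H^1(x>x_0)}^2$ applied on dyadic shells, possibly with a slight enlargement of $K$.

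I expect the main obstacle to be the bookkeeping in the first step: isolating from $\frac{d}{dt}E_{x_0,t_0}(t)$ the correct flux polynomial and verifying that every cubic term is genuinely bounded by $\|u\|_{L^\infty}\int(u^2+u_x^2)\psi'$ rather than requiring higher regularity (CH being quasilinear, terms like $uu_{xxx}$ must be integrated by parts against $\psi$ and reorganized, and one must be careful that no uncontrolled $u_{xx}^2$ term survives). The nonlocal structure $(1-\partial_x^2)^{-1}$ does not enter here directly since we work with $u$ rather than $m$, but one must still justify the computations for merely $H^1$ (indeed $Y_+$) solutions by approximation, using the stability statement of Proposition~\ref{pr2.5}. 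Once the flux identity is pinned down with the right sign, the remaining analysis is the standard Kato/Martel--Merle monotonicity machinery as adapted to CH in \cite{EM07}.
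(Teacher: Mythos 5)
Your overall skeleton (Kato-type monotonicity: differentiate the weighted energy, let the transport term dominate, push the remainder into an exponentially small error, integrate in time) is indeed the route of \cite{EM07}, which this paper simply cites, and it parallels the paper's own computation for the functional $F_{x_0,t_0}$ in Proposition \ref{pro4.3}. However, there is a genuine gap at the heart of your argument: you assert that the nonlocal structure $(1-\partial_x^2)^{-1}$ ``does not enter here directly'', and correspondingly you book the $\omega$-contribution as a harmless ``$\omega+CK^{-2}$''. This is not correct. Writing CH as $u_t+uu_x+\partial_x p*(u^2+\tfrac12u_x^2+2\omega u)=0$ with $p=\tfrac12 e^{-|x|}$, the exact local law for the $H^1$ density is $\partial_t\big(\tfrac{u^2+u_x^2}{2}\big)+\partial_x\big(uP+\tfrac12 uu_x^2-\omega u^2\big)=0$ with $P=p*(u^2+\tfrac12u_x^2+2\omega u)$; the flux therefore contains the \emph{linear-in-$u$} nonlocal piece $2\omega\, u\,(p*u)$, which is quadratic in the perturbation and cannot be made small by $H^1$-localization or by smallness of $\|u\|_{H^1}$. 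It can only be beaten by the transport term, and estimating $\int u\,(p*u)\,\Psi_K'\,\rmd x$ requires the weight--convolution estimate $e^{-|\cdot|}*\Psi_K'\lesssim \frac{K^2}{K^2-1}\Psi_K'$ (this is exactly why $\Psi_K$ is built from $\phi*e^{-|x|}$, and it is the analogue of the $h=(1-\partial_x^2)^{-1}(4\omega u+2u^2+u_x^2)$ manipulation in the proof of Proposition \ref{pro4.3}). The resulting requirement is $(1-\alpha)^2c_1>\frac{2\omega K^2}{K^2-1}$, which is precisely the stated threshold $K>\sqrt{(1-\alpha)^2c_1/((1-\alpha)^2c_1-2\omega)}$; your accounting would instead produce a condition of the form $(1-\alpha)c_1>\omega+CK^{-2}$, which neither matches the lemma nor reflects the actual mechanism (in the weighted frame the linear CH waves travel with speed up to $2\omega K^2/(K^2-1)$, and the weight must outrun them).

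Two smaller points. First, your differential inequality has the sign backwards: with a negative net coefficient on $\int(u^2+u_x^2)\Psi_K'$ you obtain $\frac{d}{dt}E_{x_0,t_0}(t)\le (\text{remainder})$, not $\ge -(\text{remainder})$; only the former integrates to $E_{x_0,t_0}(t_0)-E_{x_0,t_0}(t)\le Ce^{-x_0/K}$. Second, when you bound the remainder near the localization center you discard the term $-\alpha(x(t_0)-x(t))$ in the argument of $\Psi_K'$ and keep only $Ce^{-x_0/K}$; since $t_0-t$ is unbounded, you must retain the factor $e^{-\alpha c_1(t_0-s)/K}$ (as in the proof of Proposition \ref{pro4.3}, via $\Psi_K'\le e^{(B-x_0+\alpha c_2(t_0-s))/K}$ with the correct sign of the exponent) so that the time integral of the remainder is bounded by $Ce^{-x_0/K}$ uniformly in $t_0-t$. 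The rest of your outline (splitting near/far from $x(t)$ using $H^1$-localization to handle the cubic flux, justification for $Y_+$ data by approximation through Proposition \ref{pr2.5}, and deducing Proposition \ref{th3.1} from the lemma and its left-moving mirror) is consistent with \cite{EM07}.
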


%

For $K > 0$ we consider the function $\Psi_K$ as above. For $\alpha > 0, x_0 > 0$ and $(t_0, t) \in \mathbb{R}^2$, we define
$$
F_{x_0,t_0}(t) := \frac{1}{2}\int_{\R} \left(u^3(t,x)+uu_x^2(t,x)+2\omega u^2(t,x)\right)\Psi_K\big(x-x(t)+x_0-\alpha(x(t_0)-x(t))\big) \rmd x.
$$
The monotonicity result relative to $F_{x_0,t_0}(t)$ says that if $u(0)$ is sufficiently close  to $\varphi_{c}$ in $H^1(\mathbb{R})$, then $F_{x_0,t_0}(t)$ is almost decreasing with respect to time.
\begin{proposition}\label{pro4.3}
  Let $c > 2\omega$. There exist $a_1 > 0,B > 0,C > 0,\alpha > 0$ and $K > 0$ such that if $|u_0-\varphi_{c}|_{H^1} < a_{1}$, then for all $x_0>2B$,  $t_0 \in \mathbb{R}$ and  $t \geq t_0$,
\begin{equation*}
F_{x_0,t_0}(t) \le  F_{x_0,t_0}(t_0) + Ce^{-\frac{x_0}{K}}.
\end{equation*}
\end{proposition}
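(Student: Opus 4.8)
The plan is to differentiate $F_{x_0,t_0}(t)$ in time and show that the resulting expression is, up to an exponentially small error in $x_0$, a sum of terms with a favorable sign. I would begin by computing $\frac{\rmd}{\rmd t}F_{x_0,t_0}(t)$ using the CH equation \eqref{eq1} written in conservation form for the density $u^3+uu_x^2+2\omega u^2$ associated to the conserved quantity $F=\Ham_2$. Since $F$ is a conservation law, the flux has an explicit polynomial-plus-nonlocal form; after integration by parts the time derivative splits into two pieces: (i) a \emph{transport} piece coming from the moving cutoff $\Psi_K\big(x-x(t)+x_0-\alpha(x(t_0)-x(t))\big)$, which produces a factor $-(\dot x(t)(1-\alpha))\Psi_K'$ times the flux density, and (ii) a \emph{flux} piece, the spatial divergence of the flux integrated against $\Psi_K$, which after moving the derivative onto $\Psi_K$ becomes an integral of the flux against $\Psi_K'$ (and $\Psi_K'''$ via the $(1-\partial_x^2)^{-1}$ terms). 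The key structural point, exactly as in the $H^1$-monotonicity argument of El~Dika–Molinet \cite{EM07} underlying Proposition \ref{th3.1} and Lemma \ref{3.1.}, is that on the support of $\Psi_K'$ — a window located near $x(t)-x_0+\alpha(x(t_0)-x(t))$, hence strictly to the left of the soliton region for $t\ge t_0$ — the speed mismatch $(1-\alpha)\dot x(t) > (1-\alpha)c_1 > 2\omega/(1-\dots)$ dominates, so the transport term carries a definite sign that beats the flux term.

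Next I would bound the flux term pointwise against a multiple of $\Psi_K'$ plus $\frac{1}{K^2}\Psi_K'$ (using $\Psi_K''' \le \frac{1}{K^2}\Psi_K'$ from the properties listed before Proposition \ref{th3.1}) and the smallness $\|u_0-\varphi_c\|_{H^1}<a_1$ together with orbital stability (so $\|u(t)\|_{H^1}$ stays small relative to $\varphi_c$, and in particular $\|u\|_{L^\infty}$ is controlled). The cubic terms $u^3$, $uu_x^2$ are handled by $\|u\|_{L^\infty}\lesssim \|u\|_{H^1}$, so that $|u^3|+|uu_x^2| \lesssim \|u\|_{L^\infty}(u^2+u_x^2)$; this is where the hypothesis $|u_0-\varphi_c|_{H^1}<a_1$ enters decisively, because $F_{x_0,t_0}$ is not sign-definite and one needs the cubic contributions in the window to be a small perturbation of the quadratic ones. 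Combining, one gets
\[
\frac{\rmd}{\rmd t}F_{x_0,t_0}(t) \le -\delta \int_{\R}(u^2+u_x^2)\,\Psi_K'\big(x-x(t)+x_0-\alpha(x(t_0)-x(t))\big)\,\rmd x + \frac{C}{K^2}\int_{\R}(u^2+u_x^2)\,\Psi_K'(\cdots)\,\rmd x
\]
for some $\delta>0$; choosing $K$ large enough (and $B$ correspondingly) makes the right-hand side nonpositive \emph{up to} the contribution from the far tail of $\Psi_K'$ where the $H^1$-localization of $u$ (Definition \ref{de3.1}, via Proposition \ref{th3.1}) forces $u^2+u_x^2$ to be exponentially small; that tail contributes at most $C\mathrm e^{-x_0/K}$ after integrating in $t$ over $[t_0,t]$.

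Finally, integrating the differential inequality from $t_0$ to $t\ge t_0$ yields $F_{x_0,t_0}(t) - F_{x_0,t_0}(t_0) \le C\mathrm e^{-x_0/K}$, which is the claim. The main obstacle is the sign analysis in step (i)–(ii): one must verify that the window where $\Psi_K'\ne 0$ truly sits to the left of the essential support of the perturbation near the soliton for all $t\ge t_0$ — this is where the condition $x_0>2B$ and the choice $(1-\alpha)^2 c_1 > 2\omega$ are used, guaranteeing the monotone drift of the cutoff center relative to $x(t)$ — and that the nonlocal flux contributions generated by $(1-\partial_x^2)^{-1}$ (which, unlike in KdV, are genuinely present here) are still controlled by $\Psi_K'$ rather than polluting the estimate, for which one exploits that the kernel $\tfrac12 e^{-|x|}$ of $(1-\partial_x^2)^{-1}$ has exponential decay and $\Psi_K'$ varies slowly on that scale when $K$ is large. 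This is precisely the CH-specific refinement of the classical gKdV monotonicity argument, and the rest is the same bookkeeping as in \cite{EM07}.
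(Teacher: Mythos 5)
Your skeleton (differentiate $F_{x_0,t_0}$, split into a transport piece and a flux piece tested against $\Psi_K'$, exploit the slow variation of $\Psi_K$ and the exponential kernel of $(1-\partial_x^2)^{-1}$, then integrate in time) is the right family of argument, but the step that actually produces the error term $C e^{-x_0/K}$ is wrong as stated. You invoke the $H^1$-localization of $u$ (Definition \ref{de3.1}, via Proposition \ref{th3.1}) to make $u^2+u_x^2$ exponentially small in the far tail of $\Psi_K'$. Proposition \ref{pro4.3} does not assume $u$ is $H^1$-localized --- only $\|u_0-\varphi_c\|_{H^1}<a_1$ with $u_0\in Y_+$ --- and in its intended application (the convergence of $c(t)$ in Section \ref{sec_3}) the solution is precisely \emph{not} known to be localized: radiation may occupy the left tail for all times, so Proposition \ref{th3.1} is unavailable. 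The geometry of the correct mechanism is in fact the opposite of what you describe: where $u^2+u_x^2$ is large (the soliton core $|x-x(t)|<B$) it is the weight $\Psi_K'(x-y(t))$ that is exponentially small, of size $e^{(B-x_0-\alpha c_2(t-t_0))/K}$, and this factor --- combined with the smallness of $u$ on the window itself, which comes from orbital stability plus the decay of $\varphi_c$, not from localization of $u$ --- is what yields the error; this is how the paper concludes, via the pointwise bound on $\Psi_K'$ for $|x-x(t)|<B$ together with Lemma \ref{3.1.}. Note also that the extra factor $e^{-\alpha c_2 (t-t_0)/K}$, produced by the drift of the cutoff away from the soliton, is essential for the time integral to be uniformly bounded; a bound of size $e^{-x_0/K}$ per unit time would give $C(t-t_0)e^{-x_0/K}$ and fail.

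Second, your claimed inequality $\frac{\rmd}{\rmd t}F_{x_0,t_0}\le -\delta\int(u^2+u_x^2)\Psi_K'+CK^{-2}\int(u^2+u_x^2)\Psi_K'+\mathrm{tail}$ is not justified for this cubic, non-sign-definite density: the transport piece is $-\frac{1-\alpha}{2}\dot x(t)\int(u^3+uu_x^2+2\omega u^2)\Psi_K'$, which (once one knows $u\ge0$) controls $\int u^2\Psi_K'$ but gives no coercive control of $\int u_x^2\Psi_K'$ exactly where $u$ is small, i.e.\ on the window; likewise your global bound $\|u\|_{L^\infty}\lesssim\|u\|_{H^1}$ is of order $c-2\omega$, not small, so ``cubic $\ll$ quadratic'' only holds after localizing to the window via the decomposition $u=\varphi_c(\cdot-x(t))+v$. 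The missing ingredient is the $Y_+$ structure, $m\ge0$, hence $u\ge 0$ and $|u_x|\le u$ (so $u^2+u_x^2\le 2u^2$ and the transport term is sign-definite); alternatively one can avoid coercivity altogether, as the paper does: it writes $u_t=-\tfrac12\partial_x h$ with $(1-\partial_x^2)h=4\omega u+3u^2-u_x^2-2uu_{xx}$ (cf.\ \eqref{Hamilton form}), integrates by parts so the flux contribution becomes $\tfrac18\int(h^2-h_x^2)\Psi_K'$ plus $\tfrac12\int uu_xh_x\Psi_K'$, bounds these by $C\|u\|_{H^1}^2\int(u^2+u_x^2)\Psi_K'$, and then invokes only the weight/orbital-stability mechanism above before integrating in $t$. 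So the statement is reachable along your general lines, but you must replace the appeal to Proposition \ref{th3.1} by that mechanism and use either the $Y_+$ pointwise inequalities or the Hamiltonian substitution to dispose of the $u_x^2$ terms.
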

\begin{proof}
The proof follows along the same lines as the proof of the $H^1$ monotonicity. We set $y(t) := x(t)-x_0+\alpha\left(x\left(t_0\right)-x(t)\right)$. Recall that $t \geqslant t_0$ and $x_0>2 B>0$. Differentiating $F_{x_0, t_0}(t)$ with respect to $t$, we obtain
\begin{equation*}
\begin{aligned}
F_{x_0, t_0}^{\prime}(t) & = -\frac{1-\alpha}{2} x'(t) \int_{\R}\left(u^3+uu_x^2+2\omega u^2\right)\Psi_K^{\prime}(x-y(t)) \rmd x \\
& \quad +\frac{1}{2}\int_{\R}\left(4\omega u+3u^2-u_x^2-2uu_{xx}\right)u_t \Psi_k(x-y(t)) \rmd x-\int_{\R}uu_xu_t\Psi'_k(x-y(t)) \rmd x \\
& = F_1+F_2+F_3 .
\end{aligned}
\end{equation*}
 We  choose $a_1$ sufficiently small such that $2\omega<c_2=\frac{2\omega+c_1}{2}<x'(t)<c_3=\frac{3 c_1-2\omega}{2}$. We then take $\alpha>0$ such that $(1-\alpha) x'(t)>c_4=\frac{2\omega+c_1}{2}$, note that  $2\omega<c_4=c_2<c_1<c_3$. Since from \eqref{Hamilton form},
 $u_{t}=-\partial_x(1-\partial_x^2)^{-1}F'(u)$. If we denote by $$(1-\partial_x^2)h:=4\omega u+3u^2-u_x^2-2uu_{xx},$$ then $h=u^2+(1-\partial_x^2)^{-1}(4\omega u+2u^2+u_x^2)$ and $u_{t}=-\frac{1}{2}h'$.
Let us compute
\begin{equation*}
  \begin{aligned}
    F_2 &=\frac{1}{2}\int_{\R}\left(4\omega u+3u^2-u_x^2-2uu_{xx}\right)u_t \Psi_k(x-y(t)) \rmd x\\
    &=-\frac{1}{4} \int_{\R} h'(h-h_{xx})\Psi_k(x-y(t)) \rmd x=\frac{1}{8} \int_{\R} (h^2_x-h^2)\Psi'_k(x-y(t)) \rmd x\\
    &\leq -\frac{1}{8} \int_{\R} h^2\Psi'_k(x-y(t)) \rmd x\leq C\|u\|^2_{H^1}\int_{\R}(u^2+u_x^2)\Psi'_k(x-y(t)) \rmd x.
    \end{aligned}
\end{equation*}
\begin{equation*}
  \begin{aligned}
    F_3 &=-\int_{\R}uu_xu_t\Psi'_k(x-y(t))dx=\frac{1}{2}\int_{\R}uu_xh'\Psi'_k(x-y(t)) \rmd x\\
    &=\frac{1}{2}\int_{\R}uu_x\big(2uu_x+(1-\partial_x^2)^{-1}(4\omega u+2u^2+u_x^2)_x\big)\Psi'_k(x-y(t)) \rmd x\\
    &\leq C\|u\|^2_{H^1}\int_{\R}(u^2+u_x^2)\Psi'_k(x-y(t)) \rmd x.
    \end{aligned}
\end{equation*}
Therefore, from Lemma \ref{3.1.} and $\Psi'_k(x-y(t))\leq e^{\frac{B-x_0+\alpha c_2(t_0-t)}{K}}$ for $|x-x(t)|<B$,
\begin{equation*}
\begin{aligned}
F_{x_0, t_0}'(t)\leq C\|u\|^2_{H^1}\int_{\R}(u^2+u_x^2)\Psi'_k(x-y(t)) \rmd x\leq Ce^{\frac{B-x_0+\alpha c_2(t_0-t)}{K}}.
\end{aligned}
\end{equation*}
The proof of the proposition is completed by integrating the above from $t_0$ to $t$.
\end{proof}

We now introduce two quantities, $E_{R}$ and $E_{L}$, related to the conservation law $E(u)$, where the subscript $R$ and $L$ denote the right and left, respectively.
$$
E_{R}(t) := \frac{1}{2}\int_{\R} (u^2(t,x)+u_{x}^{2}(t,x))\Psi_K (x-x(t)+x_{0}) \rmd x,
$$
and
$$
E_{L}(t) := \frac{1}{2}\int_{\R} (u^2(t,x)+u_{x}^{2}(t,x))(1-\Psi_K(x-x(t)+x_{0})) \rmd x.
$$
These quantities are close to $\frac{1}{2}\|u(t)\|^{2}_{H^1(x>x(t)+x_{0})}$ and $\frac{1}{2}\|u(t)\|^{2}_{H^1(x<x(t)-x_{0})}$, respectively. The following corollary states that $E_{R}$ is almost
non-increasing and $E_{L}$ is almost non-decreasing.

\begin{corollary}\label{cor 1}
    Let  $a_{1}$, $B>0$, $C>0$, and $K>0$  be as in Lemma \ref{3.1.}. If  $\|u_{0}-\varphi_{c}\|_{H^{1}}<a_{1}$, then the following inequalities hold for all real  $t$, for all  $t^{\prime} \le  t$, and for all  $x_{0}>B$,
\[
E_{R}(t) \le  E_{R}\left(t^{\prime}\right)+C e^{-\frac{x_{0}}{K}},\quad \text {and } \quad E_{L}(t) \geqslant E_{L}\left(t^{\prime}\right)-C e^{-\frac{x_{0}}{K}} .
\]
\end{corollary}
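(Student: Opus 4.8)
The plan is to deduce the second inequality from the first, and to obtain the first by the monotonicity argument already used for Proposition~\ref{pro4.3}, applied now to the energy $E=\Ham_1$ in place of $F=\Ham_2$. For the reduction, $\Psi_K+(1-\Psi_K)\equiv 1$ gives, for every $t$,
\[
E_R(t)+E_L(t)=\frac12\int_{\R}\big(u^2+u_x^2\big)(t,x)\,\rmd x=E\big(u(t)\big)=\Ham_1\big(u(t)\big),
\]
and $E=\Ham_1$ is conserved along the CH flow (see \eqref{eq2a}; this is a standard property of the $H^1$-conservative solutions of Proposition~\ref{pr2.5}). Hence $E_L(t)-E_L(t')=-\big(E_R(t)-E_R(t')\big)$, so the two assertions are equivalent and it suffices to prove $E_R(t)\le E_R(t')+Ce^{-x_0/K}$ for $t'\le t$ and $x_0>B$.

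First I would replace the fixed weight by one whose centre moves, anchored at the later time $t$, in order to recover the gain a moving centre gives in Proposition~\ref{pro4.3}. Setting
\[
\widetilde E_{x_0}(s):=\frac12\int_{\R}\big(u^2+u_x^2\big)(s,x)\,\Psi_K\big(x-x(s)+x_0+\alpha\big(x(t)-x(s)\big)\big)\,\rmd x,\qquad s\le t,
\]
the monotonicity of $\Psi_K$ and $x(t)-x(s)\ge 0$ give $\widetilde E_{x_0}(t')\ge E_R(t')$ while $\widetilde E_{x_0}(t)=E_R(t)$, so $E_R(t)-E_R(t')\le \widetilde E_{x_0}(t)-\widetilde E_{x_0}(t')$, and it remains to estimate the increment of $\widetilde E_{x_0}$.

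I would differentiate $\widetilde E_{x_0}$ in $s$ and insert the local energy identity for \eqref{eq1} — obtained by multiplying the equation by $2u$ and rearranging it into divergence form $\partial_s(u^2+u_x^2)=\partial_x(\cdots)$, the $u_{sx}$ contribution to the flux being rewritten through $u_s=-\partial_x(1-\partial_x^2)^{-1}F'(u)$ as in the proof of Proposition~\ref{pro4.3} — and then integrate by parts so that every spatial derivative falls on $\Psi_K$. The motion of the centre produces $-\tfrac12(1+\alpha)\dot x(s)\int(u^2+u_x^2)\Psi_K'$; combined with the purely quadratic part of the flux (an $-\omega\int u^2\Psi_K'$ term and a nonlocal $(1-\partial_x^2)^{-1}$ correction) it assembles into a quadratic form in $u$ which, after a Fourier/commutator analysis and for $K$ large, is nonnegative precisely because $\dot x(s)>2\omega$ — the critical speed of the linear CH dispersion, as reflected in the essential spectra computed in Section~\ref{sec_2.2}; the lower bound $\dot x(s)>2\omega$ for small $a_1$ follows from the modulation theory of Section~\ref{sec_2.5} together with the orbital stability $\|u(s)-\varphi_c(\cdot-x(s))\|_{H^1}\lesssim a_1$ of \cite{CS02}. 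The $\Psi_K'''$-terms are absorbed using $\Psi_K'''\le K^{-2}\Psi_K'$, and the genuinely cubic terms and the cubic part of the nonlocal term are dominated by $C\int(u^2+u_x^2)\,\Psi_K'\big(x-x(s)+x_0+\alpha(x(t)-x(s))\big)\,\rmd x$ with $C=C(\|u_0\|_{H^1},\omega)$. Because the peak of this $\Psi_K'$ lies a distance at least $x_0+\alpha(x(t)-x(s))$ behind the soliton core and $\dot x(s)>2\omega$ keeps $x(t)-x(s)$ comparable to $t-s$, that integral is bounded by $Ce^{-x_0/K}$ times a factor decaying exponentially in $t-s$: on the support of $\Psi_K'$ the solution is either far from the soliton, where $\varphi_c$ decays exponentially and the residual $H^1$-mass of the perturbation is controlled through the $H^1$-energy monotonicity of Lemma~\ref{3.1.} (equivalently Proposition~\ref{th3.1}), or $\Psi_K'$ itself is exponentially small. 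Integrating over $s\in[t',t]$ and using $\int_{-\infty}^{t}e^{-\gamma(t-s)}\,\rmd s<\infty$ then gives $\widetilde E_{x_0}(t)-\widetilde E_{x_0}(t')\le Ce^{-x_0/K}$.

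The principal difficulty is the treatment of the flux terms. Unlike in gKdV, the CH flux is nonlocal (it carries $(1-\partial_x^2)^{-1}$) and genuinely cubic, and since $u-\varphi_c$ is only $H^1$-small — not pointwise small — the cubic and nonlocal pieces cannot be discarded by smallness; they must be localized away from the soliton core via the exponential spatial decay, for which one needs the $H^1$-energy monotonicity of Lemma~\ref{3.1.} and Proposition~\ref{th3.1}, and the resulting time integral over $(-\infty,t]$ must be closed by the speed separation $\dot x(s)>2\omega$ built into $\widetilde E_{x_0}$. A secondary subtlety is verifying that the quadratic part of the estimate becomes nonnegative at exactly the threshold $2\omega$, and not at some larger multiple of $\omega$, which is where the nonlocal correction and the choice of $K$ come into play.
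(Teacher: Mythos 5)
The central step of your proposal fails at the endpoint comparison. From $\widetilde E_{x_0}(t)=E_R(t)$ and $\widetilde E_{x_0}(t')\ge E_R(t')$ one gets $E_R(t)-E_R(t')=\widetilde E_{x_0}(t)-E_R(t')\ge \widetilde E_{x_0}(t)-\widetilde E_{x_0}(t')$, i.e.\ the \emph{opposite} of the inequality you assert; an upper bound on the increment of $\widetilde E_{x_0}$ therefore gives no upper bound on $E_R(t)-E_R(t')$. To make the comparison usable, the anchored weight must lie \emph{below} the fixed weight at the earlier time, which forces the opposite sign on the $\alpha$-correction, namely $\Psi_K\bigl(x-x(s)+x_0-\alpha(x(t)-x(s))\bigr)$. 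With that sign, and with the reference point placed \emph{ahead} of the soliton (which is what $E_R$ is meant to measure, as the sentence following its definition and the splitting $E=E_{loc}+E_L+E_R$ in the proof of Proposition \ref{pr H^1 compactness} indicate; the printed ``$+x_0$'' in $E_R$ is a slip), the anchored functional is exactly $E_{x_0,t_0}$ of Lemma \ref{3.1.} with $t_0=t$, and the first inequality is the paper's two-line argument: weight comparison at $t'$, equality at $t$, then quote Lemma \ref{3.1.}. If instead you keep your transition behind the soliton and merely correct the sign of $\alpha$, the anchored transition point sweeps forward through the soliton core as $s$ decreases, and there the cubic flux terms are no longer damped by the smallness of $\Psi_K'$, so the monotonicity computation you sketch does not close either; this is why neither the paper nor \cite{EM07} uses such a family.

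Two further points. Even where the transition stays far from the core, the estimate ``the cubic terms are $\le C\int(u^2+u_x^2)\Psi_K'$ with $C=C(\|u_0\|_{H^1},\omega)$, and this integral is $\lesssim e^{-x_0/K}$ times an exponentially decaying factor in $t-s$'' is not correct: on the effective support of $\Psi_K'$ the perturbation carries $H^1$ mass of size $O(a_1)$, which is small but not exponentially small in $x_0$ or in $t-s$, and Lemma \ref{3.1.}/Proposition \ref{th3.1} give no decay behind the soliton (Proposition \ref{th3.1} moreover presupposes an $H^1$-localized solution, which is not assumed in Corollary \ref{cor 1}). The correct mechanism, as in the proof of Lemma \ref{3.1.} and of Proposition \ref{pro4.3}, is to absorb the bulk of the cubic terms into the negative transport term using the smallness of $\|u\|_{L^\infty}$ away from the core (orbital stability plus the decay of $\varphi_c$), and to bound only the core-weighted piece by $\sup_{|x-x(s)|\le B}\Psi_K'$, which is exponentially small in $x_0$ plus a multiple of $t-s$, before integrating in $s$. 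Finally, your reduction of the $E_L$ inequality to the $E_R$ inequality via $E_R+E_L=E(u)$ and conservation of $\Ham_1$ is a nice shortcut, but it is valid only for the formulas as printed (both weights sharing the same argument); with the intended transitions at $x(t)\pm x_0$ the two weights do not sum to one, and the paper instead deduces the $E_L$ inequality from the $E_R$ inequality through the invariance $(t,x)\mapsto(-t,-x)$, so that reflection step (or an independent left-sided monotonicity) cannot be dispensed with.
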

\begin{proof}
  Let $t\in \mathbb{R}$, and $t^{\prime} \le  t$. One has $x-x(t^{\prime})-x_{0}-\alpha(x(t)-x(t^{\prime})) \le  x-x(t^{\prime})-x_{0}.$ Since $\Psi_K$ is non decreasing, we get $I_{x_{0},t}(t^{\prime}) \le  E_{R}(t^{\prime})$ . On the other hand, $E_{R}(t)= I_{x_{0},t}(t)$ and Lemma \ref{3.1.} implies that $E_{R}(t) \le  E_{R}(t^{\prime})+Ce^{-\frac{x_{0}}{K}}$ for all real $t^{\prime} \le  t$ and for all $x_{0} > B$.

To prove that $E_{L}$ is almost non decreasing, we use the fact that the CH equation is invariant under the transformation $(t, x) \longmapsto (-t,-x)$. For a given solution $u$, define $\tilde{u}$ by $\tilde{u}(t, x) = u(-t, -x)$ which is also a solution of the CH equation. Since $\|\tilde{u}_{0}-\varphi_{c}\|_{H^1}=\|{u}_{0}-\varphi_{c}\|_{H^1}$, we may decompose $\tilde{u}_{0}$ as follows:
  $$
  \tilde{u}(t,x)=\varphi_{\tilde{c}(t)}(x-\tilde{x}(t))+\tilde{v}(t,x-\tilde{x}(t)).
  $$
Using the uniqueness property of the decomposition we find that $\tilde{x}(t)=-x(-t),\tilde{c}(t)=c(-t)$, and $\tilde{v}(t,x)=v(-t,-x)$, Now let $t, t^{\prime} \in \mathbb{R}$ such that $t^{\prime} \le  t$, we use the previous result concerning $E_R$ with $\tilde{u}_{0}$ between $-t$ and $-t^{\prime}$ to obtain
\begin{equation*}
  \begin{aligned}
\frac{1}{2}  &\int_\R \Psi_K(x-(-x(t^{\prime})+x_{0}))(u^2(t^{\prime},-x)+u^{2}_{x}(t^{\prime},-x)) \rmd x \\
 &\le  \frac{1}{2} \int_\R \Psi_K (x-(-x(t)+x_{0}))(u^2(t,-x)+u^{2}_{x}(t,-x)) \rmd x+Ce^{-\frac{x_{0}}{K}}.
  \end{aligned}
\end{equation*}
Then the fact that $\Psi_K(x)=1-\Psi_K(-x)$ implies
\begin{equation*}
  \begin{aligned}
\frac{1}{2}  &\int_\R (1-\Psi_K(-x-(x(t^{\prime})-x_{0})))(u^2(t^{\prime},-x)+u^{2}_{x}(t^{\prime},-x)) \rmd x \\
& \le  \frac{1}{2} \int_\R (1-\Psi_K(-x-(x(t)-x_{0})))(u^2(t,-x)+u^{2}_{x}(t,-x)) \rmd x+Ce^{-\frac{x_{0}}{K}}.
  \end{aligned}
\end{equation*}
Changing $x$ into $-x$ yields that
$$
E_{L}(t) \geqslant E_{L}(t^{\prime}) - Ce^{-\frac{x_{0}}{K}}.
$$
This concludes the proof of the corollary.
\end{proof}
For $x_0 > 2B$, we define
\[
F_{R}(t) := \frac{1}{2}\int_{\R} \left(u^3(t,x)+uu_x^2(t,x)+2\omega u^2(t,x)\right)\Psi_K\big(x-x(t)+x_0\big) \rmd x.
\]
We can easily deduce from Proposition \ref{pro4.3} that the following corollary holds true.
\begin{corollary}\label{cor FR}
  Let $a_1,B,C$ and $K$ be as in Propositions \ref{pro4.3}. if $\|u_0-\varphi_{c}\|_{H^1} < a_1$, then  for all  $t$, for all $t^{\prime} \le  t$ and for all  $x_0$ with $x_0 > 2B$, there holds
  $$
F_{R}(t) \le  F_{R}(t^{\prime})+Ce^{-\frac{x_0}{K}}.
  $$
\end{corollary}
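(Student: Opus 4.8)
The plan is to deduce Corollary \ref{cor FR} directly from the time-dependent monotonicity of Proposition \ref{pro4.3}, following verbatim the scheme by which Corollary \ref{cor 1} was extracted from Lemma \ref{3.1.}. First I would fix $t' \le t$ and $x_0 > 2B$ and apply Proposition \ref{pro4.3} with the choice $t_0 = t'$; since $t \ge t' = t_0$, this yields $F_{x_0,t'}(t) \le F_{x_0,t'}(t') + Ce^{-x_0/K}$. The key observation is that $F_{x_0,t'}(\cdot)$ reduces, at the relevant times, to the quantity $F_R(\cdot)$ we want to compare: at $t = t_0 = t'$ the modulation correction $\alpha\bigl(x(t') - x(t')\bigr)$ vanishes identically, so $F_{x_0,t'}(t') = F_R(t')$ exactly.

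The only remaining link is the one-sided inequality $F_R(t) \le F_{x_0,t'}(t)$, and this is where the sign of the solution is used. Because the ambient assumption $u_0 \in Y_+$ propagates by Proposition \ref{pr2.5} to $m(t) \in \mathcal{M}_+(\R)$ for every $t$, one has $u(t,\cdot) = \frac12 e^{-|\cdot|} * m(t) \ge 0$, and hence the flux density $u^3 + u u_x^2 + 2\omega u^2$ is pointwise non-negative. Since $x(\cdot)$ is increasing and $\alpha > 0$, for $t \ge t'$ the shift appearing in $F_{x_0,t'}(t)$ satisfies $x - x(t) + x_0 - \alpha(x(t') - x(t)) = x - x(t) + x_0 + \alpha(x(t) - x(t')) \ge x - x(t) + x_0$; combining this with the monotonicity of $\Psi_K$ and the non-negativity of the integrand gives $F_R(t) \le F_{x_0,t'}(t)$. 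Chaining the three estimates produces $F_R(t) \le F_R(t') + Ce^{-x_0/K}$ with the same constants $a_1, B, C, K$ as in Proposition \ref{pro4.3}.

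I expect no serious obstacle: the statement is a soft corollary, essentially a bookkeeping reorganization of Proposition \ref{pro4.3}. If any point requires care it is the non-negativity of the density $u^3 + u u_x^2 + 2\omega u^2$ — exactly the ingredient that distinguishes this $F$-monotonicity corollary from the energy version (Corollary \ref{cor 1}), where the density $u^2 + u_x^2$ is trivially non-negative — and one should record explicitly that it rests on the $Y_+$-invariance built into the well-posedness theory of Proposition \ref{pr2.5}, not on any smallness of $u_0 - \varphi_c$.
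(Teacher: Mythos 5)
Your proposal is correct and follows exactly the route the paper intends: the paper offers no written proof beyond ``we can easily deduce from Proposition \ref{pro4.3}'', and your argument is the natural fleshing-out, mirroring the proof of Corollary \ref{cor 1} but with $t_0=t'$ because Proposition \ref{pro4.3} is oriented forward in time. Your one genuine addition is to make explicit that the comparison $F_R(t)\le F_{x_0,t'}(t)$ needs the pointwise nonnegativity of $u^3+uu_x^2+2\omega u^2$, which you correctly obtain from $u\ge 0$ via the $\mathcal M_+$-propagation in Proposition \ref{pr2.5}; this is precisely the step the paper's one-line justification glosses over, and recording it is appropriate.
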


\subsection{Modulation theory}\label{sec_2.5}
The aim of this subsection is to prove that if $u$ is a solution of the CH equation which remains close to the manifold of the sum of $N$ solitons for $t\in[0,t_0]$, then for the same time interval, we can decompose $u$ as the sum of $N$ modulated solitons plus a function $v(t)$, which remains small in $H^1(\R)$, as follows,
\begin{equation}\label{decompps}
u(t,x)=\sum_{j=1}^{N}\varphi_{c_{j}(t)}(x-x_{j}(t))+v(t),
\end{equation}
for $\mathcal {C}^{1}$ functions $x_j : [0, t_0] \to \R$, with $j=1,2,\cdot\cdot\cdot,N$ and the following $2N$ orthogonality relations:
$$
\left \langle v(t,\cdot),(1-\partial_{x}^{2})\partial_{x}\varphi_{c_{j}(t)}(\cdot -x_{j}(t))\right \rangle = \left\langle v(t,\cdot),(1-\partial_{x}^{2})\varphi_{c_{j}(t)}(\cdot -x_{j}(t)) \right\rangle =0.
$$
We fix $N$ velocities
\[2\omega<c_1^0<c_2^0<\cdot\cdot\cdot<c_N^0,\]
and
\begin{equation}\label{minimal velo}
\sigma_0 := \frac14\min \left\{ 2\omega,\ 2\sqrt{1-\frac{2\omega}{c_1^0}}, \ c_1^0-2\omega, \ c_2^0-c_1^0,\cdots, \ c_N^0-c_{N-1}^0 \right\}>0.
\end{equation}
We denote by $\mathcal U(\alpha,L)$ the neighborhood of size $\alpha$ of all the sum of $N$ solitons of velocities $c_j^0$ such that the distance of their space shifts $x_j$ is large than $L$, i.e.
\begin{equation}\label{4.3}
\mathcal U(\alpha,L) := \left\{ u \in H^{1}(\mathbb{R}) : \inf\limits_{x^0_{j}-x^0_{j-1}>L} \left\| u-\sum_{j=1}^{N}\varphi_{c^0_{j}}(\cdot-x^0_{j}) \right\|_{H^1}<\alpha \right\}.
\end{equation}
\begin{lemma}\label{le4.1}
There exist $L_1>0$, $\alpha_{1}>0$ and $K_{1}>0$, such that for some $L>L_{1}$, $\alpha \in (0,\alpha_{0})$ and $t_0>0$, if the solution $u(t) \in \mathcal U(\alpha,L)$ on $[0,t_{0}]$ with $u \in Y([0,t_0])$, then there exist $2N$ unique $\mathcal{C}^1$ functions
 $$
 c_{j}(t):[0,t_{0}] \to (2\omega,+\infty),\qquad x_{j}(t):[0,t_{0}] \to \R,\ j=1, \cdots,N,
 $$
 such that if we define $v(t)$ by
\[
v(t) := u(t)-\sum_{j=1}^NR_j(t),\quad \text{where} \quad R_j(t) := \varphi_{c_j(t)}(\cdot-x_j(t)),
\]
then the following properties hold true for $j=1,2,\cdot\cdot\cdot,N$, and $t\in[0,t_0]$,
\begin{eqnarray}
&&\left \langle v(t),(1-\partial_{x}^{2})\partial_xR_j(t)\right \rangle = \left\langle v(t),(1-\partial_{x}^{2})R_j(t) \right\rangle =0,\label{4.6}\\
&&\|v(t)\|_{H^1}+|c_j(t)-c_j^0|\leq K_1\alpha,\label{4.5}\\
&&|\dot{c}_j(t)|+|\dot{x}_j(t)-c_j(t)|\leq K_1 \left( \int_\R e^{-\sigma_0|x-x_j(t)|}(v^2+v_x^2)(t)\rmd x \right)^{\frac12}+K_1 e^{-\frac{\sigma_0(L+\sigma_0t)}{4}}.\label{4.7}
\end{eqnarray}
\end{lemma}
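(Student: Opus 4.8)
The plan is to follow the classical modulation scheme of Martel--Merle--Tsai and El~Dika--Molinet, adapted to the nonlocal CH setting: first construct the parameters $(c_j,x_j)$ by the implicit function theorem so that the orthogonality relations \eqref{4.6} hold, then differentiate these conditions along the flow to obtain the modulation equations, and finally close a bootstrap argument that upgrades them to the pointwise bounds \eqref{4.5} and \eqref{4.7}, using the linear-in-time separation of the soliton centers and the monotonicity estimates of Section~\ref{sec_2.4}.

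\emph{Static decomposition.} For fixed $t$ I would view $u(t)\in\mathcal U(\alpha,L)$ and consider the map $\Xi\colon H^1(\R)\times(2\omega,\infty)^N\times\R^N\to\R^{2N}$ whose components are $\langle u-\sum_k\varphi_{c_k}(\cdot-x_k),(1-\partial_x^2)\partial_x\varphi_{c_j}(\cdot-x_j)\rangle$ and $\langle u-\sum_k\varphi_{c_k}(\cdot-x_k),(1-\partial_x^2)\varphi_{c_j}(\cdot-x_j)\rangle$. At the reference configuration $u=\sum_k\varphi_{c_k^0}(\cdot-x_k^0)$ one has $\Xi=0$, and the Jacobian in $(c_k,x_k)$ is, to leading order, block diagonal; the $2\times2$ block coupling $(c_j,x_j)$ to its own two constraints is anti-diagonal by parity ($\varphi_c$ even, $\varphi_c'$ odd), with entries $\langle\varphi_c',(1-\partial_x^2)\varphi_c'\rangle=\|\varphi_c'\|_{H^1}^2$ and $\langle\partial_c\varphi_c,(1-\partial_x^2)\varphi_c\rangle=\frac{\rmd}{\rmd c}\Ham_1(\varphi_c)=4\kappa c$ (the computation preceding \eqref{eq:Fderivative to c}), hence non-degenerate, while the off-diagonal blocks are $O(e^{-\sigma_0 L})$ by the exponential localization of the profiles and the spacing $x_k^0-x_{k-1}^0>L$. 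Thus for $L>L_1$ large the Jacobian is invertible with uniformly bounded inverse; the implicit function theorem then produces unique $\mathcal C^1$ maps $(c_j,x_j)$, its quantitative form gives \eqref{4.5} (in particular $c_j(t)>2\omega$ for $\alpha$ small), and uniqueness follows from the local invertibility and the smallness of the admissible parameter range. Their $\mathcal C^1$-dependence on $t$ follows from Proposition~\ref{pr2.5}, since $t\mapsto u(t)$ is continuous into $H^1$ and $\mathcal C^1$ into $L^2$ and the constraints depend smoothly on these data.

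\emph{Modulation equations.} Inserting \eqref{decompps} with $R_j=\varphi_{c_j(t)}(\cdot-x_j(t))$ into the Hamiltonian form \eqref{Hamilton form} and using that each profile solves the stationary equation \eqref{eq:stationary}, I obtain a relation of the schematic form $\partial_t v=(\text{terms linear in }v)+\sum_j\big(-\dot c_j\,\partial_c R_j+(\dot x_j-c_j)\,\partial_x R_j\big)+\mathcal N(v)+\mathcal I$, where $\mathcal N(v)$ collects the genuinely nonlinear contributions --- quadratic in $v$, controlled via the $H^1$ algebra property of $(1-\partial_x^2)^{-1}$ --- and $\mathcal I$ is the soliton--soliton interaction, a finite sum of products essentially supported where both $|x-x_j|$ and $|x-x_k|$ are $O(1)$ for some $j\neq k$. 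Differentiating the two conditions in \eqref{4.6} in $t$, and using $\langle v,(1-\partial_x^2)\partial_x R_j\rangle\equiv\langle v,(1-\partial_x^2)R_j\rangle\equiv0$, turns them into a linear system $M\,\Theta=(\text{right-hand side})$ for the vector $\Theta$ with entries $\dot c_j$ and $\dot x_j-c_j$, where $M$ is the near-diagonal invertible matrix of the previous step up to $O(\|v\|_{H^1})$ corrections, and the right-hand side is bounded by $\sum_j\big(\int_\R e^{-\sigma_0|x-x_j|}(v^2+v_x^2)\,\rmd x\big)^{1/2}$ together with $\|\mathcal I\|$.

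\emph{Bootstrap and the main difficulty.} The remaining point --- which I expect to be the crux --- is to bound $\|\mathcal I\|$ by $e^{-\sigma_0(L+\sigma_0 t)/4}$. One runs \eqref{4.7} first in a non-sharp form to get $\dot x_j-\dot x_{j-1}\geq c_j-c_{j-1}-o(1)\geq\sigma_0$, hence $x_j(t)-x_{j-1}(t)\geq L+\sigma_0 t$; then, since each soliton decays like $e^{-\sqrt{1-2\omega/c}\,|x|}$ with exponent bounded below by $2\sigma_0$ thanks to \eqref{minimal velo}, and since $(1-\partial_x^2)^{-1}$ has the explicit exponentially decaying kernel $\tfrac{1}{2}e^{-|x|}$, every interaction integral is $O(e^{-\sigma_0(L+\sigma_0 t)/4})$; inverting $M$ then yields \eqref{4.7}. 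The delicate feature is that the smallness bound \eqref{4.5}, the spacing estimate, and \eqref{4.7} have to be bootstrapped simultaneously and uniformly in $t_0$, and that the nonlocality of the CH equation forces all interaction terms to be tracked through $(1-\partial_x^2)^{-1}$ rather than pointwise; this remains manageable because the kernel is explicit, and as an alternative the linear growth of the gaps may be extracted directly from the almost-monotonicity of the localized energy established in Lemma~\ref{3.1.} and Corollary~\ref{cor 1}.
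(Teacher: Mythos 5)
Your proposal is correct and follows essentially the same route as the paper: a static decomposition by the implicit function theorem with the same near-diagonal Jacobian structure (the entries $\tfrac{\rmd}{\rmd c}\Ham_1(\varphi_c)=4\kappa c$ and $\|\varphi_c'\|_{H^1}^2$ on the diagonal blocks, $O(e^{-\sigma_0 L/2})$ off-diagonal), followed by differentiating the orthogonality conditions along the flow to get the system for $(\dot c_j,\dot x_j-c_j)$, and a bootstrap giving $x_j(t)-x_{j-1}(t)\gtrsim L/2+\sigma_0 t$ to convert the interaction terms into the $e^{-\sigma_0(L+\sigma_0 t)/4}$ bound of \eqref{4.7}. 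The only cosmetic difference is your optional use of the monotonicity results of Section~\ref{sec_2.4} for the gap growth, which the paper does not need.
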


\begin{proof}
We first prove the stationary case of $u(\cdot) \in \mathcal U(\alpha,L)$. Let $L>0$, $X^0 := (x_j^0)\in\R^N$ such that $x_{j+1}^0-x_{j}^0>L$, and set $R_{X^0} := \sum\limits_{j=1}^N\varphi_{c_j^0}(\cdot-x_j^0)$. We denote by $B_{H^{1}}(R_{X^0},\alpha)$ the ball in $H^1(\R)$ of center $R_{X^0}$ and radius $\alpha$. Now we define the following mapping:
$$
\begin{aligned}
\mathcal Y:\prod_{j=1}^N(c_j^0-\alpha,c_j^0+ \alpha)\times\prod_{j=1}^N(-\alpha,\alpha) \times B_{H^{1}}\left(R_{Z}, \alpha\right) & \rightarrow \mathbb{R}^{N}\times\mathbb{R}^{N},
\end{aligned}
$$
by $\mathcal Y=(\mathcal Y^{1,1},\cdot\cdot\cdot,\mathcal Y^{1,N},\mathcal Y^{2,1},\cdot\cdot\cdot,\mathcal Y^{2,N})$, where
$$
\mathcal Y^{1,j}\left(c_{1}, \ldots, c_{N},y_{1}, \ldots, y_{N}, u\right) := \int_{\mathbb{R}}\left(u(x)-\sum_{k=1}^{N} \varphi_{c_k^0}\left(\cdot-x^0_{k}-y_{k}\right)\right)\left(1-\partial_{x}^{2}\right) \varphi_{c_j^0}\left(\cdot-x^0_{j}-y_{j}\right) \rmd x.
$$
and
$$
\mathcal Y^{2,j}\left(c_{1}, \ldots, c_{N},y_{1}, \ldots, y_{N}, u\right) := \int_{\mathbb{R}}\left(u(x)-\sum_{k=1}^{N} \varphi_{c_k^0}\left(\cdot-x^0_{k}-y_{k}\right)\right)\left(1-\partial_{x}^{2}\right)\partial_x \varphi_{c_j^0}\left(\cdot-x^0_{j}-y_{j}\right) \rmd x.
$$
 The dominated convergence theorem and the smoothness of $\varphi_c$ reveal that $\mathcal Y$ is $\mathcal C^1$. In order to apply the implicit function theorem, we need to compute the partial derivatives of $\mathcal Y$ at the point $M_0=(c^0_{1}, \ldots, c^0_{N},0, \ldots, 0, R_{X^0})$ for all $j=1, \ldots, N$,
$$
\frac{\partial \mathcal Y^{1,j}}{\partial c_{j}}(M_0)=-\int_{\mathbb{R}}\partial_{c^0_{j}}\varphi_{c_j^0}(\cdot-x^0_{j})\left(1-\partial_{x}^{2}\right)\varphi_{c_j^0}(\cdot-x^0_{j}) \mathrm{d} x=-\frac{\rmd \Ham_{1}(\varphi_{c_j^0})}{\rmd c_j^0}=-4\kappa_0c_j^0<0.
$$
We also integrate by parts to compute
\begin{eqnarray*}
&&\frac{\partial \mathcal Y^{1,j}}{\partial y_{j}}(M_0)=\int_{\mathbb{R}} \partial_{x} \varphi_{c_j^0}(\cdot-x^0_{j})\left(1-\partial_{x}^{2}\right) \varphi_{c_j^0}(\cdot-x^0_{j}) \mathrm{d} x=0,\\
&&\frac{\partial \mathcal Y^{2,j}}{\partial c_{j}}(M_0)=-\int_{\mathbb{R}} \left(1-\partial_{x}^{2}\right)\partial_{x} \varphi_{c_j^0}(\cdot-x^0_{j}) \partial_{c^0_{j}}\varphi_{c_j^0}(\cdot-x^0_{j}) \mathrm{d} x=0,\\
&&\frac{\partial \mathcal Y^{2,j}}{\partial y_{j}}(M_0)=\int_{\mathbb{R}} \left(1-\partial_{x}^{2}\right)\partial_{x} \varphi_{c_j^0}(\cdot-x^0_{j}) \partial_{x}\varphi_{c_j^0}(\cdot-x^0_{j}) \mathrm{d} x=\|\partial_{x}\varphi_{c_j^0}\|^2_{H^1}\geq C_1(c_j^0)>0.
\end{eqnarray*}
Hence, using the exponential decay of $\varphi^{(n)}_{c}$, $\partial_c\varphi_{c}$ and the assumption that $x_{j+1}^0-x_{j}^0>L$, we infer that for $L_{1}$ large enough (recall that $L>L_{1}$ ) and $j\neq k$, there holds
\begin{eqnarray*}
&&\left|\frac{\partial \mathcal Y^{1,j}}{\partial c_{k}}(M_0) \right| = \left| \int_{\mathbb{R}} \left(1-\partial_{x}^{2}\right)\varphi_{c_j^0}(\cdot-x^0_{j}) \partial_{c^0_{k}}\varphi_{c_k^0}(\cdot-x^0_{k}) \mathrm{d} x \right|\nonumber\\
&&\leq C\int_{\mathbb{R}}e^{-\sigma_0(|x-x_j^0|+|x-x_k^0|)}\rmd x\leq Ce^{-\frac{\sigma_0}{2}|x_j^0-x_k^0|}\leq Ce^{-\frac{\sigma_0L}{2}}.
\end{eqnarray*}
In a same manner, for $j\neq k$, one has
\begin{eqnarray*}
&&\left| \frac{\partial \mathcal Y^{1,j}}{\partial y_{k}}(M_0) \right| = \left| \int_{\mathbb{R}} \left(1-\partial_{x}^{2}\right)\varphi_{c_j^0}(\cdot-x^0_{j}) \partial_{x}\varphi_{c_k^0}(\cdot-x^0_{k}) \mathrm{d} x \right| \leq Ce^{-\frac{\sigma_0L}{2}},\\
&&\left| \frac{\partial \mathcal Y^{2,j}}{\partial y_{k}}(M_0) \right| = \left| \int_{\mathbb{R}} \left(1-\partial_{x}^{2}\right)\partial_{x}\varphi_{c_j^0}(\cdot-x^0_{j}) \partial_{c^0_{k}}\varphi_{c_k^0}(\cdot-x^0_{k}) \mathrm{d} x \right| \leq Ce^{-\frac{\sigma_0L}{2}},\\
&&\left| \frac{\partial \mathcal Y^{2,j}}{\partial y_{k}}(M_0) \right| = \left| \int_{\mathbb{R}} \left(1-\partial_{x}^{2}\right)\partial_{x} \varphi_{c_j^0}(\cdot-x^0_{j}) \partial_{x}\varphi_{c_k^0}(\cdot-x^0_{k}) \mathrm{d} x \right| \leq Ce^{-\frac{\sigma_0L}{2}}.
\end{eqnarray*}
Therefore, we deduce that $D_{\left(c_1, \ldots, c_{N},y_1, \ldots, y_{N}\right)} Y(M_0)=D+P$, where $D$ is an invertible diagonal matrix with $\left\|D^{-1}\right\| \leq\left(C_{2}\right)^{-1}$ and $\|P\| \leq O\left(\mathrm{e}^{-\frac{\sigma_0L}{2}}\right)$. Note that $C_{2}$ depends only on $c^0_{j}$ and $L$. Hence, there exists $L_{1}>0$, such that for $L>L_{1}$, the matrix $D_{\left(c_1, \ldots, c_{N},y_1, \ldots, y_{N}\right)} Y(M_0)$ is invertible with an inverse matrix of norm smaller than $2\left(C_{2}\right)^{-1}$. From the implicit function theorem, we deduce that there exist $\alpha_{0}>0$ and $C^{1}$ functions $\left(c_{j},  y_{j}\right)^N_{j=1}$ from $B_{H^1}\left(R_{X^0}, \alpha_{0}\right)$ to a neighbourhood of $(c_1^0,\ldots, c_N^0,0, \ldots, 0)$, which are uniquely determined, such that
$$
\mathcal Y\left(c_{1}(u), \ldots, c_{N}(u),y_{1}(u), \ldots, y_{N}(u), u\right) = 0 \qquad  \forall \ u \in B_{H^1}\left(R_{X^0}, \alpha\right),\ 0<\alpha\leq\alpha_0.
$$
Moreover, there exits $K_{1}>0$, such that if $u \in B_{H^1}\left(R_{X^0}, \alpha\right)$, with $0<\alpha \leq \alpha_{0}$, then
\begin{equation*}
\sum_{j=1}^{N}\left|c_{j}(u)-c_j^0\right|+\sum_{j=1}^{N}\left|y_{j}(u)\right| \leq K_{1} \alpha .
\end{equation*}
Note that $\alpha_{0}$ and $C_{0}$ depend only on $c_{1}$ and $L_{1}$ and not on the point $X^0$ provided that $x_{j+1}^0-x_{j}^0>L\geq L_1$. For $u \in B_{H^1}\left(R_{X^0}, \alpha_{0}\right)$, we set $x_{j}(u) := x^0_{j}+y_{j}(u)$, then $x_j$ is a $\mathcal C^1$ function on $B_{H^1}\left(R_{X^0}, \alpha_{0}\right)$, such that
\[
x_{j}(u)\geq x_{j-1}(u)+L-2K_1\alpha_{0}.
\]
We are now able to define the modulation of $u\in\mathcal U(\alpha,L)$ for $L\geq L_1$ and $0<\alpha\leq\alpha_1$, with $\alpha_1$ to be chosen later. Indeed, for $\alpha\leq\alpha_1$, one can cover $\mathcal U(\alpha,L)$ as follows:
\[
U(\alpha,L)\subseteq \bigcup_{X\in \R^N, \ x_{j+1}-x_{j}>L}B_{H^1}\left(R_{X}, \rho_0\right),
\]
where $\alpha_1\leq \rho_0\leq \alpha_0$, and $\rho_0$ is chosen such that if $u\in B_{H^1}\left(R_{X}, \rho_0\right)\bigcap B_{H^1}\left(R_{\tilde{X}}, \rho_0\right)$, then the modulation of $u$ is uniquely defined due to the uniqueness in the implicit function theorem.

Now we define the modulation of the CH solution $u$, such that $u(t)\in\mathcal U(\alpha,L)$ for all $t\in[0,t_0]$, by setting for $j=1,\ldots,N$ and $t\in[0,t_0]$,
\begin{eqnarray*}
&& c_j(t)=c_j(u(t)), \ x_j(t)=x_j(u(t)),\\
&& v(t)=u(t)-\sum_{j=1}^N\varphi_{c_j(t)}(\cdot-x_j(t))=u(t)-\sum_{j=1}^NR_j(t).
\end{eqnarray*}
These functions satisfy \eqref{4.6} and \eqref{4.5}. To verify \eqref{4.7}, we substitute $u(t)$ into the CH equation and employ the equation of $\varphi_{c_j(t)}$, it is found that $v(t)$ satisfies for all $t\in[0,t_0]$,
\begin{equation}\label{4.15}
\begin{split}
&(1-\partial_{x}^{2}) v_{t}-x'_j(t)(1-\partial_{x}^{2}) v_{x}-\sum_{j=1}^{N}\left(\dot{x}_{j}-c_{j}\right)\left(1-\partial_{x}^{2}\right) \partial_{x} R_{j}+2 \omega v_{x}+\sum_{j=1}^{N}\dot{c}_j \left(1-\partial_{x}^{2}\right)\partial_{c_j} R_{j} \\
=&-\frac{1}{2}\left(1-\partial_{x}^{2}\right) \partial_{x}\left[\left(v+\sum_{j=1}^{N} R_{j}\right)^{2}-\sum_{j=1}^{N} R_{j}^{2}\right]  \\
& - \partial_{x}\left[\left(v+\sum_{j=1}^{N} R_{j}\right)^{2}-\sum_{j=1}^{N} R_{j}^{2}+\frac{1}{2}\left(v_{x}+\sum_{j=1}^{N} \partial_{x} R_{j}\right)^{2}-\frac{1}{2} \sum_{j=1}^{N}\left(\partial_{x} R_{j}\right)^{2}\right].
\end{split}
\end{equation}
Thanks to \eqref{4.5}, one can choose $\alpha_1$ sufficiently small such that $\sigma_0$ can also be chosen smaller with respect to $c_j(t)$, then there holds the following
\[
|R_j(t,x)|\leq Ce^{-\sigma_0|x-x_j(t)|}.
\]
Taking the $L^{2}$-scalar product of \eqref{4.15} with $R_{j}$ and $\partial_{x} R_{j}$, integrating by parts, using the decay of $R_{\mathrm{j}}$ and its derivatives, we find
\begin{equation*}
|\dot{c}_j(t)|+\left|\dot{x}_{j}(t)-c_{j}(t)\right|\leq C \left( \int_\R e^{-\sigma_0|x-x_j(t)|}(v^2+v_x^2)(t)\rmd x \right)^{\frac12}+C\sum_{j\neq k}e^{-\frac{\sigma_0}{2}|x_j^0-x_k^0|}.
\end{equation*}
Taking $\alpha_{1}$ small enough and $L_{1}$ large enough, we get $|x_{j}(t)-x_{k}(t)|\geq \frac{L}{2}+\sigma_0t$, it follows from the above that  \eqref{4.7} holds true. The proof of  Lemma \ref{le4.1} is complete.
\end{proof}

In fact, by employing Lemma \ref{le4.1}, one can improve the estimates of parameters as follows.
\begin{corollary}\label{para improve}
For any $\varepsilon>0$, there exists $\delta=\delta(\varepsilon)>0$ such that if $\|u_0-\varphi_c\|_{H^1}<\delta$, then the modulation parameters $(x(t),c(t)$ given in Lemma \ref{le4.1} satisfy the following: for some constant $C>0$ and for all $t\in \R$,
\[
|\dot{c}(t)|\leq C\|v\|_{H^1}^2,\quad  \left|\dot{x}(t)-c(t)\right|\leq C\|v\|_{H^1}.
\]
\end{corollary}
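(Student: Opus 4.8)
The plan is to treat the two bounds separately. The estimate $|\dot x(t)-c(t)|\le C\|v(t)\|_{H^1}$ requires no new input: in the case $N=1$ there are no soliton interactions, so the last term in \eqref{4.7} is absent and Lemma \ref{le4.1} already gives
\[
|\dot c(t)|+|\dot x(t)-c(t)|\le K_1\Big(\int_\R e^{-\sigma_0|x-x(t)|}(v^2+v_x^2)(t)\,\rmd x\Big)^{1/2}\le K_1\|v(t)\|_{H^1},
\]
the modulation decomposition \eqref{decompps} (with $R(t):=\varphi_{c(t)}(\cdot-x(t))$ and $v(t):=u(t)-R(t)$) being available for all $t\in\R$ since $u(t)$ stays in a small $H^1$-tube around the orbit of $\varphi_c$ by the orbital stability of \cite{CS02}. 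The only real task is to promote the bound on $\dot c$ to a quadratic one.

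To do this I would project the equation for $v$ onto $m_R:=(1-\partial_x^2)R$. Writing the CH flow as $u_t=\mathcal J\,\Ham_2'(u)$ and using the traveling-wave variational identity $\Ham_2'(R)=c(t)(1-\partial_x^2)R=c(t)m_R$ (so that $\mathcal J\,\Ham_2'(R)=-c(t)\partial_x R$), the equation for $v=u-R$ takes the form
\[
v_t=\mathcal J\,\Ham_2''(R)v+(\dot x-c)\partial_x R-\dot c\,\partial_c R+\mathcal J N(v),\qquad N(v):=\Ham_2'(R+v)-\Ham_2'(R)-\Ham_2''(R)v,
\]
with $N(v)$ exactly quadratic in $v$ since $\Ham_2$ is cubic. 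I would then compute $\langle v_t,m_R\rangle$ in two ways. From the equation: the linear term drops, because using $\mathcal J^\ast=-\mathcal J$, $\mathcal J m_R=-\partial_x R$, the translation covariance $\Ham_2''(R)\partial_x R=\partial_x\Ham_2'(R)$ and the variational identity,
\[
\langle\mathcal J\,\Ham_2''(R)v,m_R\rangle=\langle v,\Ham_2''(R)\partial_x R\rangle=\langle v,\partial_x\Ham_2'(R)\rangle=c\,\langle v,(1-\partial_x^2)\partial_x R\rangle=0
\]
by \eqref{4.6}; the $\dot x$-term drops since $\langle\partial_x R,m_R\rangle=0$ ($\varphi_c$ being even); and $\langle\partial_c R,m_R\rangle=\langle\partial_c\varphi_c,m_{\varphi_c}\rangle=\frac{\rmd\Ham_1(\varphi_c)}{\rmd c}=4\kappa c$, while $\langle\mathcal J N(v),m_R\rangle=\langle N(v),\partial_x R\rangle$; hence $\langle v_t,m_R\rangle=-4\kappa c\,\dot c+\langle N(v),\partial_x R\rangle$. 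On the other hand, differentiating $\langle v,m_R\rangle=0$ in time and using $(m_R)_t=\dot c(1-\partial_x^2)\partial_c R-\dot x(1-\partial_x^2)\partial_x R$ together with \eqref{4.6} gives $\langle v_t,m_R\rangle=-\dot c\,\langle v,(1-\partial_x^2)\partial_c R\rangle$. Equating the two expressions,
\[
\dot c(t)\Big(4\kappa c-\langle v(t),(1-\partial_x^2)\partial_c R(t)\rangle\Big)=\langle N(v(t)),\partial_x R(t)\rangle .
\]

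To finish I would note, from \eqref{4.5}, that for $\delta$ small $\|v(t)\|_{H^1}$ is small and $c(t)$ remains in a fixed compact subset of $(2\omega,+\infty)$; hence the left-hand bracket equals $4\kappa(c(t))c(t)+O(\|v(t)\|_{H^1})$ and is bounded below by a positive constant, while the right-hand side is $O(\|v(t)\|_{H^1}^2)$ since $N(v)=\tfrac32v^2+\tfrac12v_x^2-(vv_x)_x$ is quadratic and is paired against the smooth, exponentially decaying function $\partial_x R$ (the only term carrying derivatives, $(vv_x)_x$, being integrated by parts onto $\partial_x R$). Dividing then yields $|\dot c(t)|\le C\|v(t)\|_{H^1}^2$.

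The step I expect to be the crux is the vanishing of $\langle\mathcal J\,\Ham_2''(R)v,m_R\rangle$: this is precisely what upgrades the generic bound $|\dot c|\lesssim\|v\|_{H^1}$ of Lemma \ref{le4.1} to a quadratic one, and it works only because the skew-adjointness of $\mathcal J$ with $\mathcal J m_R=-\partial_x R$, the translation covariance of $\Ham_2'$, and the profile equation conspire to send $\mathcal J\,\Ham_2''(R)v$ into a direction made orthogonal to $v$ by the very construction of the modulation. Everything else — smoothness and exponential decay of $R$ and $\partial_c R$, positivity and the lower bound of $\tfrac{\rmd\Ham_1(\varphi_c)}{\rmd c}=4\kappa c$, and the quadratic estimate on $N(v)$ — is routine and already available from the material above.
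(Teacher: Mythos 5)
Your proposal is correct and follows essentially the route the paper intends: the paper states the corollary without a separate proof, but the same computation appears in the proof of Proposition \ref{pro.linear problem}, where differentiating the orthogonality relations \eqref{4.6} and projecting the equation for $v$ onto $(1-\partial_x^2)R$ (resp. $(1-\partial_x^2)\partial_x R$) produces an identity in which every term linear in $v$ cancels, leaving $\dot c$ multiplied by $\tfrac{\rmd}{\rmd c}\Ham_1(\varphi_c)+O(\|v\|_{H^1})$ against a purely quadratic remainder. Your Hamiltonian-structure formulation of the cancellation (skew-adjointness of $\mathcal J$, translation covariance of $\Ham_2'$, and the profile identity $\Ham_2'(R)=c\,m_R$) is just a structural rewriting of that same projection argument, and your treatment of the first estimate via \eqref{4.7} with the interaction term absent for $N=1$ matches the paper.
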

One can also obtain the almost monotonicity of functionals that are very close to the energy at the right of the $j$-th bump of $u$, for $j=1,\dots,N-1$.
Setting $\Psi_{K} := \Psi(\frac{\cdot}{K})$, we introduce, for $j \in \{2,\cdots,N\}$,
$$
I_{j}(t)=I_{j}(t,u(t)) := \int_{\mathbb{R}}(u^{2}(t)+u_{x}^{2}(t))\Psi_{j,K}(x)\mathrm{d}x,
$$
where $\Psi_{j,K}(x) := \Psi_{K}(x-y_{j}(t))$, with $y_{j}(t) := \frac{x_{j-1}(t)+x_{j}(t))}{2}$ for $j=2,\cdots,N$. Note that $I_{j}(t)$ is close to $\|u(t)\|_{H^{1}(x>y_{j}(t))}$ and thus measures the energy at the right of the $(j-1)$-th bump of $u$.

\begin{lemma}[\cite{EM07}]\label{le4.2}
Let $u \in Y([0,T])$ be a solution of the CH equation, such that $u(t) \in \mathcal
U(\alpha,L/2)$ on $[0,t_{0}]$. There exist $\alpha>0$ and $L_{0}>0$ depending only on $\sigma_{0}$, such that if $0<\alpha<\alpha_{0}$ and $L \geq L_{0}$, then
\begin{equation*}
I_{j}(t)-I_{j}(0) \leq O(e^{-\sigma_0 L}), \qquad \forall j \in {2,\cdots,N}, \ \forall t \in [0,t_{0}].
\end{equation*}
\end{lemma}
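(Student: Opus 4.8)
The plan is to run the $H^{1}$-monotonicity argument of Martel--Merle \cite{MMT02} and El~Dika--Molinet \cite{EM07}, exactly as in Lemma~\ref{3.1.} and Proposition~\ref{pro4.3}, now for the cut-off centred at the midpoint $y_{j}(t)=\tfrac12(x_{j-1}(t)+x_{j}(t))$ of the $j$-th gap. Applying Lemma~\ref{le4.1} (with $L/2$ in place of $L$) on $[0,t_{0}]$ gives $c_{k}(t)=c_{k}^{0}+O(\alpha)$, $\dot x_{k}(t)=c_{k}(t)+O(\|v\|_{H^{1}}+e^{-\gamma(L+t)})$, and the separation $|x_{k}(t)-x_{\ell}(t)|\geq \tfrac14 L+\sigma_{0}t$ for $k\neq\ell$. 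Recalling the definition \eqref{minimal velo} of $\sigma_{0}$, after shrinking $\alpha$ and enlarging $L_{0}$ one obtains, for all $t\in[0,t_{0}]$,
\[
\dot y_{j}(t)-2\omega\ \geq\ 5\sigma_{0},\qquad c_{j}^{0}-\dot y_{j}(t)\ \geq\ \sigma_{0},\qquad |y_{j}(t)-x_{k}(t)|\ \geq\ \gamma(L+t)\ \ (1\leq k\leq N),
\]
for some $\gamma=\gamma(\sigma_{0})>0$. The first bound --- the gap window travels strictly faster than the maximal linear group velocity $2\omega$ --- is what drives the monotonicity; the other two say that $y_{j}$ lies, for all later times, deep inside an expanding region in which $u$ is exponentially small.

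First I would differentiate $I_{j}$. Writing $\psi(t,x):=\Psi_{j,K}(x)=\Psi_{K}(x-y_{j}(t))$, so that $\psi_{t}=-\dot y_{j}\,\psi'$ with $\psi':=\partial_{x}\psi\geq 0$, and using the CH equation in the form $u_{t}=-\tfrac12\partial_{x}h$ with $(1-\partial_{x}^{2})h=4\omega u+3u^{2}-u_{x}^{2}-2uu_{xx}$ (as in the proof of Proposition~\ref{pro4.3}), a few integrations by parts produce the local energy identity $\partial_{t}\big(\tfrac12(u^{2}+u_{x}^{2})\big)+\partial_{x}\mathcal F=0$ with flux
\[
\mathcal F=\tfrac12\,u\,(p+u_{x}^{2})-\omega u^{2},\qquad p:=(1-\partial_{x}^{2})^{-1}\!\big(4\omega u+2u^{2}+u_{x}^{2}\big),
\]
hence
\[
I_{j}'(t)=-\dot y_{j}(t)\int_{\R}(u^{2}+u_{x}^{2})\,\psi'\,\rmd x+2\int_{\R}\mathcal F\,\psi'\,\rmd x .
\]
The goal is to show the right-hand side is $\leq Ce^{-\gamma(L+t)}$.

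The key point is that the purely quadratic part of the right-hand side is coercive. Setting $w:=(1-\partial_{x}^{2})^{-1}u$, so that $u=w-w_{xx}$, the quadratic-in-$u$ part of $\mathcal F$ equals $\omega(w^{2}-w_{xx}^{2})$ (the linear-in-$u$ part of $p$ being $4\omega w$), and rewriting $\int(u^{2}+u_{x}^{2})\psi'$ in terms of $w$ and integrating by parts repeatedly shows that the quadratic part of $I_{j}'(t)$ equals
\[
-(\dot y_{j}-2\omega)\!\int w^{2}\psi'-3\dot y_{j}\!\int w_{x}^{2}\psi'-(3\dot y_{j}+2\omega)\!\int w_{xx}^{2}\psi'-\dot y_{j}\!\int w_{xxx}^{2}\psi'+\dot y_{j}\!\int(w^{2}+w_{x}^{2})\psi''' .
\]
Since $|\psi'''|\leq \tfrac{C}{K^{2}}\psi'$, the last term is absorbed once $K$ is large, and then, because $\dot y_{j}-2\omega\geq 5\sigma_{0}>0$, the quadratic part is bounded by $-c\int(w^{2}+w_{x}^{2}+w_{xx}^{2}+w_{xxx}^{2})\psi'\leq 0$ with $c\sim\sigma_{0}$ --- a localised Kato-smoothing estimate for the linear (BBM-type) part of CH. The remaining, super-quadratic terms of $2\mathcal F\psi'$, namely $uu_{x}^{2}\psi'$ and $u\,(1-\partial_{x}^{2})^{-1}(2u^{2}+u_{x}^{2})\,\psi'$, are controlled by the smallness of $u$ on the gap: on $\{|x-y_{j}(t)|\leq \tfrac18\gamma(L+t)\}$ every soliton bump is at distance $\gtrsim\gamma(L+t)$, so there $\|u\|_{L^{\infty}}\leq C\|v\|_{H^{1}}+Ce^{-\gamma(L+t)}\leq C\alpha+Ce^{-\gamma(L+t)}$; pulling out one such factor, and using $(1-\partial_{x}^{2})^{-1}\psi'\leq 2\psi'$ (valid for $K$ large) together with the exponential localisation of $u\psi'$ near $y_{j}$ for the nonlocal term, these are bounded by $C\alpha\int(u^{2}+u_{x}^{2})\psi'+Ce^{-\gamma(L+t)}$, hence absorbed into the coercive quadratic term for $\alpha$ small; off the gap, $\psi'$ itself is $\leq \tfrac{C}{K}e^{-\gamma(L+t)/(8K)}$ while $u$ stays $H^{1}$-bounded. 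Altogether $I_{j}'(t)\leq Ce^{-\gamma(L+t)}$ on $[0,t_{0}]$, and integrating from $0$ to $t$ yields $I_{j}(t)-I_{j}(0)\leq C\gamma^{-1}e^{-\gamma L}$, which is the asserted $O(e^{-\sigma_{0}L})$ (adjusting constants, and replacing $\gamma$ by $\min(\gamma,\sigma_{0})$ if one insists on the precise exponent).

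The step I expect to be the main obstacle is the coercivity of the quadratic part: because $(1-\partial_{x}^{2})^{-1}$ does not commute with multiplication by $\psi$, the bookkeeping must be carried out through commutator identities such as $\int u\,(1-\partial_{x}^{2})^{-1}u\,\psi'=\int(w^{2}+w_{x}^{2})\psi'-\tfrac12\int w^{2}\psi'''$, and the resulting sign is tight --- it holds only because $\dot y_{j}$ exceeds the top $2\omega$ of the linear group-velocity band of CH, by the margin quantified through Lemma~\ref{le4.1}. This is exactly the mechanism of \cite[Lemma~4.2]{EM07}; the companion almost-monotonicity on the left, if needed, would follow as in Corollary~\ref{cor 1} by applying the above to $\tilde u(t,x)=u(-t,-x)$.
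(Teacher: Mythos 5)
The paper itself gives no proof of this lemma (it is quoted from \cite{EM07}), and your argument is exactly the weighted $H^1$-monotonicity computation used there and mirrored in the paper's own Proposition \ref{pro4.3} and Corollary \ref{cor 1}: the local energy-flux identity, the coercivity of the quadratic part after the substitution $w=(1-\partial_x^2)^{-1}u$ (your displayed quadratic form and the margin $\dot y_j-2\omega\gtrsim\sigma_0$ check out), and the near/far splitting combined with the kernel bound $(1-\partial_x^2)^{-1}\psi'\le 2\psi'$ for the cubic local and nonlocal terms are all correct and are the intended mechanism. The one cosmetic caveat is the final rate: because the weight loses a factor $1/K$ in the exponent (and $K$ must be taken large), your argument literally yields $I_j(t)-I_j(0)\le C e^{-\theta L}$ for some $\theta=\theta(\sigma_0,\omega,c_j^0)>0$ rather than exactly $O(e^{-\sigma_0 L})$, and this cannot be repaired by ``replacing $\gamma$ by $\min(\gamma,\sigma_0)$''; it is harmless, however, since every later use (e.g.\ Lemma \ref{lemma6.2}) only requires some exponential rate $\gamma_0>0$ in $L$.
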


\medskip

Recall that the Hessian of $cE-F$ around a solitary wave $\varphi_{c}$ is given by
\begin{equation}\label{L1oper}
L_1=-\partial_{x}((c-\varphi_{c})\partial_{x})-3\varphi_{c}+\varphi_{c}^{\prime\prime}+(c-2\omega).
\end{equation}
By a Liouville transform \cite{CS02}, the linearized operator $L_1$, defined on $H^2(\R)$, is transformed into a regular self-adjoint Sturm-Liouville operator, which is a relatively compact perturbation of a second order differential operator with constant coefficients. Thus the spectral information of $L_1$ follows directly from the Sturm-Liouville theory. In particular,  since  $\varphi_c<c-2\omega$, we can introduce the transformation
$$
\varphi_c(x)=\psi(z(x)),\qquad z(x) := \int_{0}^{x}\frac{ds}{\sqrt{c-\varphi_c(s)}}.
$$
Then the operator $L_1$ is transformed into the following linear operator
\begin{equation*}
\begin{split}
\mathcal{L} & = -\partial_z^2+(c-2\omega)+V(z) \\
&:=-\partial_z^2+(c-2\omega)-3\psi(z)+\frac{3\psi''(z)}{4\big(c-\psi(z)\big)}+\frac{5\big(\psi'(z)\big)^2}{16\big(c-\psi(z)\big)^2}, \\
\psi(z) &  = (c-2\omega)\operatorname{sech}^2 \left( \frac{z\sqrt{c-2\omega}}{2} \right).
\end{split}
\end{equation*}
For example,  if $c=6$ and $\omega=1$, the above potential reads
\[
V(z)=\frac{90\operatorname{sech}^2(z)-116\operatorname{sech}^4(z)+44\operatorname{sech}^6(z)}{\big(3-2\operatorname{sech}^2(z)\big)^2}.
\]

\begin{remark}\label{rk spec of L}
Virial identities are a powerful tool in showing the asymptotic stability of solitons for gKdV equations \cite{Ma06}. Although we know that the potential $V$ is a reflectionless potential, like for the gKdV case, we still do not know the explicit information about the negative eigenvalue and the corresponding eigenfunction of the linear operators $\mathcal{L}$, and hence $L_1$. This is the major obstacle for us to prove rigidity property (Theorem \ref{thm1.4}) through a virial identity argument.
\end{remark}
We now give a generalization of a coercivity of quadratic form, which will be useful in showing the orbital stability of trains of solitons.
\begin{lemma}[\cite{EM07}]\label{le4.3}
There exist $\delta > 0, \ C_{\delta} > 0$ and $C > 0$ depending only on $c_{1} > 2\omega$, such that for all $c \geq c_{1}$, $0<\varTheta \in \mathcal C^{2}(\mathbb{R})$ and $v \in H^{1}(\mathbb{R})$, satisfying
\begin{equation*}
\left| \langle \sqrt{\varTheta}v,(1-\partial_{x}^{2})\varphi_{c}\rangle \right| + \left| \langle \sqrt{\varTheta}v,(1-\partial_{x}^{2})\partial_{x}\varphi_{c}\rangle \right| \leq \delta\| v\|_{H^{1}},
\end{equation*}
and
\begin{equation*}
\left| \frac{(\varTheta^{\prime})^2}{4\varTheta} \right| + c \left| \varTheta^{\prime} \right| + \left|\frac{\varTheta^{\prime\prime}}{2} \right| \leq \min \left( \frac{1}{4},\frac{C_{\delta}}{8c} \right) |\varTheta \vert,
\end{equation*}
it holds that
\[
\int_{\mathbb{R}}\varTheta \left[ (c-\varphi_{c})v_{x}^2+(-3\varphi_{c}+\varphi_{c}^{\prime\prime}+(c-2\omega))v^{2} \right] \rmd x +\int_{\mathbb{R}}\varTheta^{\prime}\varphi_{c}^{\prime}v^2 \rmd x \geq C\int_{\mathbb{R}}\varTheta (v_{x}^{2}+v^{2}) \rmd x.
\]
\end{lemma}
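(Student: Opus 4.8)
The plan is to reduce the weighted inequality to the known unweighted spectral coercivity of the Hessian $L_{1}$ defined in \eqref{L1oper}, by substituting $w:=\sqrt{\varTheta}\,v$ and treating the variation of the weight as a small perturbation controlled by the hypothesis on $\varTheta'$ and $\varTheta''$. With this substitution $\varTheta v^{2}=w^{2}$ and $\varTheta v_{x}^{2}=\big(w_{x}-\tfrac12\tfrac{\varTheta'}{\varTheta}w\big)^{2}$; expanding the square, integrating the cross term $\int_{\R}(c-\varphi_{c})\tfrac{\varTheta'}{\varTheta}w\,w_{x}\,\rmd x$ by parts and collecting terms, one obtains the exact identity
\[
\int_{\R}\varTheta\big[(c-\varphi_{c})v_{x}^{2}+(-3\varphi_{c}+\varphi_{c}''+(c-2\omega))v^{2}\big]\rmd x+\int_{\R}\varTheta'\varphi_{c}'v^{2}\,\rmd x=\langle L_{1}w,w\rangle+\int_{\R}E\,w^{2}\,\rmd x,
\]
where $E=\tfrac12\varphi_{c}'\tfrac{\varTheta'}{\varTheta}-\tfrac14(c-\varphi_{c})\tfrac{(\varTheta')^{2}}{\varTheta^{2}}+\tfrac12(c-\varphi_{c})\tfrac{\varTheta''}{\varTheta}$. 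Using $0<\varphi_{c}\le c-2\omega$ (so $0<c-\varphi_{c}<c$) and $|\varphi_{c}'|\le\varphi_{c}$ from property 3) of the soliton, together with the hypothesis in its sharper form $\tfrac{(\varTheta')^{2}}{4\varTheta}+c|\varTheta'|+\tfrac{|\varTheta''|}{2}\le\tfrac{C_{\delta}}{8c}\varTheta$, each term of $E$ is bounded pointwise by $\tilde C C_{\delta}$ with $\tilde C=\tilde C(c_{1})$; the same hypothesis also yields $\tfrac12\|w\|_{H^{1}}^{2}\le\int_{\R}\varTheta(v_{x}^{2}+v^{2})\,\rmd x\le 2\|w\|_{H^{1}}^{2}$ once $C_{\delta}$ is small. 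Thus it suffices to show that $\langle L_{1}w,w\rangle$ is bounded below by a positive multiple of $\|w\|_{H^{1}}^{2}$ depending only on $c_{1}$.

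For the unweighted step I would invoke the coercivity of $L_{1}$ from Constantin--Strauss \cite{CS02} (as used in \cite{EM07}): via the Liouville transform $L_{1}\mapsto\mathcal L=-\partial_{z}^{2}+(c-2\omega)+V(z)$, $L_{1}$ is self-adjoint with one simple negative eigenvalue, kernel $\mathrm{span}\{\varphi_{c}'\}$ and essential spectrum $[c-2\omega,\infty)$; combined with the scalar condition $\tfrac{\rmd}{\rmd c}\Ham_{1}(\varphi_{c})=4\kappa c>0$ (with $\kappa\in(0,\tfrac12)$, uniformly $\ge\kappa_{1}c_{1}>0$ for $c\ge c_{1}$), this gives $\langle L_{1}w,w\rangle\ge C_{0}\|w\|_{H^{1}}^{2}$, with $C_{0}=C_{0}(c_{1})>0$, for every $w$ orthogonal to $(1-\partial_{x}^{2})\varphi_{c}$ and to $(1-\partial_{x}^{2})\partial_{x}\varphi_{c}$; note that for coercivity the explicit form of the negative eigenfunction --- unavailable, cf.\ Remark \ref{rk spec of L} --- is not needed, only this scalar condition. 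A standard projection argument upgrades this to $\langle L_{1}w,w\rangle\ge\tfrac{C_{0}}{2}\|w\|_{H^{1}}^{2}$ provided the two orthogonality defects of $w$ are $\le\delta\|w\|_{H^{1}}$ with $\delta$ small. To deduce this from the hypothesis, which bounds those defects by $\delta\|v\|_{H^{1}}$, I would use that $\varTheta$ varies slowly, $|(\log\varTheta)'|\le(4c)^{-1}$, so that on the exponentially small support of $\varphi_{c}$, where the obstruction directions concentrate, $\varTheta$ is essentially constant and $\|w\|_{H^{1}}$ is comparable to the weighted norm of $v$ there, whereas away from $\varphi_{c}$ the zeroth-order coefficient $-3\varphi_{c}+\varphi_{c}''+(c-2\omega)$ is close to $c-2\omega>0$ and the weighted form is already coercive with no orthogonality; splitting accordingly transfers the defect bound to $\delta'\|w\|_{H^{1}}$ with $\delta'\to0$ as $\delta\to0$.

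Combining the two steps, after fixing $C_{\delta}\le C_{0}/(4\tilde C)$ and $\delta$ correspondingly small, the right-hand side of the displayed identity is bounded below by $\tfrac{C_{0}}{2}\|w\|_{H^{1}}^{2}-\tilde C C_{\delta}\|w\|_{L^{2}}^{2}\ge\tfrac{C_{0}}{4}\|w\|_{H^{1}}^{2}\ge\tfrac{C_{0}}{8}\int_{\R}\varTheta(v_{x}^{2}+v^{2})\,\rmd x$, which is the assertion with $C=C_{0}/8$. The main obstacle I anticipate is the uniformity in $c\ge c_{1}$ of the unweighted coercivity constant $C_{0}$ --- precisely where the explicit $c$-scaling of $\varphi_{c}$ and of $\mathcal L$, and the lower bound $\tfrac{\rmd}{\rmd c}\Ham_{1}(\varphi_{c})=4\kappa c$, must be used carefully --- together with the bookkeeping needed to transfer the near-orthogonality from the $\|v\|_{H^{1}}$-scaled defect in the hypothesis to the $\|w\|_{H^{1}}$-scaled defect required by the spectral lemma; the weight-perturbation estimates and the final combination are routine.
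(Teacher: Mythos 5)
Your proposal follows essentially the same route as the paper's proof: the substitution $w=\sqrt{\Theta}\,v$ and the resulting identity expressing the weighted form as $\langle L_{1}w,w\rangle$ plus error terms controlled by the hypotheses on $\Theta$ and the bounds $0\le\varphi_{c}\le c-2\omega$, $|\varphi_{c}'|\le c$, the comparison of $\|\sqrt{\Theta}v\|_{H^{1}}^{2}$ with $\int_{\R}\Theta(v^{2}+v_{x}^{2})\,\rmd x$, and the Constantin--Strauss coercivity of $L_{1}$ under near-orthogonality are exactly the steps used in the paper. The only difference is that you make explicit the transfer of the orthogonality defect from the $\delta\|v\|_{H^{1}}$ bound in the hypothesis to the $\|w\|_{H^{1}}$-scaled defect required by the spectral lemma, and the uniformity of the coercivity constant in $c\ge c_{1}$, two points the paper passes over by quoting the coercivity directly with the defect measured against $\|w\|_{H^{1}}$.
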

\begin{proof}
We note that
\begin{eqnarray*}
\left\langle L_1 \sqrt{\Theta} v, \sqrt{\Theta} v\right\rangle&=& \int_{\mathbb{R}} \Theta\left[\left( c- \varphi_{c}\right) v_{x}^{2}+\left(-3 \varphi_{c}+ \varphi_{c}^{''}+(c-2 \omega)\right) v^{2}\right] \mathrm{d} x \nonumber\\
&& + \int_{\mathbb{R}} \frac{ c-\varphi_{c}}{4} \frac{\left(\Theta^{'}\right)^{2}}{\Theta} v^{2}+\int_{\mathbb{R}}\left( c- \varphi_{c}\right) \Theta^{\prime} v v_{x} \mathrm{~d} x \nonumber\\
&=& \int_{\mathbb{R}} \Theta\left[\left( c- \varphi_{c}\right) v_{x}^{2}+\left(-3 \varphi_{c}+\varphi_{c}^{\prime \prime}+(c-2 \omega)\right) v^{2}\right]+ \Theta^{\prime} \varphi_{c}^{\prime} v^{2} \mathrm{~d} x \nonumber\\
&& + \int_{\mathbb{R}}\left[\left( c- \varphi_{c}\right)\left(\frac{\left(\Theta^{\prime}\right)^{2}}{4 \Theta}-\frac{1}{2} \Theta^{\prime \prime}\right)-\Theta^{\prime} \varphi_{c}^{\prime}\right] v^{2} \mathrm{~d} x .
\end{eqnarray*}
On the other hand, on account of the results on the spectrum of $L_1$ derived in \cite{CS02}, it can be easily seen that there exist $\delta>0$ and $C_{\delta}>0$, such that if for $c \geq c_{1}>2\omega$,
\begin{equation*}
\left|\left\langle w,\left(1-\partial_{x}^{2}\right) \varphi_{c}\right\rangle\right|+\left|\left\langle w,\left(1-\partial_{x}^{2}\right) \partial_{x} \varphi_{c}\right\rangle\right| \leq \delta\|w\|_{H^{1}},
\end{equation*}
then
\begin{equation*}
\left\langle L_1 w, w\right\rangle \geq C_{\delta}\|w\|_{H^{1}}^{2}.
\end{equation*}
Direct calculations imply that
\begin{equation*}
\left\| \sqrt{\Theta} v \right\|_{H^{1}}^{2}=\int_{\mathbb{R}} \Theta\left(v^{2}+v_{x}^{2}\right) \mathrm{d} x+\frac{1}{4} \int_{\mathbb{R}} \frac{\left(\Theta^{\prime}\right)^{2}}{\Theta} v^{2} \mathrm{~d} x-\frac{1}{2} \int_{\mathbb{R}} \Theta^{\prime \prime} v^{2} \mathrm{~d} x .
\end{equation*}
Recall that according to \cite{CS02}, $\varphi_{c}$ takes values in $[0, c-2 \omega]$ and $\varphi_{c}^{\prime}$ takes values in $[-c, c]$. Thus the conclusion of lemma follows.\end{proof}

\section{Nonlinear Liouville property implies asymptotic stability} \label{sec_3}
In this section, assuming the nonlinear Liouville property (Theorem \ref{th1.1}), we prove the asymptotic stability result (Theorem \ref{th1.2}). Fix  $c >2\omega$  and let  $u_{0} \in Y_{+}$  satisfying  $\|u_{0}-\varphi_{c}\|_{H^{1}} \le  a_{0}$  with  $a_{0}=\min \left(\alpha_0, \delta\left(\alpha_0\right)\right)$.
Here  $\alpha_{0}$  is as in Theorem \ref{th1.1}, and  $\delta $ (in Corollary \ref{para improve}) is chosen so that  $c(t)>2\omega$. We decompose  $u$  as
$$
u(t, x)=\varphi_{c(t)}(x-x(t))+v(t, x-x(t))
$$
with  $v, x(t)$, and  $c$  satisfying the estimates of Lemma \ref{le4.1}. To prove Theorem \ref{th1.2}, it suffices to prove that  $v(t)$  tends to $0$ weakly in  $H^{1}(\mathbb{R})$  when $t \to +\infty$  and that there exists  $c^\star>2\omega $ such that  $c(t) \to c^\star$  in  $\mathbb{R}$  as  $t \to +\infty$.

Assuming Theorem \ref{th1.1}, the convergence of  $v(t)$  to zero will be proved by constructing a limit solution to the CH equation which contradicts the nonlinear Liouville Theorem. The convergence of  $c(t)$  will be a consequence of the weak convergence of  $v(t)$  and the monotonicity related to the energy $F(u)$ of the CH equation (see Corollary \ref{cor FR}).

\subsection{Construction of the limit solution.}
Assume that $v(t)$ does not converge
to $0$ weakly in $H^{1}(\mathbb{R})$. Then orbital stability implies that there exists $\tilde{v}_{0} \in H^1(\mathbb{R}),\tilde{v}_0 \not\equiv 0$, and a sequence $t_n \rightarrow+\infty$,  such that
\begin{equation}\label{weak H^1 converge}
  v (t_n) \rightharpoonup \tilde{v}_0 \ \text{ in } \ H^1(\mathbb{R}) \ \text{ as } \ n \to \infty.
\end{equation}
From the weak convergence \eqref{weak H^1 converge}, we deduce that $\|\tilde{v}_0\|_{H^1} \le  \alpha_0$.
On the other hand, extracting a subsequence from $\{t_n\}$, still denoted by $t_n$, we deduce the existence of $\tilde{c} >2\omega$ such that $c(t_n) \to \tilde{c}$ as $n \to \infty$. Now let $\tilde{u}_{0}=\varphi_{\tilde{c}} +\tilde{v}_0$ and
consider $\tilde{u}$ the solution of the CH equation with initial data $\tilde{u}(0)=\tilde{u}_0$. To prove that \eqref{weak H^1 converge} leads to a contradiction if  $\tilde{v}_0 \not\equiv 0$, it suffices to prove the $H^1$-localization of $\tilde{u}$ up to a translation. This will then contradict Theorem \ref{th1.1}. We again decompose as before
\begin{equation*}
  \tilde{u}(t,x)=\varphi_{\tilde{c}(t)}(x-\tilde{x}(t))+\tilde{v}(t,x-\tilde{x}(t)).
\end{equation*}
Let us to show the following $H^1$-localization of the solution $\tilde{u}$ up to a translation.
\begin{proposition}\label{pr H^1 compactness}
  The function $\tilde{u}$ constructed above is $H^1(\R)$-localized, that is,  for any $\varepsilon > 0 $ there exists $R_{\varepsilon} > 0$ such that for all $t \in \R$,
\begin{equation}\label{H1-locali}
  \int_{|x|>R_{\varepsilon}} \left( \tilde{u}^2(t,x+\tilde{x}(t)) + \tilde{u}_{x}^{2}(t,x+\tilde{x}(t)) \right) \rmd x < \varepsilon.
\end{equation}
\end{proposition}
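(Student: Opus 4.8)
The plan is to follow the Liouville--modulation scheme of Martel--Merle \cite{MM01,MM05,MM08} and El~Dika--Molinet \cite{EM07}: first realize $\tilde u(t,\cdot+\tilde x(t))$, at every fixed time, as a weak $H^1$-limit of the original solution near its \emph{own} soliton at late times, and then transfer uniform-in-time tail estimates from $u$ to $\tilde u$ by means of the monotonicity formulas of Section~\ref{sec_2.4}. As a preliminary step I would record the structure of the limit object. Since $u_0\in Y_+$, Proposition~\ref{pr2.5} gives $m(t_n)\in\mathcal M_+$ with $\|m(t_n)\|_{\mathcal M}=\int_\R m_0$ conserved, so together with \eqref{weak H^1 converge} and $c(t_n)\to\tilde c$ one gets $u(t_n,\cdot+x(t_n))\rightharpoonup^\ast\tilde u_0=\varphi_{\tilde c}+\tilde v_0$ in $Y$, whence $\tilde u_0\in Y_+$ and $\tilde u$ is the solution furnished by Proposition~\ref{pr2.5}. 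Passing the orthogonality relations \eqref{4.6} to the weak limit (using that $(1-\partial_x^2)\varphi_{c(t_n)}\to(1-\partial_x^2)\varphi_{\tilde c}$ and $(1-\partial_x^2)\partial_x\varphi_{c(t_n)}\to(1-\partial_x^2)\partial_x\varphi_{\tilde c}$ in $H^{-1}$) shows $\langle\tilde v_0,(1-\partial_x^2)\varphi_{\tilde c}\rangle=\langle\tilde v_0,(1-\partial_x^2)\partial_x\varphi_{\tilde c}\rangle=0$ and $\|\tilde v_0\|_{H^1}\le K_1\alpha_0$; hence $\tilde u$ admits the modulated decomposition of Lemma~\ref{le4.1} with $\tilde c(0)=\tilde c$, $\tilde x(0)=0$, and Corollary~\ref{para improve} gives $\tilde c(t)>2\omega$ and $\dot{\tilde x}(t)\to\tilde c$, so $\inf_t\dot{\tilde x}(t)>c_1$ for a suitable $c_1\in(0,2\omega)$ and the speed requirement of Definition~\ref{de3.1} is met; it remains to prove \eqref{H1-locali}.

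Next I would carry out the reduction. By the weak-$\ast$ stability in Proposition~\ref{pr2.5} applied to the data $u(t_n,\cdot+x(t_n))\rightharpoonup^\ast\tilde u_0$, together with $x(t_n+t)-x(t_n)\to\tilde x(t)$ (from integrating \eqref{4.7} and orbital stability), one obtains, for each fixed $t\in\R$,
\[
u(t_n+t,\cdot+x(t_n+t))\rightharpoonup\tilde u(t,\cdot+\tilde x(t))\quad\text{in }H^1(\R),\qquad n\to\infty.
\]
Since $t_n+t\to+\infty$, \eqref{H1-locali} will follow once the $H^1$-mass of $u$ at time $t_n+t$ outside a window centered at $x(t_n+t)$ is bounded uniformly in $n$ (large) and $t$. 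The \emph{right} tail is handled by Lemma~\ref{3.1.}: for the weight $E_{x_0,t_0}$ with $x_0>0$ the transition point at time $0$ runs off to $+\infty$ as $t_0\to+\infty$, so $E_{x_0,t_0}(0)\to0$; almost-monotonicity then yields, for each $\varepsilon$, some $x_0,T$ with $\|u(s)\|_{H^1(x>x(s)+x_0)}^2<\varepsilon$ for all $s\ge T$, and weak lower semicontinuity of $v\mapsto\|v\|_{H^1(y>x_0)}^2$ transfers this bound to $\tilde u(t,\cdot+\tilde x(t))$ uniformly in $t$. The \emph{left} tail for $t\le0$ is immediate from Corollary~\ref{cor 1}: $E_L$ being almost non-decreasing, $E_L^{\tilde u}(t)\le E_L^{\tilde u}(0)+Ce^{-x_0/K}$ for $t\le0$, and $E_L^{\tilde u}(0)\to0$ as $x_0\to\infty$ because $\tilde u_0\in H^1$.

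The main obstacle is the left tail of $\tilde u$ for $t\ge0$ (and, symmetrically, the right tail for $t\le0$, the two being exchanged by the invariance $(t,x)\mapsto(-t,-x)$, which preserves right-moving solitons because $\varphi_c$ is even). Equivalently, one must rule out that $\tilde u$ carries escaping dispersive radiation, i.e.\ that $\tilde v(t,\cdot)$ develops a spreading bump as $t\to\pm\infty$. The mechanism I would exploit is that the soliton speed exceeds $2\omega$ --- the supremum of the group velocities of the linearized CH flow, which is exactly the gap encoded in the threshold $K>\sqrt{c_1/(c_1-2\omega)}$ of Section~\ref{sec_2.4} --- so the dispersive part of $u$ separates from the soliton at a linear-in-time rate and is therefore annihilated in the weak limit. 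Concretely, for a smooth cutoff $\chi$ supported in a \emph{fixed} window to the left of the soliton, multiplication by $\chi$ is weak-to-weak continuous on bounded sets of $H^1$, and for $n$ large the radiation of $u(t_n+t)$ has already exited that window, so $\|\chi\,\tilde u(t,\cdot+\tilde x(t))\|_{H^1}\le\liminf_n\|\chi\,u(t_n+t,\cdot+x(t_n+t))\|_{H^1}$ only retains the intermediate ``core'' contribution of $v$ near the soliton. It then remains to show that this core contribution, outside any fixed window, is uniformly small in $t$: here I would combine conservation of $E=\Ham_1$ and $F=\Ham_2$, the orthogonality \eqref{4.6}, the identity $\frac{\rmd\Ham_2(\varphi_c)}{\rmd c}=4\kappa c^2>0$ of \eqref{eq:Fderivative to c} (which pins $\tilde c(t)$ to $\tilde c$), the almost-monotonicity of $E_L,F_L$ and $E_R,F_R$ (Corollaries~\ref{cor 1}, \ref{cor FR} and Proposition~\ref{pro4.3}), and the coercivity of the Hessian $L_1$ of \eqref{L1oper} (Lemma~\ref{le4.3}), to conclude that the core mass far from the soliton is controlled uniformly in time.

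Putting the right-tail bound together with this left-tail bound yields \eqref{H1-locali}, hence the $H^1$-localization of $\tilde u$ up to the translation $\tilde x(\cdot)$. The hard part, as indicated, is precisely the last step: making the annihilation of the escaping radiation in the weak limit quantitative and uniform in time, and showing that the residual ``bound'' part of the limit does not leak $H^1$-mass to spatial infinity as $t\to\pm\infty$ --- the point where the integrable-structure spectral decomposition and the weighted decay of Sections~\ref{sec_2.2}--\ref{subsec spec multi} (via Theorem~\ref{th1.1}) are what ultimately rule out such leakage, closing the contradiction with $\tilde v_0\not\equiv0$.
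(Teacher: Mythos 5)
Your setup — extracting the limit along $t_n\to+\infty$, modulating $\tilde u$, and transferring the right-tail bound from $u$ via Lemma \ref{3.1.} and weak lower semicontinuity — matches the paper, and the negative-time left-tail estimate via Corollary \ref{cor 1} is also fine. But the heart of Proposition \ref{pr H^1 compactness}, namely excluding that $\tilde u$ sheds $H^1$-mass to the left of the soliton as $t\to+\infty$, is not actually proved in your proposal. The tools you list cannot do it directly: the coercivity of $L_1$ controls $\|v\|_{H^1}$ globally but says nothing about where the mass sits; conservation of $E$ and $F$ does not localize; and $E_L$ is almost \emph{non-decreasing}, which is the wrong direction for a forward-in-time left-tail bound on $\tilde u$. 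Worse, your closing step invokes Theorem \ref{th1.1} (through the spectral decomposition and weighted semigroup decay) to rule out the leakage; but Theorem \ref{th1.1} takes $H^1$-localization as a \emph{hypothesis}, and Proposition \ref{pr H^1 compactness} is precisely what licenses its application afterwards, so this is circular within the paper's logical structure.

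The paper's mechanism, which is absent from your proposal, is an accumulation-of-energy contradiction exploiting that $\tilde u$ is a limit along $t_n\to+\infty$. If \eqref{H1-locali} fails, there exist $\delta_0>0$ and, for each $y_0$, a time $t_0(y_0)$ at which the localized energy $E_{loc}$ of $\tilde u$ (built with the weight $\Psi_K(\cdot+y_0)-\Psi_K(\cdot-y_0)$) has dropped by essentially $\delta_0$ compared with time $0$. The local strong convergence $u(t_n+t,\cdot+x(t_n+t))\to\tilde u(t,\cdot+\tilde x(t))$ in $H^1_{loc}$ transfers this drop to $u$ between the times $t_n$ and $t_n+t_0$ for all large $n$. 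Writing $E=E_{loc}+E_L+E_R$ and using conservation of $E$ together with the almost monotonicity of $E_R$ (non-increasing) and $E_L$ (non-decreasing) from Corollary \ref{cor 1}, one gets $E_L(u(t_{n+1}))\ge E_L(u(t_n))+\tfrac{2}{5}\delta_0$ along a subsequence with $t_{n+1}\ge t_n+t_0$, so $E_L$ grows without bound, contradicting $E_L\le E(u_0)$; the case $t_0(y_0)\le 0$ is handled symmetrically with $E_R$. Without this step, or a genuine replacement for it that does not presuppose Theorem \ref{th1.1}, your argument does not establish \eqref{H1-locali}.
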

A crucial ingredient in the proof of Proposition \ref{pr H^1 compactness} is the continuity of the flow of the CH equation in $H^1(\mathbb{R})$ under the weak topology.
\begin{proof}
Let  $u_{0} \in Y_{+}$  satisfying  $\|u_{0}-\varphi_{c}\|_{H^{1}} \le  a_{0}$ with  $a_{0}=\min \left(\alpha_0, \delta\left(\alpha_0\right)\right)$. We recall that the solution $u$ satisfies Corollary \ref{cor 1}. In view of \eqref{4.7}, one infers that $\{x(t_n+\cdot)-x(t_n)\}$ is uniformly equi-continuous, the Arzela-Ascoli theorem ensures that there exists subsequence $\{t_n\}$ and $\tilde{x}\in \mathcal{C}(\R)$ such that for all $T>0$,
\begin{equation}\label{conv sequ}
x(t_n+\cdot)-x(t_n) \rightarrow \tilde{x} \text { in } C([-T, T]).
\end{equation}
Since $u(t_n)\in Y_+$, there exists $\tilde{u}_{0}\in Y_+$ and subsequence of $\{t_n\}$ such that
\begin{eqnarray}
&& u(t_{n}, \cdot+x(t_{n}))\rightharpoonup \tilde{u}_0 \text { in } H^{1}(\mathbb{R}) \text { as } n \rightarrow+\infty,\nonumber\\
&& u(t_{n}, \cdot+x(t_{n}))\rightarrow \tilde{u}_0 \text { in } H_{loc}^{1}(\mathbb{R}) \text { as } n \rightarrow+\infty.\label{loca strong}
\end{eqnarray}
On account of \eqref{conv sequ} and Proposition \ref{pr2.5}, for any $t\in \R$,
\begin{eqnarray}
&& u(t_{n}+t, \cdot+x(t_{n}+t))\rightharpoonup \tilde{u}(t,\cdot+\tilde{x}(t) \text { in } H^{1}(\mathbb{R}) \text { as } n \rightarrow+\infty,\label{g weak}\\
&& u(t_{n}+t, \cdot+x(t_{n}+t))\rightarrow\tilde{u}(t,\cdot+\tilde{x}(t) \text { in } H_{loc}^{1}(\mathbb{R}) \text { as } n \rightarrow+\infty.\label{gloca strong}
\end{eqnarray}
In view of \eqref{g weak}, we infer from the uniqueness result in Lemma \ref{le4.1} that $\tilde{x} \in \mathcal{C}^1(\R) $ and satisfies \eqref{4.7}.

The proof of the $H^1$-localization of the asymptotic object $\tilde{u}$ will be proceed by a contradiction. If \eqref{H1-locali} does not hold, then a loss of the local $H^1$ norm occurs (recall that the $H^1$ norm is conserved) as  $t\rightarrow\infty$. This leads to a loss of the local $H^1$ norm for the sequence $\{u(t_n + \cdot, \cdot + x(t_n))$\}. Then the control of the $H^1$ norm on the right and on the left leads to an infinite loss on the left, which contradicts the $H^1$ norm conservation.
To proceed, we set
\begin{equation*}
  \begin{aligned}
  E_0 = \frac{1}{2} \int_{\R} \big(\tilde{u}^2(t,x)+\tilde{u}^{2}_{x}(t,x)\big)\,dx
  = \frac{1}{2} \int_{\R} \big(\tilde{u}^2(t,x+\tilde{x}(t))+\tilde{u}^{2}_{x}(t,x+\tilde{x}(t))\big)\,dx.
\end{aligned}
\end{equation*}
Assume that the $H^1$-localization of $\tilde{u}$ does not hold. Then there exist $\delta >0$ such that for any $y_0 > 0$, there exist $t_0=t_{0}(y_{0}) \in \mathbb{R}$ for which $\tilde{u}(t_{0})$ satisfies
\begin{equation*}
  \frac{1}{2} \int_{|x|<2y_{0}}(\tilde{u}^2(t_{0},x+\tilde{x}(t_0))+\tilde{u}^2_{x}(t_{0},x+\tilde{x}(t_{0})))dx \leq m_{0}-\delta_0.
\end{equation*}
Now take $y_0 \geq 2B$ ($B$ as in Lemma \ref{3.1.}) with in addition
$$
\frac{1}{2}\int(\Psi_K(x+y_{0})-\Psi_K(x-y_{0}))(\tilde{u}^2(0,x)+\tilde{u}^{2}_{x}(0,x)) \geq E_{0}-\delta_0,
$$
and
$$
Ce^{\frac{-y_0}{K}}+E_0 \sup_{|x|>2y_0}(\Psi_K(x+y_0)- \Psi_K(x-y_0))  \leq \frac{\delta_{0}}{10}.
$$
Using the fact that $\Psi_K$ is increasing and $0 < \Psi_K(x) < 1$, we deduce that
\begin{equation*}
  \begin{aligned}
\frac{1}{2} \int(\Psi_K(x+y_{0})-\Psi_K(x-y_{0}))(\tilde{u}^2(t_0,x+\tilde{x}(t_0))+\tilde{u}^2_{x}(t_{0},x+\tilde{x}(t_0)))dx \leq E_{0}-\frac{9}{10}\delta_0.
\end{aligned}
\end{equation*}
Assume that $t_0=t_{0}(y_0)>0$ (the proof of the case $t_0(y_{0})<0$ is similar). Define the local $H^1$-norm of $\tilde{u}$ by
\begin{equation*}
  \begin{aligned}
    &E_{loc}(\tilde{u}(t))=\frac{1}{2}\int_{\R} (\Psi_K(x+y_0)-\Psi_K(x-y_0))(\tilde{u}^2(t,x+\tilde{x}(t))+\tilde{u}^{2}_{x}(t,x+\tilde{x}(t)))dx,
  \end{aligned}
\end{equation*}
then one has
$$
E_{loc}(\tilde{u}(0)) \geq E_{0}-\frac{\delta_0}{10} \quad \text{ and } \quad E_{loc}(\tilde{u}(t_0))\leq E_{0}-\frac{9}{10}\delta_0.
$$
It reveals from \eqref{gloca strong} that there exists $N_0>0$ such that for $n\geq N_0$,
$$
E_{loc}(\tilde{u}(t_n)) \geq E_{0}-\frac{\delta_0}{5} \quad \text{ and } \quad E_{loc}(\tilde{u}(t_n+t_0))\leq E_{0}-\frac{4}{5}\delta_0.
$$
Which in turn implies that
$$
E_{loc}(\tilde{u}(t_n)) - E_{loc}(u(t_n,t_0)) \geq \frac{3}{5}\delta_0.
$$
Note that
$$
E(\tilde{u}(t))=E_{loc}(\tilde{u}(t))+E_{L}(\tilde{u}(t))+E_{R}(\tilde{u}(t)),\quad E(u(t_{n}))=E(u(t_{n}+t_{0})).
$$
Then one has
\begin{equation}\label{pr H^1concer}
E_{R}(u(t_{n}))-E_{R}(u(t_n+t_0))-E_{R}(u(t))-E_{L}(u(t)) \geq \frac{3}{5} \delta_{0}.
\end{equation}
It is deduced from \eqref{pr H^1concer} that for all $n \ge N_0$,
\begin{equation}
  E_{L}(u(t_{n}+t_{0}))\geq E_{L}(u(t_n)) +  \frac{1}{2} \delta_{0}.
\end{equation}
We may assume (extracting a subsequence if necessary) that the sequence $\{t_n\}$
satisfies $t_n+1 \geq nt + t_0$ for all $n \geq N_0$. It is deduced from Corollary \ref{cor 1} that
$$
E_{L}(u(t_{n}+t_{0}))\geq E_{L}(u(t_n+t_0)) -  \frac{1}{10} \delta_{0}, \ \text{for all} \ n \geq N_0.
$$
This inequality implies that
$$
E_{L}(u(t_{n+1}))\geq E_{L}(u(t_n)) +  \frac{2}{5} \delta_{0}, \ \text{for all} \ n \geq N_0.
$$
but this is impossible since $E_L(u(t_n)) \leq  E(u(t_n)) = E(u(0))$.

It remains to consider the case $t_0(y_0) \leq  0$. We can suppose that $y_0$ has been chosen such that $E_R(0) \le  \frac{\delta_0}{10}$. Then we proceed as above, the only difference being that the inequalities are inverted (because $t_n + t_0 < t_n$) and we get a contradiction
by proving that $E_R(0) \geqslant \frac{3 \delta_0}{10}$. This completes the proof of Proposition \ref{pr H^1 compactness}.
\end{proof}

\subsection{Convergence of $c(t)$}\label{conver c}
In this subsection, we will prove the convergence of $c(t)$ by employing Corollary \ref{cor FR} and the weak convergence of $v(t)\rightharpoonup0$ in $H^1$.
Indeed, the almost monotonicity of $F_R$ and the smallness of $u$ on the right imply an almost monotonicity of a local energy. Then,   the modulation of $u$ will imply an almost monotonicity of $F(\varphi_{c(t)})$. Finally, writing $F(\varphi_{c(t)})$ in term of $c(t)$ will yield the convergence.

Recall that
$$
F_R(t)=\frac{1}{2}\int_{\R} \big(u^3+uu^2_x+2\omega u^2\big)(t,x+x(t)) \Psi_K(x+x_0)dx,
$$
We claim that for given $\varepsilon > 0$, there exists $R_0 > 0$ such that for all $x_0 \geq\max{(2B, R_0)}$,
the following is satisfied for all $t \ge 0$
\begin{equation}\label{concerntra}
  \left|F_{R}(t)-\frac{1}{2}\int_{|x| < 2x_{0}}\big(u^3+uu^2_x+2\omega u^2\big)(t,x+x(t)) \Psi_K(x+x_0)dx\right| \le \varepsilon.
\end{equation}
Indeed, from Corollary \ref{cor 1}, one has
\begin{equation*}
  \begin{aligned}
    &\left|\int_{x > 2x_{0}}\big(u^3+uu^2_x+2\omega u^2\big)(t,x+x(t)) \Psi_K(x+x_0)dx \right|\\
    &\le C\int_{\R}\big(u^2+u^{2}_{x}\big)(t,x+x(t))\Psi_K(x+x_0))dx\\
    &\le C\int_{\R}\big(u^2+u^{2}_{x}\big)(0,x+x(0))\Psi_K(x+x_0))dx + Ce^{\frac{-x_0}{K}}.
  \end{aligned}
\end{equation*}
 Taking $R_0$ sufficiently large we find
$$
\left| \int_{x > 2x_{0}}\big(u^3+uu^2_x+2\omega u^2\big)(t,x+x(t)) \Psi_K(x+x_0)dx \right| \le \frac{\varepsilon}{2}.
$$
On the other hand, taking again $R_0$ large enough gives
$$
\left| \int_{x < -2x_{0}}\big(u^3+uu^2_x+2\omega u^2\big)(t,x+x(t)) \Psi_K(x+x_0)dx \right| \le \frac{\varepsilon}{2},
$$
since $\lim_{x \to -\infty} \Psi_K (x) = 0$. Hence, estimate \eqref{concerntra} is proved.
For $v \in H^1(\mathbb{R})$, we set
$$
F_{loc}(v)=\int_{|x|<2x_0} \big(v^3+vv^2_x+2\omega v^2\big)(t,x) \Psi_K(x+x_0)dx.
$$
Once again, we take $R_0$ sufficiently large such that for all real $t$
\begin{equation*}
  |F_{loc}(\varphi_{c(t)})-F(\varphi_{c(t)})| \le \varepsilon.
\end{equation*}
Corollary \ref{cor FR}  and inequality \eqref{concerntra} reveal that for all  $t$, for all $t^{\prime} \le t$,
\begin{equation}\label{local est}
  F_{loc}(u(t,\cdot+x(t))) \le  F_{loc}(u(t^{\prime},\cdot+x(t^{\prime}))) + 2\varepsilon.
\end{equation}
From the weak convergence of $v(t)$ to zero in $H^1(\mathbb{R})$, we deduce that $v(t)\rightarrow 0$ in
$L^{2}_{loc}(\mathbb{R})$. Recall that $u(t, x + x(t)) = \varphi_{c(t)}(x) + v(t, x)$,  we deduce from \eqref{local est} that there exists $t_0 > 0$ such that for all $t$ and $t^{\prime},t_0 \le  t^{\prime} \le  t$,
\begin{equation}\label{energy est}
F(\varphi_{c(t)})\le  F(\varphi_{c(t^{\prime})}) + 5\varepsilon.
\end{equation}
Now, since \eqref{eq:Fderivative to c} gives that $F(\varphi_{c(t)}) = f(c(t))$, with $f'(c) =4\kappa c^2>0$. Hence, we deduce from \eqref{energy est} that for all $\varepsilon> 0$, there exists $t_0 > 0$ such that for all $t$ and $t^{\prime},t_0 \le  t^{\prime} \le  t$,
\begin{equation*}
  c(t) \le  c(t^{\prime}) + \varepsilon.
\end{equation*}
This implies the existence of $c^{\star}>2\omega$ with $c(t) \to c^{\star}$ as $t \to +\infty$. Moreover, from Lemma \ref{le4.1}, $|c^{\star} - c_0|$ is controlled in terms of $\alpha_0$.

\subsection{Strong convergence on $(2\omega t,+\infty)$}\label{conver 2omega}
In this subsection, we give the proof of \eqref{asympt lim} and conclude the proof of  Theorem \ref{th1.2}.

Let $y_0>0$ and $\gamma_0=\frac{\sigma_0}{4}$, it is deduced from Lemma \ref{3.1.} (by arguing backwards in time (from $t$ to $0$) and conservation of $E(u)$) that
\begin{equation*}
\int_{\mathbb{R}}(u^2+u_{x}^2)(t,x)\Psi_{K}\big(x-x(t)-y_{0}\big)\rmd x\leq\int_{\mathbb{R}}(u^2+u_{x}^2)(0,x)\Psi_{K}\big(x-x(0)-\frac{\sigma_0t}{2}-y_{0}\big)\rmd x+Ce^{-\gamma_0y_0}.
\end{equation*}
Therefore, the decay property of $\varphi_c$ reveals that
\begin{equation*}
\int_{x>x(t)+y_0}(v^2+v_{x}^2)(t,x)\rmd x\leq2\int_{\mathbb{R}}(u^2+u_{x}^2)(0,x)\Psi_{K}\big(x-x(0)-\frac{\sigma_0t}{2}-y_{0}\big)\rmd x+Ce^{-\gamma_0y_0}.
\end{equation*}
The weak convergence of $v$ to $0$ implies that, for fixed $y_0$, $\int_{x(t)<x<x(t)+y_0}(v^2+v_{x}^2)(t,x)\rmd x\rightarrow0$ as $t\rightarrow+\infty$, then we have
\[
\lim_{t\rightarrow+\infty}\int_{x>x(t)}(v^2+v_{x}^2)(t,x)\rmd x=0.
\]
It remains to show $\lim_{t\rightarrow+\infty}\int_{x>2\omega t}(v^2+v_{x}^2)(t,x)\rmd x=0$. Indeed, let $0<t'=t'(t)<t$ such that $x(t')-2\omega t'-\frac{c_1^0-2\omega}{4}(t+t')=y_0$. Then for $\sup_{t\geq0}\|v(t)\|_{H^1}$ small enough, we have
\begin{eqnarray*}
&&\int_{\R}(u^2+u_{x}^2)(t,x)\Psi_{K}\big(x-2\omega t\big)\rmd x\\ &&\leq \int_{\R}(u^2+u_{x}^2)(t',x)\Psi_{K}\big(x-2\omega t'-\frac{c_1^0-2\omega}{4}(t-t')\big)\rmd x+Ce^{-\gamma_0y_0}\\ &&\leq \int_{\R}(u^2+u_{x}^2)(t',x)\Psi_{K}\big(x-x(t')+y_{0}\big)\rmd x+Ce^{-\gamma_0y_0}.
\end{eqnarray*}
This concludes the proof of \eqref{asympt lim}.

We are now able to summarize above to conclude the proof of Theorem \ref{th1.2}.

\begin{proof}[Proof of Theorem \ref{th1.2}.]
 Assuming the nonlinear Liouville property (Theorem \ref{th1.1}), one needs to prove that  $v(t)$  tends to $0$ weakly in  $H^{1}(\mathbb{R})$  when $t \to +\infty$  and that there exists  $c^\star>2\omega $ such that  $c(t) \to c^\star$  in  $\mathbb{R}$  as  $t \to +\infty$. By contradiction, if $v(t)$ does not converge
to $0$ weakly in $H^{1}(\mathbb{R})$. Then orbital stability implies that there exists $\tilde{v}_{0} \in H^1(\mathbb{R}),\tilde{v}_0 \not\equiv 0$, and a sequence $t_n \rightarrow +\infty$,  such that $v(t_n) \rightharpoonup \tilde{v}_0 \ \text{in} \ H^1(\mathbb{R}) \ \text{as} \ n \to \infty$, $\|\tilde{v}_0\|_{H^1} \le  \alpha_0$ and $c(t_n) \to \tilde{c}>2\omega$ as $n \to \infty$. Now let $\tilde{u}_{0}=\varphi_{\tilde{c}} +\tilde{v}_0$ and
consider $\tilde{u}$ the solution of the CH equation with initial data $\tilde{u}(0)=\tilde{u}_0$. From Proposition \ref{pr H^1 compactness} and Theorem \ref{th1.1}, the solution $\tilde{u}$ is $H^1$-localized and then must be solitons. Therefore, we have $v\equiv0$, but this is a contradiction and \eqref{weak asympt lim} holds true. The convergence of $c(t)$ and \eqref{asympt lim} has been shown in subsection \ref{conver c} and \ref{conver 2omega}, respectively. This completes the proof of Theorem \ref{th1.2}.
\end{proof}
\section{Proof of  Liouville property} \label{sec 4}
The aim of this section is to
reduce the proof of the Liouville Theorem \ref{th1.1} to the proof of a linear
Liouville problem (see Proposition \ref{pro.linear problem} below). Consider $u$ a solution of the CH equation
which satisfies the assumptions of Theorem \ref{th1.1}. We decompose $u$ as in Section \ref{sec_3}
\[
u(t,x)=\varphi_{c(t)}(x-x(t))+v(t,x-x(t)).
\]
It is deduced from Proposition \ref{pr H^1 compactness}  that $u$ is $H^1$-localized, then Proposition \ref{th3.1} implies that $u(t, \cdot+x(t))$ and $u_x(t, \cdot + x(t))$ decay exponentially in space, and uniformly in time.

Arguing in terms of $v$, we reduce the proof of the Liouville theorem
to the proof of the following. Consider $v$, a solution of \eqref{4.15} (with $N=1$) with sufficiently small
initial data $v_0$ in $H^1(\R)$, such that for all $t \in \mathbb{R}$,
\begin{equation}\label{decay of var}
|v(t,x)|+|v_x(t,x)|\leq Ce^{-\sigma_0|x|},
\end{equation}
and such that the orthogonality conditions \eqref{orthogonal cond} are satisfied, then we need to show $v\equiv0$.

\subsection{Reduction to a linear Liouville theorem} \label{sec 41}
We will argue by contradiction. If Theorem \ref{th1.1} does not hold, then
there exists a sequence $\{v_{n}\}$ of solutions of \eqref{4.15} (with $N=1$) such that $v_{n}(0)\rightarrow 0$ in $H^1(\R)$ and
for all integer $n$, $v_{n}\neq0$, $v_{n}$ satisfies \eqref{decay of var} and the orthogonality conditions \eqref{4.6}. Thanks to the orbital stability of the solitons (see \cite{CS02}), we deduce that
$a_n:=\sup\limits_{t\in\R}\|v_{n}\|_{H^1}\rightarrow 0$ as $n\rightarrow+\infty$. Considering a normalized subsequence of $\{v_{n}\}_n$, we will
construct a nonzero solution of the following linear Liouville problem.

\begin{proposition} \label{pro.linear problem}
Let $u_n$ be a solution of the CH equation \eqref{eq1} initially close to $\varphi_c$. Suppose that $c_n(t)$ and $x_n(t)$
are its modulation parameters and $v_n$ being as in Lemma \ref{le4.1}. We assume that $v_{n}\neq0$
and $v_{n}$ satisfies the decay estimate \eqref{decay of var}. Assume moreover that
$$a_n=\sup\limits_{t\in\R}\|v_{n}\|_{H^1}\rightarrow 0, \quad \text{as}\quad  n\rightarrow+\infty.$$
Then there exist $c>2\omega$, a subsequence of $\{v_{n}\}$ (still denoted by $\{v_{n}\}$) and a sequence of $\{t_{n}\}$, such that
\[
\frac{v_{n}(t+t_n)}{a_n}\rightarrow v(t) \quad \text{in}\quad L^{\infty}_{loc}(\R,H^1(\R)),
\]
where  $v\in \mathcal C(\R,H^1(\R))\cap  L^{\infty}(\R,H^1(\R))$ is a nontrivial solution of
\begin{equation}\label{linearized equa}
v_t+\mathcal J L_1v=\beta(t)\partial_x\varphi_c,
\end{equation}
for some continuous function $\beta(t)$. In addition, for any $t\in\R$, $v(t)$ satisfies \eqref{decay of var} and
\begin{eqnarray}
&& \left \langle v(t),(1-\partial_{x}^{2})\partial_x\varphi_c\right \rangle =\langle v(t),(1-\partial_{x}^{2})\varphi_c\rangle =0.\label{orthogonal cond}
\end{eqnarray}
\end{proposition}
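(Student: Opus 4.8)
The plan is to run the compactness-and-limiting argument of Martel--Merle \cite{MM01,MM05,MM08}: normalize the perturbations by their uniform size, extract a limit along a well-chosen sequence of times, and verify that the limit solves the linearized flow and is nontrivial. Concretely, since each $v_n\not\equiv0$, I would pick $t_n\in\R$ with $\|v_n(t_n)\|_{H^1}\ge\tfrac12 a_n$ (possible by definition of $a_n$ as a supremum) and set $w_n(t):=a_n^{-1}v_n(t+t_n)$, so that $\|w_n(t)\|_{H^1}\le1$ for all $t$ and $\|w_n(0)\|_{H^1}\ge\tfrac12$. Passing to a subsequence, $c_n(t_n)\to c$ for some $c>2\omega$ (the $c_n(t)$ stay in a fixed compact subinterval of $(2\omega,\infty)$, being $O(a_n)$-close to the base speed), and since $|\dot c_n|\le C a_n^2$ by Corollary \ref{para improve}, in fact $c_n(t+t_n)\to c$ uniformly on compact time intervals.

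Next I would derive the equation for $w_n$ by dividing \eqref{4.15} (with $N=1$) by $a_n$. Writing $R_n(t)=\varphi_{c_n(t+t_n)}$ in the translated frame and using that the linear-in-$v$ part of \eqref{4.15} reassembles into $-\mathcal JL_1$ (the Hessian operator \eqref{L1oper}), one gets $\partial_t w_n+\mathcal JL_1^{(n)}w_n=b_n(t)\,\partial_x R_n-d_n(t)\,\partial_c R_n+\mathcal E_n$, where $L_1^{(n)}$ is \eqref{L1oper} at speed $c_n(t+t_n)$, $b_n(t):=a_n^{-1}(\dot x_n(t+t_n)-c_n(t+t_n))$, $d_n(t):=a_n^{-1}\dot c_n(t+t_n)$, and $\mathcal E_n$ collects the genuinely quadratic-in-$v_n$ contributions divided by $a_n$. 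By Corollary \ref{para improve}, $|b_n|\le C$ and $|d_n|\le Ca_n\to0$; since the CH nonlinearity is bounded from $H^1\times H^1$ into $L^2$ (via $H^1(\R)\hookrightarrow L^\infty(\R)$ and the two-derivative gain of $(1-\partial_x^2)^{-1}$), $\|\mathcal E_n(t)\|_{L^2}\le Ca_n\to0$ uniformly in $t$. As $L_1^{(n)}\colon H^1\to L^2$ is bounded uniformly in $n$, $\partial_t w_n$ is bounded in $L^\infty_tL^2_x$; with $\|w_n\|_{L^\infty_tH^1_x}\le1$, Aubin--Lions and a diagonal extraction yield a subsequence and $v\in L^\infty(\R;H^1(\R))$ with $w_n\to v$ in $C([-T,T];L^2_{\mathrm{loc}}(\R))$, weakly-$*$ in $L^\infty([-T,T];H^1(\R))$ for all $T$, plus a weak-$*$ limit $\beta\in L^\infty(\R)$ of $b_n$. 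Passing to the limit (using $L_1^{(n)}\to L_1$ in operator norm and $\partial_x R_n\to\partial_x\varphi_c$ in $H^1$, since the coefficients converge uniformly with all derivatives) gives $v_t+\mathcal JL_1v=\beta(t)\,\partial_x\varphi_c$. The orthogonality relations \eqref{4.6} are scale-invariant and the test functions $(1-\partial_x^2)\partial_x R_n,(1-\partial_x^2)R_n$ converge strongly, so \eqref{orthogonal cond} holds in the limit; once the strong $H^1$-convergence of the next paragraph is available, $v\in C(\R;H^1(\R))$ as a uniform limit of the $C(\R;H^1)$-functions $w_n$, and then differentiating the first relation of \eqref{orthogonal cond} in $t$ and pairing \eqref{linearized equa} with $(1-\partial_x^2)\partial_x\varphi_c$ recovers $\beta$ as a continuous function, completing the identification of the limit equation.

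The delicate point — and the main obstacle — is to prove $v\not\equiv0$ and that $v$ inherits the exponential decay \eqref{decay of var}: weak-$*$ $H^1$-convergence permits escape of mass both to spatial infinity and (less obviously) to high frequencies, and this must be excluded. I would obtain this from the monotonicity formulas of Section \ref{sec_2.4} (Lemma \ref{3.1.}, Corollary \ref{cor 1}) and the uniform exponential decay of Proposition \ref{th3.1}, applied directly to the perturbation $v_n$: since every error term produced by the CH nonlinearity and by the modulation of the parameters is \emph{quadratic} in $v_n$, tracking them carefully yields the \emph{scale-invariant} weighted bound $\sup_{t}\int_\R(v_n^2+\partial_xv_n^2)(t,x)\,e^{\sigma_0|x|}\,\rmd x\le Ca_n^2$, hence $\sup_t\int_\R(w_n^2+\partial_xw_n^2)\,e^{\sigma_0|x|}\,\rmd x\le C$ uniformly in $n$. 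This has three consequences: the family $\{w_n\}$ is tight, so $w_n\to v$ strongly in $C([-T,T];H^1(\R))$ and $\|v(0)\|_{H^1}\ge\tfrac12$, giving $v\not\equiv0$; Fatou gives $\sup_t\int_\R(v^2+v_x^2)e^{\sigma_0|x|}<\infty$, which, fed back into \eqref{linearized equa} through weighted elliptic regularity for the conjugated operator $e^{\sigma_0x/2}\mathcal JL_1e^{-\sigma_0x/2}$ (which has the same favorable structure), upgrades to the pointwise bound \eqref{decay of var}; and the uniform tail control keeps the modulation decomposition of $u_n$ valid along the whole sequence.

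I expect the scale-invariant weighted estimate on $v_n$ to be the technical heart. The raw inputs only give a pointwise bound $|v_n|+|\partial_xv_n|\le Ce^{-\sigma_0|x|}$ with a constant that is uniform in $n$ but \emph{not} proportional to $a_n$, and crude interpolation against $\|v_n\|_{H^1}\le a_n$ yields merely $\int(v_n^2+\partial_xv_n^2)e^{\sigma_0|x|}\lesssim a_n^{2-\varepsilon}$, which is worthless after dividing by $a_n^2$. One genuinely needs to re-run the monotonicity argument on the perturbation, exploiting that the nonlinear and modulation errors are quadratic in $v_n$, to gain the exact power $a_n^2$; this uniform weighted bound, and the resulting strong $H^1$-compactness of the rescaled sequence, is what makes the limit $v$ nontrivial and decaying, and hence what makes the contradiction with the linear Liouville theorem possible.
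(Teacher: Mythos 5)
Your proposal follows essentially the same Martel--Merle scheme as the paper: the same choice of $t_n$ with $\|v_n(t_n)\|_{H^1}\ge a_n/2$, the same normalization $w_n=a_n^{-1}v_n(\cdot+t_n)$, the same use of Corollary \ref{para improve} to see that the $\partial_c\varphi$-forcing is $O(a_n)$ and the $\partial_x\varphi$-forcing is $O(1)$, a compactness argument in $C_tL^2_{loc}$ from the uniform bounds on $w_n$ and $\partial_t w_n$, and passage to the limit in the modulation equation (the paper's \eqref{wn equa}). The two genuine differences are instructive. First, the identification of $\beta(t)$: the paper differentiates the orthogonality relations in time, divides by $a_n$, and obtains the explicit bounds \eqref{bdd of cx}, so that $\dot c_n/a_n\to0$ and $(\dot x_n-c_n)/a_n$ converges to an explicit continuous functional of the limit $w$; you instead take a weak-$*$ limit of the multiplier and recover continuity a posteriori from the limit equation, which also works but is less direct. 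Second, and more substantially, the decay/nontriviality step: the paper simply asserts the uniform bound \eqref{exponetial decay2} for the normalized sequence (i.e.\ that the constant in \eqref{decay of var} for $v_n$ scales like $a_n$), whereas you correctly identify that the raw pointwise decay does not survive division by $a_n$ and propose to prove a scale-invariant weighted bound $\sup_t\int(v_n^2+v_{nx}^2)e^{\sigma_0|x|}\lesssim a_n^2$ by re-running the monotonicity of Section \ref{sec_2.4} on the perturbation, exploiting that the nonlinear and modulation errors are quadratic in $v_n$. This is exactly the estimate needed to legitimize \eqref{exponetial decay2}, and your treatment is more careful than the paper's at this point (possibly with a slightly smaller exponential rate than $\sigma_0$, which is harmless since Theorem \ref{thm1.4} only needs $H^1$-localization).

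One caution: tightness (a uniform weighted $H^1$ bound) together with $C_tL^2_{loc}$ convergence gives strong convergence in $C_tL^2$, but it does not by itself yield strong convergence in $C([-T,T];H^1)$, since it does not preclude high-frequency concentration of $\partial_x w_n$; your claim of strong $H^1$ convergence, and hence the inequality $\|v(0)\|_{H^1}\ge\tfrac12$, needs an extra ingredient (the paper instead argues nontriviality from \eqref{w bounded} together with the uniform pointwise bounds on $w_n$ and $\partial_xw_n$ in \eqref{exponetial decay2}, and only uses $L^2$-level compactness in time). This is a repairable overstatement rather than a wrong approach: for the contradiction with the linear Liouville theorem it suffices to produce an $H^1$-bounded, $H^1$-localized limit with $v(0)\ne0$, which your weighted bounds plus the $L^2$-convergence already give once the lower bound is expressed at the $L^2$ (or locally uniform) level rather than in the full $H^1$ norm.
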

\begin{proof}
 By the definition of $a_n$, there exists a sequence $\{t_{n}\}_n$ such that $\|v_n(t_n)\|_{H^1}\geq \frac{a_n}{2}$. We define $w_n$ by setting for $(t,x)\in\R^2$,
\[w_n(t,x) := \frac{v_{n}(t+t_n)}{a_n}.\]
Then we see that $w_n$ satisfies
\begin{eqnarray}
&& \|w_n(0,x)\|_{H^1}\geq \frac12,\quad  \|w_n(t,x)\|_{H^1}\leq1, \label{w bounded}\\
&& \left \langle w_n(t),(1-\partial_{x}^{2})\partial_x\varphi_{c_n(t+t_n)}\right \rangle =\langle w_n(t),(1-\partial_{x}^{2})\varphi_{c_n(t+t_n)}\rangle =0,\label{orthogonal cond 2}\\
&& |w_n(t,x)|+|\partial_x w_{n}(t,x)|\leq Ce^{-\sigma_0|x|}. \label{exponetial decay2}
\end{eqnarray}
  Moreover, $w_n$ satisfies the equation
\begin{equation}\label{wn equa}
\begin{split}
        \partial_t w_{n}+\mathcal{J}L_{c_n(t+t_n)}w_n & = (\partial_t x_{n} - c_n) \partial_x w_{n}
        + \frac{\partial_t x_n-c_n}{a_n} \partial_{x} \varphi_{c_n(t+t_n)} - \frac{\partial_t c_n}{a_n} \partial_c \varphi_{c_n(t+t_n)} \\ & \quad - \frac{a_n}{2}\partial_x(w_{n}^{2})-a_n(1-\partial_{x}^{2})^{-1}\partial_{x} \left( \frac12 \partial_x w_n^{2}+w_{n}^{2} \right)  \\
        w_n(0) & = \frac{v_n(t_n)}{a_n}.
 \end{split}
  \end{equation}
  Where the operator $L_{c_n(t+t_n)}$ is defined in \eqref{L1oper} (with $c$ replaced by $c_n(t+t_n)$), $x_n(t)$ and $c_n(t)$ are parameters such that $$u_n(t,x):=\varphi_{c_n(t)}(x-x_n(t))+v_n(t,x-x_n(t))$$ satisfies the following orthogonal conditions
  \begin{equation}\label{northo}
    \left \langle v_n(t),(1-\partial_{x}^{2})\partial_x\varphi_{c_n(t)}\right \rangle =\langle v_n(t),(1-\partial_{x}^{2})\varphi_{c_n(t)}\rangle=0.
  \end{equation}
  Since $\|w_n(0)\|_{H^1} \le  1$ and $c_n(0)>2\omega$, there exist a subsequence of $\left\{ w_n(0)\right\}_{n}$ and a subsequence of $\left\{ c_n(0)\right\}_{n}, v_0 \in H^1(\mathbb{R})$ and $c >2\omega$ such that
  \begin{equation*}
    w_n(0) \rightharpoonup v_0 \ \text { in } \ H^1(\mathbb{R}) \ \text { and } \ c_n(0) \rightarrow c.
  \end{equation*}
  Differentiating \eqref{northo} with respect to $t$ and dividing the resulting equation by $a_n$, we have
\begin{align*}
-\frac{\partial_t c_n}{a_n} & \left( \frac12\frac{d}{dc}E(\varphi_{c_n(t+t_n)})-a_n \left\langle w_n(t),(1-\partial_x^2)\partial_c\varphi_{c_n(t+t_n)} \right\rangle \right) \\
& + a_n\left( \left\langle(1-\partial_x^2)(w_n\partial_x w_n),\varphi_{c_n(t+t_n)} \right\rangle - \left\langle w^2_n+\frac12 (\partial_x w_n)^2 ,\partial_x\varphi_{c_n(t+t_n)} \right\rangle \right) = 0, \\
\frac{\partial_t x_n-c_n}{a_n} & \left( \|\partial_x\varphi_{c_n(t+t_n)}\|^2_{L^2} + \|\partial^2_x\varphi_{c_n(t+t_n)}\|^2_{L^2} + a_n \left\langle(1-\partial_x^2)w_n,\partial_x\varphi_{c_n(t+t_n)} \right\rangle \right) \\
& - \partial_t c_n \left\langle (1-\partial_x^2)\partial_x w_n,\partial_x\varphi_{c_n(t+t_n)} \right\rangle - \left\langle w_n,L_{c_n(t+t_n)}\partial^2_x\varphi_{c_n(t+t_n)} \right\rangle\\
& - a_n \left\langle(1-\partial_x^2)(w_n \partial_x w_n)+ \partial_x \left( w^2_n+\frac12 (\partial_x w_n)^2 \right),\partial_x\varphi_{c_n(t+t_n)} \right\rangle=0.
\end{align*}
It is deduced from \eqref{4.5} that
\[\sup_{t\in\R}|c_n(t)-c|\leq Ca_n.\]
Therefore, from the above, one has
\begin{equation}\label{bdd of cx}
   \left| \frac{\partial_t c_n}{a_n} \right| \leq Ca_n, \qquad  \left| \frac{\partial_t x_n-c_n}{a_n}-\frac{1}{\|\partial_x\varphi_{c}\|^2_{L^2}+\|\partial^2_x\varphi_{c}\|^2_{L^2}} \langle w_n,L_{c}\partial^2_x\varphi_{c} \rangle \right|\leq Ca_n.
  \end{equation}
It is deduced from \eqref{w bounded} and \eqref{bdd of cx} that
\[\sup_{n\in\mathbb {N}}\sup_{t\in\R}\big(\|w_n(t)\|_{H^1}+\|\partial_tw_n(t)\|_{L^2}\big)<+\infty.\]
It reveals from   \eqref{exponetial decay2} that the set $\{ w_n(t)\ | \ t\in[-T,T], n\in\mathbb{N}\}$ is relatively compact in $L^2(\R)$ for any $T>0$.  Then by the Arzel\`a--Ascoli theorem, we see that there exists a subsequence of $w_n\in \mathcal {C}(\R;H^1(\R))$, still denoted by $\{w_n\}$, such that
\[
w_n(t)\rightarrow w(t) \quad \text{in }\ L^{\infty}_{loc}(\R;L^2(\R)).
\]
Moreover, it follows from \eqref{w bounded} and \eqref{exponetial decay2} that $w_n(0)\rightarrow w(0)\not\equiv0$.
Therefore, it follows from \eqref{bdd of cx} that
\[
\frac{\partial_t c_n}{a_n}\rightarrow0,\qquad \frac{\partial_t x_n-c_n}{a_n}\rightarrow\frac{1}{\|\partial_x\varphi_{c}\|^2_{L^2}+\|\partial^2_x\varphi_{c}\|^2_{L^2}} \langle w_n,L_{c}\partial^2_x\varphi_{c} \rangle=:\beta(t),
\]
uniformly on any compact interval of $\R$ as $n\rightarrow+\infty$. In addition, $w(t)$ satisfies \eqref{orthogonal cond}, \eqref{decay of var} and \eqref{linearized equa}. This completes the proof of Proposition \ref{pro.linear problem}.
\end{proof}

\subsection{Proof of the linear Liouville theorem}\label{sec 5}
This is the last step in the proof
of Theorem \ref{th1.1}. We show that the function $v$ constructed in Proposition \ref{pro.linear problem} satisfies
$v\equiv0$. This leads to a contradiction and achieves the proof of the Liouville Theorem. In fact, we will give the classification of solutions for more general linear problem related to \eqref{eq3}.

Let us recall the following higher order linearized operators around one soliton profile $\varphi_c$,
\begin{equation*}
L_n=-H''_{n+1}(\varphi_c)+cH''_{n}(\varphi_c).
\end{equation*}
 The idea of classifying the solutions for more general linear problem is by employing the $L^2$-completeness of the squared eigenfunctions \eqref{eq:closure relation}, a suitable change of function and the semigroup estimate initialed in the work of Pego--Weinstein \cite{PW94} (see also \cite{MT14}) which is dealing with the gKdV equation.
\begin{theorem}\label{thm1.4}
Let $v\in \mathcal C(\R,H^1(\R))\cap L^{\infty}(\R,H^1(\R))$ be a solution of
\begin{equation}\label{higher linearized CH}
v_t+\mathcal{J}L_nv=A(t)\varphi_c'+B(t)\frac{\partial \varphi_c}{\partial c},
\end{equation}
where $A(t)$ and $B(t)$ are two  continuous and bounded functions. Assume that for constant $C>0$, $v(t,x)$ satisfies
\begin{equation*}
\forall t\in \R, \ R_{\varepsilon} \gg 1, \quad  \int_{|x|>R_{\varepsilon}}\big(v^2(t,x)+v_x^2(t,x)\big) dx\leq \varepsilon.
\end{equation*}
Then for  all $ t\in \R$,
\begin{eqnarray*}
v(t) = \beta(t)\varphi'_c+\gamma(t)\frac{\partial \varphi_c}{\partial c}.
\end{eqnarray*}
for some $\mathcal C^{1}$ bounded functions $\beta(t)$ and $\gamma(t)$ satisfying
\begin{equation*}
\beta'(t)=A(t)-c^{n-1}\gamma(t), \quad \gamma'(t)=B(t).
\end{equation*}
If we further impose
\begin{equation*}
\forall t\in \R,\quad \int_{\R}v(t,x)(1-\partial_x^2)\varphi_c(x)\rmd x=\int_{\R}v(t,x)(1-\partial_x^2)\varphi'_c(x)\rmd x=0,
\end{equation*}
then  for all $t\in \R$, $v(t)\equiv 0$.
\end{theorem}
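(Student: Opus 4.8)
The plan is to decompose $v(t)$ along the two function sets \eqref{eq:set JL} and \eqref{eq:set LJ} provided by the spectral resolution of $\mathcal{J}L_n$ and $L_n\mathcal{J}$. First I would write, using the closure relation \eqref{eq:closure relation} and the decomposition \eqref{decomposition of z} (valid since $v(t)$ vanishes as $x\to\pm\infty$ by the uniform localization hypothesis),
\begin{equation*}
v(t,x)=\int_{\R}\big(F^{\pm}(x,k)\big)_x P^{\pm}(t,k)\,\rmd k+\beta(t)\varphi_c'(x)+\gamma(t)\frac{\partial\varphi_c}{\partial c}(x),
\end{equation*}
where the coefficients are recovered by pairing against the dual set \eqref{eq:set LJ}. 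Because $\big(F^{\pm}(x,k)\big)_x$ are generalized eigenfunctions of $\mathcal{J}L_n$ with purely imaginary eigenvalue $\varrho_{n,c}(\pm 2k)$, and $\varphi_c'$, $\partial_c\varphi_c$ span the (generalized) kernel with $\mathcal{J}L_n\varphi_c'=0$, $\mathcal{J}L_n\partial_c\varphi_c=c^{n-1}\varphi_c'$, substituting into \eqref{higher linearized CH} and projecting yields: the continuous part obeys $\partial_t P^{\pm}(t,k)=-\varrho_{n,c}(\pm2k)P^{\pm}(t,k)$ (so $|P^\pm(t,k)|$ is conserved in $t$), while the discrete modes obey $\beta'(t)=A(t)-c^{n-1}\gamma(t)$ and $\gamma'(t)=B(t)$. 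This already gives the stated ODEs for $\beta,\gamma$; it remains to show the radiation part vanishes.

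The key step is to kill the dispersive component $v_{\mathrm{rad}}(t,x):=\int_{\R}\big(F^{\pm}(x,k)\big)_x P^{\pm}(t,k)\,\rmd k$. Here I would follow the Pego--Weinstein \cite{PW94} strategy adapted to CH: pass to the exponentially weighted space $H^1_a(\R)$ with weight $e^{ax}$ (for a suitable small $a>0$), under which the translational neutral mode $\varphi_c'$ is removed (it decays like $e^{-\sqrt{1-2\omega/c}\,|x|}$ and lies in the weighted space, but the weighted linearized operator has a spectral gap around it) and, more importantly, the continuous spectrum $\varrho_{n,c}(\pm2k)\subset i\R$ of $\mathcal{J}L_n$ gets shifted strictly into the stable half-plane $\{\mathrm{Re}\,z<-\delta_0<0\}$. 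Concretely, the symbol of the weighted operator is $\varrho_{n,c}(\pm2k+ia)$, whose real part is strictly negative for $a$ in an appropriate range, uniformly in $k$; this is the analogue of the gap lemma. Consequently the weighted semigroup $e^{-t\mathcal{J}L_n}$ restricted to the radiation subspace satisfies $\|e^{-t\mathcal{J}L_n}P_{\mathrm{rad}}\|_{H^1_a}\le Ce^{-\delta_0 t}\|P_{\mathrm{rad}}\|_{H^1_a}$. Since $v$ is a \emph{global} bounded-in-time solution satisfying the uniform decay estimate — in particular $v(t)\in H^1_a(\R)$ with norm bounded uniformly in $t\in\R$, and the forcing $A(t)\varphi_c'+B(t)\partial_c\varphi_c$ is annihilated by the projection onto the radiation subspace — running Duhamel's formula backward from $-T$ to $t$ and letting $T\to+\infty$ forces $v_{\mathrm{rad}}(t)\equiv0$. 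This is the standard ``no solution can grow backward and be bounded forward'' argument, valid precisely because the semigroup decays forward and hence grows backward.

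Having shown $v(t)=\beta(t)\varphi_c'+\gamma(t)\frac{\partial\varphi_c}{\partial c}$ with $\beta,\gamma\in\mathcal C^1$ bounded, the last assertion follows by imposing the two orthogonality conditions. Pairing with $(1-\partial_x^2)\varphi_c$ and $(1-\partial_x^2)\varphi_c'$ and using the non-degeneracy computations already recorded before \eqref{eq:closure relation} — namely $\int_{\R}\frac{\partial\varphi_c}{\partial c}(1-\partial_x^2)\varphi_c\,\rmd x=\frac{\rmd\Ham_1(\varphi_c)}{\rmd c}=4\kappa c\ne0$, $\int_{\R}\varphi_c'(1-\partial_x^2)\varphi_c'\,\rmd x=\|\varphi_c'\|_{H^1}^2\ne0$, together with the parity cancellations $\int\varphi_c'(1-\partial_x^2)\varphi_c=0$ and $\int\frac{\partial\varphi_c}{\partial c}(1-\partial_x^2)\varphi_c'=0$ (since $\varphi_c,\partial_c\varphi_c$ are even and $\varphi_c'$ is odd) — one gets $\gamma(t)=0$ and $\beta(t)=0$ for all $t$, hence $v\equiv0$.

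The main obstacle I expect is establishing the weighted semigroup decay rigorously in the nonlocal CH setting: one must verify that the weighted operator $\mathcal{J}L_n$ on $H^1_a(\R)$ generates a $C_0$-semigroup, that its essential spectrum (governed by the shifted symbol $\varrho_{n,c}(\pm2k+ia)$) indeed lies strictly in $\{\mathrm{Re}\,z<0\}$ for admissible $a$, and that no point spectrum crosses into the unstable half-plane under the weighting — i.e.\ that the only discrete eigenvalue is at $0$ with the generalized eigenspace $\mathrm{span}\{\varphi_c',\partial_c\varphi_c\}$, which \emph{leaves} the weighted space in the sense needed. This requires a resolvent/gap-lemma estimate uniform in the spectral parameter, and is where the explicit squared-eigenfunction representation of the resolvent, rather than any abstract perturbation argument, does the essential work.
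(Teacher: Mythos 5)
Your overall strategy is the paper's: expand $v$ over the squared--eigenfunction sets \eqref{eq:set JL}--\eqref{eq:set LJ}, read off from $\mathcal{J}L_n\varphi_c'=0$, $\mathcal{J}L_n\partial_c\varphi_c=c^{n-1}\varphi_c'$ the ODEs $\beta'=A-c^{n-1}\gamma$, $\gamma'=B$, and eliminate the radiation by the exponentially weighted Pego--Weinstein machinery of Propositions \ref{pro5.1}--\ref{pro5.2}. Where you genuinely differ is the closing mechanism: you project onto the radiation subspace and run Duhamel backward in time, using uniform-in-time boundedness in the weighted space, whereas the paper argues forward, playing the conserved quadratic form $\langle L_1w,w\rangle$ and its coercivity under the two orthogonality conditions against the local decay furnished by the weighted semigroup. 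For $n=1$ both closings work, and your final elimination of $\beta,\gamma$ from the orthogonality conditions via $\langle \partial_c\varphi_c,(1-\partial_x^2)\varphi_c\rangle=4\kappa c\neq0$ and $\|\varphi_c'\|_{H^1}^2\neq0$ is exactly the right linear algebra (the paper reaches $v\equiv0$ without first writing the representation). One caveat you share with the paper: the hypothesis of the theorem only gives uniform $\varepsilon$-localization, while placing $v(t)$ in $H^1_a$ uniformly in $t$ (needed both for your backward limit and for the paper's use of $h=e^{ax}w$) really uses the uniform exponential decay \eqref{decay of var} available in the application; this should be stated.

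The genuine gap is your claim that the weighted symbol $\varrho_{n,c}(\pm2k+ia)$ has real part strictly negative \emph{uniformly in $k$} for every $n\ge1$. From \eqref{eq:symbol JL}, for $n\ge2$ the symbol satisfies $\varrho_{n,c}(\zeta)\to0$ as $|\zeta|\to\infty$ (the denominator degree $2n$ exceeds the numerator degree $3$), so after conjugation by $e^{ax}$ the essential spectrum accumulates at the origin: its real part tends to $0$ and there is no uniform spectral gap --- the paper points this out explicitly when introducing $\mathcal{L}^{(n)}_a$. Your backward-limit argument requires operator-norm, i.e.\ exponential, decay of the weighted semigroup on the radiation subspace; strong (pointwise) stability is not enough, since the datum $P_{\mathrm{rad}}v(s)$ changes with $s$ as $s\to-\infty$. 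Hence for $n\ge2$ the step ``analogue of the gap lemma'' fails as stated, and some modification is needed --- the paper's device is to work with $L_n+KL_1$ for $K>0$ large (equivalently $\mathcal{L}^{(n)}_a+K\mathcal{L}_a$), which restores a gap; alternatively one could try to exploit the factorization $\mathcal{J}L_n=(\mathcal{R}^{\ast}(\varphi_c))^{n-1}\mathcal{J}L_1$ to reduce to $n=1$. Without such a repair your proposal proves the theorem only for $n=1$ (which is, admittedly, the case used for the nonlinear Liouville theorem).
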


\begin{remark} \label{re1.5}
Theorem \ref{thm1.4} states that solutions of \eqref{higher linearized CH} satisfy the linear Liouville property. We refer to \cite{Ma06} for the counterpart results of gKdV and BBM equations.
\end{remark}

Let us first deal with the case $n=1$. The spectral analysis of $\mathcal J L_1$ has been verified in Proposition \ref{pr3.5-3}. In particular, the eigenfunctions of which are eigenfunctions of adjoint recursion operator $\mathcal{R}^\ast(\varphi_c)$. The operator $\mathcal J L_1$ is degenerate, with an eigenvalue at $\lambda=0$ with (generalized) eigenfunctions $\varphi'_c$ and $\partial_c\varphi_c$, which satisfy
\[\mathcal J L_1\varphi'_c=0,\qquad \mathcal J L_1\partial_c\varphi_c=\varphi'_c.\]
The above two zero modes are related, respectively, to infinitesimal changes in the location and speed of the soliton. They give rise to solutions $\varphi'_c$ and $\partial_c\varphi_c-t\varphi'_c$ to the linearized problem
\[v_t=\mathcal J L_1v,\]
which are, respectively, constant and linearly growing with time.

Following the idea of Pego--Weinstein \cite{PW94}, we define $\mathcal L_a:=e^{ax}\mathcal J L_1e^{-ax}$, where $a$ is well chosen such that the exponential weight shifts the essential spectrum of $\mathcal L_a$ into the open left half plane. We want to show some semigroup decay estimates of $e^{t\mathcal L_a}$.
We first consider the essential spectrum of $\mathcal L_a$.
 The essential spectrum of $\mathcal L_a$ is
  \[\sigma_{ess}(\mathcal L_a):= \left\{ z\in \mathbb C : z=c(ik+a)-2\omega\frac{ik+a}{1-(ik+a)^2} \right\},\ k\in\R.\]
The graph of $\sigma_{ess}(\mathcal L_a)$ has a vertical asymptote $\Re z=ac$ when $|k|\rightarrow+\infty$. The maximal of $\Re z$ is
\[
\Lambda:=ac-\frac{2a\omega }{1-a^2}<0,
\]
if
\[
a_1:=-\sqrt{1-\frac{2\omega}{c}}<a_{\ast}:=-\left(\frac{c+\omega-\sqrt{\omega(4c+\omega)}}{c} \right)^{\frac12}<a<0.
\]

Next, we study the discrete spectrum of $\mathcal L_a$. From Proposition \ref{pr3.5-3}, we see that the eigenvalue is $\lambda=0$ with (generalized) eigenfunctions $e^{ax}\varphi'_c$ and $e^{ax}\partial_c\varphi_c$, which satisfy
\[
\mathcal L_a(e^{ax}\varphi'_c)=0,\qquad \mathcal L_a(e^{ax}\partial_c\varphi_c)=e^{ax}\varphi'_c.
\]
Similarly, the generalized eigenspaces of adjoint operator $\mathcal L^{\ast}_a:=-e^{-ax} L_1\mathcal Je^{ax}$ are then
$$
\operatorname{gKer}(\mathcal L^{\ast}_a)=\operatorname{Span}\left\{ e^{-ax}m_{\varphi}, \ e^{-ax}\partial_x^{-1}\left( \frac{\partial m_{\varphi}}{\partial c}\right) \right\}.
$$
Recall that for an operator $A$ defined in $L^2(\R)$,
$$
\operatorname{gKer}(A) := \bigcup\limits_{k=1}^{\infty}\operatorname{Ker}(A^k).
$$

Now we have the following spectral projection for the zero eigenvalue.
\begin{proposition}\label{pro5.1}
Assume that $a_1<a<0$. Then $\lambda=0$ is an eigenvalue for the operators $\mathcal L_a$ and $\mathcal L^{\ast}_a$ with algebraic multiplicity two. Moreover,
\begin{eqnarray*}
&&\operatorname{gKer}(\mathcal L_a)=\operatorname{Ker}(\mathcal L^{2}_a)=\operatorname{Span}\{f_1,f_2\},\\
&&\operatorname{gKer}(\mathcal L^{\ast}_a)=\operatorname{Ker}(\mathcal L^{\ast2}_a)=\operatorname{Span}\{g_1,g_2\},
\end{eqnarray*}
where
\begin{eqnarray*}
&&f_1 := e^{ax}\varphi'_c,\ f_2 := e^{ax}\partial_c\varphi_c,\\
&& g_1:=e^{-ax}\tilde{g}_1= C_1e^{-ax}\partial_x^{-1}\left( \frac{\partial m_{\varphi_c}}{\partial c} \right)+C_2e^{-ax}m_{\varphi_c},\quad   g_2:=e^{-ax}\tilde{g}_2=C_3e^{-ax}m_{\varphi_c},\\
&& C_1=-C_3=-\bigg(\frac{d}{dc}H_1(\varphi_c)\bigg)^{-1},\quad C_2=\frac{1}{2}\bigg(\frac{d}{dc}\int_{\R}\varphi_c dx\bigg)^{2}\bigg(\frac{d}{dc}H_1(\varphi_c)\bigg)^{-2}.
\end{eqnarray*}
In addition, for $j=1,2$, the functions $f_j$ and $g_j$ are bi-orthogonal, with $\langle f_j, g_k\rangle=\delta_{jk}$ for $j,k=1,2$. The spectral projection $\mathcal P$ for the zero eigenvalue of $\mathcal L_a$ and the complementary spectral projection $\mathcal Q$ are given by
\[
\mathcal P w := \sum\limits_{k=1}^2\langle w, g_k\rangle f_k,\ \quad \mathcal Q w := (I-\mathcal P)w=w-\sum\limits_{k=1}^2\langle w, g_k\rangle f_k,\quad \text{for }\  w\in L^2(\R).
\]
These projections satisfy $\mathcal P \mathcal L_aw=\mathcal L_a\mathcal P w$ and $\mathcal Q \mathcal L_aw=\mathcal L_a\mathcal Q w$ for $w\in \operatorname{dom}(\mathcal L_a)$.
\end{proposition}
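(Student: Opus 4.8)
The plan is to verify the generalized kernel structure directly from the spectral information already in hand, compute the bi-orthogonality constants explicitly, and then assemble the projection. First I would note that by Proposition \ref{pr3.5-3} the operator $\mathcal{J}L_1$ has $\lambda=0$ as its only discrete eigenvalue, with $\mathrm{Ker}(\mathcal{J}L_1)=\mathrm{Span}\{\varphi_c'\}$ and generalized kernel $\mathrm{Span}\{\varphi_c',\partial_c\varphi_c\}$, the latter because $\mathcal{J}L_1\partial_c\varphi_c=c^{0}\varphi_c'=\varphi_c'$ (the $n=1$ case of the identity in that proposition). Conjugating by $e^{ax}$ preserves the point spectrum (the weight $e^{ax}$, $a_1<a<0$, moves the essential spectrum into $\Re z<0$ as shown, so $\lambda=0$ is isolated from $\sigma_{ess}(\mathcal{L}_a)$) and carries eigenfunctions to $f_1=e^{ax}\varphi_c'$, $f_2=e^{ax}\partial_c\varphi_c$, which satisfy $\mathcal{L}_af_1=0$, $\mathcal{L}_af_2=f_1$, hence $\mathcal{L}_a^2f_2=0$. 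Since the algebraic multiplicity of $\lambda=0$ is two and the chain has length two, $\mathrm{gKer}(\mathcal{L}_a)=\mathrm{Ker}(\mathcal{L}_a^2)=\mathrm{Span}\{f_1,f_2\}$; the same argument applied to $\mathcal{L}_a^\ast=-e^{-ax}L_1\mathcal{J}e^{ax}$, using Proposition \ref{le3.82} which gives $\mathrm{Ker}(L_1\mathcal{J})=\mathrm{Span}\{m_\varphi\}$ and generalized kernel $\mathrm{Span}\{m_\varphi,\partial_x^{-1}(\partial_c m_\varphi)\}$, yields $\mathrm{gKer}(\mathcal{L}_a^\ast)=\mathrm{Span}\{g_1,g_2\}$ with $\tilde{g}_1,\tilde{g}_2$ in the span of $m_{\varphi_c}$ and $\partial_x^{-1}(\partial_c m_{\varphi_c})$.

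Next I would pin down the constants $C_1,C_2,C_3$ by imposing bi-orthogonality $\langle f_j,g_k\rangle=\delta_{jk}$. Writing $g_1=C_1e^{-ax}\partial_x^{-1}(\partial_c m_{\varphi_c})+C_2e^{-ax}m_{\varphi_c}$ and $g_2=C_3e^{-ax}m_{\varphi_c}$, the weights cancel in the pairings, and the nonzero inner products of the unweighted functions are exactly those computed just before \eqref{eq:closure relation}: $\langle\varphi_c',\partial_x^{-1}(\partial_c m_{\varphi_c})\rangle=-\tfrac{d}{dc}\Ham_1(\varphi_c)=-4\kappa c$ and $\langle\partial_c\varphi_c,m_{\varphi_c}\rangle=\tfrac{d}{dc}\Ham_1(\varphi_c)=4\kappa c$, together with $\langle\varphi_c',m_{\varphi_c}\rangle=\langle\varphi_c',(1-\partial_x^2)\varphi_c\rangle=0$ (odd times even) and $\langle\partial_c\varphi_c,\partial_x^{-1}(\partial_c m_{\varphi_c})\rangle=-\tfrac12\tfrac{d}{dc}\big(\int\varphi_c\,dx\big)^2$ after an integration by parts. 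The four equations $\langle f_1,g_1\rangle=1$, $\langle f_2,g_2\rangle=1$, $\langle f_2,g_1\rangle=0$, $\langle f_1,g_2\rangle=0$ (the last is automatic) then force $C_1=\big(\tfrac{d}{dc}\Ham_1(\varphi_c)\big)^{-1}\cdot(-1)$ from the first, $C_3=-C_1$ from the second, and $C_2$ from the vanishing of $\langle f_2,g_1\rangle=C_1\langle\partial_c\varphi_c,\partial_x^{-1}(\partial_c m_{\varphi_c})\rangle+C_2\langle\partial_c\varphi_c,m_{\varphi_c}\rangle$, giving the stated $C_2=\tfrac12\big(\tfrac{d}{dc}\int\varphi_c\,dx\big)^2\big(\tfrac{d}{dc}\Ham_1(\varphi_c)\big)^{-2}$; note $\tfrac{d}{dc}\Ham_1(\varphi_c)=4\kappa c>0$ by \eqref{eq:Fderivative to c}-type computation, so all constants are well defined.

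Finally, with the bi-orthogonal dual bases, the Riesz projection onto the two-dimensional generalized eigenspace takes the explicit finite-rank form $\mathcal{P}w=\sum_{k=1}^2\langle w,g_k\rangle f_k$, and $\mathcal{Q}=I-\mathcal{P}$. The commutation $\mathcal{P}\mathcal{L}_a=\mathcal{L}_a\mathcal{P}$ follows since $\mathcal{P}$ is the spectral projection associated with the isolated spectral set $\{0\}$ (equivalently, a direct check: $\mathcal{L}_a\mathcal{P}w=\langle w,g_1\rangle\mathcal{L}_af_1+\langle w,g_2\rangle\mathcal{L}_af_2=\langle w,g_2\rangle f_1$, while $\mathcal{P}\mathcal{L}_aw=\langle\mathcal{L}_aw,g_1\rangle f_1+\langle\mathcal{L}_aw,g_2\rangle f_2=\langle w,\mathcal{L}_a^\ast g_1\rangle f_1+\langle w,\mathcal{L}_a^\ast g_2\rangle f_2=\langle w,g_2\rangle f_1$, using $\mathcal{L}_a^\ast g_1=0$, $\mathcal{L}_a^\ast g_2=g_1$, which is the dual chain). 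I expect the main obstacle to be the bookkeeping in the constant computation, specifically keeping the integration-by-parts identities for $\partial_x^{-1}$ and the $c$-derivatives of the conserved quantities consistent with the sign conventions of \eqref{refined completeness relations} and \eqref{eq:closure relation}; once those pairings are correctly tabulated the rest is linear algebra.
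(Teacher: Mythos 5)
Your overall strategy is the same as the paper's: read off the Jordan chains $\mathcal L_a f_1=0$, $\mathcal L_a f_2=f_1$ and the adjoint chain from Propositions \ref{pr3.5-3} and \ref{le3.82} after conjugation by $e^{ax}$ (the weight only shifts the essential spectrum, so $\lambda=0$ remains an isolated eigenvalue of algebraic multiplicity two), fix $C_1,C_2,C_3$ by bi-orthogonality using the pairings tabulated before \eqref{eq:closure relation}, and identify $\mathcal P$ as the finite-rank Riesz projection. However, two of your steps, as written, would not reproduce the statement. First, the cross pairing is $\langle \partial_c\varphi_c,\partial_x^{-1}(\partial_c m_{\varphi_c})\rangle=\tfrac12\big(\tfrac{d}{dc}\int_\R\varphi_c\,dx\big)^2$, not $-\tfrac12\tfrac{d}{dc}\big(\int_\R\varphi_c\,dx\big)^2$: writing $\partial_c m_{\varphi_c}=(1-\partial_x^2)\partial_c\varphi_c$ gives $\partial_x^{-1}(\partial_c m_{\varphi_c})=\Phi-\partial_x\partial_c\varphi_c$ with $\Phi(x)=\int_{-\infty}^x\partial_c\varphi_c\,dy$, and $\int_\R\Phi'\Phi\,dx=\tfrac12\Phi(+\infty)^2$ while $\int_\R\partial_c\varphi_c\,\partial_x\partial_c\varphi_c\,dx=0$. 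Your value has the wrong sign and the derivative misplaced (derivative of the square instead of square of the derivative); plugging it into $\langle f_2,g_1\rangle=0$ would force a $C_2$ different from the one in the proposition, so the constant computation as you wrote it does not close.

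Second, your dual chain is reversed. Since $\mathcal J\,\partial_x^{-1}(\partial_c m_{\varphi_c})=-\partial_c\varphi_c$ and $L_1\partial_c\varphi_c=-m_{\varphi_c}$, while $L_1\mathcal J m_{\varphi_c}=-L_1\varphi_c'=0$, one gets $\mathcal L_a^{\ast}g_2=0$ and $\mathcal L_a^{\ast}g_1=-C_1e^{-ax}m_{\varphi_c}=g_2$ (this is exactly where $C_1=-C_3$ is used), not $\mathcal L_a^{\ast}g_1=0$, $\mathcal L_a^{\ast}g_2=g_1$. With your chain the displayed verification of $\mathcal P\mathcal L_a=\mathcal L_a\mathcal P$ would produce $\langle w,g_1\rangle f_2$ rather than $\langle w,g_2\rangle f_1$, so the commutation check would fail; with the corrected chain it matches $\mathcal L_a\mathcal P w=\langle w,g_2\rangle f_1$ as required. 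Both issues are local slips rather than missing ideas: once the pairing value and the direction of the adjoint chain are corrected, your argument yields the proposition and coincides with the paper's reasoning.
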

\begin{proposition}\label{pro5.2}
Assume that $a_1<a<0$.  Let $\mathcal Q$ denote the projection
onto $\operatorname{gKer}(\mathcal L^{\ast}_a)^{\perp}$. Then the initial value problem
\begin{align*}
\begin{cases}
w_t=\mathcal L_a w, \\
w(0)=w_0\in H^1(\R)\cap \operatorname{Range} \mathcal Q,
\end{cases}
\end{align*}
has a unique solution $e^{t\mathcal L_a}w_0\in \mathcal{C}(\mathbb{R}_+;H^1(\mathbb{R}))$, satisfying
\[
\|e^{t\mathcal L_a}w_0\|_{H^1}\leq C e^{-bt}\|w_0\|_{H^1},
\]
for some constant $C>0$ and $0<b<b_{max}$, with
\[
-b_{max} := \inf \left\{ -b : \ \lambda=0 \ \text{is the only eigenvalue of}\ \mathcal L_a \ \text{with}\ \Re \lambda\geq -b>  \Lambda \right\}.
\]
\end{proposition}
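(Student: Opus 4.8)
The plan is to follow the resolvent-analysis approach of Pego--Weinstein \cite{PW94} (see also \cite{MT14}), now for the nonlocal weighted operator $\mathcal{L}_a$, exploiting the spectral picture already obtained: the essential spectrum lies in $\{\operatorname{Re}z\le\Lambda<0\}$, and on $\operatorname{Range}\mathcal Q$ (the complement of the generalized eigenspace of $\lambda=0$) there is no spectrum in the closed half-plane $\{\operatorname{Re}\lambda\ge -b\}$ as soon as $\Lambda<-b$ and $b<b_{max}$. The argument splits into three parts: generation of the semigroup, a \emph{uniform} resolvent bound on vertical lines $\{\operatorname{Re}\lambda=-b\}$, and Gearhart--Pr\"uss together with an $H^1$ upgrade. \textbf{Generation.} I would first check that $\mathcal{L}_a$ with domain $H^1(\R)$ generates a $C_0$-semigroup on $L^2(\R)$ and on $H^1(\R)$. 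Write $\mathcal{L}_a=\mathcal{L}_a^{\infty}+B_a$, where $\mathcal{L}_a^{\infty}$ is the constant-coefficient operator obtained by dropping the $\varphi_c$-dependent terms in $L_1$, namely the Fourier multiplier with symbol $p_a(k):=c(ik+a)-2\omega(ik+a)/\bigl(1-(ik+a)^2\bigr)$, and $B_a:=e^{ax}\mathcal J(L_1-L_1^{\infty})e^{-ax}$ has coefficients built from $\varphi_c,\varphi_c',\varphi_c''$ and hence decays exponentially in $x$. The symbol $p_a$ has real part bounded above by $\Lambda$, so $\mathcal{L}_a^{\infty}$ generates a bounded $C_0$-group; since $a_1<a<0$ keeps $\bigl(1-(ik+a)^2\bigr)^{-1}$ regular, $B_a$ is a lower-order perturbation with exponentially localized coefficients and generation of $e^{t\mathcal{L}_a}$ follows exactly as in \cite{PW94} (alternatively via a direct weighted energy estimate for $w_t=\mathcal{L}_a w$). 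By Proposition \ref{pro5.1} the oblique projections $\mathcal P,\mathcal Q$ commute with $\mathcal{L}_a$, so $\operatorname{Range}\mathcal Q=\operatorname{gKer}(\mathcal L_a^{\ast})^{\perp}$ is a closed, $\mathcal{L}_a$-invariant subspace and $e^{t\mathcal{L}_a}$ restricts to a $C_0$-semigroup there.

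\textbf{Uniform resolvent estimate.} Fix $b<b_{max}$ with $-b>\Lambda$. The spectral information above already gives $\{\operatorname{Re}\lambda\ge -b\}\subset\rho\bigl(\mathcal{L}_a|_{\operatorname{Range}\mathcal Q}\bigr)$, so the point is the quantitative bound
\[
\sup\Bigl\{\bigl\|(\lambda-\mathcal{L}_a)^{-1}\bigr\|_{\mathcal L(\operatorname{Range}\mathcal Q)}\ :\ \operatorname{Re}\lambda\ge -b\Bigr\}<\infty .
\]
For the constant-coefficient piece this is immediate: $(\lambda-\mathcal{L}_a^{\infty})^{-1}$ is the Fourier multiplier $1/(\lambda-p_a(k))$ with $\operatorname{Re}p_a(k)\le\Lambda<-b\le\operatorname{Re}\lambda$, so $|\lambda-p_a(k)|\ge -b-\Lambda>0$ uniformly in $k$ and in $\lambda$ on the half-plane, giving a uniform bound on $L^2$ and on $H^1$. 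For the full operator I would construct $(\lambda-\mathcal J L_1)^{-1}$ as an integral operator with Green's kernel $G_\lambda(x,y)$ assembled, in the manner of \cite{PW94}, from the solutions of $(\lambda-\mathcal J L_1)v=0$ with prescribed behaviour as $x\to\pm\infty$ --- which are built from the Jost functions $f^{\pm}(x,k)$ of \eqref{eq3} --- then estimate the conjugated kernel $e^{ax}G_\lambda(x,y)e^{-ay}$ directly, using the exponential decay of $\varphi_c$ and the range $a_1<a<0$, and close the argument with analytic Fredholm theory and the already-known location of $\sigma\bigl(\mathcal{L}_a|_{\operatorname{Range}\mathcal Q}\bigr)$. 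The integrable structure offers an alternative route: the closure relation \eqref{eq:closure relation} diagonalizes $\mathcal J L_1$ on $L^2$ through the squared eigenfunctions, and one may deform the corresponding expansion in the spectral parameter (using analyticity of $f^{\pm}(\cdot,k)$ in $k$) into the weighted space to read off the resolvent on $\operatorname{Range}\mathcal Q$.

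\textbf{Decay and $H^1$ upgrade.} Given $b<b_{max}$, pick $b'$ with $b<b'<b_{max}$ and apply the previous step with $b'$: the resolvent of the $C_0$-semigroup generator $\mathcal{L}_a|_{\operatorname{Range}\mathcal Q}$ on the Hilbert space $\operatorname{Range}\mathcal Q$ is uniformly bounded on $\{\operatorname{Re}\lambda\ge -b'\}$, so by the Gearhart--Pr\"uss theorem its exponential growth bound does not exceed $-b'$, whence $\|e^{t\mathcal{L}_a}w_0\|_{L^2}\le Ce^{-bt}\|w_0\|_{L^2}$. Since $\mathcal{L}_a$ commutes with $e^{t\mathcal{L}_a}$ and with $\mathcal Q$, and since on $\operatorname{dom}(\mathcal{L}_a)=H^1$ the graph norm of $\mathcal{L}_a$ is equivalent to the $H^1$-norm ($p_a$ grows linearly and $B_a$ is lower order), we get for $w_0\in H^1\cap\operatorname{Range}\mathcal Q$
\[
\|e^{t\mathcal{L}_a}w_0\|_{H^1}\lesssim \|e^{t\mathcal{L}_a}w_0\|_{L^2}+\|e^{t\mathcal{L}_a}\mathcal{L}_a w_0\|_{L^2}\le Ce^{-bt}\bigl(\|w_0\|_{L^2}+\|\mathcal{L}_a w_0\|_{L^2}\bigr)\le Ce^{-bt}\|w_0\|_{H^1},
\]
and $e^{t\mathcal{L}_a}w_0\in\mathcal C(\R_+;H^1)$ because the semigroup maps $\operatorname{dom}(\mathcal{L}_a)$ continuously into itself; uniqueness is the standard semigroup uniqueness.

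\textbf{Main obstacle.} The crux is the uniform resolvent bound for the full variable-coefficient, nonlocal operator as $|\operatorname{Im}\lambda|\to\infty$: because $\mathcal J L_1$ is effectively first order (principal part $-(c-\varphi_c)\partial_x$), the naive Neumann series in $(\lambda-\mathcal{L}_a^{\infty})^{-1}B_a$ does not converge --- that term has operator norm of size $O(|\lambda|)$ rather than $o(1)$ --- so one must work with the explicit Green's-function/Jost-solution representation (or the deformed squared-eigenfunction expansion) and prove that the contribution of the $\varphi_c$-localized part of the operator stays controlled at high frequency, e.g.\ by a non-stationary-phase argument resting on $c-\varphi_c$ being bounded away from zero. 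This is the delicate technical point; the rest is bookkeeping.
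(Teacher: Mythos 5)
Your proposal follows the same skeleton as the paper's proof: split $\mathcal L_a=\mathcal L^{\infty}_a+B_a$ with $B_a=e^{ax}\mathcal J\bigl(\partial_x\varphi_c\partial_x-3\varphi_c+\varphi_c''\bigr)e^{-ax}$, generate the semigroup by perturbing the constant-coefficient operator, prove a uniform resolvent bound on a half-plane $\{\Re\lambda\ge -b\}$ away from the zero eigenvalue, and conclude by Pr\"uss's theorem (Proposition \ref{Pruss theorem}); whether one runs Gearhart--Pr\"uss on $\operatorname{Range}\mathcal Q\subset L^2$ and upgrades to $H^1$ through the graph norm, as you do, or works directly in $H^1$ with the projected resolvent $(\lambda I-\mathcal L_a)^{-1}\mathcal Q$, as the paper does, is immaterial. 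The genuine divergence, and the gap in your proposal, is at the decisive high-frequency resolvent estimate: you offer two candidate routes (a Green's kernel assembled from the Jost/squared-eigenfunction solutions of \eqref{eq3}, cf.\ Proposition \ref{pr3.5-3} and the closure relation \eqref{eq:closure relation}, closed by analytic Fredholm theory; or a deformation of the squared-eigenfunction expansion into the weighted space), but you carry out neither, and you yourself label the uniformity as $|\Im\lambda|\to\infty$ ``the delicate technical point''. Since that bound is the entire analytic content of the proposition beyond soft semigroup theory, the proposal as written is not a proof.

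It is worth recording, though, that the step you decline to do by a Neumann series is exactly how the paper does it: Step 2 writes $(\lambda I-\mathcal L_a)^{-1}\mathcal Q=(I-C(\lambda))^{-1}(\lambda I-\mathcal L^{\infty}_a)^{-1}\mathcal Q$ with $C(\lambda)=\bigl(D_a(-cD_a^2+c-2\omega)+\lambda(1-D_a^2)\bigr)^{-1}D_a(-D_a\varphi_cD_a+3\varphi_c-\varphi_c'')$, $D_a=\partial_x-a$, which is precisely your $(\lambda I-\mathcal L^{\infty}_a)^{-1}B_a$; it proves compactness via the Hilbert--Schmidt adjoint and then asserts $\|C(\lambda)\|_{L^2\to L^2}<1-\delta$ for $\Im\lambda$ or $\Re\lambda$ large, ``checked directly within the Fourier transform''. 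Your objection to this route has substance: both $\mathcal L_a$ and $B_a$ are of order one (principal parts $-(c-\varphi_c)\partial_x$ and $\varphi_c\partial_x$ modulo smoothing), and testing $C(\lambda)$ on a wave packet $\chi(x)e^{ikx}$ supported near the soliton, with $\Im\lambda\approx ck$ chosen so that the free resolvent is $O(1)$ on the packet's frequency support and $\Re\lambda=-b$, gives $\|C(\lambda)\|_{L^2\to L^2}\gtrsim |k|$, so the asserted smallness cannot hold uniformly on the half-plane needed for Pr\"uss. The compensation available for KdV --- the cubic free dispersion makes the resonant frequency window shrink as $|\lambda|$ grows, absorbing the one derivative in the localized perturbation --- is absent here because the group velocity of $\mathcal L^{\infty}_a$ is bounded. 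So your diagnosis pinpoints the very estimate the paper treats as a routine Fourier computation, and something of the kind you sketch (Jost-solution Green's function, or an argument exploiting the transport structure of the principal part) is what would actually be needed to substantiate it. In short: same architecture, different key lemma; your key lemma is left unproven, but your reason for abandoning the paper's version of it is well founded.
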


In order to show Proposition \ref{pro5.2}, similar to \cite{MW96}, we employ an abstract theorem of Pr\"{u}ss \cite{P84} which is stated as follows.
\begin{proposition}\label{Pruss theorem}
Let $B$ be the infinitesimal generator of a $\mathcal C_0$ semigroup on the
Hilbert space $Z$. Let $b>0$. If there exists $M >0$ such that
\begin{equation}\label{bdd resolv}
\|(\lambda I-B)^{-1}\|_{Z\rightarrow Z}\leq M,
\end{equation}
for all $\lambda$ with $\Re \lambda> -b$, then
\begin{equation*}
\|e^{Bt}\|_{Z\rightarrow Z}\leq e^{-bt}.
\end{equation*}
\end{proposition}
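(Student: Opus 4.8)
The plan is to reprove this along the Paley--Wiener/Plancherel lines underlying the Gearhart--Pr\"{u}ss characterization of exponential stability of $C_0$-semigroups on Hilbert spaces. Write $R(\lambda,B):=(\lambda I-B)^{-1}$ and fix an arbitrary $b'\in(0,b)$; set $A:=B+b'I$, which generates the rescaled semigroup $e^{tA}=e^{b't}e^{tB}$. It suffices to show $\sup_{t\ge0}\|e^{tA}\|_{Z\to Z}<\infty$, since then $\|e^{tB}\|_{Z\to Z}\le Ce^{-b't}$ and $b'<b$ is arbitrary. (One tracks the constants through the argument to recover the normalization $e^{-bt}$ stated here; see \cite{P84}.) Since $b'<b$, the resolvent identity $R(\lambda,A)=R(\lambda-b',B)$ together with the hypothesis gives $\{\Re\lambda\ge0\}\subset\rho(A)$ with $\|R(\lambda,A)\|_{Z\to Z}\le M$ there.

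First I would prove the orbit bound $\int_0^\infty\|e^{tA}u_0\|_Z^2\,dt\le C\|u_0\|_Z^2$ for all $u_0\in Z$. Let $\omega$ be an exponential growth bound for $\{e^{tA}\}$ (it exists since the semigroup is $C_0$), fix $\mu_0>\max(\omega,0)$, and let $\mu\in(0,\mu_0)$. The function $t\mapsto e^{-\mu t}e^{tA}u_0$, extended by $0$ to $\mathbb{R}$, lies in $L^2(\mathbb{R};Z)$ and has Fourier transform $\xi\mapsto R(\mu+i\xi,A)u_0$, so the Plancherel identity gives
\[
\int_0^\infty e^{-2\mu t}\|e^{tA}u_0\|_Z^2\,dt=\frac{1}{2\pi}\int_{\mathbb{R}}\|R(\mu+i\xi,A)u_0\|_Z^2\,d\xi .
\]
Comparing resolvents via $R(\mu+i\xi,A)u_0=R(\mu_0+i\xi,A)u_0+(\mu_0-\mu)R(\mu+i\xi,A)R(\mu_0+i\xi,A)u_0$ and using $\|R(\mu+i\xi,A)\|\le M$, the right-hand side is at most $(1+\mu_0M)^2$ times $\frac{1}{2\pi}\int_{\mathbb{R}}\|R(\mu_0+i\xi,A)u_0\|_Z^2\,d\xi=\int_0^\infty e^{-2\mu_0t}\|e^{tA}u_0\|_Z^2\,dt$, which is finite and independent of $\mu$. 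Letting $\mu\downarrow0$ and invoking monotone convergence yields the orbit bound, and the closed graph theorem then makes $u_0\mapsto(t\mapsto e^{tA}u_0)$ a bounded operator $Z\to L^2(\mathbb{R}_+;Z)$.

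Next I would upgrade this to uniform boundedness: setting $M_1:=\sup_{0\le\tau\le1}\|e^{\tau A}\|<\infty$ and writing $e^{tA}u_0=e^{(t-s)A}e^{sA}u_0$ for $s\in[\max(0,t-1),t]$, one gets $\|e^{tA}u_0\|_Z^2\le M_1^2\int_{\max(0,t-1)}^t\|e^{sA}u_0\|_Z^2\,ds\le M_1^2C\|u_0\|_Z^2$, hence $\sup_{t\ge0}\|e^{tA}\|_{Z\to Z}=:C_1<\infty$. Combining uniform boundedness with the $L^2$ orbit bound gives, for each $\tau>0$, $\tau\|e^{\tau A}u_0\|_Z^2=\int_0^\tau\|e^{(\tau-s)A}e^{sA}u_0\|_Z^2\,ds\le C_1^2C\|u_0\|_Z^2$, so $\|e^{\tau A}\|_{Z\to Z}\to0$ as $\tau\to\infty$; by the semigroup property $\|e^{tA}\|_{Z\to Z}\le C'e^{-\varepsilon t}$ for some $\varepsilon>0$, whence $\|e^{tB}\|_{Z\to Z}\le C'e^{-(b'+\varepsilon)t}\le C'e^{-b't}$. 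Since $b'<b$ was arbitrary, this yields exponential decay at rate $b$.

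The main obstacle is the limit $\mu\downarrow0$ in the Plancherel identity: the crude pointwise bound $\|R(\mu+i\xi,A)u_0\|\le M\|u_0\|$ is not integrable in $\xi$, so one genuinely needs the resolvent-identity comparison against a vertical line strictly inside the region of analyticity, where integrability in $\xi$ is automatic from Plancherel applied to an exponentially decaying orbit; the remaining steps are soft semigroup theory. Alternatively, the statement may simply be invoked from \cite{P84}.
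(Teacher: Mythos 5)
First, a point of comparison: the paper does not prove this proposition at all — it is quoted verbatim as an abstract theorem of Pr\"uss \cite{P84} and then invoked (``we only need to prove the boundedness of the resolvent''). So your proposal is measured against the standard literature proof of the Gearhart--Pr\"uss theorem, and indeed your route (Plancherel on vertical lines, resolvent-identity comparison, closed graph, then the Datko-type upgrade from square-integrable orbits to uniform boundedness and exponential decay) is exactly that standard argument; your closing remark that one may simply cite \cite{P84} is what the paper actually does.

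There is, however, one genuine gap in the middle of your argument, and you have mis-located the obstacle. You invoke the Plancherel identity $\int_0^\infty e^{-2\mu t}\|e^{tA}u_0\|_Z^2\,dt=\tfrac{1}{2\pi}\int_{\mathbb R}\|R(\mu+i\xi,A)u_0\|_Z^2\,d\xi$ for \emph{every} $\mu\in(0,\mu_0)$. This identity is only justified for $\mu$ strictly above the growth bound $\omega$ of $e^{tA}$, where the orbit $e^{-\mu t}e^{tA}u_0$ is exponentially decaying and its Fourier transform is the resolvent by the convergent Laplace integral. For $0<\mu\le\omega$ (and a priori $\omega$ may well be positive — ruling that out is the content of the theorem, since the resolvent bound \eqref{bdd resolv} only controls the spectral bound, not the growth bound), neither the membership of $e^{-\mu t}e^{tA}u_0$ in $L^2(\mathbb R_+;Z)$ nor the identification of its Fourier transform with $R(\mu+i\xi,A)u_0$ is available; asserting the identity there is circular. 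Your resolvent-identity comparison against the line $\Re\lambda=\mu_0$ bounds the right-hand side uniformly in $\mu$, but it does not validate the identity itself, which is the real issue (not mere $\xi$-integrability). Standard repairs: (i) a vector-valued Paley--Wiener theorem — the uniform $H^2$-bound you derived shows $\lambda\mapsto R(\lambda,A)u_0$ is the Laplace transform of some $g\in L^2(\mathbb R_+;Z)$ on $\Re\lambda>0$, and uniqueness of Laplace transforms (agreement with the orbit for $\Re\lambda>\omega$) identifies $g(t)=e^{tA}u_0$; or (ii) work with truncated orbits via $\int_0^T e^{-\lambda t}e^{tA}u_0\,dt=R(\lambda,A)u_0-e^{-\lambda T}e^{TA}R(\lambda,A)u_0$; or (iii) bootstrap: apply your chain only for $\mu>\omega$, let $\mu\downarrow\omega$ by monotone convergence, and conclude from Datko's theorem that the growth bound of $e^{-\omega t}e^{tA}$ is negative, contradicting its being $0$ unless $\omega<0$. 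Any of these closes the gap; the remainder of your argument (closed graph, $\sup_t\|e^{tA}\|<\infty$, $\tau\|e^{\tau A}u_0\|^2\le C_1^2C\|u_0\|^2$, semigroup property) is fine.

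Finally, the exact normalization $\|e^{Bt}\|_{Z\to Z}\le e^{-bt}$ with constant $1$ cannot be ``recovered by tracking constants'': with only the uniform bound \eqref{bdd resolv} one obtains $\|e^{tB}\|\le C_{b'}e^{-b't}$ for every $b'<b$ (equivalently $\omega_0(B)\le-b$; in fact applying Gearhart--Pr\"uss to $B+bI$ even gives $\|e^{tB}\|\le Ce^{-(b+\epsilon)t}$ for some $C\ge1$, $\epsilon>0$), and a two-by-two Jordan-block example with a large off-diagonal entry shows the constant cannot be taken equal to $1$. This is an imprecision in the paper's paraphrase of Pr\"uss's theorem rather than a defect of your argument, and the version you prove is exactly what is used downstream in Proposition \ref{pro5.2}, where a constant $C>0$ is allowed.
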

In view of Pr\"{u}ss's result, we only need to prove the boundedness of the resolvent in \eqref{bdd resolv}.

\begin{proof}[Proof of Proposition \ref{pro5.2}]

{\bf{ Step 1.}}  We show that $\mathcal L_a$ generates a $\mathcal C_0$ semigroup in $H^1(\R)$. A straight Fourier transformation upon the constant coefficient operator $\mathcal L^{\infty}_a$ shows that $\mathcal L^{\infty}_a$ generates a $\mathcal C_0$ semigroup in $H^1(\R)$.
Then
\[\mathcal L_a-\mathcal L^{\infty}_a=e^{ax}\mathcal J(\varphi_c\partial_x^2+\varphi_c'\partial_x+\varphi_c''-3\varphi_c)e^{-ax},\]
is bounded in $H^1(\R)$, therefore, $\mathcal L_a$ generates a $\mathcal C_0$ semigroup in $H^1(\R)$.

{\bf{ Step 2.}} We will show \eqref{bdd resolv} with $B=\mathcal L_a \mathcal Q$ and $Z=H^1(\R)$. Notice that
\[ (\lambda I-\mathcal L_a)^{-1}\mathcal Q=(I-C(\lambda))^{-1}(\lambda I-\mathcal L^{\infty}_a)^{-1}\mathcal Q,\]
where
$$
C(\lambda):=\bigg(D_a(-cD_a^2+c-2\omega)+\lambda(1-D_a^2)\bigg)^{-1}D_a(-D_a\varphi_cD_a+3\varphi_c-\varphi_c''),\quad D_a := \partial_x-a.
$$
Firstly, the operator norm of $(\lambda I-\mathcal L^{\infty}_a)^{-1}\mathcal Q$ can be controlled since for any $-b> \Lambda$, there exists $M_1>0$ such that $\|(\lambda I-\mathcal L^{\infty}_a)^{-1}\mathcal Q\|_{H^1\rightarrow H^1}\leq M_1$ for any $\Re \lambda>-b$.

We claim that there exist $R_1>0$ and $M_2>0$ such that $\|(I-C(\lambda))^{-1}\|_{H^1\rightarrow H^1}\leq M_2$ for $\Re \lambda>-b$ and $|\lambda|>R_1$. Indeed, it suffices to show that the operator $C(\lambda):L^2\rightarrow L^2$ is compact for $\lambda\notin \sigma_{ess}(\mathcal L_a)$ and the operator norm $\|C(\lambda)\|_{L^2\rightarrow L^2}$ is strictly less than $1$. To show $C(\lambda)$ is compact, it suffices to show that its adjoint $C^{\ast}(\lambda):L^2\rightarrow L^2$ is compact. For this, observe that
\[C^{\ast}(\lambda)=A(x)B(i\partial_x,\lambda),\]
where $A(x) := -D_a\varphi_cD_a+3\varphi_c-\varphi_c''$ and
\[
B(k,\lambda) := \frac{ik-a}{(ik-a)\left( -c(ik-a)^2+c-2\omega\bigg)+\lambda\bigg(1-(ik-a)^2 \right)}.
\]
Then $C^{\ast}(\lambda)$ is an integral operator with kernel $A(y)\hat{B}(x-y,\lambda)$. Since $A(x)$  decays exponentially as $y\rightarrow\pm\infty$ and $B(\cdot,\lambda)\in L^2(\R)$ for $\lambda\notin \sigma_{ess}(\mathcal L_a)$, which reveals that
\[\int_{\R}\int_{\R}|A(y)\hat{B}(x-y,\lambda)|^2 dxdy<+\infty.\]
Therefore, $C^{\ast}(\lambda):L^2\rightarrow L^2$ is a Hilbert-Schmidt operator  whenever $\lambda\notin \sigma_{ess}(\mathcal L_a)$.

Now we show that there exists $\delta\in(0,1)$ and $M>0$ such that $\|C(\lambda)\|_{L^2\rightarrow L^2}<1-\delta$ for $\Im \lambda>cM\gamma^3$ or $\Re \lambda>cM\gamma^3$ with $\gamma:=\sqrt{\frac{c-2\omega}{c}}$, which can be checked directly within the Fourier transform upon $C(\lambda)$.
Then we see there exists $M_2>0$ and $R_1>0$ such that $\|(I-C(\lambda))^{-1}\|_{H^1\rightarrow H^1}<M_2$ for $\Re \lambda>-b_{max}$ and $|\lambda|>R_1$,
 which reveals that $\|(\lambda I-\mathcal L_a)^{-1}\mathcal Q\|_{H^1\rightarrow H^1}=\|(I-C(\lambda))^{-1}(\lambda I-\mathcal L^{\infty}_a)^{-1}\mathcal Q\|_{H^1\rightarrow H^1}\leq M'_2$ is uniformly bounded for $\Re \lambda>-b_{max}$ and $|\lambda|>R_1$.

Lastly, noticing that the singularities of $(\lambda I-\mathcal L_a)^{-1}$ with $\Re \lambda\geq \Lambda$ are isolated and must be the eigenvalues of $\mathcal L_a$. Now we know $\lambda=0$ is only eigenvalue of $\mathcal L_a$ with a nonnegative real part. Therefore, $-b_{max}<0$. Moreover, for any $R>0$ and $b\in(0,b_{max})$, there exists $M_R>0$ such that $\|(\lambda I-\mathcal L_a)^{-1}\mathcal Q\|_{H^1\rightarrow H^1}\leq M_R$ for $\Re \lambda>-b$ and $|\lambda|\leq R$. Therefore, $\|(\lambda I-\mathcal L_a)^{-1}\mathcal Q\|_{H^1\rightarrow H^1}$ is uniformly bounded for all $\Re \lambda>-b>-b_{max}$. The proof of the Proposition is concluded.
\end{proof}

 The general case  essentially follows a parallel argument as above. Indeed, recall that the spectral information of $\mathcal{J}L_{n}$ with $n\geq1$ was shown in Proposition \ref{pr3.5-3}, which reveals that the spectrum of $\mathcal{J}L_{n}$ lies on the imagery axis, with a double eigenvalue at $\lambda=0$ with (generalized) eigenfunctions $\varphi'_c$ and $\partial_c\varphi_c$. We then define $\mathcal L^{(n)}_a:=e^{ax}\mathcal J L_ne^{-ax}$, where $a$ is well chosen such that the exponential weight shifts the essential spectrum of $\mathcal L^{(n)}_a$ into the open left half plane.
 The essential spectrum of $\mathcal L^{(n)}_a$ with $n\geq2$ is
 \[
 \sigma_{ess}(\mathcal L^{(n)}_a):= \left\{ z\in \mathbb C, \ \ z= \left( \frac{2\omega}{1-(ik+a)^2} \right)^{n-1} \left( c(ik+a)-2\omega\frac{ik+a}{1-(ik+a)^2} \right) \right\},\ k\in\R.
 \]
Unfortunately, in this case, the graph of $\sigma_{ess}(\mathcal L_a)$ has a vertical asymptote $\Re z=0$ when $|k|\rightarrow+\infty$ and the maximal of $\Re z$ tends to $0$.  However, for some constant $K>0$ large, the operator $e^{ax}\mathcal J (L_n+KL_1)e^{-ax}=\mathcal L^{(n)}_a+K\mathcal L_a$ shares a similar spectral gap with the operator  $\mathcal L_a$ with $|a| \ll \sqrt{1-\frac{2\omega}{c}}$.

 \begin{proof}[Proof of Theorem \ref{thm1.4}]
The Cauchy problem of \eqref{higher linearized CH} for $n=1$ is globally well-posed in $H^{1}(\R)$. For general $n$,  \eqref{higher linearized CH} is also well-posed in $H^{1}(\R)$, since from Lemma \ref{le3.5}, one has for all $n\geq2$ $$\mathcal{J}L_n=\mathcal{J}\mathcal{R}(\varphi_c)L_{n-1}=\mathcal{R}^{\ast}(\varphi_c)\mathcal{J}L_{n-1}=\big(\mathcal{R}^{\ast}(\varphi_c)\big)^{n-1}\mathcal{J}L_{1},$$
and the well-posedness of Cauchy problem of \eqref{higher linearized CH} in $H^{1}(\R)$ follows from induction over $n$ and the invertibility in $L^2(\R)$ of the adjoint recursion operator $\mathcal{R}^{\ast}(\varphi_c)$ in Lemma \ref{le3.81}.

Now we restrict ourselves to the case $A(t)=B(t)=0$, since one can show that $\int_0^tA(s)ds$ and $\int_0^tB(s)ds$ are uniformly bounded in time. Indeed, multiplying the equation of $v(t)$ \eqref{higher linearized CH} by $(1-\partial_x^2)\varphi_c$ and $(1-\partial_x^2)\varphi_c'$, respectively, integrating on $\R$, using $L_n\varphi_c'=0$, we obtain
\begin{eqnarray*}
&&A(t)=\frac{\langle v(t), L_n\varphi_c''\rangle+\langle v'(t), (1-\partial_x^2)\varphi_c'\rangle}{\|\varphi_c'\|^2_{L^2}+\|\varphi_c''\|^2_{L^2}},\\
&&B(t)=\frac{\langle v'(t), (1-\partial_x^2)\varphi_c\rangle}{\langle \frac{\partial \varphi_c}{\partial c}, (1-\partial_x^2)\varphi_c\rangle}=\frac{\langle v'(t), (1-\partial_x^2)\varphi_c\rangle}{2\kappa c}.
\end{eqnarray*}

In the case of $n=1$, the linear equation \eqref{higher linearized CH} admits conservation law $\langle L_1v,v \rangle$. Under the orthogonal conditions satisfied by $v$, the quadratic form is positive definite. Therefore, for $v\neq 0$, one has $\langle L_1v,v \rangle=C_0>0$. On the other hand, after a suitable change of functions and semigroup decay estimate in Proposition \ref{pro5.2}, one has $\|v\|_{H^1_{loc}}\rightarrow0$ as $t\rightarrow +\infty$. The above two facts will lead to a contradiction. Indeed, we notice from orthogonal conditions  that $\langle v(t),\tilde{g}_2\rangle=0$ for all $t\in\R$. From the linear equation satisfied by $v$, one sees that $\frac{d}{dt}\langle v(t),\tilde{g}_1\rangle=0$, which reveals that $\langle v(t),\tilde{g}_1\rangle=\langle v(0),\tilde{g}_1\rangle$ for all $t\in\R$.

Now we define $$w(0):=v(0)-\frac{\langle v(0),\tilde{g}_1\rangle}{\langle \varphi'_c,\tilde{g}_1\rangle}\varphi'_c,$$
and consider the solution $w(t)$ of linear equation \eqref{higher linearized CH}  emanating from $w(0)$. It reveals that
$$w(t):=v(t)-\frac{\langle v(0),\tilde{g}_1\rangle}{\langle \varphi'_c,\tilde{g}_1\rangle}\varphi'_c,$$
and $\langle w(t),\tilde{g}_1\rangle=\langle w(t),\tilde{g}_2\rangle=0$. Moreover, $w(t)$ possesses exponential decay.

For $a_{\ast}<a<0$, we take $h(t,x)=e^{ax}w(t,x)\in H^1(\R)$ for all $t\in\R$. It follows that
\[h_t=e^{ax}w_t=e^{ax}\mathcal{J}L_1e^{-ax}h=\mathcal L_ah. \]
We now show $v\equiv0$ by contradiction. Suppose that $v(0)\neq0$, then for all $t\in\R$, there holds the conservation law $\langle L_1w(t),w(t)\rangle=\langle L_1v(0),v(0)\rangle$. Moreover, the orthogonal conditions satisfied by $w(t)$ reveals that the coercivity property  $\langle L_1w(t),w(t)\rangle\geq C\|w(t)\|^2_{L^2}\geq C_0>0$ holds true. Therefore, for all $t\in\R$, one has
\[C_0\leq \langle L_1w(t),w(t)\rangle\leq C\|w(t)\|^2_{H^1},\]
Combining the $H^1$-localized property of $w(t)$, we obtain for some $R_0>0$,
\begin{equation}\label{localized version}
\|w(t)\|^2_{H^1(-R_0,R_0)}\geq \frac{C_0}{2}>0.
\end{equation}
We then write $w(t,x)=e^{-ax}h(t,x)$ and deduce from Proposition \ref{pro5.2} that
\begin{eqnarray*}
&&\|w(t)\|^2_{H^1(-R_0,R_0)}\leq(1+a^2)\int_{-R_0}^{R_0}e^{-2ax}\big(h^2(t,x)+h_x^2(t,x)\big)dx\leq3e^{-2aR_0}\|h(t)\|^2_{H^1(-R_0,R_0)}\\
&&\leq3e^{-2aR_0}\|h(t)\|^2_{H^1}\leq Ce^{-2aR_0-2bt}\|h(0)\|^2_{H^1}\rightarrow0,\quad  \text{as}\quad  t\rightarrow +\infty,
\end{eqnarray*}
which contradicts to \eqref{localized version} and so that $v(t)=v(0)\equiv0$.

For the general case of $n\geq2$, we will consider the operator $L_n+KL_1 $ with the constant $K>0$ large. In this case, there exists some $|a| \ll \sqrt{1-\frac{2\omega}{c}}$ such that the spectrum the operator $\mathcal L^{(n)}_a+K\mathcal L_a$ lies in the open left half plane and the semigroup decay estimate of $e^{t (\mathcal L^{(n)}_a+K\mathcal L_a)}$ holds true. Therefore, following the argument of the case $n=1$ as above, any $H^1$ localized solutions of $f_t+\mathcal J (L_n+KL_1)f=0$ must be $f\equiv0$. The proof of Theorem \ref{thm1.4} is completed.
\end{proof}

We are now able to conclude the proof of Theorem \ref{th1.1}.
\begin{proof}[End of proof of Theorem \ref{th1.1}.]
 From Theorem \ref{thm1.4} (with $n=1$), the function $v$ constructed in Proposition \ref{pro.linear problem} satisfies
 $v \equiv 0$, which is a contradiction. This completes the proof of Theorem \ref{th1.1}.
\end{proof}

With Proposition \ref{pr4.5-1} in hand,  we can extend Theorem \ref{thm1.4} to the case of multi-solitons profile.
\begin{corollary}\label{cor5.1}
Let $v\in \mathcal C(\R,H^1(\R))\cap L^{\infty}(\R,H^1(\R))$ be a solution of
\begin{equation*}
v_t+\mathcal{J}\mathcal{L}_Nv=\sum_{j=1}^N \left( A_j(t)\mathcal{J}G_j(U^{(N)})+B_j(t)\frac{\partial U^{(N)}}{\partial c_j} \right),
\end{equation*}
where $A_j(t)$ and $B_j(t)$ are $2N$  continuous and bounded functions. Assume that for constant $R_{\varepsilon}>0$, $v(t,x)$ satisfies
\begin{equation*}
\forall t\in \R, \quad R_{\varepsilon} \gg 1, \quad \int_{|x|>R_{\varepsilon}}\big(v^2(t,x)+v_x^2(t,x)\big) dx\leq \varepsilon.
\end{equation*}
Then for  all $ t\in \R$,
\begin{eqnarray*}
v(t)\equiv \sum_{j=1}^N \left( \beta_j(t)\mathcal{J}G_j(U^{(N)})+\gamma_j(t)\frac{\partial U^{(N)}}{\partial c_j} \right).
\end{eqnarray*}
for some $\mathcal C^{1}$ bounded functions $\beta_j(t)$ and $\gamma_j(t)$ satisfying
\begin{equation*}
\beta_j'(t)=A_j(t)+c_j^{n-1}\gamma_j(t), \quad \gamma_j'(t)=B_j(t)\ j=1,2,\cdot\cdot\cdot,N.
\end{equation*}
If further we impose
\begin{equation*}
\forall t\in \R,\quad \int_{\R}v(t,x)(1-\partial_x^2)U^{(N)}(x)\rmd x=\int_{\R}v(t,x)(1-\partial_x^2)\mathcal{J}G_j(U^{(N)}(x))\rmd x=0,
\end{equation*}
with $j=1,2,\cdot\cdot\cdot,N$, then  for all $ t\in \R$, there holds $v(t)\equiv 0$.
\end{corollary}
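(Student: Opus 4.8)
The plan is to mirror the proof of Theorem~\ref{thm1.4} step by step, with the single-soliton spectral picture of Propositions~\ref{pr3.5-3}, \ref{pro5.1} and~\ref{pro5.2} replaced by its $N$-soliton counterpart furnished by Proposition~\ref{pr4.5-1} and Proposition~\ref{le3.84}. First I would record that the Cauchy problem for the linear equation is globally well posed in $H^1(\R)$; this is established as in Theorem~\ref{thm1.4}, writing $\mathcal{J}\mathcal{L}_N$ as the sum of a Fourier multiplier with symbol $\varrho_N(\cdot)$ (see \eqref{eq:symbol JLM}) and an exponentially localized bounded perturbation, and using the commutation \eqref{operator identity4} together with the $L^2$-invertibility of $\mathcal{R}^\ast(U^{(N)})$ from Lemma~\ref{le3.83}. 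Next I would reduce to the homogeneous case $A_j\equiv B_j\equiv 0$. Pairing the equation with the family dual to the generalized kernel --- namely $(1-\partial_x^2)U^{(N)}$ and $(1-\partial_x^2)\mathcal{J}G_j(U^{(N)})$, up to the $\aleph$-bi-orthogonality of \eqref{eq:set JLM}--\eqref{eq:set LJM} --- and using $\mathcal{L}_N\mathcal{J}G_j(U^{(N)})=0$ and $\mathcal{L}_N\tfrac{\partial U^{(N)}}{\partial c_j}=-G_j(U^{(N)})$, one solves for $A_j$ and $B_j$ as linear functionals of $v(t)$ and $v_t(t)$; the uniform bound $\sup_t(\|v(t)\|_{H^1}+\|v_t(t)\|_{L^2})<\infty$ then forces $t\mapsto\int_0^tA_j$ and $t\mapsto\int_0^tB_j$ to be bounded, so subtracting $\sum_j\bigl(\beta_j(t)\mathcal{J}G_j(U^{(N)})+\gamma_j(t)\tfrac{\partial U^{(N)}}{\partial c_j}\bigr)$, with $\beta_j,\gamma_j$ solving the asserted first-order system (obtained by substituting this ansatz and matching generalized-kernel components), leaves a bounded $H^1$-localized solution of $v_t+\mathcal{J}\mathcal{L}_N v=0$.

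For the homogeneous equation I would introduce $\mathcal{L}^{(N)}_a:=e^{ax}\mathcal{J}\mathcal{L}_N e^{-ax}$ with $a<0$, $|a|$ small. By Proposition~\ref{pr4.5-1} the spectrum of $\mathcal{J}\mathcal{L}_N$ is purely imaginary, with $\lambda=0$ of algebraic multiplicity $2N$ and generalized eigenfunctions $U^{(N)}_{x_j}\sim\mathcal{J}G_j(U^{(N)})$ and $\tfrac{\partial U^{(N)}}{\partial c_j}$; the essential spectrum of $\mathcal{L}^{(N)}_a$ is read off the symbol \eqref{eq:symbol JLM}, and for $n=1$ the exponential weight pushes it into $\{\Re z<0\}$ exactly as in the single-bump case. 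For $n\ge 2$ the essential spectrum only has a vertical asymptote at $\Re z=0$, so, just as in Theorem~\ref{thm1.4}, I would instead work with $\mathcal{J}(\mathcal{L}_N+K\mathcal{L}_1)=\mathcal{L}^{(N)}_a+K\mathcal{L}_a$ for $K$ large, which has a genuine spectral gap and the same generalized kernel. I would then build the spectral projection $\mathcal{P}$ onto the $2N$-dimensional generalized eigenspace and the complementary $\mathcal{Q}$ from the bi-orthogonal families $\{e^{ax}U^{(N)}_{x_j},\,e^{ax}\tfrac{\partial U^{(N)}}{\partial c_j}\}$ and $\{e^{-ax}G_j(U^{(N)}),\,e^{-ax}\mathcal{J}^{-1}\tfrac{\partial U^{(N)}}{\partial c_j}\}$ coming from \eqref{eq:set JLM}--\eqref{eq:set LJM}, and establish, by repeating the argument of Proposition~\ref{pro5.2} verbatim (generation of a $\mathcal{C}_0$ semigroup via the exponentially localized bounded perturbation; a Hilbert--Schmidt estimate on the perturbation kernel giving a uniform resolvent bound for large $|\lambda|$ off $\sigma_{ess}$; isolatedness of eigenvalues with $\Re\lambda\ge-b$; and Pr\"uss's resolvent criterion, Proposition~\ref{Pruss theorem}), the decay $\|e^{t\mathcal{L}^{(N)}_a}w_0\|_{H^1}\le Ce^{-bt}\|w_0\|_{H^1}$ on $\operatorname{Range}\mathcal{Q}$ for some $b>0$.

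The argument is then closed by contradiction. Since $\mathcal{L}_N\mathcal{J}G_j(U^{(N)})=0$, the pairings of $v(t)$ with the duals of the finitely many non-decaying modes are conserved, so one may subtract from $v(0)$ a fixed linear combination of the $\mathcal{J}G_j(U^{(N)})$ to obtain $w(0)\in H^1(\R)\cap\operatorname{Range}\mathcal{Q}$ still satisfying the imposed orthogonality conditions; the resulting solution $w(t)=v(t)-\sum_j\theta_j\mathcal{J}G_j(U^{(N)})$ is bounded, $H^1$-localized, and lies in $\operatorname{Range}\mathcal{Q}$ for all $t$. The coercivity of $\mathcal{L}_N=I_N''(U^{(N)})$ on the subspace cut out by the orthogonality conditions --- the content of the orbital-stability analysis of Wang--Liu \cite{WL20} via the Hamilton--Krein index count recorded in the remark after Proposition~\ref{le3.84} --- together with conservation of $\langle\mathcal{L}_N w(t),w(t)\rangle$ (or of the corresponding quadratic form of $\mathcal{L}_N+K\mathcal{L}_1$ when $n\ge2$), gives $\langle\mathcal{L}_N w(t),w(t)\rangle\ge C\|w(t)\|_{H^1}^2\ge C_0>0$ for all $t$ unless $w\equiv0$. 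If $w\not\equiv0$, the $H^1$-localization yields $\|w(t)\|_{H^1(-R_0,R_0)}^2\ge C_0/2$ for some $R_0>0$ and all $t$, whereas $w(t)=e^{-ax}h(t)$ with $h_t=\mathcal{L}^{(N)}_a h$ and $h(0)\in\operatorname{Range}\mathcal{Q}$, so the weighted decay gives $\|w(t)\|_{H^1(-R_0,R_0)}\le Ce^{-aR_0}\|h(t)\|_{H^1}\le Ce^{-aR_0-bt}\|h(0)\|_{H^1}\to0$ as $t\to+\infty$, a contradiction. Hence $w\equiv0$, so $v(t)$ equals a combination $\sum_j(\beta_j(t)\mathcal{J}G_j(U^{(N)})+\gamma_j(t)\tfrac{\partial U^{(N)}}{\partial c_j})$, and imposing the final orthogonality conditions forces every $\beta_j,\gamma_j$ to vanish, so $v\equiv0$.

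I expect the main obstacle to be the weighted semigroup estimate for the $N$-bump operator: verifying that $\lambda=0$ stays the only eigenvalue of $\mathcal{L}^{(N)}_a$ (resp. $\mathcal{L}^{(N)}_a+K\mathcal{L}_a$) with $\Re\lambda\ge-b$ --- that is, that no point spectrum escapes into the right half-plane under the exponential weight, which must be deduced from the purely imaginary location of $\sigma(\mathcal{J}\mathcal{L}_N)$ and the explicit description of its generalized kernel in Proposition~\ref{pr4.5-1} --- and making the resolvent bound of Proposition~\ref{Pruss theorem} uniform in the presence of $N$ separated bumps, along with the gap-opening modification required when $n\ge2$. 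The coercivity of $\mathcal{L}_N$ on the orthogonal complement is not new, being supplied by \cite{WL20}.
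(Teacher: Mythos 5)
Your proposal is correct and follows essentially the same route as the paper's (sketched) proof: invoke the spectral description of $\mathcal{J}\mathcal{L}_N$ from Proposition \ref{pr4.5-1}, conjugate by the exponential weight to push the essential spectrum into the open left half-plane with a spectral gap, obtain the semigroup decay as in Proposition \ref{pro5.2} via Pr\"uss's criterion, and then repeat the one-soliton contradiction argument (conserved quadratic form, coercivity on the orthogonal complement, $H^1$-localization versus weighted decay). The only superfluous element is your fallback to $\mathcal{J}(\mathcal{L}_N+K\mathcal{L}_1)$: there is no hierarchy index $n$ in this corollary, and the $n=1$ term of the symbol $\varrho_N$ in \eqref{eq:symbol JLM} already grows linearly in $k$, which is precisely what the paper means by ``$\mathcal{J}\mathcal{L}_N$ consists of the element $\mathcal{J}L_1$'', so the weighted operator has a genuine gap without any $K\mathcal{L}_1$ modification.
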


\begin{proof}[Sketch of the proof]
It reveals from Proposition \ref{pr4.5-1} that the spectrum of $\mathcal{J}\mathcal{L}_N$ lies in imagery axis, with an eigenvalue at $\lambda=0$ $2N$-fold. In viewing of exponential decay of $N$-solitons profile $U^{(N)}$, \eqref{eq:linearized n-soliton operator} and \eqref{eq:symbol JLM}, one sees that the operator $\mathcal{J}\mathcal{L}_N$  consists of the element $\mathcal{J}L_1$. Therefore, there exists some $|a| \ll \sqrt{1-\frac{2\omega}{c_N}}$ such that the essential spectrum of the operator $e^{ax}\mathcal{J}\mathcal{L}_Ne^{-ax}$ lie in the open left half plane and possess spectral gap. Moreover, the semigroup decay estimate of $e^{t e^{ax}\mathcal{J}\mathcal{L}_Ne^{-ax}}$ holds true. The rest of the proof follows a parallel argument of one soliton case and we omit the details. The proof of the Corollary is completed.
\end{proof}

\section{Orbital stability of train of solitons}\label{sec 6}
The aim of this section is to revisit and modify the result of orbital stability of the train of $N$ well decoupled solitons in \cite{EM07}. The idea is to combine the classical arguments of the stability of one soliton with the almost monotonicity of the $N-1$ functionals that measure the energy at the right of the $N-1$ first bumps of $u$. The main result of this Section is as follows.
\begin{proposition}\label{thm4.1}
Let $u_0 \in Y_{+}$. For given $N$ velocities $c^0_{1},\cdots,c^0_{N}$, such that $0<2\omega<c^0_{1}<c^0_{2}<\cdots<c^0_{N}$. There exist $\gamma_{0},A>0,L_{0}>0$ and $\alpha_1> 0$, such that if $0<\varepsilon <\alpha_1$, $L>L_{0}$ and $x_1^0<\cdot\cdot\cdot<x_N^0$,
\begin{equation}\label{4.1}
\left\| u_{0}-\sum_{j=1}^{N}\varphi_{c^0_{j}}(\cdot-x^0_{j}) \right\|_{H^1} \leq \varepsilon, \quad \text{with} \quad x^0_{j}-x^0_{j-1} \geq L, \quad j=2,\cdot\cdot\cdot,N.
\end{equation}
 Then there exist $x_{1}(t),\cdots,x_{N}(t)$, such that $u\in\mathcal{C}(\R;H^1(\R))$ is a solution of the CH equation satisfies
\begin{equation*}
\sup_{t\geq0} \left\| u(t,\cdot)-\sum_{j=1}^{N}\varphi_{c^0_{j}}(\cdot -x_{j}(t)) \right\|_{H^1} \leq A(\varepsilon+e^{-\gamma_{0}L}).
\end{equation*}
\end{proposition}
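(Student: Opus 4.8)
The proof follows the Martel--Merle--Tsai strategy \cite{MMT02}, adapted to Camassa--Holm by El~Dika--Molinet \cite{EM07}: a bootstrap (continuity) argument on a tube around the train, combining the modulation of Lemma~\ref{le4.1}, the single-soliton coercivity of Lemma~\ref{le4.3}, and the almost-monotonicity of the localized energies of Lemma~\ref{le4.2}. Fix all the constants below to depend only on $c_1^0,\dots,c_N^0,\omega$, set $\beta:=A(\varepsilon+e^{-\gamma_0 L})$ with $A$ chosen at the end. Since $u_0\in Y_+$, Proposition~\ref{pr2.5} gives a global solution $u\in\mathcal C(\R;H^1(\R))$ with $m(t)\in\mathcal M_+(\R)$, so the monotonicity formulas of Section~\ref{sec_2.4} apply; since $\|u_0-\sum_j\varphi_{c_j^0}(\cdot-x_j^0)\|_{H^1}\le\varepsilon<\beta$, continuity of the flow yields a maximal time $T^\star\in(0,+\infty]$ with $u(t)\in\mathcal U(\beta,L/2)$ on $[0,T^\star)$. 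The plan is to prove that on $[0,T^\star)$ one actually has the strictly better bound $\|u(t)-\sum_j\varphi_{c_j^0}(\cdot-x_j(t))\|_{H^1}\le\tfrac A2(\varepsilon+e^{-\gamma_0 L})$; together with the separation $x_j(t)-x_{j-1}(t)\ge\tfrac L2+\sigma_0 t$ produced by Lemma~\ref{le4.1}, this is incompatible with $T^\star<+\infty$. On $[0,T^\star)$, for $\alpha_1$ small and $L_0$ large, Lemma~\ref{le4.1} produces $\mathcal C^1$ parameters $c_j(t),x_j(t)$ and $v(t)=u(t)-\sum_jR_j(t)$, $R_j(t)=\varphi_{c_j(t)}(\cdot-x_j(t))$, with the orthogonality \eqref{4.6}, the bound $\|v(t)\|_{H^1}+|c_j(t)-c_j^0|\le K_1\beta$ of \eqref{4.5}, and the estimates \eqref{4.7}; the same construction at $t=0$ with tube size $\varepsilon$ (using \eqref{4.1}) gives $\|v(0)\|_{H^1}+\sum_j|c_j(0)-c_j^0|\le K_1\varepsilon$. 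By the linear-in-time separation, all interaction and cutoff errors $\int R_jR_k\,dx$ ($j\ne k$), $\int R_jv\,(1-\Psi_K)\,dx$ are $O(e^{-\gamma_0 L})$ uniformly in $t$.

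With the cutoffs $\Psi_{j,K}(t,\cdot):=\Psi_K(\cdot-y_j(t))$, $y_j=\tfrac{x_{j-1}+x_j}2$ ($\Psi_{1,K}\equiv1$, $\Psi_{N+1,K}\equiv0$) from Lemma~\ref{le4.2}, $\phi_j:=\Psi_{j,K}-\Psi_{j+1,K}$, and $I_j(t)=\int(u^2+u_x^2)\Psi_{j,K}$, I take the Lyapunov functional
\[
\mathcal G(t):=c_1^0\,E(u(t))-F(u(t))+\tfrac12\sum_{j=2}^N(c_j^0-c_{j-1}^0)\,I_j(t),\qquad E=\Ham_1,\ F=\Ham_2 .
\]
Since $E,F$ are conserved (see \eqref{eq2a}--\eqref{eq2b}), $c_j^0-c_{j-1}^0>0$, and Lemma~\ref{le4.2} gives $I_j(t)\le I_j(0)+O(e^{-\sigma_0 L})$, the functional $\mathcal G$ is almost non-increasing: $\mathcal G(t)\le\mathcal G(0)+Ce^{-\sigma_0 L}$. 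Inserting $u=\sum_kR_k+v$: the term linear in $v$ vanishes, because it equals $\sum_k\langle c_k^0E'(R_k)-F'(R_k),v\rangle$ up to $O(e^{-\gamma_0 L}\|v\|_{H^1})$, and the soliton identity $F'(\varphi_c)=cE'(\varphi_c)$ combined with the orthogonality \eqref{4.6} of $v$ to $(1-\partial_x^2)R_k=E'(R_k)$ makes each summand $O((|c_k(t)-c_k^0|+e^{-\gamma_0 L})\|v\|_{H^1})$. A direct computation (using the well-separation) shows that the $v$-quadratic part of $\mathcal G$ localizes, near the $k$-th bump, to $\tfrac12\langle L_1^{c_k^0}v,v\rangle$ weighted by $\phi_k$, with $L_1^{c}$ the operator \eqref{L1oper} at speed $c$, while the soliton part collapses to $\sum_k d_k(c_k(t))$, $d_k(c):=c_k^0E(\varphi_c)-F(\varphi_c)$. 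Using $\tfrac{d}{dc}E(\varphi_c)=4\kappa c$ and $\tfrac{d}{dc}F(\varphi_c)=4\kappa c^2$ (recorded before \eqref{eq:Fderivative to c}) one gets $d_k'(c)=4\kappa c\,(c_k^0-c)$, so $c_k^0$ is a strict maximum of $d_k$ with $d_k(c)=d_k(c_k^0)-\tfrac{\mu_k}2|c-c_k^0|^2+O(|c-c_k^0|^3)$, $\mu_k=4\kappa(c_k^0)c_k^0>0$. Thus $\mathcal G(t)=\sum_k d_k(c_k^0)-\tfrac12\sum_k\mu_k|c_k(t)-c_k^0|^2+\tfrac12\sum_k\langle L_1^{c_k^0}v(t),v(t)\rangle_{\phi_k}+\mathrm{Err}(t)$, with $\mathrm{Err}(t)=O(\|v(t)\|_{H^1}^3+|c(t)-c^0|\,\|v(t)\|_{H^1}^2+(|c(t)-c^0|+\|v(t)\|_{H^1})e^{-\gamma_0 L})$, and the same at $t=0$. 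Applying Lemma~\ref{le4.3} with $\varTheta=\phi_k$ (which forces $K$ large, so the structural hypothesis on $\varTheta$ holds; the spectral conditions follow from \eqref{4.6} up to $O(|c_k(t)-c_k^0|\|v\|)$) and summing in $k$ gives $\sum_k\langle L_1^{c_k^0}v,v\rangle_{\phi_k}\ge c_0\|v(t)\|_{H^1}^2$. Feeding $\mathcal G(t)\le\mathcal G(0)+Ce^{-\sigma_0 L}$, discarding $-\tfrac12\sum_k\mu_k|c_k(0)-c_k^0|^2\le0$, bounding the $t=0$ terms by $K_1\varepsilon$, and absorbing $\mathrm{Err}$ via $\|v\|_{H^1}+|c(t)-c^0|\le K_1\beta$, I obtain
\[
\|v(t)\|_{H^1}^2\le C\sum_{j=1}^N|c_j(t)-c_j^0|^2+C(\varepsilon^2+e^{-\gamma_0 L})\qquad\text{on }[0,T^\star).
\]

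It remains to control $\sum_j|c_j(t)-c_j^0|^2$ by $\varepsilon^2+e^{-\gamma_0 L}$ plus a small multiple of $\|v(t)\|_{H^1}^2$. Expanding as above, $E(u(t))=\sum_kE(\varphi_{c_k(t)})+O(\|v(t)\|_{H^1}^2+e^{-\gamma_0 L})$, $F(u(t))=\sum_kF(\varphi_{c_k(t)})+O(\|v(t)\|_{H^1}^2+e^{-\gamma_0 L})$, $I_j(t)=2\sum_{k\ge j}E(\varphi_{c_k(t)})+O(\|v(t)\|_{H^1}^2+e^{-\gamma_0 L})$, and analogously at $t=0$ with errors $O(\varepsilon^2+e^{-\gamma_0 L})$. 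With $\delta_k:=E(\varphi_{c_k(t)})-E(\varphi_{c_k(0)})$ and $\eta:=\|v(t)\|_{H^1}^2+\varepsilon^2+e^{-\gamma_0 L}$, conservation of $E$ and $F$ gives $\sum_k\delta_k=O(\eta)$ and $\sum_kc_k^0\delta_k=O(\eta)$, while the monotonicity of the $I_j$ gives the one-sided bounds $\sum_{k\ge j}\delta_k\le C\eta$ for $j=2,\dots,N$. Since $c_1^0<\dots<c_N^0$, an elementary triangular induction on these mixed equalities and inequalities yields $|\delta_k|\le C\eta$ for every $k$; because $\tfrac{d}{dc}E(\varphi_c)=4\kappa c$ is bounded away from $0$ near $c_k^0$, this gives $|c_k(t)-c_k(0)|\le C\eta$, whence, using $|c_k(0)-c_k^0|\le K_1\varepsilon$ and $\|v(t)\|_{H^1}+|c(t)-c^0|\le K_1\beta$,
\[
\sum_j|c_j(t)-c_j^0|^2\le C\beta\,\|v(t)\|_{H^1}^2+C(\varepsilon^2+e^{-\gamma_0 L}).
\]
Substituting into the previous display and taking $\alpha_1$ small (so $\beta$ is small) to absorb $C\beta\|v(t)\|_{H^1}^2$, I conclude $\|v(t)\|_{H^1}^2+\sum_j|c_j(t)-c_j^0|^2\le C_1(\varepsilon^2+e^{-\gamma_0 L})$ on $[0,T^\star)$, with $C_1$ independent of $A$. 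Hence $\|u(t)-\sum_j\varphi_{c_j^0}(\cdot-x_j(t))\|_{H^1}\le\|v(t)\|_{H^1}+C\sum_j|c_j(t)-c_j^0|\le C_2\sqrt{C_1}\,(\varepsilon+e^{-\gamma_0 L})$; choosing $A:=2C_2\sqrt{C_1}$ (after halving $\gamma_0$ if needed) makes this $<\beta$, so by continuity $T^\star<+\infty$ is impossible, which proves the proposition.

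The main obstacle is the speed-control step. Unlike the single-soliton case, the Lyapunov functional $\mathcal G$ is concave in the $N$ scaling directions $c_j\mapsto\varphi_{c_j}$, so its almost-monotonicity alone cannot bound $\sum_j|c_j(t)-c_j^0|^2$; this quadratic control must be recovered purely from conservation of the two lowest integrals $E=\Ham_1$, $F=\Ham_2$ together with the one-sided monotonicity of the $N-1$ localized energies at the midpoints, and the argument closes \emph{only} because the train is strictly well ordered, $c_1^0<\dots<c_N^0$ (this is exactly where the separation hypothesis enters). The remaining difficulty is bookkeeping: all interaction and cutoff error terms have to be shown $O(e^{-\gamma_0 L})$ uniformly in $t\ge0$, which relies on the linear-in-time separation $x_j(t)-x_{j-1}(t)\ge L/2+\sigma_0 t$ of the bumps furnished by the modulation estimates \eqref{4.7}.
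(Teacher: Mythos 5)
Your proof is correct and follows essentially the same route as the paper's (Proposition \ref{prop4.2} and Lemma \ref{lemma6.2}): a bootstrap in the tube $\mathcal U(\beta,L)$ using the modulation Lemma \ref{le4.1}, conservation of $E$ and $F$ together with $\frac{d}{dc}F(\varphi_c)=c\,\frac{d}{dc}E(\varphi_c)$, the almost-monotonicity of the localized energies $I_j$ (Lemma \ref{le4.2}) combined with an Abel-summation/positivity argument over the well-ordered speeds, and the weighted coercivity of Lemma \ref{le4.3}. Packaging this through the fixed-speed Lyapunov functional $c_1^0E-F+\tfrac12\sum_{j\ge2}(c_j^0-c_{j-1}^0)I_j$ instead of the paper's modulated-coefficient version (weights $c_j(t)-c_{j-1}(t)$ and the weight function $c(t,x)$) is only a cosmetic difference.
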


Here we give a sketch of proof, since the main part of the proof has been proved in \cite{EM07}. To prove Proposition \ref{thm4.1}, it suffices to show that there exist $A>0$, $\gamma_{0}>0, \alpha_1>0$ and $L_{0}>0$, such that $\forall L>L_{0}$ and $0<\varepsilon<\alpha_1$, if $u_{0}$ satisfied \eqref{4.1}, then for all $t\geq0$, $u(t) \in \mathcal{U}(A(\varepsilon + e^{-\gamma_{0} L}),L)$ (see \eqref{4.3}). By the continuity of the map $t \mapsto u(t)$ in $H^1(\mathbb{R})$, Proposition \ref{thm4.1} is a consequence of the following.
\begin{proposition}\label{prop4.2}
 There exist $\gamma_{0},A>0,L_{0}>0$ and $\alpha_1> 0$, such that if $0<\varepsilon <\alpha_1$, $L>L_{0}$ and
\begin{equation*}
\left\| u_{0}-\sum_{j=1}^{N}\varphi_{c^0_{j}}(\cdot-x^0_{j}) \right\|_{H^1} \leq \varepsilon, \quad \text{with} \quad x^0_{j}-x^0_{j-1} \geq L, \quad j=2,\cdot\cdot\cdot,N.
\end{equation*}
If there holds for $t^\ast>0$,
\begin{equation}\label{initial assumption}
\forall t\in [0,t^\ast], \quad u(t) \in \mathcal{U}(A(\varepsilon + e^{-\gamma_{0} L}),L),
\end{equation}
then there holds
\begin{equation}\label{finial con}
\forall t\in [0,t^\ast], \quad u(t) \in \mathcal{U}(\frac{A}{2}(\varepsilon + e^{-\gamma_{0} L}),L).
\end{equation}
\end{proposition}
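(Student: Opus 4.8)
The plan is to run the Weinstein--Martel--Merle--Tsai energy bootstrap adapted to CH, combining the modulation of Lemma~\ref{le4.1}, the conservation of $\Ham_1=E$ and $\Ham_2=F$, the $H^1$-monotonicity of the functionals $I_j$ (Lemma~\ref{le4.2}), and the weighted coercivity of Lemma~\ref{le4.3}. I would first fix the constant $A$, large and depending only on the structural constants $2\omega<c_1^0<\dots<c_N^0$, and then choose $\alpha_1$ small and $L_0$ large so that $A(\varepsilon+e^{-\gamma_0L})$ is as small as needed. Under \eqref{initial assumption}, Lemma~\ref{le4.1} (applied on $[0,t^\ast]$, enlarging to the tube $\mathcal U(\cdot,L/2)$ if necessary) produces $\mathcal C^1$ parameters $c_j(t),x_j(t)$ and $v(t)=u(t)-\sum_jR_j(t)$, $R_j:=\varphi_{c_j(t)}(\cdot-x_j(t))$, with the orthogonality relations \eqref{4.6}, the separation $x_j(t)-x_{j-1}(t)\gtrsim L$ which only grows in time by \eqref{4.7}, and $\|v(t)\|_{H^1}+\sum_j|c_j(t)-c_j^0|\le K_1A(\varepsilon+e^{-\gamma_0L})$. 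The target is to upgrade the last bound to $\|v(t)\|_{H^1}+\sum_j|c_j(t)-c_j^0|\le C(\varepsilon+e^{-\gamma_0L})$ with $C$ \emph{independent of $A$}; since $\|u(t)-\sum_j\varphi_{c_j^0}(\cdot-x_j(t))\|_{H^1}\le\|v(t)\|_{H^1}+C\sum_j|c_j(t)-c_j^0|$, taking $A>2C$ and shrinking $\gamma_0$ then yields \eqref{finial con}.

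The Lyapunov functional I would use is built from a partition of unity $\phi_1,\dots,\phi_N$ subordinate to the bumps, made from the cutoffs $\Psi_{j,K}(x)=\Psi_K(x-y_j(t))$, $y_j=\tfrac12(x_{j-1}+x_j)$ of Lemma~\ref{le4.2}: set $\phi_1=1-\Psi_{2,K}$, $\phi_j=\Psi_{j,K}-\Psi_{j+1,K}$ for $2\le j\le N-1$, $\phi_N=\Psi_{N,K}$. Define $E_j=\tfrac12\int(u^2+u_x^2)\phi_j$, $F_j=\tfrac12\int(u^3+uu_x^2+2\omega u^2)\phi_j$ and
\[
\mathcal G(t):=\sum_{j=1}^N\big(c_j^0E_j(t)-F_j(t)\big)=c_1^0\,\Ham_1(u(t))-\Ham_2(u(t))+\tfrac12\sum_{j=2}^N(c_j^0-c_{j-1}^0)\,I_j(t),
\]
the second form coming from Abel summation of $\sum_jc_j^0\phi_j=c_1^0+\sum_{j=2}^N(c_j^0-c_{j-1}^0)\Psi_{j,K}$. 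Since $\Ham_1,\Ham_2$ are conserved along the $Y_+$-solution $u$ (Proposition~\ref{pr2.5}), $c_j^0-c_{j-1}^0>0$, and $I_j(t)\le I_j(0)+O(e^{-\sigma_0L})$ by Lemma~\ref{le4.2}, this gives at once the almost-monotonicity $\mathcal G(t)\le\mathcal G(0)+Ce^{-\sigma_0L}$ on $[0,t^\ast]$.

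Next I would expand $\mathcal G(t)$ around the train. Inserting $u=\sum_jR_j+v$ and using that each $R_j$ and its $x$- and $c$-derivatives are exponentially localized on well-separated intervals, every soliton cross-interaction contributes $O(e^{-\gamma_0L})$ and every derivative falling on a $\Psi_{j,K}$ is supported where all bumps are exponentially small, contributing $O(e^{-\gamma_0L})\|v\|_{H^1}$; hence $\mathcal G(t)=P(c(t))+\langle\ell_t,v(t)\rangle+\tfrac12\mathcal Q_t(v(t),v(t))+O(\|v\|_{H^1}^3)+O(e^{-\gamma_0L})$, where $P(c):=\sum_j\big(c_j^0\Ham_1(\varphi_{c_j})-\Ham_2(\varphi_{c_j})\big)$. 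The one-soliton variational principle gives $\Ham_2'(\varphi_c)=c\,\Ham_1'(\varphi_c)=c(1-\partial_x^2)\varphi_c$, so $\ell_t=\sum_j(c_j^0-c_j(t))(1-\partial_x^2)R_j$ modulo $O(e^{-\gamma_0L})$, whence $\langle\ell_t,v\rangle=O(e^{-\gamma_0L})\|v\|_{H^1}$ by the orthogonality \eqref{4.6}. The coefficients $c_j^0$ in $\mathcal G$ were chosen precisely so that, localized on the $j$-th bump, $\mathcal Q_t$ equals — modulo cutoff commutators and $O(\|v\|_{H^1})$ perturbations of the coefficients coming from $c_j(t)\ne c_j^0$ — the form $\langle L_1^{(c_j^0)}\chi_jv,\chi_jv\rangle$ with $L_1^{(c_j^0)}=-\Ham_2''+c_j^0\Ham_1''$ the Hessian \eqref{L1oper} and $\chi_j\approx\sqrt{\phi_j}$. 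Feeding \eqref{4.6} (which forces the approximate orthogonality of $\chi_jv$ against $(1-\partial_x^2)\varphi_{c_j^0}$ and $(1-\partial_x^2)\partial_x\varphi_{c_j^0}$) into Lemma~\ref{le4.3}, with slowly varying weight $\Theta\approx\phi_j$, then yields $\mathcal Q_t(v(t),v(t))\ge\mu\|v(t)\|_{H^1}^2-Ce^{-\gamma_0L}$ for $\|v(t)\|_{H^1}$ small. I expect this quadratic step to be the main obstacle: the bump localization of $\mathcal Q_t$, the bookkeeping of cutoff commutators and of the soliton interactions, and the passage from \eqref{4.6} to the hypotheses of Lemma~\ref{le4.3} all require care, and one must also check that the coefficient perturbations are small enough not to destroy coercivity.

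Finally I would extract the $c_j$-drift from $P$. Using $\tfrac{d}{dc}\Ham_2(\varphi_c)=c\,\tfrac{d}{dc}\Ham_1(\varphi_c)$ one gets $\partial_{c_j}P=(c_j^0-c_j)\tfrac{d}{dc_j}\Ham_1(\varphi_{c_j})$, which vanishes at $c_j=c_j^0$, while the slope condition $\tfrac{d}{dc}\Ham_1(\varphi_c)=4\kappa c>0$ (established above) makes $c^0$ a nondegenerate local maximum of $P$; hence $P(c(0))\le P(c^0)$ and $P(c^0)-P(c(t))\ge\mu_1\sum_j|c_j(t)-c_j^0|^2$. Combining with the two expansions and the almost-monotonicity $\mathcal G(t)\le\mathcal G(0)+Ce^{-\sigma_0L}$ — bounding $\mathcal G(0)\le P(c(0))+C\|v(0)\|_{H^1}^2+Ce^{-\gamma_0L}\le P(c^0)+C\varepsilon^2+Ce^{-\gamma_0L}$ and $\mathcal G(t)\ge P(c(t))+\tfrac\mu2\|v(t)\|_{H^1}^2-C\|v(t)\|_{H^1}^3-Ce^{-\gamma_0L}$, then absorbing the cubic term for $\|v(t)\|_{H^1}$ small — gives
\[
\mu_1\sum_j|c_j(t)-c_j^0|^2+\tfrac\mu4\|v(t)\|_{H^1}^2\le C\varepsilon^2+Ce^{-\gamma_0L},
\]
with $C,\mu,\mu_1$ independent of $A$. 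Taking square roots, redefining $\gamma_0$ (e.g. $\gamma_0=\sigma_0/4$), and choosing $A$ large completes the bootstrap \eqref{finial con}, the separation $x_j(t)-x_{j-1}(t)>L$ being preserved by \eqref{4.7}.
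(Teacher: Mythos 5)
Your setup (modulation via Lemma~\ref{le4.1}, almost-monotonicity of the $I_j$ from Lemma~\ref{le4.2}, localized coercivity via Lemma~\ref{le4.3}, and the final choice of $A$ independent of the bootstrap constant) matches the paper's strategy, but your final combination step has a sign problem that the argument cannot survive. You correctly compute that $P(c)=\sum_j\big(c_j^0\Ham_1(\varphi_{c_j})-\Ham_2(\varphi_{c_j})\big)$ satisfies $\partial_{c_j}P=(c_j^0-c_j)\,\tfrac{\rmd}{\rmd c_j}\Ham_1(\varphi_{c_j})$ with Hessian $-\mathrm{diag}(4\kappa_jc_j^0)$ at $c^0$, i.e.\ $c^0$ is a nondegenerate local \emph{maximum}. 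But then $P(c^0)-P(c(t))\ge 0$, and your chain $\mathcal G(t)\le\mathcal G(0)+Ce^{-\sigma_0L}$, $\mathcal G(0)\le P(c^0)+C\varepsilon^2+Ce^{-\gamma_0L}$, $\mathcal G(t)\ge P(c(t))+\tfrac{\mu}{2}\|v(t)\|_{H^1}^2-\cdots$ only yields $\tfrac{\mu}{4}\|v(t)\|_{H^1}^2\le\big(P(c^0)-P(c(t))\big)+C\varepsilon^2+Ce^{-\gamma_0L}$: the speed deviation enters on the right-hand side with the sign favorable to the solution, so this gives no bound at all on $\sum_j|c_j(t)-c_j^0|^2$, and the only bound it gives on $\|v(t)\|_{H^1}$ goes through the a priori estimate $\sum_j|c_j(t)-c_j^0|\le K_1A(\varepsilon+e^{-\gamma_0L})$ of \eqref{4.5}, which is proportional to $A$; the resulting $\|v(t)\|_{H^1}\lesssim A(\varepsilon+e^{-\gamma_0L})$ cannot beat $\tfrac{A}{2}(\varepsilon+e^{-\gamma_0L})$, so the bootstrap does not close. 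Your claimed display $\mu_1\sum_j|c_j-c_j^0|^2+\tfrac\mu4\|v\|_{H^1}^2\le C\varepsilon^2+Ce^{-\gamma_0L}$ would require $c^0$ to be a \emph{minimum} of $P$. Conceptually, the drift of $c_j(t)$ is motion along the negative direction of $L_1$ (note $\langle L_1\partial_c\varphi_c,\partial_c\varphi_c\rangle=-\tfrac{\rmd}{\rmd c}\Ham_1(\varphi_c)<0$), which the orthogonality \eqref{4.6} removes from $v$ but does not control for the parameters themselves; a single Abel-resummed Lyapunov functional with frozen weights $c_j^0$ therefore cannot control it.

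The missing ingredient is a separate, quadratic-in-$\|v\|$ control of the speed drift, obtained before the coercivity step. This is exactly the paper's Lemma~\ref{lemma6.2}, estimate \eqref{quad control}: one combines (i) the individual one-sided bounds $\Delta_0^td_j\le K(\|v(0)\|_{H^1}^2+e^{-2\gamma_0L})$ coming from Lemma~\ref{le4.2} applied to each $I_j$ together with the orthogonality conditions (this is \eqref{lin mass control}); (ii) conservation of $F=\Ham_2$ and the expansion \eqref{linear energy control}; (iii) the Taylor relation \eqref{ener mass control}, based on $\tfrac{\rmd}{\rmd c}\Ham_2(\varphi_c)=c\,\tfrac{\rmd}{\rmd c}\Ham_1(\varphi_c)$; and (iv) the Abel transformation \eqref{abel sum} with the time-dependent coefficients $c_1(t)$, $c_j(t)-c_{j-1}(t)\ge\sigma_0>0$, whose positivity converts the one-sided bounds plus the two-sided bound on the weighted sum into two-sided bounds $|\Delta_0^td_j|\le K\Omega(t)$, hence $\sum_j|c_j(t)-c_j(0)|\le K(\|v(t)\|_{H^1}^2+\|v(0)\|_{H^1}^2+e^{-2\gamma_0L})$. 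Only with this in hand does the localized coercivity step (your quadratic step; the paper's \eqref{v control} via Lemma~\ref{le4.3}) give $\|v(t)\|_{H^1}^2\le K(\|v(0)\|_{H^1}^2+e^{-2\gamma_0L})$ with $K$ independent of $A$, after which choosing $A=2K$ closes the bootstrap. Replacing your ``nondegenerate maximum of $P$'' step by this monotonicity/Abel argument would bring your outline in line with a correct proof.
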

\begin{proof}
Let $A>0$ to be fixed later. Since from \eqref{initial assumption}, $u(t)$ is close to the sum of $N$ well coupled solitons, one may apply Lemma \ref{le4.1} on $[0,t^\ast]$. It reveals that there exists $c_j(t)$ and $x_j(t)$ such that the statement of Lemma \ref{le4.1} holds true. Let us define
\begin{equation*}
d_j(t):=\sum_{k=j}^NE(\varphi_{c_j(t)}),\quad \Delta_0^td_j:=d_j(t)-d_j(0).
\end{equation*}
We need to control the variations of $c_j(t)$, which is essentially quadratic with respect to $\|v(t)\|_{H^1}$. Then we estimate $\|v(t)\|_{H^1}$, which gives the stability result.
\begin{lemma}\label{lemma6.2}
There exists $K>0$, such that for all  $t\in [0,t^\ast]$,
\begin{eqnarray}
&&\sum_{j=1}^N\left|c_j(t)-c_j(0)\right|\leq K\big(\|v(t)\|^2_{H^1}+\|v(0)\|^2_{H^1}+e^{-2\gamma_{0} L}\big),\label{quad control}\\
&&\|v(t)\|^2_{H^1}\leq K \left( \|v(0)\|^2_{H^1}+e^{-2\gamma_{0} L} \right).\label{v control}
\end{eqnarray}
\end{lemma}
\begin{proof}[Proof of Lemma \ref{lemma6.2}.]
We claim the following
\begin{eqnarray}
&&\big|\sum_{j=1}^N\big(F(R_j(t))-F(R_j(0))\big)+\frac12\int_{\R}3R(t)v^2(t)+R(t)v^2_x(t)+2R_x(t)v(t)v'(t)+2\omega v^2(t)\rmd x \big|\nonumber\\ &&\leq K\big(\|v(t)\|^3_{H^1}+\|v(0)\|^2_{H^1}+e^{-2\gamma_{0} L}\big),\label{linear energy control}\\
&&\Delta_0^td_j+\frac{1}{2}\int_{\R}\big(v^2(t)+v^2_x(t)\big)\Psi _{K}(\cdot-y_{j}(t))\leq K \left( \|v(0)\|^2_{H^1}+e^{-2\gamma_{0} L} \right).\label{lin mass control}\\
&& \left| \left( F(R_j(t))-F(R_j(0)) \right) - c_j(t) \left( E(R_j(t))-E(R_j(0)) \right) \right|\leq K|c_j(t)-c_j(0)|^2.\label{ener mass control}
\end{eqnarray}
Indeed, \eqref{linear energy control} follows from the decomposition of $u(t)$ \eqref{decompps} and the orthogonal conditions satisfied by $v$. We expand
\begin{eqnarray*}
&&F(u(t))=\frac12\sum_{j=1}^N\int_{\R}\big(R^3_j+R_jR^2_{jx}+2\omega R^2_j\big)(t)\rmd x+\frac12\int_{\R}\big(3Rv^2+Rv^2_x+2R_xvv'+2\omega v^2\big)(t)\rmd x\\
&&\sum_{j=1}^N\int_{\R}\big(3R^2_jv+2R_jR_{jx}v'+R^2_{jx}v+4\omega R_jv\big)(t)\rmd x+\omega\sum_{j\neq k}\int_{\R}R_j(t)R_k(t)\rmd x\\
&&\frac12\int_{\R}\{(R+v)^3-\sum_{j=1}^N\big(R_j^3+3R_j^2v\big)-3Rv^2\}(t)\rmd x\\
&&\frac12\int_{\R}\{(R+v)(R'+v')^2-\sum_{j=1}^N\big(R_jR^2_{jx}+2R_jR_{jx}v'+R^2_{jx}v\big)-Rv^2_x-2R_xvv'\}(t)\rmd x.
\end{eqnarray*}
From the equation of $\varphi_c$ \eqref{eq:stationary} and orthogonal conditions, we have for all $j$
\[
\int_{\R}\big(3R^2_jv+2R_jR_{jx}v'+R^2_{jx}v+4\omega R_jv\big)(t,x)\rmd x=2c_j(t)\int_{\R}(1-\partial_x^2)R_j(t,x)v(t,x)\rmd x=0.
\]
Since $|R_j(t,x)|\leq Ce^{-\sigma_0|x-x_j(t)|}$ and $|x_j(t)-x_k(t)|\geq L$ for $j\neq k$. Then we have
\[
\left| \int_{\R}R_j(t,x)R_k(t,x)\rmd x \right| \leq Ce^{-\frac{\sigma_0L}{2}}.
\]
In view of the smallness of $\|v(t)\|_{H^1}$, we have
\[
\left| F(u(t))-F(R_j(t))-\frac12\int_{\R}\big(3Rv^2+Rv^2_x+2R_xvv'+2\omega v^2\big)(t,x)\rmd x \right|\leq C\big(e^{-\frac{\sigma_0L}{2}}+\|v(t)\|^3_{H^1}\big).
\]
Therefore, \eqref{linear energy control} follows from the conservation $F(u(t))=F(u(0))$.

Equation \eqref{lin mass control} follows from Lemma \ref{le4.2} and the orthogonal conditions satisfied by $v$. Equation \eqref{ener mass control} follows from $\frac{dF(\varphi_c)}{d c}=c\frac{dE(\varphi_c)}{d c}$.
 Now by the Abel transformation, we have
\begin{equation}\label{abel sum}
\begin{split}
\sum_{j=1}^Nc_j(t)\big(E(R_j(t))-E(R_j(0))\big) = & \sum_{j=1}^Nc_j(t)\big(\Delta_0^td_j-\Delta_0^td_{j+1}\big)+c_N(t)\Delta_0^td_N\\
= & \sum_{j=2}^N\big(c_j(t)-c_{j-1}(t)\big)\Delta_0^td_j+c_1(t)\Delta_0^td_1.
\end{split}
\end{equation}

{\bf Proof of \eqref{quad control}.} Let us define
\[
\Omega(t)=\|v(0)\|^2_{H^1}+\|v(t)\|^2_{H^1}+e^{-\frac{\sigma_0L}{2}}+\left|c_j(t)-c_j(0)\right|^2.
\]
From \eqref{linear energy control}, one has
\[
\left| \sum_{j=1}^N\left( F(R_j(t))-F(R_j(0)) \right) \right|\leq K\Omega(t),
\]
by employing \eqref{ener mass control} and \eqref{abel sum}, one deduces
\begin{equation*}
\left| \sum_{j=2}^N \left( c_j(t)-c_{j-1}(t) \right)\Delta_0^td_j+c_1(t)\Delta_0^td_1 \right| \leq K\Omega(t).
\end{equation*}
Notice that from \eqref{lin mass control}, we have, for $j=1,\cdot\cdot\cdot,N$, $\Delta_0^td_j\leq K\big(\|v(0)\|^2_{H^1}+e^{-2\gamma_{0} L}\big)$. Then in view of \eqref{initial assumption} for $j\geq2$, we have $c_1(t)\Delta_0^td_1\geq -K\Omega(t)$. Since $c_1(t)\geq\sigma_0>0$, we obtain $|\Delta_0^td_1|\leq K\Omega(t)$. Similarly,
from \eqref{initial assumption} and $c_j(t)-c_{j-1}(t)\geq\sigma_0>0$, we finally have, for all $j=1,\cdot\cdot\cdot,N$, $|\Delta_0^td_j|\leq K\Omega(t)$. Therefore,
for all $j=1,\cdot\cdot\cdot,N$, we have
\[
\left| \big(E(R_j(t))-E(R_j(0)) \right|=4\kappa_j c_j(0)|c_j(t)-c_j(0)|+O\big(|c_j(t)-c_j(0)|^2\big)\leq K\Omega(t).
\]
Thus
\[
\sum_{j=1}^N|c_j(t)-c_j(0)|\leq K\Omega(t).
\]
{\bf Proof of \eqref{v control}.} The idea is by employing \eqref{linear energy control}--\eqref{abel sum} and the coercivity property of the following quadratic form
\[
\langle \mathcal L_{loc} v(t),v(t)\rangle:=\int_{\R} \left[ c(t,x)v_x^2(t,x)+\big(c(t,x)-2\omega\big)v^2(t,x)+\left( Rvv_{xx}+R'vv'+(R''-3R)v^2 \right)(t,x) \right]\rmd x,
\]
where $c(t,x)=c_1(t)+\sum_{j=2}^N \left( c_j(t)-c_{j-1}(t) \right)\Psi_K(x-y_j)$. Inserting \eqref{linear energy control} and \eqref{abel sum} into \eqref{ener mass control}, then using \eqref{quad control}, one has
\begin{eqnarray*}
&&\left| \frac12\langle \mathcal L_{loc} v(t),v(t)\rangle-\sum_{j=1}^N(c_j(t)-c_{j-1}(t)) \left(\Delta_0^td_j+\frac{1}{2}\int_{\R}\big(v^2(t)+v^2_x(t)\big)\Psi _{K}(\cdot-y_{j}(t))\rmd x \right) \right|\\
&&\leq K\bigg(\|v(t)\|^2_{H^1}+\|v(0)\|^2_{H^1}+e^{-2\gamma_{0} L}+\sum_{j=1}^N|c_j(t)-c_j(0)|^2\bigg)\\
&&\leq  K\bigg(\|v(t)\|^2_{H^1}+\|v(0)\|^2_{H^1}+e^{-2\gamma_{0} L}\bigg).
\end{eqnarray*}
Since $c_j(t)-c_{j-1}(t)>\sigma_0$, we then from above to have
\[
\langle \mathcal L_{loc} v(t),v(t)\rangle \leq  K\bigg(\|v(t)\|^2_{H^1}+\|v(0)\|^2_{H^1}+e^{-2\gamma_{0} L}\bigg).
\]
On the other hand, Lemma \ref{le4.3} reveals that $\langle \mathcal L_{loc} v(t),v(t)\rangle\geq \lambda_0\|v(t)\|^2_{H^1}$. Therefore, \eqref{v control} holds true and this completes the proof of Lemma \ref{lemma6.2}.
\end{proof}
Now we complete the proof of Proposition \ref{prop4.2}. From Lemma \ref{lemma6.2} and $\|v(0)\|_{H^1}+\sum_{j=1}^N|c_j(0)-c^0_j|\leq K_1\varepsilon$ (see \eqref{4.5}), we have
\begin{eqnarray*}
&&\left\| u(t)-\sum_{j=1}^{N}\varphi_{c^0_{j}}(\cdot -x_{j}(t)) \right\|_{H^1}\\
&&\quad \leq \left\|u(t)-\sum_{j=1}^{N}R_j(t)\right\|_{H^1} + \left\| \sum_{j=1}^{N}R_j(t)-\sum_{j=1}^{N}\varphi_{c^0_{j}}(\cdot -x_{j}(t)) \right\|_{H^1}\\
&&\quad \leq \|v(t)\|_{H^1}+C\sum_{j=1}^{N}|c_j(t)-c^0_j|\\
&&\quad \leq\|v(t)\|_{H^1}+C\sum_{j=1}^{N}|c_j(t)-c_j(0)|+C\sum_{j=1}^{N}|c_j(0)-c^0_j|\\
&&\quad \leq\|v(t)\|_{H^1}+C\left( \|v(0)\|^2_{H^1}+e^{-2\gamma_{0} L} \right) + CK_1\varepsilon\\
&&\quad \leq K\big(\varepsilon + e^{-\gamma_{0} L}\big),
\end{eqnarray*}
where $K>0$ is independent of $A$. By choosing $A=2K$ and $A\alpha_1$ small enough, we have \eqref{finial con}. This completes the proof of Proposition \ref{prop4.2}.
\end{proof}

\section{Asymptotic stability of train of solitons}\label{sec 7}
This section devotes to the proof of Theorem \ref{th1.3} and Corollary \ref{co1.3}. From Proposition \ref{thm4.1}, the CH solution $u(t)$ is close to the sum of $N$ solitons for all time and admits a decomposition as in Lemma \ref{le4.1}. Then the proof of  Theorem \ref{th1.3} proceeds into two steps. First, by employing Theorem \ref{th1.1} and monotonicity properties, we show the convergence of $v(t)$ to $0$ around each solitons. Second, we prove the convergence of $v(t)$ in $H^1(\R)$ in the region $x>2\omega t$ by monotonicity arguments.

\subsection{Convergence around the solitons}
In this subsection, we show the following asymptotic result on $v(t,\cdot+x_j(t))$, $c_j(t)$ and $x'_j(t)$ as $t\rightarrow+\infty$.
\begin{proposition}\label{prop7.1}
Under the assumption of Theorem \ref{th1.3}, for any $j\in \{1,\cdot\cdot\cdot,N\}$, there exists $c_j^{\star}>2\omega$ such that
\begin{eqnarray*}\label{conver so}
v(t,\cdot+x_j(t))\rightharpoonup0 \quad \text{in}\ \quad H^1(\R), \ c_j(t)\rightarrow c_j^{\star},\  \quad x'_j(t)\rightarrow c_j^{\star}, \ \quad \text{as}\ \quad t\rightarrow+\infty.
\end{eqnarray*}
\end{proposition}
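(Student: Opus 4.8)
The plan is to prove Proposition~\ref{prop7.1} by reducing, for each index $j$, the large-time behavior near the $j$-th soliton to the rigidity Theorem~\ref{th1.1} for a single soliton, exactly as the single-soliton asymptotic stability (Theorem~\ref{th1.2}) was deduced in Section~\ref{sec_3}. Fix $j\in\{1,\dots,N\}$. First I would argue by contradiction: suppose that $v(t,\cdot+x_j(t))$ does not converge weakly to $0$ in $H^1(\R)$ as $t\to+\infty$. By the orbital stability from Proposition~\ref{thm4.1} (which gives $\sup_{t\ge 0}\|v(t)\|_{H^1}\le A(\varepsilon+e^{-\gamma_0 L})$ small), together with the parameter estimates \eqref{4.5}, there exist $\tilde v_{0,j}\in H^1(\R)$ with $\tilde v_{0,j}\not\equiv 0$, a velocity $\tilde c_j>2\omega$, and a sequence $t_n\to+\infty$ such that $v(t_n,\cdot+x_j(t_n))\rightharpoonup\tilde v_{0,j}$ in $H^1(\R)$ and $c_j(t_n)\to\tilde c_j$. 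I would then set $\tilde u_{0,j}:=\varphi_{\tilde c_j}+\tilde v_{0,j}$ and let $\tilde u_j$ be the solution of the CH equation with this initial datum (using the well-posedness in $Y_+$, Proposition~\ref{pr2.5}, after checking that $\tilde u_{0,j}\in Y_+$ by the weak-$\ast$ closedness of $Y_+$ and the uniform bound $m_{u}+\omega\ge 0$).

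The heart of the argument is then to show that $\tilde u_j$ is $H^1$-localized in the sense of Definition~\ref{de3.1}, which by Theorem~\ref{th1.1} forces $\tilde u_j$ to be exactly a soliton, hence $\tilde v_{0,j}\equiv 0$, contradicting $\tilde v_{0,j}\not\equiv 0$. To obtain the $H^1$-localization I would follow the scheme of the proof of Proposition~\ref{pr H^1 compactness}, but now localized in a window attached to the $j$-th bump. The key new tool is the almost-monotonicity of the functionals $I_i(t)$ from Lemma~\ref{le4.2}, which measure the $H^1$ energy to the right of the midpoint $y_i(t)=\tfrac12(x_{i-1}(t)+x_i(t))$; combined with the time reversibility $(t,x)\mapsto(-t,-x)$ of CH (as in Corollary~\ref{cor 1}) one gets that the energy of $u$ in the slab between $y_j(t)$ and $y_{j+1}(t)$ (with the conventions $y_1(t)=-\infty$, $y_{N+1}(t)=+\infty$) is almost non-increasing from the left edge and almost non-decreasing from the right edge. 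Passing to the limit along $t_n$ (using the local strong convergence $u(t_n+t,\cdot+x_j(t_n+t))\to\tilde u_j(t,\cdot+\tilde x_j(t))$ in $H^1_{loc}$ coming from the exponential spatial decay of the localized $H^1$ norm — which itself follows from Proposition~\ref{th3.1} applied in the moving frame $x_j(t)$, valid because the modulation estimate \eqref{4.7} gives $\dot x_j(t)>c_1-C\alpha>2\omega$), any loss of local $H^1$ mass of $\tilde u_j$ would propagate a net loss of mass on one side, contradicting the conservation of $E(u)=\|u\|_{H^1}^2/2$ for the original solution. This yields \eqref{H1-locali}-type localization for $\tilde u_j$ up to the translation $\tilde x_j(t)$, and closes the contradiction; hence $v(t,\cdot+x_j(t))\rightharpoonup 0$ in $H^1(\R)$.

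Next I would upgrade the subsequential statement to the convergence $c_j(t)\to c_j^\star$ for some $c_j^\star>2\omega$. This follows the argument of Section~\ref{conver c}: using Corollary~\ref{cor FR} (the almost-monotonicity of the localized version $F_R$ of $F=\Ham_2$), localized now to the right of $y_j(t)$, together with the fact that $v(t,\cdot+x_j(t))\to 0$ in $L^2_{loc}$ (a consequence of the weak $H^1$ convergence to $0$ just proved) and the smallness of $u$ in the regions separating consecutive bumps (from the exponential decay in Proposition~\ref{th3.1} and the linear-in-time separation $|x_j(t)-x_k(t)|\ge \tfrac{L}{2}+\sigma_0 t$ from \eqref{4.7}), one shows that the quantity $F(\varphi_{c_j(t)})+F(\varphi_{c_{j+1}(t)})+\cdots+F(\varphi_{c_N(t)})$ (the total energy of the bumps to the right of $y_j(t)$, which is what the localized functional converges to) is almost non-increasing in $t$. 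By downward induction on $j$, starting from $j=N$ where $F(\varphi_{c_N(t)})=f(c_N(t))$ with $f'=4\kappa c^2>0$ by \eqref{eq:Fderivative to c}, the monotonicity of $f$ converts the almost-monotonicity of the energy sums into the existence of limits $c_j^\star$, and the strict monotonicity $f'>0$ gives $c_j^\star>2\omega$ together with $c_1^\star<c_2^\star<\cdots<c_N^\star$ from the ordering preserved by Lemma~\ref{le4.1}. Finally, the limit $x'_j(t)\to c_j^\star$ is read off from the modulation estimate \eqref{4.7}: the right-hand side tends to $0$ since $\int_\R e^{-\sigma_0|x-x_j(t)|}(v^2+v_x^2)(t)\,dx\to 0$ (from the weak $H^1$ convergence $v(t,\cdot+x_j(t))\rightharpoonup 0$ combined with the uniform exponential decay \eqref{decay of var}, giving strong $L^2_{loc}$ convergence against the weight) and $e^{-\sigma_0(L+\sigma_0 t)/4}\to 0$, hence $\dot x_j(t)-c_j(t)\to 0$ and therefore $\dot x_j(t)\to c_j^\star$.

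The main obstacle I anticipate is the passage-to-the-limit step in establishing the $H^1$-localization of each asymptotic object $\tilde u_j$: one must carefully track the interaction between the $j$-th bump and its neighbors so that the localized monotonicity functionals $I_j$, $I_{j+1}$ behave for the sequence $\{u(t_n+\cdot,\cdot+x_j(t_n))\}$ as they do for a single soliton, and in particular verify that the cross terms decay fast enough (using the linear separation of the $x_k(t)$) that the limit solution $\tilde u_j$ genuinely sees only one soliton's worth of energy localized around it. The bookkeeping of which slab of energy is controlled from which side, and ensuring the leak-to-the-left / leak-to-the-right dichotomy produces a contradiction with $E$-conservation, is where all the care is needed; the rest is a routine transcription of Sections~\ref{sec_3} and \ref{conver c} to the moving frame $x_j(t)$.
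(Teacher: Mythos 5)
Your proposal follows essentially the same route as the paper's proof: a contradiction argument that extracts a weak limit $\tilde v_{0,j}\not\equiv 0$ along $t_n\to+\infty$, constructs the limit solution with data $\varphi_{\tilde c_j}+\tilde v_{0,j}$, proves its $H^1$-localization and invokes the rigidity Theorem \ref{th1.1} to force $\tilde v_{0,j}\equiv 0$, and then deduces $c_j(t)\to c_j^\star$ from the almost-monotonicity of the localized $F$-type functionals and $\dot x_j(t)\to c_j^\star$ from \eqref{4.7}. The paper's own proof is terser (it cites Proposition \ref{pr H^1 compactness} and "monotonicity arguments related to $F_j(t)$" without spelling out the multi-bump bookkeeping), so your additional detail on the $I_j$ functionals of Lemma \ref{le4.2} and the downward induction in $j$ is a faithful elaboration of the same argument rather than a different method.
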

\begin{proof}
Consider a solution $u(t)$ satisfying the assumption of Theorem \ref{th1.3}, then by Proposition \ref{thm4.1}, $u(t)$ is uniformly close in $H^1(\R)$ to the superposition of $N$ solitons for all $t\geq0$. From Lemma \ref{le4.1}, one deduce that $v(t)$ is uniformly small in $H^1(\R)$ and $\sum_{j=1}^N|c_j(t)-c_j(0)|$ is also uniformly small.

Assume that $v(t)$ does not converge
to $0$ weakly in $H^{1}(\mathbb{R})$. Then there exists $\tilde{v}_{0} \in H^1(\mathbb{R}),\tilde{v}_0 \not\equiv 0$, and a sequence $t_n$
going to $\infty$ with $n$ such that
\begin{equation}\label{weak H^1 converge2}
  v (t_n) \rightharpoonup \tilde{v}_0 \ \text{in} \ H^1(\mathbb{R}) \ \text{as} \ n \to \infty
\end{equation}
From the weak convergence and stability result, we deduce that $\|v_0\|_{H^1} \le  C\left(\mathrm{e}^{-\sigma_{0} L_0}+\varepsilon\right)$, and therefore $\|v_0\|_{H^1}$ is small by taking $\varepsilon$ small and $L_0$ large. On the other hand, extracting a subsequence from $\{t_n\}$, we deduce the existence of $\tilde{c} >2\omega$ such that $c(t_n)$ converges to $\tilde{c}$ as $n$ goes to $+\infty$.

Now let $\tilde{u}_{0}=\varphi_{\tilde{c}} +\tilde{v}_0$ and
consider $\tilde{u}$ solution of the CH equation with initial data $\tilde{u}(0)=\tilde{u}_0$. Let $\tilde{c}(t)$ and $\tilde{x}(t)$ be the parameters associated to the solution $\tilde{u}(t)$.  To prove that \eqref{weak H^1 converge2} leads to a contradiction if  $\tilde{v}_0 \not\equiv 0$, it suffices to prove the $H^1$-localization of $\tilde{u}$ up to a translation. This will then contradict Theorem \ref{th1.1}. From Proposition \ref{pr H^1 compactness} and Theorem \ref{th1.1}, there exists $c^{\star}>2\omega$ and $x^{\star}\in \R$ such that $\tilde{u}(t)=\varphi_{c^{\star}}(\cdot-c^{\star}t-x^{\star})$. In particular,  $\tilde{u}(0)=\varphi_{c^{\star}}(\cdot-x^{\star})=\varphi_{\tilde{c}} +\tilde{v}_0$. By uniqueness of the decomposition of $\tilde{u}_{0}$,  we have $c^{\star}=\tilde{c}, x^{\star}=0$ and $\tilde{v}_0=0$, which is a contradiction.

The convergence of $c_j(t)$ follows from the monotonicity arguments related to $F_j(t)$. The fact that $x'_j(t)\rightarrow c_j^{\star}$ is a direct consequence of \eqref{4.7}.
\end{proof}

\subsection{Asymptotic behavior on $x>2\omega t$}
In this subsection, we conclude the proof of Theorem \ref{th1.3} by employing the monotonicity of energy.
\begin{proposition}\label{prop7.2}
Under the assumption of Theorem \ref{th1.3}, there holds the following
\begin{eqnarray*}
\|v(t)\|_{H^1(x>2\omega t)}\rightarrow0,\quad \text{as} \quad t\rightarrow+\infty.
\end{eqnarray*}
\end{proposition}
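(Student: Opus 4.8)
The plan is to mirror the argument of Section~\ref{conver 2omega} for a single soliton, now combined with the train–monotonicity machinery of Section~\ref{sec_2.4} and the weak convergence supplied by Proposition~\ref{prop7.1}. First I would record two consequences of Proposition~\ref{prop7.1} and Lemma~\ref{le4.1}: since $\dot x_j(t)\to c_j^\star>2\omega$ one has $x_1(t)-2\omega t\to+\infty$, and by \eqref{4.7} the gaps $x_{j+1}(t)-x_j(t)$ grow at least like $\sigma_0 t$; hence for $t$ large the whole soliton configuration sits deep inside $\{x>2\omega t\}$. Introducing the interbump fronts $y_j(t)=\tfrac12\bigl(x_{j-1}(t)+x_j(t)\bigr)$ for $j=2,\dots,N$, together with $y_1(t):=2\omega t$ and $y_{N+1}(t):=x_N(t)+y_0$ for a large parameter $y_0$, the region $\{x>2\omega t\}$ splits into the far-right tail $\{x>y_{N+1}(t)\}$ and the $N$ ``blocks'' $\{y_j(t)<x<y_{j+1}(t)\}$, the $j$-th containing exactly the $j$-th soliton. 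It therefore suffices to show that the $H^1$-mass of $v$ over each of these regions tends to $0$.

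For the far-right tail I would repeat the computation of Lemma~\ref{3.1.} (equivalently, the front-to-the-right version of Lemma~\ref{le4.2}): by the almost-monotonicity of the energy of $u$ to the right of a front moving at a speed $\beta\in(2\omega,c_N^\star)$ and staying to the right of $x_N(\cdot)$, the energy of $u$ to the right of $x_N(t)+y_0$ at time $t$ is bounded by the energy of $u_0$ to the right of $x_N(t)+y_0-\beta t$, plus $Ce^{-y_0/K}$. Since $x_N(t)+y_0-\beta t\to+\infty$ and $u_0\in H^1(\R)$, the first term vanishes as $t\to+\infty$; peeling off the (exponentially small in $y_0$) soliton tails and letting $y_0\to\infty$ gives $\lim_{t\to+\infty}\|v(t)\|_{H^1(x>x_N(t)+y_0)}=0$.

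The core of the proof is the $N$ blocks. On block $j$ one writes $u=\varphi_{c_j(t)}(\cdot-x_j(t))+v+(\text{soliton cross terms})$, the cross terms being exponentially small by the linear separation of the centers. Both the localized energy $I_j(t)$ of Lemma~\ref{le4.2} and the localized momentum $\mathcal F_j(t):=\tfrac12\int_{\R}(u^3+uu_x^2+2\omega u^2)\Psi_K(x-y_j(t))\rmd x$ (by the computation of Proposition~\ref{pro4.3}) are almost non-increasing, hence convergent, and so are the block quantities $I_j-I_{j+1}$ and $\mathcal F_j-\mathcal F_{j+1}$. The terms linear in $v$ inside these block quantities vanish in the limit: for the energy because $v(t,\cdot+x_j(t))\rightharpoonup0$ in $H^1(\R)$ (Proposition~\ref{prop7.1}) and hence weakly in $L^2$ against $\varphi_{c_j^\star}\in L^2$, and for the momentum because $F'(\varphi_c)=c(1-\partial_x^2)\varphi_c$ together with the \emph{exact} orthogonality relations \eqref{4.6}. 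Consequently the quadratic-in-$v$ part of $c_j(t)\,(I_j-I_{j+1})(t)-(\mathcal F_j-\mathcal F_{j+1})(t)$ converges; this quadratic part is $\langle L_1v(t),v(t)\rangle$ (with $c=c_j(t)$ in \eqref{L1oper}) localized to block $j$, and by the weighted coercivity of Lemma~\ref{le4.3} (valid since $K$ is large and \eqref{4.6} holds) it dominates $C\int_{\text{block }j}(v^2+v_x^2)(t)\rmd x$. Comparing the limiting values of the localized functionals with their values on the initial datum through conservation of $E$ and $F$ and the telescoping identities already exploited in the orbital-stability proof (Lemma~\ref{lemma6.2}) then pins these limits down and forces the limiting quadratic defect to be zero, i.e.\ $\int_{\text{block }j}(v^2+v_x^2)(t)\rmd x\to0$.

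The main obstacle is exactly this last step: weak $H^1$ convergence of $v$ around each soliton does not by itself control the quadratic forms entering $I_j$ and $\mathcal F_j$, so one cannot simply pass to the limit. The point is that both conservation laws, both families of almost-monotone localized functionals, and the coercivity of the Hessian $L_1$ have to be used simultaneously --- the same mechanism that underlies the orbital-stability estimate of Lemma~\ref{lemma6.2}, now run in the limit $t\to+\infty$ --- in order to upgrade the weak convergence of Proposition~\ref{prop7.1} into strong convergence on $\{x>2\omega t\}$. The remaining bookkeeping (choosing admissible front speeds, the value $2\omega$ being the critical threshold for the CH flow, and handling the transitions between consecutive blocks) is routine given the monotonicity formulas of Section~\ref{sec_2.4}. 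Combining the far-right estimate with the block estimates yields $\|v(t)\|_{H^1(x>2\omega t)}\to0$ as $t\to+\infty$, which is the assertion of Proposition~\ref{prop7.2}.
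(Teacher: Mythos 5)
Your first step (the far-right tail, controlled by the almost-monotonicity of the energy behind a front of speed $\beta\in(2\omega,c_N^\star)$ together with the decay of $u_0$ at $+\infty$ and the weak convergence near the last soliton) is essentially the paper's first step and is fine. The genuine problem is the ``core'' block step. You propose to pass to the limit in the localized quantities $I_j$, $\mathcal F_j$, kill the linear-in-$v$ terms by weak convergence, and then argue that conservation of $E$ and $F$, the telescoping identities of Lemma~\ref{lemma6.2}, and the coercivity of Lemma~\ref{le4.3} ``pin down the limits and force the limiting quadratic defect to be zero.'' There is no mechanism there that produces zero: the conservation laws and the Abel-summation identities relate the time-$t$ quantities to their values at $t=0$, where the residual is $v(0)\not\equiv 0$; run to the limit, this bookkeeping reproduces exactly the orbital-stability bound $\|v(t)\|_{H^1}^2\lesssim \|v(0)\|_{H^1}^2+e^{-2\gamma_0L}$ of Lemma~\ref{lemma6.2} and nothing smaller. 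In other words, weak convergence of $v(t,\cdot+x_j(t))$ plus the two conservation laws plus coercivity cannot by themselves exclude that an $O(\|v(0)\|_{H^1})$ amount of energy persists between the solitons inside $\{x>2\omega t\}$; the fact that it cannot persist is a dispersive statement (radiation travels with speed at most $2\omega$), and it must be captured by a monotonicity argument run \emph{between two large times}, not between $t$ and $0$.

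This is precisely what the paper does and what is missing from your proposal: a backwards induction on $j$ in which, for each $t$ large, one chooses an intermediate time $t'=t'(t)$ (e.g.\ $x_{j_0}(t')-x_{j_0-1}(t')-\tfrac{\sigma_0}{2}(t-t')=2y_0$) so that the same energy front sits just to the \emph{left} of soliton $j_0$ at time $t'$ and just to the \emph{right} of soliton $j_0-1$ at time $t$. The almost-monotonicity of the energy behind this front bounds the right-region energy at time $t$ by its value at time $t'$; at time $t'$ that value is computed as $\sum_{k\ge j_0}E(\varphi_{c_k^\star})+o(1)$ using the induction hypothesis (already proved for the region $x>x_{j_0}$) together with the weak convergence of Proposition~\ref{prop7.1}; subtracting $\sum_{k\ge j_0}E(\varphi_{c_k(t)})$ at time $t$ (legitimate since $c_k(t)\to c_k^\star$ and the linear terms vanish by weak convergence and the decay of the profiles) then forces the $v$-energy to the right of $x_{j_0-1}(t)$ to vanish, and a final front of speed between $2\omega$ and $c_1^0$ handles the region $2\omega t<x<x_1(t)$. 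Notice that this argument uses only the $E$-monotonicity and Proposition~\ref{prop7.1}; the $F$-functional, the localized Hessian $\langle L_1 v,v\rangle$ and Lemma~\ref{le4.3} are not needed here (they enter the orbital stability and the convergence of $c_j(t)$, not this upgrade from weak to strong convergence). Your block decomposition could be salvaged, but only by replacing the conservation-law bookkeeping with this two-time monotonicity/transport mechanism.
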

\begin{proof}
First, let $y_0>0$ and $\gamma_0=\frac{\sigma_0}{4}$, by arguing backwards in time (from $t$ to $0$) and conservation of $E(u)$, one has
\begin{equation*}
\int_{\mathbb{R}}(u^2+u_{x}^2)(t,x)\Psi_{K}\big(x-x_N(t)-y_{0}\big)\rmd x\leq\int_{\mathbb{R}}(u^2+u_{x}^2)(0,x)\Psi_{K}\big(x-x_N(0)-\frac{\sigma_0t}{2}-y_{0}\big)\rmd x+Ce^{-\gamma_0y_0}.
\end{equation*}
Therefore, the decay property of $\varphi_c$ reveals that
\begin{equation*}
\int_{x>x_N(t)+y_0}(v^2+v_{x}^2)(t,x)\rmd x\leq2\int_{\mathbb{R}}(u^2+u_{x}^2)(0,x)\Psi_{K}\big(x-x_N(0)-\frac{\sigma_0t}{2}-y_{0}\big)\rmd x+Ce^{-\gamma_0y_0}.
\end{equation*}
By Proposition \ref{prop7.1}, one sees that, for fixed $y_0$, $\int_{x_N(t)<x<x_N(t)+y_0}(v^2+v_{x}^2)(t,x)\rmd x\rightarrow0$ as $t\rightarrow+\infty$, then we have
\[
\lim_{t\rightarrow+\infty}\int_{x>x_N(t)}(v^2+v_{x}^2)(t,x)\rmd x=0.
\]
We need to show that for all $j$, there holds $\int_{x>x_j(t)}(v^2+v_{x}^2)(t,x)\rmd x=0$ as $t\rightarrow+\infty$ by backwards induction on $j$. Assume that for some $j_0\in \{2,\cdot\cdot\cdot,N\}$, we have $\lim_{t\rightarrow+\infty}\int_{x>x_{j_0}(t)}(v^2+v_{x}^2)(t,x)\rmd x=0$,  then we need to show
\begin{equation*}
\lim_{t\rightarrow+\infty}\int_{x>x_{j_0-1}(t)}(v^2+v_{x}^2)(t,x)\rmd x=0.
\end{equation*}
For $t\geq0$ large, there exists $0<t'=t'(t)<t$ such that
\[
x_{j_0}(t')-x_{j_0-1}(t')-\frac{\sigma_0}{2}(t-t')=2y_0,
\]
which follows from $x_{j_0}(t)-x_{j_0-1}(t)\geq\frac{\sigma_0}{2}t\geq2y_0$  and $x_{j_0}(0)-x_{j_0-1}(0)-\frac{\sigma_0}{2}t<0<2y_0$ if $t$ is large.
Therefore, one has
\begin{eqnarray}
&&\int_{\R}(u^2+u_{x}^2)(t,x)\Psi_{K}\big(x-x_{j_0-1}(t)-y_{0}\big)\rmd x\nonumber\\&&\leq\int_{\mathbb{R}}(u^2+u_{x}^2)(t',x)\Psi_{K} \left( x-x_{j_0-1}(t')-\frac{\sigma_0}{2}(t-t')-y_{0} \right)\rmd x+Ce^{-\gamma_0y_0}\nonumber\\ &&\leq
\int_{\mathbb{R}}(u^2+u_{x}^2)(t',x)\Psi_{K}\big(x-x_{j_0}(t')-y_{0}\big)\rmd x+Ce^{-\gamma_0y_0}.\label{local est21}
\end{eqnarray}
Since $t'(t)\rightarrow+\infty$ as $t\rightarrow\infty$, by $H^1_{loc}$ convergence of $v(t,x+x_{j_0}(t))$ and the induction assumption, for fixed $y_0$, we obtain
\[
\lim_{t\rightarrow+\infty}\int_{x>x_{j_0}(t')-2y_0}(v^2+v_{x}^2)(t',x)\rmd x=0.
\]
From Proposition \ref{prop7.1}, one deduces that
\[
\limsup_{t\rightarrow+\infty}\int_{\R}(u^2+u_{x}^2)(t',x)\Psi_{K}\big(x-x_{j_0}(t')-y_{0}\big)\rmd x-\sum_{k=j_0}^NE(\varphi_{c^{\star}_k})\leq Ce^{-\gamma_0y_0}.
\]
Moreover, in viewing of the decomposition of $u(t)$, we have
\begin{eqnarray*}
&&\int_{\R}(v^2+v_{x}^2)(t,x)\Psi_{K}\big(x-x_{j_0}(t)-y_{0}\big)\rmd x\\&&\leq \int_{\R}(u^2+u_{x}^2)(t,x)\Psi_{K}\big(x-x_{j_0}(t)-y_{0}\big)\rmd x-\sum_{k=j_0}^NE(\varphi_{c_k(t)})+Ce^{-\gamma_0y_0},
\end{eqnarray*}
since $c_k(t)\rightarrow c^{\star}_k$ as $t\rightarrow+\infty$, by \eqref{local est21}, we have
\[
\lim_{t\rightarrow+\infty}\int_{x>x_{j_0}(t)+y_0}(v^2+v_{x}^2)(t,x)\rmd x=0,
\]
and therefore
\[
\lim_{t\rightarrow+\infty}\int_{x>x_{j_0}(t)}(v^2+v_{x}^2)(t,x)\rmd x=0,
\]
The above induction argument reveals finally that
\[
\lim_{t\rightarrow+\infty}\int_{x>x_{1}(t)}(v^2+v_{x}^2)(t,x)\rmd x=0.
\]

It remains to show $\lim_{t\rightarrow+\infty}\int_{x>2\omega t}(v^2+v_{x}^2)(t,x)\rmd x=0$. Indeed, let $0<t'=t'(t)<t$ such that $x_1(t')-2\omega t'-\frac{c_1^0-2\omega}{4}(t+t')=y_0$. Then for $\sup_{t\geq0}\|v(t)\|_{H^1}$ small enough, we have
\begin{eqnarray*}
&&\int_{\R}(u^2+u_{x}^2)(t,x)\Psi_{K}\big(x-2\omega t\big)\rmd x\\
&&\leq \int_{\R}(u^2+u_{x}^2)(t',x)\Psi_{K}\left( x-2\omega t'-\frac{c_1^0-2\omega}{4}(t-t') \right)\rmd x+Ce^{-\gamma_0y_0}\\ &&\leq \int_{\R}(u^2+u_{x}^2)(t',x)\Psi_{K}\big(x-x_{1}(t')+y_{0}\big)\rmd x+Ce^{-\gamma_0y_0}.
\end{eqnarray*}
The conclusion is  obtained as before.
\end{proof}

We are now able to conclude the proof of Theorem \ref{th1.3}.
\begin{proof}[End of proof of Theorem \ref{th1.3}.]
 The proof of Theorem \ref{th1.3} follows from Proposition \ref{prop7.1} and Proposition \ref{prop7.2}.
\end{proof}

\begin{proof}[Proof of Corollary \ref{co1.3}.]
Note first that (see for example \cite{M05})
\begin{equation}\label{decoupl meas}
\left\| U^{(N)}(\cdot;c_j^0,-y_j)-\sum_{j=1}^N\varphi_{c_j^0}(\cdot-y_j) \right\|_{H^1}\rightarrow0, \quad \text{ as } \ \inf(y_{j+1}-y_j)\rightarrow+\infty.
\end{equation}
Let $a<a_0$, $L\geq L_0$ be such that $A_0(a+e^{-\gamma_0L})<\frac{\delta_1}{2}$ and
\[
\left\| U^{(N)}(\cdot;c_j^0,-y_j)-\sum_{j=1}^N\varphi_{c_j^0}(\cdot-y_j) \right\|_{H^1}\leq\frac{\delta_1}{2}, \quad \text{ as } \ \inf(y_{j+1}-y_j)>L.
\]
Let $T>0$ be such that $c_{j+1}^0T+x_{j+1}^0>c_{j}^0T+x_{j}^0+2L$ for all $j=1,\cdot\cdot\cdot,N-1$, then for all $t\geq T$, the $N$-solitons $v(t,x):=U^{(N)}(x;c_j^0,-x^0_j-c_j^0t)$ satisfy
\begin{equation}\label{N-solitons meas}
\left\| v(t,x)-\sum_{j=1}^N\varphi_{c_j^0}(x-c_j^0t-x_j^0) \right\|_{H^1}\leq\frac{\delta_1}{2}.
\end{equation}
From the continuous dependence of the CH solution with respect to the initial data, there exists $a_1>0$ such that if $\|u(0)-v(0)\|_{H^1}<a_1$, then $\|u(T)-v(T)\|_{H^1}<\frac{a}{2}$. Therefore, from \eqref{N-solitons meas}, one has
\[
\left\| u(T,x)-\sum_{j=1}^N\varphi_{c_j^0}(x-c_j^0T-x_j^0) \right\|_{H^1}\leq a.
\]
By Proposition \ref{thm4.1}, there exist $x_j(t)$ with $x_{j+1}(t)-x_{j}(t)>L$, for all $t\geq T$ such that
\[
\left\| u(t,\cdot)-\sum_{j=1}^{N}\varphi_{c^0_{j}}(\cdot -x_{j}(t)) \right\|_{H^1} \leq A(\varepsilon+e^{-\gamma_{0}L})<\frac{\delta_1}{2}.
\]
Together with \eqref{N-solitons meas}, this gives \eqref{stability}.

Finally, \eqref{asympt lim3} follows from Theorem  \ref{th1.3} and \eqref{decoupl meas}.
\end{proof}

\section{Applications of other dispersive equations}\label{sec 8}
In this Section, we present our approach to show some counterpart rigidity results of Theorem \ref{thm1.4} for other integrable models, such as KdV and mKdV equations.

We consider the gKdV equations
\begin{equation}\label{mKdV}
u_{t} +\left(u_{xx}+u^{p}\right)_{x}=0, \quad t, x \in \mathbb{R},
\end{equation}
for $p=2,3 $ integer. Let
\begin{equation*}\label{solito pro}
Q(x)=\left(\frac{p+1}{2 \cosh ^{2}\left(\frac{p-1}{2} x\right)}\right)^{\frac{1}{p-1}},
\end{equation*}
be the unique $H^{1}$ positive solution (up to translations) of
\begin{equation*}\nonumber
Q^{\prime \prime}+Q^{p}=Q \quad \text { on }\quad \mathbb{R}.
\end{equation*}
Then, for any $c>0$, $x_{0}\in\mathbb{R}$, the functions
\begin{equation}\nonumber
R_{c, x_{0}}(t, x)=Q_{c}\left(x-x_{0}-c t\right), \quad \text { where } \quad Q_{c}(x)=c^{\frac{1}{p-1}} Q(\sqrt{c} x),
\end{equation}
are solions of the gKdV equations \eqref{mKdV}.
the following quantities are invariant for such $H^{1}$ solutions:
\begin{equation*}\label{mass KdV}
H_{K,1}(u(t)):=\frac{1}{2}\int_{\mathbb{R}} u^{2}(t)=\frac{1}{2}\int_{\mathbb{R}} u^{2}(0),
\end{equation*}
\begin{equation*}\label{Energy KdV}
H_{K,2}(u(t)):=\int_{\mathbb{R}}\left[\frac{1}{2}\left(\partial_{x} u(t)\right)^{2}-\frac{1}{p+1} u^{p+1}(t)\right]=\int_{\mathbb{R}}\left[\frac{1}{2}\left(\partial_{x} u(0)\right)^{2}-\frac{1}{p+1} u^{p+1}(0)\right],
\end{equation*}
where the subscript $K$ denotes the (m)KdV. The case $p=2,3$ we considered are completely integrable, \eqref{mKdV} admits  infinitely many conservation laws
\begin{eqnarray*}\label{m conser}
H_{K,n}(u):=\frac12\int_{\mathbb R} (\partial_x u_{n})^2\rmd x+\int_{\mathbb R}q_{K,n}(u, u_x,..., \partial_x u_{n-1} )\rmd x,
\end{eqnarray*}
where $q_{K,n}$ is a polynomial. Furthermore, invariants $H_{K,n}$  can be defined via the the following famous Lenard recursion
\begin{equation}\label{eq:KdV recursion}
\mathcal{J}\frac{\delta H_{K,n+1}(u)}{\delta u}=\mathcal{K}(u)\frac{\delta H_{K,n}(u)}{\delta u},
\end{equation}
where $\mathcal{J}$ is the operator $\partial_x$ and $\mathcal{K}(u)$ is a skew-adjoint operator as follows
\begin{eqnarray}
&&\mathcal{K}(u)=-\partial_x^3-\frac 43u\partial_x-\frac 23u_x, \quad p=2,\label{KdV skew-ad operator}\\
&&\mathcal{K}(u)=-\partial_x^3-2u^2\partial_x-2u_x\partial_x^{-1}(u\partial_x). \quad p=3.\label{mKdV skew-ad operator}
\end{eqnarray}
Notice that by the scaling and translation invariance of the equation,
it is enough to study  the soliton $Q(x-t)$, the linearized operator around solitons is
\begin{equation}\label{linear operator}
\mathcal{L}_1: v=-\partial_{x}^{2} v+v-p Q^{p-1} v=-\partial_{x}^{2} v+v-\frac{p(p+1)}{2 \cosh ^{2}\left(\frac{p-1}{2} x\right)} v, \quad p=2,3.
\end{equation}
The higher order linearized operator
\begin{equation}\label{higher linear operator}
\mathcal{L}_n:=H''_{K,n+1}(Q)+H''_{K,n}(Q)=\big(\mathcal{J}^{-1}\mathcal{K}(Q)\big)^{n-1}\mathcal{L}_1,\quad n\geq1.
\end{equation}

\begin{proposition}\label{col 7.3}
  Suppose that  $p=2,3$.  Let  $v \in C\left(\mathbb{R}, H^{1}(\mathbb{R})\right) \cap L^{\infty}\left(\mathbb{R}, H^{1}(\mathbb{R})\right)$  be a solution of
\begin{equation}\label{eq38e}
\partial_{t} v=\partial_{x}(\mathcal{L}_n v)+A(t)\left(\frac{2}{p-1} Q+x Q'\right)+B(t) Q', \quad \text { on } \mathbb{R} \times \mathbb{R},
\end{equation}
where $A(t)$ and $B(t)$ are two continuous and bounded functions. Assume that for two constants $C>0$ and $\sigma>0$,
\begin{equation}\label{eq39e}
\forall t, x \in \mathbb{R}, \quad|v(t, x)| \leq C e^{-\sigma|x|},
\end{equation}
Then for all $t \in \mathbb{R}$,
\begin{equation}\label{eq40'}
v(t) \equiv a(t)\left(\frac{2}{p-1} Q+x Q'\right)+b(t) Q',
\end{equation}
for some $C^{1}$ bounded functions $a(t)$ and $b(t)$ satisfying
\begin{equation}\label{eq41'}
a'(t)=A(t), \quad b'(t)=2(-1)^n a(t)+B(t).
\end{equation}
\end{proposition}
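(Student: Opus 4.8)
The plan is to transplant the proof of Theorem~\ref{thm1.4} to the integrable equations \eqref{mKdV} with $p=2,3$. The role of the CH recursion operator is played by $\mathcal J^{-1}\mathcal K(Q)$ (here $\mathcal J=\partial_x$ and $\mathcal K$ is given by \eqref{KdV skew-ad operator}--\eqref{mKdV skew-ad operator}), so that $\mathcal L_n=\big(\mathcal J^{-1}\mathcal K(Q)\big)^{n-1}\mathcal L_1$ by \eqref{higher linear operator} and $\partial_x\mathcal L_n$ commutes with $\partial_x\mathcal L_1$; the role of the CH squared eigenfunctions is played by the squared eigenfunctions of the Lax operator of \eqref{mKdV} linearized at the soliton. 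Since $pQ^{p-1}$ is a reflectionless potential, this spectral resolution is completely explicit: for $p=2$ it is exactly the one used by Pego--Weinstein \cite{PW94}, and for $p=3$ one uses the AKNS squared-eigenfunction completeness relation. The Cauchy problem \eqref{eq38e} is globally well-posed in $H^1(\mathbb R)$, which we take for granted.

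\textbf{Step 1: spectral structure.} Set $\Phi:=\tfrac{2}{p-1}Q+xQ'=2\,\partial_cQ_c|_{c=1}$. Differentiating the soliton equation in $c$ gives $\mathcal L_1\Phi=-2Q$, while $\mathcal L_1Q'=0$, hence $\partial_x\mathcal L_1Q'=0$ and $\partial_x\mathcal L_1\Phi=-2Q'$; the squared-eigenfunction expansion then shows that $\lambda=0$ is the only spectrum of $\partial_x\mathcal L_1$ on $i\mathbb R$, it has algebraic multiplicity two with generalized kernel $\mathrm{span}\{Q',\Phi\}$, and the essential spectrum is $i\mathbb R$. Iterating the identities $\mathcal J^{-1}\mathcal K(Q)Q=-Q$ and $\mathcal J^{-1}\mathcal K(Q)Q'=-Q'$ yields $\partial_x\mathcal L_nQ'=0$ and $\partial_x\mathcal L_n\Phi=2(-1)^nQ'$, the essential spectrum of $\partial_x\mathcal L_n$ still lying on $i\mathbb R$. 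Let $g_1,g_2$ be the dual generalized eigenfunctions of the adjoint operator $-\mathcal L_n\partial_x$ (concretely $g_1\propto Q$ and $g_2$ a combination of $\partial_x^{-1}\Phi$ and $Q$), normalized so that $\langle Q',g_1\rangle=\langle\Phi,g_2\rangle=1$ and $\langle Q',g_2\rangle=\langle\Phi,g_1\rangle=0$; these are exponentially localized.

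\textbf{Step 2: reduction and weighted decay.} Put $a(t):=\langle v(t),g_2\rangle$ and $b(t):=\langle v(t),g_1\rangle$; pairing \eqref{eq38e} with $g_1,g_2$ and using Step~1 gives $a'=A$ and $b'=2(-1)^na+B$, and $a,b$ are bounded since $v\in L^\infty(\mathbb R,H^1)$. Then $w:=v-a\Phi-bQ'$ is uniformly bounded in $H^1$, still obeys the uniform exponential bound \eqref{eq39e} (because $\Phi$, $Q'$, $g_1$, $g_2$ are exponentially localized and $a,b$ bounded), lies in the range of the complementary spectral projection $\mathcal Q$, and solves the homogeneous equation $\partial_tw=\partial_x(\mathcal L_nw)$ since $\mathcal Q$ commutes with $\partial_x\mathcal L_n$ and annihilates $\Phi,Q'$. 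Conjugating by a weight $e^{ax}$ with $a$ in the Pego--Weinstein range $a_\ast<a<0$ (resp. $|a|$ small, for $n\ge2$), $h:=e^{ax}w\in\mathcal C(\mathbb R,H^1)$ solves $\partial_th=\mathcal L_a^{(n)}h$ with $\mathcal L_a^{(n)}:=e^{ax}\partial_x\mathcal L_ne^{-ax}$; for $n=1$ the Pego--Weinstein spectral analysis together with a Pr\"uss-type resolvent bound (the analogue of Proposition~\ref{pro5.2}) gives $\|e^{t\mathcal L_a^{(1)}}w_0\|_{H^1}\le Ce^{-bt}\|w_0\|_{H^1}$ on $\mathrm{Range}\,\mathcal Q$, and for $n\ge2$, where the unweighted essential spectrum accumulates on $i\mathbb R$, one replaces $\mathcal L_n$ by $\mathcal L_n+K\mathcal L_1$ with $K$ large to restore a spectral gap for the conjugated operator, exactly as in the proof of Theorem~\ref{thm1.4}. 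Hence $\|w(t)\|_{H^1(-R_0,R_0)}\le Ce^{-2aR_0}\|h(t)\|_{H^1}\le Ce^{-2aR_0-2bt}\|h(0)\|_{H^1}\to0$. If $w\not\equiv0$, the conserved quadratic form $\langle\mathcal L_1w(t),w(t)\rangle$ is positive and constant in $t$ — using the coercivity of $\mathcal L_1$ under $\langle w,g_1\rangle=\langle w,g_2\rangle=0$, see \cite{PW94,Ma06} — so the uniform localization \eqref{eq39e} forces $\|w(t)\|_{H^1(-R_0,R_0)}\ge\delta_0>0$ for $R_0$ large, a contradiction. Therefore $w\equiv0$, i.e. $v(t)=a(t)\Phi+b(t)Q'$, which is \eqref{eq40'}, and \eqref{eq41'} is the pair of ODEs found above.

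\textbf{Main obstacle.} The substantive analytic input is the squared-eigenfunction spectral resolution of $\partial_x\mathcal L_1$ and the attendant weighted-semigroup decay: for KdV ($p=2$) this is supplied by Pego--Weinstein \cite{PW94}, while for mKdV ($p=3$) one must set up the AKNS squared-eigenfunction completeness and re-derive the resolvent bound for the conjugated operator. A further delicate point, as in the CH case, is the accumulation of the essential spectrum of $\partial_x\mathcal L_n$ on $i\mathbb R$ for $n\ge2$, handled by the $\mathcal L_n+K\mathcal L_1$ device; and, since no orthogonality condition is imposed in the hypotheses, the real content of the proposition is precisely the vanishing of the component of $v$ in the spectral complement of $\mathrm{gKer}(\partial_x\mathcal L_n)$.
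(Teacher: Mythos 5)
Your proposal is correct and follows essentially the same route as the paper: the factorization $\partial_x\mathcal L_n=\big(\mathcal R_K^{\ast}(Q)\big)^{n-1}\partial_x\mathcal L_1$, the squared-eigenfunction spectral resolution identifying the two-dimensional generalized kernel, the Pego--Weinstein exponentially weighted semigroup decay (Pr\"uss resolvent bound, as in Proposition~\ref{pro5.2}), the $\mathcal L_n+K\mathcal L_1$ device for $n\ge 2$, and the conserved quadratic form plus coercivity to kill the localized remainder, exactly as in the proof of Theorem~\ref{thm1.4}. The only presentational difference is that you treat the inhomogeneous terms by pairing with the dual generalized eigenfunctions $g_1,g_2$ and subtracting $a(t)\Phi+b(t)Q'$, whereas the paper reduces to $A=B=0$ by invoking \cite{Ma06}; this is the same reduction in substance and your version makes the derivation of \eqref{eq40'}--\eqref{eq41'} more explicit.
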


\begin{corollary}[Rigidity  for KdV and mKdV] \label{prop 7.1}
Suppose that $p =2,3$ and $n\geq1$. Let $v \in C\left(\mathbb{R}, H^{1}(\mathbb{R})\right) \cap L^{\infty}\left(\mathbb{R}, H^{1}(\mathbb{R})\right)$ be a solution of
\begin{equation}\label{linear problem for mKdV}
\partial_{t} v=\partial_{x}(\mathcal{L}_n v) \quad \text { on } \mathbb{R} \times \mathbb{R}.
\end{equation}
Assume that for two constants $ C>0, \sigma>0 $
\begin{equation*}
\forall t, x \in \mathbb{R}, \quad|v(t, x)| \leq C e^{-\sigma|x|}.
\end{equation*}
Then there exists a constant $b_{0}\in \mathbb{R}$ such that for all $t \in \mathbb{R}$,
\begin{equation}\label{anseer}
v(t) \equiv b_{0} Q'.
\end{equation}
\end{corollary}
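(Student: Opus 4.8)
The plan is to read off Corollary~\ref{prop 7.1} from Proposition~\ref{col 7.3} and then to remove the remaining secular mode using the hypothesis $v\in L^{\infty}(\mathbb{R},H^{1}(\mathbb{R}))$. First I would observe that \eqref{linear problem for mKdV} is precisely equation \eqref{eq38e} with $A(t)=B(t)=0$, so Proposition~\ref{col 7.3} applies verbatim and yields
\[
v(t)\equiv a(t)\,\Lambda Q+b(t)\,Q',\qquad \Lambda Q:=\tfrac{2}{p-1}Q+xQ',
\]
with $a'(t)=0$ and $b'(t)=2(-1)^{n}a(t)$. Integrating these ODEs gives $a(t)\equiv a_{0}$ constant and $b(t)=b_{0}+2(-1)^{n}a_{0}\,t$.

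Next I would rule out $a_{0}\neq 0$. Since $Q$ is even, $Q'$ is odd and $\Lambda Q$ is even, so $\Lambda Q$ and $Q'$ are orthogonal in $H^{1}$ by parity (both $\int\Lambda Q\,Q'$ and $\int(\Lambda Q)'Q''$ have odd integrands). Hence
\[
\|v(t)\|_{H^{1}}^{2}=a_{0}^{2}\,\|\Lambda Q\|_{H^{1}}^{2}+\bigl(b_{0}+2(-1)^{n}a_{0}t\bigr)^{2}\,\|Q'\|_{H^{1}}^{2},
\]
and since $\|Q'\|_{H^{1}}>0$ this stays bounded as $t\to\pm\infty$ only if $a_{0}=0$. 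Then $b(t)\equiv b_{0}$ and $v(t)\equiv b_{0}Q'$, which is \eqref{anseer}. This step is routine and the only thing to check is the parity bookkeeping above, so it presents no real obstacle.

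Consequently all the content lies in Proposition~\ref{col 7.3}, whose proof I would run in parallel with that of Theorem~\ref{thm1.4}. The Lenard recursion \eqref{eq:KdV recursion} expresses $\mathcal{L}_{n}$ through iterates of the recursion operator $\mathcal{J}^{-1}\mathcal{K}(Q)$ applied to $\mathcal{L}_{1}$ (see \eqref{higher linear operator}), and the P\"oschl--Teller operator $\mathcal{L}_{1}$ of \eqref{linear operator} has completely explicit spectral data: essential spectrum $[1,\infty)$, finitely many eigenvalues below $1$, and a simple zero eigenvalue with eigenfunction $Q'$, while $\partial_{x}\mathcal{L}_{n}$ carries the two-dimensional generalized kernel $\operatorname{span}\{Q',\Lambda Q\}$ (the translation and scaling directions). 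Following Pego--Weinstein \cite{PW94}, I would conjugate $\partial_{x}\mathcal{L}_{n}$ --- for $n\ge 2$ replaced by $\partial_{x}(\mathcal{L}_{n}+K\mathcal{L}_{1})$ with $K$ large --- by an exponential weight $e^{ax}$, with $a$ chosen so that the essential spectrum of the weighted operator lies strictly in $\{\Re\lambda<0\}$, establish a Pr\"uss-type uniform resolvent bound to the right of a vertical line $\{\Re\lambda=-b\}$, and thereby obtain exponential decay of the weighted semigroup on the complement of that generalized kernel. Decomposing $v=\mathcal{P}v+\mathcal{Q}v$ along this splitting, the forced terms disappear from the equation for $\mathcal{Q}v$; the uniform spatial decay \eqref{eq39e} (after bootstrapping the $x$-derivative via the equation) makes $e^{ax}v(t)$ bounded in $H^{1}$ uniformly in $t\in\mathbb{R}$ for $|a|$ small, and letting the initial time tend to $-\infty$ in the weighted decay estimate forces $\mathcal{Q}v\equiv 0$; writing $v=\mathcal{P}v$ in the basis $\{Q',\Lambda Q\}$ and substituting into the equation produces the ODEs \eqref{eq41'}. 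The genuinely delicate point --- exactly as in the CH case treated in Section~\ref{sec 5} --- is this weighted resolvent estimate: for $n\ge 2$ the essential spectrum of the bare operator $e^{ax}\partial_{x}\mathcal{L}_{n}e^{-ax}$ asymptotes toward $i\mathbb{R}$, which is why one must first perturb by $K\mathcal{L}_{1}$ to restore a uniform spectral gap.
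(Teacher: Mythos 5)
Your proposal is correct and follows essentially the same route as the paper: the paper's proof of Corollary \ref{prop 7.1} consists precisely of invoking Proposition \ref{col 7.3} with $A(t)=B(t)=0$ (whose proof, in turn, is the weighted Pego--Weinstein semigroup argument you sketch, with the $K\mathcal{L}_1$ perturbation for $n\ge 2$). The only difference is that you make explicit the final step the paper leaves implicit --- using $v\in L^{\infty}(\mathbb{R},H^{1})$ together with the parity orthogonality of $Q'$ and $\tfrac{2}{p-1}Q+xQ'$ to force $a_{0}=0$ and kill the linearly growing mode --- which is a correct and welcome clarification.
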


Since by direct calculations,
$$
\mathcal{L}_n \left( \frac{2}{p-1} Q+x Q' \right)=(-1)^n2 Q, \quad \mathcal{L}_n Q'=0.
$$
It is easily checked that (\ref{eq40'}) indeed define a solution of (\ref{eq38e}). Corollary \ref{col 7.3} means that there is no other $H^{1}$ bounded solution of (\ref{eq38e}) satisfying (\ref{eq39e}).
\begin{remark} \label{re7.3}
Corollary \ref{prop 7.1} and  Proposition\ref{col 7.3} extend the work of Martel \cite{Ma06} (the case $n=1$). We  give a classification of solutions for a more general problem \eqref{eq38e}.
\end{remark}
\subsection{The KdV case}
In this subsection, we collect the results which is essential in the proof of Corollary \ref{prop 7.1} and Proposition \ref{col 7.3} for the KdV equation. We start with the following commutativity of recursive operators and linearized operators
\begin{eqnarray}
&&\mathcal{R}_{K}(Q)\mathcal{L}_n\partial_x=\mathcal{L}_n\partial_x\mathcal{R}_{K}(Q),\quad \mathcal{R}_{K}(Q)=\partial_x^{-1}\mathcal{K}(Q),\label{com recur}\\
&&\mathcal{R}^{\ast}_{K}(Q)\partial_x\mathcal{L}_n=\partial_x\mathcal{L}_n\mathcal{R}^{\ast}_{K}(Q),\quad \mathcal{R}^{\ast}_{K}(Q)=\mathcal{K}(Q)\partial_x^{-1}. \label{com adrecur}
\end{eqnarray}

The eigenfunctions of $\mathcal{R}_{K}(Q)$ can be given in terms of squared eigenfunctions of the Schr\"odinger equation
\begin{equation}\label{eq:arb eigenvalue}
v_{xx}+\left(\frac43Q(2x)+\kappa^2\right)v=0, \quad \kappa\in \mathbb C,
\end{equation}
we define the Jost solutions $\psi(x,\kappa)$ and $\phi(x,\kappa)$ of \eqref{eq:arb eigenvalue} as follows
\begin{eqnarray*}
&&|\psi(x,\kappa)- e^{i\kappa x}|\rightarrow0, x\rightarrow +\infty,\\
&&|\phi(x,\kappa)- e^{-i\kappa x}|\rightarrow0, x\rightarrow -\infty.
\end{eqnarray*}
It is easy to check that $\psi$ and $\phi$ above are simply
\begin{eqnarray*}
&&\psi(x,\kappa)=\frac{\kappa+i\tanh x}{\kappa+i}e^{i\kappa x},\\
&&\phi(x,\kappa)=\frac{\kappa-i\tanh x}{\kappa+i}e^{-i\kappa x}.
\end{eqnarray*}
For these eigenfunctions, we can check that $\phi^2(\frac{x}{2},\kappa)$ are eigenfunctions of $\mathcal{R}(Q)$ with the corresponding eigenvalue $\kappa^2$, that is,
\begin{equation}\label{eq:arb eigenfuc}
\mathcal{R}(Q)\phi^2 \left( \frac{x}{2},\kappa \right) = \kappa^2\phi^2 \left( \frac{x}{2},\kappa \right).
\end{equation}
To separate continuous and discrete eigenfunctions, we define the set
\begin{equation}\label{eq:set KdV}
\left\{ \phi^2 \left( \frac{x}{2},\kappa \right), \text{for} \ \kappa\in\R;\quad \phi^2 \left( \frac{x}{2},i \right)\sim Q,\quad \left. \frac{\partial \phi^2(\frac{x}{2},\kappa)}{\partial \kappa}\right|_{\kappa=i} \right\},
\end{equation}
Set \eqref{eq:set KdV} consists of continuous and discrete eigenfunctions of the operator $\mathcal{R}(Q)$. In particular, the continuous eigenfunctions $\phi^2(\frac{x}{2},\kappa)$ satisfy \eqref{eq:arb eigenfuc}. The discrete eigenfunctions satisfy
\begin{eqnarray*}
&&\mathcal{R}(Q)\phi^2 \left( \frac{x}{2},i \right) = -\phi^2 \left( \frac{x}{2},i \right) = -\frac23Q,\\
&&\mathcal{R}(Q) \left. \frac{\partial \phi^2(\frac{x}{2},\kappa)}{\partial \kappa} \right|_{\kappa=i} =-\left. \frac{\partial \phi^2(\frac{x}{2},\kappa)}{\partial \kappa} \right|_{\kappa=i} + 2i\phi^2 \left( \frac{x}{2},i \right).
\end{eqnarray*}
Similarly, the eigenfunctions of adjoint recursion operator $\mathcal{R}^{\ast}(Q)$ form the following set
\begin{equation}\label{eq:adjointset KdV}
\left\{ \left( \phi^2 \left( \frac{x}{2},\kappa \right) \right)_x, \text{ for} \ \kappa\in\R;\quad \left(\phi^2 \left(\frac{x}{2},i \right) \right)_x\sim Q',\quad \left( \left.\frac{\partial \phi^2(\frac{x}{2},\kappa)}{\partial \kappa}\right|_{\kappa=i} \right)_x\sim 2Q+x Q^{\prime} \right\},
\end{equation}
Therefore, for any $f\in L^2(\R)$, it can be expanded over \eqref{eq:adjointset KdV} as follows
\begin{equation}\label{eq: KdV decom}
f(x)=\int_{\R}\alpha(\kappa)\left( \phi^2 \left(\frac{x}{2},\kappa \right) \right)_x \rmd \kappa+\beta Q'+\gamma(2Q+x Q^{\prime}),
\end{equation}
with the coefficients $\alpha(\kappa)$, $\beta$ and $\gamma$, which are related to the coefficients in the completeness relations between the two sets \eqref{eq:set KdV} and \eqref{eq:adjointset KdV}.
\subsection{The mKdV case}
This subsection is devoted to the proof of Proposition \ref{prop 7.1} and Corollary \ref{col 7.3} for the KdV and mKdV equations.
We first review the following general Zakharov-Shabat eigenvalue problem
\begin{align}\tag{ZS}\label{ZS}
\begin{cases}
v_{1x}(t,x)+i\zeta v_1(t,x)=q(t,x)v_2(t,x), \\
v_{2x}(t,x) -i\zeta v_2(t,x)=r(t,x)v_1(t,x),\quad x\in \mathbb R,
\end{cases}
\end{align}
where $q,r$ are complex-valued integrable potentials. In this case, given a particular relationship (called involutions) between $q$ and $r$ leads to other integrable PDEs. As special cases, we list (see \cite{AKNS74}.\\
1).$r=-1$,\quad $q_t+q_{xxx}+6qq_x=0$, \quad (KdV)\\
2).$r=\mp q$,\quad $q_t+q_{xxx}\pm6q^2q_x=0$, \quad (mKdV)\\
3).$r=\mp \bar{q}$,\quad $iq_t+q_{xx}\pm|q|^2q=0$, \quad (NLS )\\
4).$r=-q=\frac12u_x$,\quad $u_{xt}=\sin u$, \quad (sine-Gordon).

Let us consider \eqref{ZS} with soliton potential $r(t,x)=-q(t,x)=\frac{\sqrt{2}}{2}Q$, the Jost solutions  $\phi(\zeta,x)$ and $\psi(\zeta,x)$ are uniquely determined by imposing the following asymptotic conditions
\begin{equation}\label{Jost}
 \phi(x,\zeta)=\begin{pmatrix}
 \phi_1\\ \phi_2
\end{pmatrix}=\begin{pmatrix}
 1\\0
\end{pmatrix}e^{-i\zeta x} +o(1),\quad \tilde{\phi}(x,\zeta)=\begin{pmatrix}
 \tilde{\phi}_1\\ \tilde{\phi}_2
\end{pmatrix}=\begin{pmatrix}
 0\\-1\end{pmatrix}e^{i\zeta x} +o(1),
\end{equation}
as $x\rightarrow-\infty$, and
\begin{equation}\label{Jost2}
 \psi(x,\zeta)=\begin{pmatrix}
 \psi_1\\ \psi_2
\end{pmatrix}=\begin{pmatrix}
 0\\1
\end{pmatrix}e^{i\zeta x} +o(1),\quad \tilde{\psi}(x,\zeta)=\begin{pmatrix}
 \tilde{\psi}_1\\ \tilde{\psi}_2
\end{pmatrix}=\begin{pmatrix}
 1\\0\end{pmatrix}e^{-i\zeta x} +o(1),
\end{equation}
as $x\rightarrow+\infty$.
These Jost solutions have simple expression as follows \cite{Y00}.
\begin{eqnarray*}
&&\phi(x,\zeta)=\frac{1}{2i\zeta-1}\begin{pmatrix}
 \tanh x+2i\zeta\\-\operatorname{sech}x
\end{pmatrix}e^{-i\zeta x},\quad \tilde{\phi}(x,\zeta)=\frac{1}{2i\zeta+1}\begin{pmatrix}
-\operatorname{sech}x\\ \tanh x-2i\zeta
\end{pmatrix}e^{i\zeta x}\\
&&\psi(x,\zeta)=\frac{1}{1-2i\zeta}\begin{pmatrix}
 -\operatorname{sech}x\\ \tanh x-2i\zeta
\end{pmatrix}e^{i\zeta x},\quad \tilde{\psi}(x,\zeta)=\frac{1}{2i\zeta+1}\begin{pmatrix}
\tanh x+2i\zeta\\-\operatorname{sech}x
\end{pmatrix}e^{-i\zeta x}.
\end{eqnarray*}
Then we can deduce the following relations which correspond to the spectrum of recursion operator $\mathcal{R}^{\ast}(Q)$
\begin{eqnarray*}
&&\mathcal{R}^{\ast}(Q)\left(\phi^2_2-\phi^2_1 \right) \left( x,\frac{k}{2} \right)=\frac{k^2}{4}\big(\phi^2_2-\phi^2_1\big) \left(x,\frac{k}{2} \right),\\
&&\mathcal{R}^{\ast}(Q)Q'=-Q',\quad \mathcal{R}^{\ast}(Q)(Q+xQ')=-(Q+xQ')-2Q'.
\end{eqnarray*}
 The corresponding decomposition of $z\in L^2(\R)$ is
\begin{eqnarray} \label{decomposition of z 4}
 z(x)=\int_{\R}\big(\phi^2_2-\phi^2_1\big) \left(x,\frac{k}{2} \right)\alpha(k)\rmd k+\beta Q'+\gamma\frac{\partial Q_{c}}{\partial c}|_{c=1},
\end{eqnarray}
where
\[
\big(\phi^2_2-\phi^2_1\big) \left(x,\frac{k}{2} \right) = \frac{1}{(k+i)^2} \left[ (\tanh (x)+ik)^2-\operatorname{sech}^2(x) \right] e^{-ikx},\quad k\in\R.
\]
In particular, the kernel of $\mathcal{R}^{\ast}(Q)$ is spanned by the function $\big(\phi^2_2-\phi^2_1\big)(x,0)=1-Q^2\notin L^2(\R)$.

With \eqref{eq: KdV decom} and \eqref{decomposition of z 4} in hand, we can complete the proof of Proposition \ref{prop 7.1} and Corollary \ref{col 7.3}.

\begin{proof}[Proof of Proposition \ref{col 7.3}.]
The Cauchy problem of \eqref{eq38e} for $n=1$ is globally well-posed in $H^{1}(\R)$. For general $n$,  \eqref{eq38e} is also well-posed in $H^{1}(\R)$, since from a parallel of Lemma \ref{le3.5}, one has for all $n\geq2$
$$\mathcal{J}\mathcal L_n=\mathcal{J}\mathcal{R}_{K}(Q)\mathcal L_{n-1}=\mathcal{R}_K^{\ast}(Q)\mathcal{J}\mathcal L_{n-1}=\big(\mathcal{R}_K^{\ast}(Q)\big)^{n-1}\mathcal{J}\mathcal L_{1},$$
the well-posedness of Cauchy problem of \eqref{eq38e} in $H^{1}(\R)$ follows from inductions over $n$ and the invertibility in $L^2(\R)$ of the adjoint recursion operator $\mathcal{R}_K^{\ast}(Q)$.

Now we restrict ourselves to the case $A(t)=B(t)=0$, since one can show that $\int_0^tA(s)ds$ and $\int_0^tB(s)ds$ are uniformly bounded in time \cite{Ma06}. The spectral information of $\partial_x\mathcal{L}_n$ lie in imagery axis, with an eigenvalue at $\lambda=0$ double. Indeed, it reveals from \eqref{com adrecur} that the operators $\partial_x\mathcal{L}_n$ shares the same eigenfunctions of the adjoint recursion operator $\mathcal{R}^{\ast}_{K}(Q)$ and these eigenfunctions are $L^2$-complete.

 We consider first the $n=1$ case. From the work of \cite{PW94} and \cite{MT14}, for $0<\nu \ll 1$, the operator $e^{\nu x}\partial_x\mathcal{L}_1e^{-\nu x}$ lie in the open left half plane and possess spectral gap. Therefore, the semigroup decay estimate of $e^{t e^{\nu x}\partial_x\mathcal{L}_1e^{-\nu x}}$ holds true. Combining the $L^2$-localized condition of $v$ yield $v\equiv0$.

For the general case of $n\geq2$, we will consider the operator $\mathcal{L}_n+K\mathcal{L}_1 $ instead  with the constant $K>0$ large, then we can verify that there exists some $|\nu_n|<<1$ such that the operator $e^{\nu_n x}\partial_x(\mathcal{L}_n+K\mathcal{L}_1)e^{-\nu_n x}$ lie in the open left half plane and possess spectral gap. Therefore, the semigroup decay estimate of $e^{t e^{\nu_n x}\partial_x(\mathcal{L}_n+K\mathcal{L}_1)e^{-\nu x}}$ holds true. Therefore, following the argument of the case $n=1$ as above, any $L^2$-localized solutions of $w_t+\partial_x(\mathcal{L}_n+K\mathcal{L}_1)w=0$ must be zero, thus any $L^2$-localized solutions of \eqref{eq38e} are also zero. This completes the proof of the Proposition \ref{col 7.3}.
\end{proof}

\begin{proof}[Proof of Corollary \ref{prop 7.1}.]
Let us consider the problem \eqref{eq38e} with $A(t)=B(t)=0$, then Corollary \ref{prop 7.1} follows from Proposition \ref{col 7.3}, which completes the proof of Corollary \ref{prop 7.1}.
\end{proof}

\noindent {\bf Acknowledgments.}
 The work of  Lan is partially supported by  the NSF of China under grant number 12201340. The work of Wang is partially supported by the NSF of China  under grant number 11901092 and Guangdong NSF under grant number 2023A1515010706.


\end{document}